\pgfplotsset{width=10cm,compat=1.9}
\setlist[enumerate,1]{label={\textit{(\roman*)}}}
\newcommand{\zerodisplayskips}{%
  \setlength{\abovedisplayskip}{5pt}%
  \setlength{\belowdisplayskip}{5pt}%
  \setlength{\abovedisplayshortskip}{5pt}%
  \setlength{\belowdisplayshortskip}{5pt}}
\appto{\normalsize}{\zerodisplayskips}
\appto{\small}{\zerodisplayskips}
\appto{\footnotesize}{\zerodisplayskips}
\newtheorem{theorem}{Theorem}[section]
    \newtheorem{corollary}[theorem]{Corollary}
    \newtheorem{lemma}[theorem]{Lemma}
    \newtheorem{proposition}[theorem]{Proposition}
    \newtheorem{definition}[theorem]{Definition}
    \newtheorem{remark}[theorem]{Remark}
\setlist[enumerate,1]{label={\textit{(\roman*)}}}
\theoremstyle{nonumberplain}
\newtheorem{proof-wo}{Proof}
\newtheorem{proof}{Proof}
\renewcommand{\thesection}{\arabic{section}}
\def\@cite#1#2{[\textbf{#1}\if@tempswa , #2\fi]}
\def\@biblabel#1{[\textbf{#1}]}
\newcommand{\bigslant}[2]{{\raisebox{.2em}{$#1$}\left/\raisebox{-.2em}{$#2$}\right.}}
\newcommand{\eus}{\EuScript}
\newcommand{\llb}{\llbracket}
\newcommand{\rrb}{\rrbracket}
\let\div\undefined
\DeclareMathOperator{\div}{div\,}
\DeclareMathOperator{\supp}{supp\,}
\title{Homogeneous Sobolev and Besov spaces on special Lipschitz domains and their traces \thanks{MSC 2020: 42B37, 46B70, 46E35}\thanks{Key words: Homogeneous Sobolev spaces, Homogeneous Besov spaces, Interpolation of non complete spaces, Lipschitz domains, Traces}}
\author{Anatole \textsc{Gaudin}\thanks{{Aix-Marseille Université, CNRS, I2M, Marseille, France} - \textbf{email:} anatole.gaudin@univ-amu.fr}}
\date{}
\begin{document}

\maketitle

\begin{abstract}
    We aim to contribute to the folklore of function spaces on Lipschitz domains. We prove the boundedness of the trace operator for homogeneous Sobolev and Besov spaces on a special Lipschitz domain with sharp regularity. To achieve this, we provide appropriate definitions and properties, ensuring our construction of these spaces is suitable for non-linear partial differential equations and boundary value problems. The trace theorem holds with the sharp range $s \in (\frac{1}{p}, 1 + \frac{1}{p})$. While the case of inhomogeneous function spaces is well-known, the case of homogeneous function spaces appears to be new, even for a smooth half-space. We refine several arguments from a previous paper on function spaces on the half-space and include a treatment for the endpoint cases $p=1$ and $p=+\infty$.
\end{abstract}

\addtocontents{toc}{\protect\thispagestyle{empty}}
\tableofcontents

\pagestyle{plain}
\thispagestyle{empty}
\renewcommand{\arraystretch}{1.5}



\section{Introduction}

We establish in this paper a proper and user-friendly construction of homogeneous Sobolev and Besov spaces\footnote{See the beginning of Subsection~\ref{subsec:SobolevBesovRn}, in particular Definition~\ref{def:HomFuncSpacesRn}, for the appropriate definitions in the case of the whole space.} on some unbounded Lipschitz domains $\Omega \subset \mathbb{R}^n$, defined (up to a rotation) by the epigraph of a uniformly Lipschitz function, more precisely
\begin{align*}
    \Omega =\{\,(x',x_n)\in\mathbb{R}^{n-1}\times\mathbb{R}\,|\, x_n>\phi(x')\,\}\text{,}
\end{align*}
for a globally and uniformly Lipschitz continuous function $\phi\,:\,\mathbb{R}^{n-1}\longrightarrow\mathbb{R}$.

The introduced technical biases also appear to be fundamental. For instance, the trace theorem with optimal regularity, stated with homogeneous estimates, is obtained through a trick based on the $\mathrm{L}^q$-maximal regularity result obtained in \cite{Gaudin2023}, which allows us to circumvent significant difficulties in a simple way.

Additionally, complementary tricks and techniques for complex interpolation of homogeneous inequalities, as well as new results for interpolation of normed spaces, are introduced, particularly addressing the absence of completeness for the considered normed spaces without considering their completions; see Remark~\ref{rmk:HomInterpLift} and the Appendix~\ref{Appendix:Interp}. We note that the loss of completeness is necessary to define such function spaces with high regularities; see Subsection~\ref{sec:HomSpacesNecessary}.

As an intermediate step for an in-depth investigation in the case of special Lipschitz domains $\Omega$, it was necessary to obtain new and sharpened results for homogeneous function spaces on the whole space $\mathbb{R}^n$.

Finally, the construction of these homogeneous Sobolev and Besov spaces, as normed vector spaces of actual distributions on special Lipschitz domains, along with the optimal trace theorem and related properties, has been an open problem for more than a decade. Even analogs in the case of a $\mathrm{C}^\infty$ bent half-space do not appear to be known until now.

The main goals and strategies of this paper are highlighted at the end of Subsections~\ref{sec:HomSpacesNecessary} and \ref{sec:tracthmIntro}. The most significant results of this paper are highlighted at the end of Subsection~\ref{sec:HomSpacesNecessary} and in Subsection~\ref{sec:KatoHarSobDiag}.

\subsection{Necessity of homogeneous function spaces and their issues}\label{sec:HomSpacesNecessary}

Homogeneous Sobolev and Besov spaces are function spaces that arise very naturally whil solving the most standard partial differential equations on unbounded domains, such as the whole or the half-space. The fundamental examples being the Laplace and the heat equations,

\noindent\begin{minipage}{0.4\textwidth}
\begin{equation*}\tag{$\mathcal{L}$}
-\Delta u =f \text{, in } \mathbb{R}^n, \label{eq:LapEqRn}
\end{equation*} 
\end{minipage}%
\begin{minipage}{0.2\textwidth}\centering
    and 
\end{minipage}%
\begin{minipage}{0.4\textwidth}
\begin{equation*}\tag{$\mathcal{HE}$}
\left\{\begin{array}{l}
\partial_t u-\Delta u=f, \text{ in } \mathbb{R}_+\times \mathbb{R}^n\\
u(0)=0,\end{array}\label{eq:HeatEqRn}\right. 
\end{equation*}
\end{minipage}

\noindent for which we cannot control the solution $u$ itself, but only the amount of derivatives of the solution $u$ that appears in the corresponding equations. More precisely:
\begin{enumerate}
    \item In the case of the Laplace equation \eqref{eq:LapEqRn}, one cannot expect $u\in\mathrm{L}^2(\mathbb{R}^n)$, for arbitrary $f\in\mathrm{L}^2(\mathbb{R}^n)$. The only control we can have is for the (semi-)norm defined by $$\lVert u\rVert_{\dot{\mathrm{H}}^{2,2}(\mathbb{R}^n)} := \lVert\nabla^2u\rVert_{\mathrm{L}^2(\mathbb{R}^n)} = \lVert f \rVert_{\mathrm{L}^2(\mathbb{R}^n)}.$$
    \item In the case of the heat equation \eqref{eq:HeatEqRn}, one cannot expect $u\in\mathrm{L}^2(\mathbb{R}_+\times\mathbb{R}^n)$, for arbitrary $f\in\mathrm{L}^2(\mathbb{R}_+\times\mathbb{R}^n)$. One can only establish a control of the form $$\lVert \partial_t u\rVert_{{\mathrm{L}}^2(\mathbb{R}_+\times\mathbb{R}^n)}^2 + \lVert\nabla^2u\rVert_{\mathrm{L}^2(\mathbb{R}_+\times\mathbb{R}^n)}^2 = \lVert f \rVert_{\mathrm{L}^2(\mathbb{R}_+\times\mathbb{R}^n)}^2.$$
\end{enumerate}
Thus, we need function spaces whose norms precisely control the regularity and integrability arising from the equations, namely, the homogeneous Sobolev (and Besov) spaces. For the reader's convenience, we mention that these two facts\footnote{that in both cases, $u$ itself cannot lie in $\mathrm{L}^2$, provided $f\in\mathrm{L}^2$ is arbitrary.} can be proven by contradiction using the Closed Graph Theorem and a dilation argument. These phenomena are due to the Laplacian not being invertible on $\mathrm{L}^2(\mathbb{R}^n)$. In the case of the heat equation in $\mathrm{L}^q(\mathrm{L}^p)$-spaces, a similar phenomenon occurs, but it also holds for more general operators. See, for example, the discussion in \cite[Introduction, Subsection~1.1.1]{Gaudin2023}, as well as the results \cite[Definition~1.1 \& Proposition~2.2]{Monniaux2009MaxReg} and \cite[Corollary~3.5.3]{PrussSimonett2016}.

\medbreak

In the last two decades, the use of homogeneous function spaces has been central to the global-in-time study of various problems in fluid mechanics. For more details, see \cite{bookBahouriCheminDanchin,DanchinMucha2009,DanchinMucha2015,DanchinHieberMuchaTolk2020,OgawaShimizu2016,OgawaShimizu2021,OgawaShimizu2022,OgawaShimizu2024} and the references therein.

In the context of non-linear partial differential equations (even on the whole space), one has to be clear about the definition of homogeneous function spaces. Typically, elements of homogeneous function spaces, such as homogeneous Sobolev and Besov spaces, are defined as equivalence classes of tempered distributions modulo polynomials, denoted by $\bigslant{\mathcal{S}'(\mathbb{R}^n)}{\mathcal{P}(\mathbb{R}^n)}$, as in \cite[Chapter~6,~Section~6.3]{BerghLofstrom1976}, \cite[Chapter~5]{bookTriebel1983}, or \cite[Chapter~2]{bookSawano2018}.

This construction, modulo polynomials, proves unsuitable for nonlinear partial differential equations for several reasons, as discussed in \cite[Introduction]{Gaudin2022}. We also present further reasons in this section. Notably, according to \cite{Bourdaud1988, Bourdaud2013}, \cite[Chapter~2,~Section~2.4]{Triebel2015}, and \cite[Chapter~2,~Section~2.4.3]{bookSawano2018}, it is not clear that one can canonically choose the polynomial part to obtain an element of $\mathcal{S}'(\mathbb{R}^n)$ in an independent way of the Sobolev index of the considered function spaces.

\begin{itemize}
    \item \textbf{Issues for the definition of (para-)products.}
    
    Given $[u],[v]\in\bigslant{\mathcal{S}'(\mathbb{R}^n)}{\mathcal{P}(\mathbb{R}^n)}$ with representatives $u+P, v+Q, u+\Tilde{P}, v+\Tilde{Q} \in\mathcal{S}'(\mathbb{R}^n)$,, we have
\begin{align*}
(u+P) (v+Q) - (u+\Tilde{P}) (v+\Tilde{Q}) =& (P- \Tilde{P})v + (Q-\Tilde{Q})u+ P Q - \Tilde{P}\Tilde{Q}.
\end{align*}
Therefore, even with meaningful bilinear (para)product estimates, while $P Q - \Tilde{P}\Tilde{Q}$ is a polynomial, $(P-\Tilde{P})v + (Q-\Tilde{Q})u$ is not, meaning the product ultimately \textit{depends on the choice of representatives!}\footnote{This issue persists even when $P,Q,\Tilde{P},\Tilde{Q}$ are constants.}
    \item \textbf{Issues with the definition on domains by restriction.}
    
    Defining the restriction to a domain $\Omega$, of an element that belongs to $\bigslant{\mathcal{S}'(\mathbb{R}^n)}{\mathcal{P}(\mathbb{R}^n)}$ seems ambiguous. For instance, consider $[f]\in \bigslant{\mathcal{S}'(\mathbb{R})}{\mathcal{P}(\mathbb{R})}$ such that has at least one representative $f\in\mathrm{L}^1_{\text{loc}}(\mathbb{R})\cap\mathcal{S}'(\mathbb{R})$.
    
    Then $x\mapsto f(x)-x^2$ and $x\mapsto f(x)-x^3$ are two distinct representative that admit different restrictions in $\mathcal{D}'((a,b))$ for any $-\infty<a<b\leqslant +\infty$. Thus, the restriction in the distributional sense seems meaningless with regard to for the quotient structure. In particular, it appears unclear on how to define the support, having in mind the quotient structure.

    \item \textbf{Issues with the composition with a diffeomorphism.}

    Assuming that $u+P, u+Q\in\mathcal{S}'(\mathbb{R}^n)$ are two representatives of $[u]\in \bigslant{\mathcal{S}'(\mathbb{R}^n)}{\mathcal{P}(\mathbb{R}^n)}$, and $\Psi$ is a smooth diffeomorphism of $\mathbb{R}^n$, the meaning of the expression
\begin{align*}
    u\circ \Psi + P\circ\Psi- (u\circ \Psi + Q\circ\Psi) = P\circ\Psi - Q\circ\Psi
\end{align*}
is ambiguous: even assuming $u\circ \Psi\in\mathcal{S}'(\mathbb{R}^n)$, $P\circ\Psi$ and $Q\circ\Psi$ might not even qualify as tempered distributions. Even if it is, it should not depend on the choice of $P$ and $Q$ which is again unclear. This creates a significant challenge in transferring properties of homogeneous function spaces from the whole space or half-space to a bent half-space via a change of coordinates, particularly in defining traces on the boundary.

    \item \textbf{Issues for extension (and projection) operators.}

    To properly maintain properties such as interpolation identities, one typically uses extension operators $\mathcal{E}\,:\, \mathcal{S}'(\mathbb{R}^n)_{|_{\Omega}}\longrightarrow \mathcal{S}'(\mathbb{R}^n)$, such that $[\mathcal{E} \cdot]_{|_{\Omega}}=\mathrm{I}$ on $\mathcal{S}'(\mathbb{R}^n)_{|_{\Omega}}$. However, for homogeneous function spaces built modulo polynomials, preserving the quotient structure requires that $\mathcal{E}(\mathcal{P}(\mathbb{R}^n)_{|_\Omega}) \subset \mathcal{P}(\mathbb{R}^n)$, by linearity. However, since polynomials are entirely determined by their values taken on a finite number of points, we should necessarily have
\begin{align*}
\mathcal{E} P = P\text{, } \forall P\in\mathcal{P}(\mathbb{R}^n).
\end{align*}
This condition is overly restrictive and is likely not met by most standard extension operators, even for $\Omega=\mathbb{R}^n_+$. For instance, this condition is not met for the extension operator by higher-order reflection around the boundary used in \cite[Chapter~2]{DanchinHieberMuchaTolk2020} and \cite{Gaudin2022}, which was initially introduced and studied for smooth functions and standard Sobolev spaces in \cite{Hestenes1941,Babich1953}.

Although some extension operators can meet the polynomial-preserving condition, we want our construction to accommodate a wider variety of extension operators. This is in order to have the most flexible construction of these function spaces in concrete cases, \textit{i.e.}, when one wants to study non-linear and boundary-value problems. Therefore, we need function spaces and ambient spaces where we do not have to distinguish between different representatives of their elements.

    \item \textbf{Issues concerning distribution theory and the completion of function spaces.}

    Any completion of some function spaces may not be canonically identified as subspaces of $\mathcal{D}'(\mathbb{R}^n)$. For example, it can be verified that
    \begin{align*}
    \mathrm{C}^\infty_c(\mathbb{R}^n)\not\subset \dot{\mathrm{H}}^{-{n}/{2},2}(\mathbb{R}^n).
    \end{align*}
    Consequently, completion of Schwartz functions for the $\dot{\mathrm{H}}^{{n}/{2},2}(\mathbb{R}^n)$-norm cannot be \textit{canonically} embedded in $\mathcal{D}'(\mathbb{R}^n)$, as the (pre-)dual space lacks test functions.

    This implies that if one takes any abstract completion to define these spaces, one would encounter elements that cannot be \textit{canonically} identified with actual distributions.
\end{itemize}
In particular, the realization of homogeneous function spaces on special Lipschitz domains provided by Costabel, M${}^\text{c}$Intosh and Taggart \cite{CostabelTaggartMcIntosh2013}, built on $\bigslant{\mathcal{S}'(\mathbb{R}^n)}{\mathcal{P}(\mathbb{R}^n)}$, appears to be inapplicable for linear problems with boundary values and unsuitable for non-linear problems.

To circumvent those issues, Bahouri, Chemin, and Danchin proposed in \cite[Chapter~2]{bookBahouriCheminDanchin} to consider a subspace of $\mathcal{S}'(\mathbb{R}^n)$ consisting solely of tempered distributions without any (non-zero) polynomial part, see \cite[Examples,~p.23]{bookBahouriCheminDanchin}. This subspace, denoted $\mathcal{S}'_h(\mathbb{R}^n)$, is defined at the beginning of Subsection \ref{subsec:SobolevBesovRn}. Using $\mathcal{S}'_h(\mathbb{R}^n)$ as an ambient space, Bahouri, Chemin, and Danchin constructed homogeneous Besov spaces $\dot{\mathrm{B}}^{s}_{p,q}(\mathbb{R}^n)$. With their construction, the homogeneous Besov spaces are complete whenever $(s,p,q)\in\mathbb{R}\times[1,+\infty]^2$ satisfies
\begin{align*}\tag{$\mathcal{C}_{s,p,q}$}
    \left[ s<\frac{n}{p} \right]\text{ or }\left[q=1\text{ and } s\leqslant\frac{n}{p} \right]\text{. }
\end{align*}
This also led Danchin and Mucha to consider homogeneous Besov spaces on $\mathbb{R}^n_+$ and exterior domains, as discussed in \cite{DanchinMucha2009,DanchinMucha2015}. Additionally,  Danchin, Hieber, Mucha, and Tolksdorf examined briefly homogeneous Sobolev spaces $\dot{\mathrm{H}}^{m,p}$ on $\mathbb{R}^n$ and $\mathbb{R}^n_+$ for $m\in\mathbb{N}$, $p\in(1,+\infty)$, in \cite[Chapter~2]{DanchinHieberMuchaTolk2020}. 

The author extended this construction in a previous work \cite{Gaudin2022}, encompassing the entire scale of homogeneous Sobolev spaces, in the reflexive range, on the half-space $\mathbb{R}^n_+$. This extension was achieved by investigating interpolation properties and defining the meaning of traces on the boundary. The corresponding properties for homogeneous Besov spaces on the half-space have also been reviewed.

\medbreak

\textbf{The first main goal of the present paper} is to construct homogeneous Sobolev and Besov spaces on special Lipschitz domains, enhancing, sometime by a lot, certain arguments presented in \cite[Subsection~3B]{Gaudin2022} for the flat upper half-space. The structure of extension and projection operators used here forces us to consider two families of regularity indices: $(-1+\frac{1}{p},1]$ and $[0,+\infty)$ with a common overlap $[0,1]$. We also note that the simple argument employed in \cite[Proposition~3.1]{Gaudin2022} is not applicable to the whole family of non-negative regularity indices. Furthermore, we address the cases $p=1,+\infty$\footnote{Only partially for the case $p=+\infty$}. We no longer require the completeness assumption or the intersection with a complete space to achieve the main results, marking a significant improvement over the work in \cite{Gaudin2022}. We also improve several results in the case of the whole space in Subsection \ref{subsec:SobolevBesovRn}.

\begin{itemize}
    \item \textbf{New density results} on $\mathbb{R}^n$: \textbf{Proposition~\ref{prop:DensitySobBesovRn}}; and on special Lipschitz domains $\Omega$: \textbf{Propositions~\ref{prop:densityCcinftyinhomHspspeLip}} and \textbf{\ref{prop:densityCcinftyBspq0SpeLip}}.
    \item \textbf{New interpolation results} that does not require the completeness of involved spaces on $\mathbb{R}^n$: \textbf{Theorem~\ref{thm:InterpHomSpacesRn}}; and on special Lipschitz domains $\Omega$: \textbf{Proposition~\ref{prop:InterpHomSobSpeLip}} and \textbf{Theorem \ref{thm:RealInterpHomSpacesSpeLip}}.
    \item \textbf{New duality results} on $\mathbb{R}^n$: \textbf{Proposition \ref{prop:DualityBesovRn}}; and on special Lipschitz domains $\Omega$: \textbf{Proposition~\ref{prop:DualitySobolevDomain}} and \textbf{Theorem~\ref{thm:dualityBesovSpeLip}}.
\end{itemize}

We note that while the results highlighted above are nearly optimal, the intermediate results--such as the boundedness of extension operators, which are necessary to achieve these main results--are not entirely optimal, though they are close. This primarily concerns the case $p=+\infty$. See, for instance, Lemma \ref{lem:GlobalchangeCoordBesov} and Proposition \ref{prop:ExtProj0HomBspinfty}, then the beginning of Subsections \ref{sec:HomSobSpacesSpeLip} and \ref{sec:HomBesovSpacesSpeLip}, as well as Remark~\ref{rmk:RemarkNonOptimalitypequalinfty} for some brief discussions.  There are also many other related obstructions such as the loss of completeness, or the change of extension operators depending on the regularity index.

\subsection{Trace theorems}\label{sec:tracthmIntro}

Trace theorems with sharp regularity are fundamental for studying boundary value problems in the field of partial differential equations. The standard trace theorems for Besov or Sobolev functions defined on $\mathbb{R}^n_+:=\mathbb{R}^{n-1}\times (0,+\infty)$ and on bounded, sufficiently regular, domains are detailed for example, in \cite[Subsection~6.6]{BerghLofstrom1976}, \cite[Theorem~7.43,~Remark~7.45]{adamsFournier2003sobolev}, \cite[Theorems~3.16~\&~3.19]{Schneider2010}. A nearly exhaustive result for traces on subsets of $\mathbb{R}^n$ with minimal geometric assumptions can be found in \cite[Chapters~VI~\&~VII]{JonssonWallin1984}. The standard trace theorem for Lipschitz domains is given below. Refer to Definition \ref{def:TraceOperator} below, for a definition of the trace operator.

\begin{theorem}\label{thm:genericinhomTrace} Let $p\in(1,+\infty)$, $q\in[1,+\infty]$, $s\in(\frac{1}{p},1+\frac{1}{p})$, and $\Omega$ be either a special or a bounded Lipschitz domain,
\begin{enumerate}[label=($\roman*$)]
    \item the trace operator $[\cdot]_{|_{\partial\Omega}}\,:\, \mathrm{H}^{s,p}(\Omega)\longrightarrow \mathrm{B}^{s-\frac{1}{p}}_{p,p}(\partial\Omega)$ is a bounded surjection, in particular for all $u\in \mathrm{H}^{s,p}(\Omega)$,
    \begin{align*}
        \lVert  u_{|_{\partial\Omega}} \rVert_{\mathrm{B}^{s-\frac{1}{p}}_{p,p}(\partial\Omega)}  \lesssim_{s,p,n} \lVert u \rVert_{\mathrm{H}^{s,p}(\Omega)} ;
    \end{align*}
    \item The trace operator $[\cdot]_{|_{\partial\Omega}}\,:\, \mathrm{B}^{s}_{p,q}(\Omega)\longrightarrow \mathrm{B}^{s-\frac{1}{p}}_{p,q}(\partial\Omega)$ is a bounded surjection, in particular for all $u\in \mathrm{B}^{s}_{p,q}(\Omega)$, 
    \begin{align*}
        \lVert u_{|_{\partial\Omega}} \rVert_{\mathrm{B}^{s-\frac{1}{p}}_{p,q}(\partial\Omega)} \lesssim_{s,p,n} \lVert u \rVert_{\mathrm{B}^{s}_{p,q}(\Omega)}  ;
    \end{align*}
    \item The trace operator $[\cdot]_{|_{\partial\Omega}}\,:\, \mathrm{B}^{\frac{1}{p}}_{p,1}(\Omega)\longrightarrow \mathrm{L}^p(\partial\Omega)$ is a bounded surjection, in particular for all $u\in \mathrm{B}^{\frac{1}{p}}_{p,1}(\Omega)$, 
    \begin{align*}
        \lVert u_{|_{\partial\Omega}} \rVert_{\mathrm{L}^p(\partial\Omega)} \lesssim_{p,n} \lVert u \rVert_{\mathrm{B}^{\frac{1}{p}}_{p,1}(\Omega)}  .
    \end{align*}
\end{enumerate}
Moreover, the trace operator $[\cdot]_{|_{\partial\Omega}}$ admits a right bounded inverse (not necessarily linear) for each of the above cases.
\end{theorem}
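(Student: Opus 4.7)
The plan is to reduce the theorem to the model case $\Omega=\mathbb{R}^n_+$ and then transfer to a general (special or bounded) Lipschitz domain by a standard localization-and-flattening argument. On the half-space, the cornerstone is item (iii), from which items (i) and (ii) follow by real interpolation between the endpoints $s=\frac{1}{p}$ and $s=1+\frac{1}{p}$.

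For (iii) on $\mathbb{R}^n_+$, I would extend $u\in\mathrm{B}^{1/p}_{p,1}(\mathbb{R}^n_+)$ to $v\in\mathrm{B}^{1/p}_{p,1}(\mathbb{R}^n)$ via a universal extension operator (Rychkov's extension is well adapted, since it is independent of the indices), decompose $v=\sum_{j}\Delta_j v$ via an inhomogeneous Littlewood-Paley partition of unity, and invoke the Bernstein-type trace bound
\[ \|(\Delta_j v)(\cdot,0)\|_{\mathrm{L}^p(\mathbb{R}^{n-1})}\lesssim 2^{j/p}\,\|\Delta_j v\|_{\mathrm{L}^p(\mathbb{R}^n)}, \]
summing in $j$ to conclude. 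The companion endpoint at $s=1+\frac{1}{p}$ follows by applying (iii) to $\partial_n u$ and to $u$ itself; real interpolation between these two endpoints then yields (ii) for all $q\in[1,+\infty]$, and (i) follows from the identification $\mathrm{H}^{s,p}=\mathrm{F}^{s}_{p,2}$ together with interpolation recovering the sharp target $\mathrm{B}^{s-1/p}_{p,p}$. A right inverse $E$ is built by dyadic synthesis: for $g\in\mathrm{B}^{s-1/p}_{p,q}(\mathbb{R}^{n-1})$,
\[ E g(x',x_n):=\sum_{j}\varphi(2^j x_n)(\Delta_j g)(x'), \]
with $\varphi\in\eus{S}(\mathbb{R})$ satisfying $\varphi(0)=1$, so that $(Eg)(\cdot,0)=g$ and the dyadic structure transports the expected regularity gain of $\frac{1}{p}$ across the boundary.

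To pass to a Lipschitz domain, I would use the global flattening $\Psi(x',x_n):=(x',x_n-\eta(x'))$ when $\Omega=\{x_n>\eta(x')\}$ is special, or a finite atlas $\{(U_k,\Psi_k)\}$ subordinate to a partition of unity in the bounded case. The elementary identity $(u\circ\Psi^{-1})_{|_{\partial\mathbb{R}^n_+}}=(u_{|_{\partial\Omega}})\circ\Psi^{-1}_{|_{\partial\mathbb{R}^n_+}}$ reduces the estimate on $\Omega$ to the model one, provided composition with $\Psi$ and $\Psi^{-1}$ is continuous on the involved function spaces. This is the main obstacle: $\Psi$ is only Lipschitz, so composition acts automatically on $\mathrm{H}^{s,p}$ and $\mathrm{B}^{s}_{p,q}$ only for $s\in(0,1]$. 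For the remaining range $s\in(1,1+\frac{1}{p})$ I would differentiate once and write $\nabla(u\circ\Psi^{-1})=[(\nabla u)\circ\Psi^{-1}]\,\nabla\Psi^{-1}$ with $\nabla\Psi^{-1}\in\mathrm{L}^\infty$, which reduces control of $u\circ\Psi^{-1}$ at regularity $s$ to control of $\nabla u$ at regularity $s-1\in(0,\frac{1}{p})\subset(0,1]$, covered by the preceding regime. Gluing via the partition of unity yields the direct trace bound, and pulling back the model extension $E$ through the same charts delivers a bounded right inverse on $\Omega$, with $\mathrm{B}^{s-1/p}_{p,q}(\partial\Omega)$ being defined intrinsically via the boundary charts so that these pullbacks make sense.
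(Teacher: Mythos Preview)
First, note that the paper does not prove Theorem~\ref{thm:genericinhomTrace}: it is stated in the introduction as a known result, with references to Bergh--L\"ofstr\"om, Adams--Fournier, Schneider, and Jonsson--Wallin. The paper's own contribution is the \emph{homogeneous} analogue, Theorem~\ref{thm:TraceSpeLipopti}, whose proof strategy (following Ding and Costabel) is nevertheless relevant to assessing your sketch.

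Your half-space argument and the dyadic extension are standard and essentially fine. The gap lies in the transfer step for $s\in(1,1+\tfrac{1}{p})$. You write
\[
\nabla(u\circ\Psi^{-1})=[(\nabla u)\circ\Psi^{-1}]\,\nabla\Psi^{-1},\qquad \nabla\Psi^{-1}\in\mathrm{L}^\infty,
\]
and claim this reduces matters to regularity $s-1\in(0,\tfrac{1}{p})$. But the factor $\nabla\Psi^{-1}$ involves $\nabla'\phi$, which is merely $\mathrm{L}^\infty$, and multiplication by a general $\mathrm{L}^\infty$ function is \emph{not} bounded on $\mathrm{H}^{\sigma,p}$ or $\mathrm{B}^{\sigma}_{p,q}$ for $\sigma>0$. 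So $(\nabla u)\circ\Psi^{-1}\in\mathrm{H}^{s-1,p}$ does not give $[(\nabla u)\circ\Psi^{-1}]\,\nabla\Psi^{-1}\in\mathrm{H}^{s-1,p}$. Equivalently, composition with a bi-Lipschitz map fails to be bounded on isotropic $\mathrm{H}^{s,p}$ once $s>1$; this is precisely the obstruction the paper highlights at the start of Section~3.

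The standard remedy (Ding for the inhomogeneous case, and the paper's Lemmas~\ref{lem:TphiHspKsp}--\ref{lem:TangentialContinuityTraceHspOmegaBsppRn-1} for the homogeneous one) is to work \emph{anisotropically}: since $\nabla\Psi^{-1}$ depends only on $x'$, multiplication by it \emph{is} bounded on $\mathrm{H}^{s-1,p}_{x_n}(\mathbb{R}_+,\mathrm{L}^p_{x'}(\mathbb{R}^{n-1}))$, and one shows instead that $T_\phi$ maps $\mathrm{H}^{s,p}(\Omega)$ into the anisotropic class $\mathcal{K}^{s,p}(\mathbb{R}^n_+)=\mathrm{H}^{s,p}(\mathbb{R}_+,\mathrm{L}^p)\cap\mathrm{H}^{s-1,p}(\mathbb{R}_+,\mathrm{H}^{1,p})$. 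The trace on $\{x_n=0\}$ is then read off from maximal regularity for the Poisson semigroup in the $x_n$ variable. Your outline would be correct with this modification.
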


Roughly speaking, the goal here is, up to technical modifications, to add dots to every $\mathrm{H}$ and $\mathrm{B}$ symbol in Theorem~\ref{thm:genericinhomTrace} for a special Lipschitz domain. Our focus on special Lipschitz domains is motivated by two main reasons. First, in bounded Lipschitz domains, localization aspects indicate minimal differences between inhomogeneous and homogeneous function spaces, as exemplified by Poincaré-Wirtinger-Sobolev type inequalities. Second, special Lipschitz domains appear to be the one of the rare class that currently allows for effective extension operators with homogeneous estimates. For more general unbounded Lipschitz or smoother domains, a naive localization argument with smooth cut-offs fails to preserve homogeneity.

In the context of inhomogeneous function spaces, simpler proofs for constructing the trace operator on the boundary can be found, see \textit{e.g.}, \cite[Theorem~1,~p.273]{Evansbook2ndEd2010} or \cite[Theorem~18.1,~Corollary~18.4]{Leoni2017}, seeing the trace operator as a (compact) operator with value in $\mathrm{L}^p(\partial \Omega)$ (when $\Omega$ has compact boundary). Similar results are also available for partial traces of vector fields, with compactness property in the case of compact boundary, \textit{e.g.}, see \cite{Monniaux2015,Denis2020} and the references therein.
A general result for the existence of partial traces on bounded Lipschitz domains is provided by Mitrea, Mitrea, and Shaw in \cite[Section~4]{MitreaMitreaShaw2008} for differential forms, which contains the result for vector fields.

Theorem \ref{thm:genericinhomTrace} and the properties of simple and double layer potentials have been extensively used to study the regularity and well-posedness of elliptic boundary value problems, as well as to derive functional analytic properties of the involved elliptic operators (see, for instance, \cite{JerisonKenig1995,FabesMendezMitrea1998,MendezMitrea2001,MitreaMitreaTaylor2001}).

However, when addressing boundary value problems in unbounded domains, such as bent half-spaces, there may be a lack of control over lower-order derivatives. In such cases, homogeneous estimates are necessary, requiring the use of homogeneous Sobolev and Besov spaces, $\dot{\mathrm{H}}^{s,p}(\Omega)$ and $\dot{\mathrm{B}}^{s}_{p,q}(\Omega)$. For instance, in the case of the flat upper half-space (or the whole space with a trace on a hyperplane), the trace theorem holds, as shown in \cite{Jawerth78}. The conventional definition of these function spaces, involving the restriction of tempered distributions modulo polynomials, is not well-suited for the usual strategy that requires pointwise composition to flatten the boundary. For a suitable realization of homogeneous function spaces on the flat upper half-space, the expected results can be easily derived from the case of inhomogeneous function spaces, as shown by the author in \cite[Section~4]{Gaudin2022}.

Regarding traces with homogeneous estimates, we refer to the recent work of Leoni \cite[Chapter~9]{Leoni2023}, and the work of Leoni and Tice \cite{LeoniTice2019}.

For special Lipschitz domains, a proof of the appropriate (homogeneous) trace estimate is given for Sobolev-Soblodeckij type spaces $\dot{\mathrm{W}}^{s,p}(\Omega)$\footnote{Note that when $0<s<1$ the (semi-)norm involved to define $\dot{\mathrm{W}}^{s,p}(\Omega)$ coincides with the one for the homogeneous Besov space  $\dot{\mathrm{B}}^{s}_{p,p}(\Omega)$ by finite differences (the so called Gagliardo (semi-)norm.) }, $s\in(1/p,1]$, with \cite[Theorem~18.34]{Leoni2017} and \cite[Theorem~9.29]{Leoni2023}.

A trace theorem has also been established for infinite strips with Lipschitz boundaries in \cite{LeoniTice2019}, concerning the so-called 'screened' Sobolev spaces. However, the framework is different in many ways and requires additional subtleties. In this case, the involved Sobolev spaces cannot coincide with the corresponding restriction of Sobolev spaces over $\mathbb{R}^n$. This discrepancy arises from the specific structure of the trace space, which includes additional features absent in the standard case of bounded and half-space type domains. Again, the estimates are only shown for Sobolev-Slobodeckij type spaces $\dot{\mathrm{W}}^{s,p}(\Omega)$ with $s\in(1/p,1]$, using the adapted but unusual trace space.

\medbreak

\textbf{The second main goal of this paper} is to give a proof of the expected trace estimates for scalar-valued homogeneous Sobolev and Besov spaces on special Lipschitz domains with optimal range of regularity:
\begin{align*}
[\cdot]_{|_{\partial\Omega}}\,:\,\dot{\mathrm{H}}^{s,p}(\Omega),\,\dot{\mathrm{B}}^{s}_{p,q}(\Omega)\longrightarrow \dot{\mathrm{B}}^{s-\frac{1}{p}}_{p,q}(\partial\Omega)\text{, }\quad s\in(1/p,1+1/p),\,p\in(1,+\infty),\,q\in[1,+\infty]\text{.}
\end{align*}
In order to prove the homogeneous version of Theorem \ref{thm:genericinhomTrace}, we aim to follow the strategy exhibited in \cite{Ding1996}, and initially described in \cite{Costabel1988}. Costabel and Ding gave a very simple proof for the boundedness of the trace operator
\begin{align*}
[\cdot]_{|_{\partial\Omega}}\,:\,{\mathrm{H}}^{s,2}(\Omega)\longrightarrow {\mathrm{H}}^{s-\frac{1}{2},2}(\partial\Omega)\text{, }\quad s\in(1/2,3/2)
\end{align*}
when $\Omega$ is a special or a bounded Lipschitz domain. In order to obtain such a result, they used function spaces with anisotropic regularity. However, the use of the Fourier transform, and the overall strategy restrict everything to the case of inhomogeneous and $\mathrm{L}^2$-based Sobolev spaces. The idea we present here is to use the global-in-time $\dot{\mathrm{H}}^{\alpha,p}(\mathrm{L}^p)$-maximal regularity for the Poisson semigroup on $\mathbb{R}^{n-1}$, allowed by \cite[Theorem~4.7]{Gaudin2023}, and interpolation theory to replace the use of $\mathrm{L}^2$ techniques.

\subsection{Description of some results through a Hardy-Littlewood-Sobolev-Kato type diagram.}\label{sec:KatoHarSobDiag}

Sobolev and Besov spaces, such as $\dot{\mathrm{W}}^{s,p}(\Omega)$, $\dot{\mathrm{H}}^{s,p}(\Omega)$, and $\dot{\mathrm{B}}^{s}_{p,q}(\Omega)$, can be represented by corresponding points $(\frac{1}{p},s)\in [0,1]\times\mathbb{R}$.

This representation is particularly effective for visually conveying key information, especially within the context of interpolation theory. It is also useful for exhibiting families of function spaces with shared properties.

The following figure illustrates a selection of key results from this paper, particularly the trace theorem and its implications.

\begin{figure}[H]
\centering
\begin{tikzpicture}[yscale=0.6,xscale=6]
  \draw[->] (-0.1,0) -- (1.1,0) node[right,yshift=-2mm] {$1/p$};
  \draw[->] (0,-4) -- (0,4) node[above] {$s$};

  \draw[domain=0:1,smooth,variable=\x,blue] plot ({\x},{3*\x}) node[right] {$s=n/p$};
  \fill[blue!30,opacity=0.3] (0,0) -- plot[domain=0:1] (0,0) -- (0,-4) -- (1,-4) -- (1,0) -- cycle;
  \fill[blue!30,opacity=0.3] (0,0) -- plot[domain=0:1] (\x,{3*\x}) -- (1,3) -- (1,0) -- cycle;
  \draw[domain=0:1,smooth,variable=\x,red] plot ({\x},{-1+\x}) node[right,yshift=2mm] {$s=-1+1/p$};
  \draw[domain=0:1,smooth,variable=\x,red] plot ({\x},{\x}) node[right] {$s=1/p$};
  \fill[red!30,opacity=0.3] (0,0) -- plot[domain=0:1] (\x,\x) -- (1,0) -- cycle;
  \fill[red!30,opacity=0.3] (0,0) -- plot[domain=0:1] (\x,{-1+\x}) -- (1,0) -- cycle;
  \draw[domain=0:1,smooth,variable=\x,orange!80!black] plot ({\x},{1+\x}) node[right] {$s=1+1/p$};
    \fill[orange!80!black,opacity=0.3] (0,0) -- plot[domain=0:1] (\x,{1+\x}) -- (1,1) -- cycle;
  \draw[dashed] (1,3) -- (0,3)  node[left] {$s=n$};
  \draw[dashed] (1,1) -- (0,1)  node[left] {$s=1$};
  \draw[dashed] (1,-3) -- (0,-3) node[left] {$s=-n$};
  \node[circle,fill,inner sep=1.5pt,label=below:$\mathrm{L}^2$] at (0.5,0) {};
  \node[circle,fill,inner sep=1.5pt,label=above:$\dot{\mathrm{H}}^1$] at (0.5,1) {};
  \draw[circle,fill,inner sep=1pt] (0,0) node[below left] {$0$};
  \draw[circle,fill,inner sep=1pt] (1,0) node[below right] {$1$};
\end{tikzpicture}
\caption{Representation of Sobolev and Besov spaces and their properties : a Hardy-Littlewood-Sobolev-Kato diagram. (with $n=3$)}
\end{figure}

\begin{itemize}
    \item The region corresponding to {{ {\eqref{AssumptionCompletenessExponents}} }} for Besov spaces, $s<n/p$ for Sobolev spaces. These are the function spaces that are complete;
    \item The region \textcolor{red}{\bm{$s\in(-1+1/p,1/p)$}} corresponds to spaces where elements of $\dot{\mathrm{H}}^{s,p}(\Omega)$ (or  $\dot{\mathrm{B}}^{s}_{p,q}(\Omega)$) can be extended by zero to the whole space $\mathbb{R}^n$ to obtain an element of $\dot{\mathrm{H}}^{s,p}(\mathbb{R}^n)$ (or $\dot{\mathrm{B}}^{s}_{p,q}(\mathbb{R}^n)$). Additionally, $\mathrm{C}_c^\infty(\Omega)$ is dense when $q<+\infty$.  See \textbf{Corollary \ref{cor:Hdp=Hsp0SpeLip}} and \textbf{Proposition \ref{prop:Bspq=Bspq0SpeLip}};
    \item The region \textcolor{orange!100!black}{\bm{$s\in(1/p,1+1/p)$}} corresponds to spaces where optimal homogeneous trace estimates are established, see \textbf{Theorem \ref{thm:TraceSpeLipopti}}. More precisely, provided $1<p<+\infty$, for all $u\in\dot{\mathrm{H}}^{s,p}(\Omega)$, we have $u_{|_{\partial\Omega}}\in\dot{\mathrm{B}}^{s-\frac{1}{p}}_{p,p}(\partial\Omega)$ with the estimate
    \begin{align*}
        \lVert u_{|_{\partial\Omega}} \rVert_{\dot{\mathrm{B}}^{s-\frac{1}{p}}_{p,p}(\partial\Omega)}\lesssim_{p,s,n}\lVert u \rVert_{\dot{\mathrm{H}}^{s,p}(\Omega)}.
    \end{align*}
    Moreover, \textbf{Lemma~\ref{lem:IdentHsp0and0trace}} shows that the null space of the trace operator can be identified with $\dot{\mathrm{H}}^{s,p}_0(\Omega)$, in which $\mathrm{C}_c^\infty(\Omega)$ is dense in it by \textbf{Proposition~\ref{prop:densityCcinftyinhomHspspeLip}}.
    
    Corresponding results for Besov spaces and their variants are also obtained, with appropriate modifications.
\end{itemize}

\subsection{Notations, definitions, and review of usual concepts}

Throughout this paper, the dimension is $n\geqslant 2$, and $\mathbb{N}$ is the set of non-negative integers. For $a,b\in\mathbb{R}$ with $a\leqslant b$, we write $\llb a,b\rrb:=[a,b]\cap\mathbb{Z}$.

For two real numbers $A,B\in\mathbb{R}$, $A\lesssim_{a,b,c} B$ means that there exists a constant $C>0$ depending on ${a,b,c}$ such that $A\leqslant C B$. When both $A\lesssim_{a,b,c} B$ and $B \lesssim_{a,b,c} A$ hold, we simply write $A\sim_{a,b,c} B$. When there are many indices, we may write $A\lesssim_{a,b,c}^{d,e,f} B$ instead of $A\lesssim_{a,b,c,d,e,f} B$.

\subsubsection{Smooth and measurable functions on open sets}

Let $\mathcal{S}(\mathbb{R}^n,\mathbb{C})$ be the space of complex-valued Schwartz functions, and $\mathcal{S}'(\mathbb{R}^n,\mathbb{C})$ its dual, called the space of tempered distributions. The Fourier transform on $\mathcal{S}'(\mathbb{R}^n,\mathbb{C})$ is written $\mathcal{F}$, and it is pointwise defined for any $f\in\mathrm{L}^1(\mathbb{R}^n,\mathbb{C})$ as
\begin{align*}
  \mathcal{F}f(\xi) :=\int_{\mathbb{R}^n} f(x)\,e^{-ix\cdot\xi}\,\mathrm{d}x\text{, } \xi\in\mathbb{R}^n\text{. }
\end{align*}
Additionally, for $p\in[1+\infty]$, the quantity $p'=\tfrac{p}{p-1}$ is its \textit{\textbf{H\"{o}lder conjugate}}.

For any $m\in\mathbb{N}$, the map $\nabla^m\,:\,\mathcal{S}'(\mathbb{R}^n,\mathbb{C})\longrightarrow \mathcal{S}'(\mathbb{R}^n,\mathbb{C}^{n^m})$ is defined as $\nabla^m u := (\partial^\alpha u)_{|\alpha|=m}$. The Laplace operator on $\mathbb{R}^n$ is given as $\Delta= \partial_{x_1}^2+\partial_{x_2}^2+\ldots+\partial_{x_{n-1}}^2+\partial_{x_n}^2$.

 We introduce the operator $\nabla'$ which stands for the gradient on $\mathbb{R}^{n-1}$ identified with the first $n-1$ variables of $\mathbb{R}^n$, \textit{i.e.} $\nabla'=(\partial_{x_1}, \ldots, \partial_{x_{n-1}})$. Similarly, one defines $\Delta' = \partial_{x_1}^2+\partial_{x_2}^2+\ldots+\partial_{x_{n-1}}^2$. We denote by $(e^{-t(-\Delta')^\frac{1}{2}})_{t\geqslant0}$ the Poisson semigroup on $\mathbb{R}^{n-1}$.

If $\Omega$ is an open set in $\mathbb{R}^n$, $\mathrm{C}_c^\infty(\Omega,\mathbb{C})$ denotes the set of smooth, compactly supported functions in $\Omega$, and $\mathcal{D}'(\Omega,\mathbb{C})$ is its topological dual. For $p\in[1,+\infty)$, $\mathrm{L}^p(\Omega,\mathbb{C})$  is the normed vector space of complex-valued (Lebesgue-) measurable functions whose $p$-th power is integrable with respect to the Lebesgue measure. $\mathcal{S}(\overline{\Omega},\mathbb{C})$ (\textit{resp.} $\mathrm{C}_c^\infty(\overline{\Omega},\mathbb{C})$) stands for functions which are restrictions on $\Omega$ of elements of $\mathcal{S}(\mathbb{R}^n,\mathbb{C})$ (\textit{resp.} $\mathrm{C}_c^\infty(\mathbb{R}^n,\mathbb{C})$). Unless the contrary is explicitly stated, we always identify $\mathrm{L}^p(\Omega,\mathbb{C})$ (resp. $\mathrm{C}_c^\infty(\Omega,\mathbb{C})$) as the subspace of functions in $\mathrm{L}^p(\mathbb{R}^n,\mathbb{C})$ (resp. $\mathrm{C}_c^\infty(\mathbb{R}^n,\mathbb{C})$) supported in $\overline{\Omega}$ through the extension by $0$ outside $\Omega$. $\mathrm{L}^\infty(\Omega,\mathbb{C})$ stands for the space of essentially bounded (Lebesgue-) measurable functions.

For $s\in\mathbb{R}$, $p\in[1,+\infty)$, $\ell^p_s(\mathbb{Z},\mathbb{C})$ denotes the normed vector space of $p$-summable sequences of complex numbers with respect to the counting measure $2^{ksp}\mathrm{d}k$;  $\ell^\infty_s(\mathbb{Z},\mathbb{C})$ denotes the vector space of sequences $(x_k)_{k\in\mathbb{Z}}$ such that $(2^{ks}x_k)_{k\in\mathbb{Z}}$  is bounded.
More generally, when $X$ is a Banach space, for $p\in[1,+\infty]$, one can also consider $\mathrm{L}^p(\Omega,X)$ denotes the space of (Bochner-)measurable functions $u\,:\,\Omega\longrightarrow X$, such that $t\mapsto\lVert u(t)\rVert_X \in \mathrm{L}^p(\Omega,\mathbb{R})$. Similarly, one can consider $\ell^p_s(\mathbb{Z},X)$. Finally, $\mathrm{C}^0(\Omega,X)$ denotes the space of continuous functions on $\Omega\subset \mathbb{R}^n$ with values in $X$. The subspace $\mathrm{C}^0_b(\mathbb{R},X)$  consists of uniformly bounded continuous functions. $\mathrm{C}^0_0(\mathbb{R},X)$ denotes the subspace of continuous functions that vanish at infinity. For $\mathcal{C}\in\{\mathrm{C},\mathrm{C}_c,\mathrm{C}_b,\mathrm{C}_0\}$, we define $\mathcal{C}^0(\overline{\Omega}, X)$ as the vector space of continuous functions on $\overline{\Omega}$ which are restrictions of elements that belongs to $\mathcal{C}^0(\mathbb{R}^n, X)$. The same goes for the function spaces of continuously differentiable functions up to the order $k\in\mathbb{N}$: $\mathcal{C}^k(\overline{\Omega}, X):=\{\,u\in \mathcal{C}^{k-1}(\overline{\Omega}, X)\,|\,\nabla u \in \mathcal{C}^{k-1}(\overline{\Omega}, X)\,\}$, and the space of continuously differentiable functions up to the order $k$, such that the derivatives of order $k$ are Lipschitz $\mathcal{C}^{k,1}(\overline{\Omega}, X):=\{\,u\in \mathcal{C}^{k}(\overline{\Omega}, X)\,|\,\nabla^{k+1} u \in \mathrm{L}^{\infty}({\Omega}, X)\,\}$. We keep similar notations with $\Omega$ instead of $\overline{\Omega}$.

For $\Omega$ an open set of $\mathbb{R}^n$, we say that $\Omega$ is a special Lipschitz domain, if there exists, up to a rotation, a globally Lipschitz function $\phi\,:\,\mathbb{R}^{n-1}\longrightarrow\mathbb{R}$, such that
\begin{align*}
    \Omega =\{\,(x',x_n)\in\mathbb{R}^{n-1}\times\mathbb{R}\,|\, x_n>\phi(x')\,\}\text{.}
\end{align*}
In other words, a special Lipschitz domain of $\mathbb{R}^n$ is the epigraph of real valued Lipschitz function defined on $\mathbb{R}^{n-1}$.

\subsubsection{Interpolation of normed vector spaces}

For more details, the interested reader is invited to consult \cite{BerghLofstrom1976,bookTriebel1978,bookLunardiInterpTheory} or \cite[Section~1.3]{EgertPhDThesis2015}.

Let $(X,\left\lVert\cdot\right\rVert_X)$ and $(Y,\left\lVert\cdot\right\rVert_Y)$ be two normed vector spaces. We write $X\hookrightarrow Y$ to indicate that $X$ embeds continuously into $Y$. We briefly recall the basics of interpolation theory. If there exists a Hausdorff topological vector space $Z$, such that $X,Y\subset Z$, then $X\cap Y$ and $X+Y$ are normed vector spaces with their canonical norms. One can define the $K$-functional of $z\in X+Y$, for any $t>0$ by
\begin{align*}
    K(t,z,X,Y) := \underset{\substack{(x,y)\in X\times Y,\\ z=x+y}}{\inf}\left({\left\lVert{x}\right\rVert_{X}+t\left\lVert{y}\right\rVert_{Y}}\right)\text{. }
\end{align*}
This enables us to construct, for any $\theta\in(0,1)$, $q\in[1,+\infty]$, the real interpolation spaces between $X$ and $Y$ with indices $\theta,q$ as
\begin{align*}
    (X,Y)_{\theta,q} := \left\{\, x\in X+Y\,\Big{|}\,t\longmapsto t^{-\theta}K(t,x,X,Y)\in\mathrm{L}^q_\ast(\mathbb{R}_+)\,\right\}\text{, }
\end{align*}
where $\mathrm{L}^q_\ast(\mathbb{R}_+):=\mathrm{L}^q((0,+\infty),\mathrm{d}t/t)$. When $q=+\infty$, one can consider
\begin{align*}
    (X,Y)_{\theta} := \left\{\, x\in X+Y\,\Big{|}\,\lim_{t,t^{-1}\rightarrow0_+} t^{-\theta}K(t,x,X,Y)=0\,\right\}\text{, }
\end{align*}
endowed with the norm $\lVert \cdot \rVert_{(X,Y)_{\theta,\infty}}$.

If moreover we assume that $X$ and $Y$ are complex Banach spaces, one can consider $\mathrm{F}(X,Y)$ the set of all continuous functions $f:\overline{S}\longmapsto X+Y$, $S$ being the strip of complex numbers whose real part is between $0$ and $1$, with $f$ holomorphic in $S$, and such that
\begin{align*}
    t\longmapsto f(it)\in \mathrm{C}^0_b(\mathbb{R},X) \quad\text{ and }\quad t\longmapsto f(1+it)\in \mathrm{C}^0_b(\mathbb{R},Y)\text{. }
\end{align*}
We can endow the space $\mathrm{F}(X,Y)$ with the norm
\begin{align*}
    \left\lVert{f}\right\rVert_{\mathrm{F}(X,Y)}:=\max\left(\underset{t\in\mathbb{R}}{\mathrm{sup}} \left\lVert {f(it)}\right\rVert_{X},\underset{t\in\mathbb{R}}{\mathrm{sup}} \left\lVert {f(1+it)}\right\rVert_{Y}\right)\text{, }
\end{align*}
which makes $\mathrm{F}(X,Y)$ a Banach space since it is a closed subspace of $\mathrm{C}^0(\overline{S},X+Y)$.
Hence, for $\theta\in(0,1)$, the normed vector space given by
\begin{align*}
    [X,Y]_{\theta} &:= \left\{\,f(\theta)\,\big{|}\,f\in \mathrm{F}(X,Y)\,\right\} 
    \text{, }\\
    \left\lVert{x}\right\rVert_{[X,Y]_{\theta}} &:= \underset{\substack{f\in \mathrm{F}(X,Y),\\ f(\theta)=x}}{\inf} \left\lVert{f}\right\rVert_{\mathrm{F}(X,Y)}\text{, }
\end{align*}
is a Banach space called the complex interpolation space between $X$ and $Y$ associated with $\theta$.


\section{Around the standard theory of Sobolev and Besov spaces}

This section provides a review of the standard construction of inhomogeneous and homogeneous Sobolev spaces. We will also refine known properties for our construction of homogeneous function spaces on $\mathbb{R}^n$, including the endpoint cases $p=1,+\infty$.

\subsection{Sobolev and Besov spaces on the whole space}\label{subsec:SobolevBesovRn}

We start the section as in \cite[Subsection~2A]{Gaudin2022}: to address Sobolev and Besov spaces on the whole space, we introduce the Littlewood-Paley decomposition with $\varphi\in \mathrm{C}_c^\infty(\mathbb{R}^n)$, a radial, real-valued, non-negative function such that
\begin{itemize}[label={$\bullet$}]
    \item $\supp \varphi \subset B(0,4/3)$;
    \item ${\varphi}_{|_{B(0,3/4)}}=1$;
\end{itemize}
Thus, we define the following functions for any $j\in\mathbb{Z}$ and for all $\xi\in\mathbb{R}^n$,
\begin{align*}
    \varphi_j(\xi):=\varphi(2^{-j}\xi)\text{, }\qquad \psi_j(\xi) := \varphi_{j}(\xi/2)-\varphi_{j}(\xi)\text{,}
\end{align*}
and the family $(\psi_j)_{j\in\mathbb{Z}}$ has the following properties
\begin{itemize}[label={$\bullet$}]
    \item $\mathrm{supp}(\psi_j)\subset \{\,\xi\in\mathbb{R}^n\,|\,3\cdot 2^{j-2}\leqslant\left\lvert{\xi}\right\rvert \leqslant 2^{j+3}/3\,\}$;
    \item $\forall\xi\in\mathbb{R}^n\setminus\{0\}$, $\sum\limits_{j=-M}^N{\psi_j}(\xi)\xrightarrow[N,M\rightarrow+\infty]{} 1$.
\end{itemize}
Such a family $(\varphi,(\psi_j)_{j\in\mathbb{Z}})$  is called a Littlewood-Paley family. We consider the following two families of operators associated with their Fourier multipliers:
\begin{itemize}[label={$\bullet$}]
    \item The \textit{\textbf{homogeneous}} family of Littlewood-Paley dyadic decomposition operators $(\dot{\Delta}_j)_{j\in\mathbb{Z}}$, where
    \begin{align*}
        \dot{\Delta}_j := \mathcal{F}^{-1}\psi_j\mathcal{F}\text{,}
    \end{align*}
    \item The \textit{\textbf{inhomogeneous}} family of Littlewood-Paley dyadic decomposition operators $({\Delta}_k)_{k\in\mathbb{Z}}$, where
    \begin{align*}
       {\Delta}_{-1} := \mathcal{F}^{-1}\varphi\mathcal{F}\text{,}
    \end{align*}
    $\Delta_k:=\dot{\Delta}_k$ for any $k\geqslant 0$, and $\Delta_k:=0$ for any $k\leqslant-2$.
    \item The low frequency cut-off operators $(\dot{S}_k)_{k\in\mathbb{Z}}$, where
    \begin{align*}
       \dot{S}_k := \mathcal{F}^{-1}\varphi_k\mathcal{F}\text{.}
    \end{align*}
    Note that $\dot{S}_0 = \Delta_{-1}$.
\end{itemize}
As a direct application of Young's inequality for the convolution, they are all uniformly bounded families of operators on $\mathrm{L}^p(\mathbb{R}^n)$, $p\in[1,+\infty]$.

Both families of operators lead to the following quantities, for $s\in\mathbb{R}$, $p,q\in[1,+\infty]$ and $u\in \mathcal{S}'(\mathbb{R}^n)$,
\begin{align*}
    \left\lVert u \right\rVert_{\mathrm{B}^{s}_{p,q}(\mathbb{R}^n)}= \left\lVert(2^{ks}\left\lVert {\Delta}_k u \right\rVert_{\mathrm{L}^{p}(\mathbb{R}^n)})_{k\in\mathbb{Z}}\right\rVert_{\ell^{q}(\mathbb{Z})}\text{ and }
    \left\lVert u \right\rVert_{\dot{\mathrm{B}}^{s}_{p,q}(\mathbb{R}^n)} = \left\lVert \big(2^{js}\lVert \dot{\Delta}_j u \rVert_{\mathrm{L}^{p}(\mathbb{R}^n)}\big)_{j\in\mathbb{Z}}\right\rVert_{\ell^{q}(\mathbb{Z})}\text{.}
\end{align*}
These are respectively named the inhomogeneous and homogeneous Besov norms. However, the homogeneous norm is not a true norm since $\left\lVert u \right\rVert_{\dot{\mathrm{B}}^{s}_{p,q}(\mathbb{R}^n)}=0$ does not imply $u=0$. Thus, following \cite[Chapter~2]{bookBahouriCheminDanchin} and \cite[Chapter~3]{DanchinHieberMuchaTolk2020}, we introduce a subspace of tempered distributions where $\left\lVert \cdot \right\rVert_{\dot{\mathrm{B}}^{s}_{p,q}(\mathbb{R}^n)}$ is point--separating, say
\begin{align*}
   \mathcal{S}'_h(\mathbb{R}^n) &:= \left\{ u\in \mathcal{S}'(\mathbb{R}^n)\,\Big{|}\,\forall \Theta \in \mathrm{C}_c^\infty(\mathbb{R}^n),\,  \left\lVert \Theta(\lambda \mathfrak{D}) u \right\rVert_{\mathrm{L}^\infty(\mathbb{R}^n)} \xrightarrow[\lambda\rightarrow+\infty]{} 0\right\}\text{,}
\end{align*}
where for $\lambda>0$, $\Theta(\lambda \mathfrak{D})u = \mathcal{F}^{-1}{\Theta}(\lambda\cdot)\mathcal{F}u$. Note that $\mathcal{S}'_h(\mathbb{R}^n)$ does not contain any polynomials, so that $\left\lVert u \right\rVert_{\dot{\mathrm{B}}^{s}_{p,q}(\mathbb{R}^n)}=0$ does imply that $u=0$ when $u\in\mathcal{S}'_h(\mathbb{R}^n)$.

One can also define the following quantities, referred to as the potential norms of inhomogeneous and homogeneous Sobolev spaces:
\begin{align*}
    \left\lVert {u} \right\rVert_{\mathrm{\mathrm{H}}^{s,p}(\mathbb{R}^n)} := \left\lVert {(\mathrm{I}-\Delta)^\frac{s}{2} u} \right\rVert_{\mathrm{L}^{p}(\mathbb{R}^n)}\text{ and } \left\lVert {u} \right\rVert_{\dot{\mathrm{H}}^{s,p}(\mathbb{R}^n)} := \Big\lVert \sum_{j\in\mathbb{Z}} (-\Delta)^\frac{s}{2}\dot{\Delta}_{j} u  \Big\rVert_{{\mathrm{L}}^{p}(\mathbb{R}^n)}\text{, }
\end{align*}
where $(-\Delta)^\frac{s}{2}$ is understood on $u\in \mathcal{S}'_h(\mathbb{R}^n)$ by the action on its dyadic decomposition, \textit{i.e.},
\begin{align*}
    (-\Delta)^\frac{s}{2}\dot{\Delta}_j u := \mathcal{F}^{-1}|\xi|^s\mathcal{F}\dot{\Delta}_j u\text{.}
\end{align*}
This gives a family of bounded $\mathrm{C}^\infty$ functions, which follows from \cite[Lemma~2.2]{bookBahouriCheminDanchin} and the fact that $u\in\mathcal{S}'_h(\mathbb{R}^n)$. We recall the definition of the standard Sobolev (semi-)norms, provided $k\in\mathbb{N}$, $p\in[1,+\infty]$, and $u\in\mathcal{S}'(\mathbb{R}^n)$
\begin{align*}
    \lVert {u} \rVert_{\dot{\mathrm{W}}^{k,p}(\mathbb{R}^n)}:=\lVert\nabla^k u\rVert_{\mathrm{L}^p(\mathbb{R}^n)}\text{ and } \lVert {u} \rVert_{{\mathrm{W}}^{k,p}(\mathbb{R}^n)}:=\lVert u\rVert_{\mathrm{L}^p(\mathbb{R}^n)} +\lVert\nabla^k u\rVert_{\mathrm{L}^p(\mathbb{R}^n)}.
\end{align*}

\begin{definition}\label{def:HomFuncSpacesRn} For any $p,q\in[1,+\infty]$, $s\in\mathbb{R}$, and $k\in\mathbb{N}$, we define
\begin{itemize}[label={$\bullet$}]
    \item the inhomogeneous and homogeneous Sobolev (Bessel and Riesz potential) spaces,\\
    \resizebox{0.90\textwidth}{!}{$
        \mathrm{\mathrm{H}}^{s,p}(\mathbb{R}^n)=\left\{\, u\in\mathcal{S}'(\mathbb{R}^n) \,\big{|}\, \left\lVert {u} \right\rVert_{\mathrm{\mathrm{H}}^{s,p}(\mathbb{R}^n)}<+\infty \,\right\}\text{, }\dot{\mathrm{H}}^{s,p}(\mathbb{R}^n)=\left\{\, u\in\mathcal{S}'_h(\mathbb{R}^n) \,\big{|}\, \left\lVert {u} \right\rVert_{\dot{\mathrm{H}}^{s,p}(\mathbb{R}^n)}<+\infty \,\right\}\text{ ; }$}
    \item the standard inhomogeneous and homogeneous Sobolev spaces,\\
    \resizebox{0.90\textwidth}{!}{$
        \mathrm{\mathrm{W}}^{k,p}(\mathbb{R}^n)=\left\{\, u\in\mathcal{S}'(\mathbb{R}^n) \,\big{|}\, \left\lVert {u} \right\rVert_{{\mathrm{W}}^{k,p}(\mathbb{R}^n)}<+\infty \,\right\}\text{, }\dot{\mathrm{W}}^{k,p}(\mathbb{R}^n)=\left\{\, u\in\mathcal{S}'_h(\mathbb{R}^n) \,\big{|}\, \lVert {u} \rVert_{\dot{\mathrm{W}}^{k,p}(\mathbb{R}^n)}<+\infty \,\right\}\text{ ; }$}
    \item and the inhomogeneous and homogeneous Besov spaces,\\
    \resizebox{0.90\textwidth}{!}{$\mathrm{B}^{s}_{p,q}(\mathbb{R}^n)=\left\{\, u\in\mathcal{S}'(\mathbb{R}^n) \,\big{|}\, \left\lVert {u} \right\rVert_{\mathrm{B}^{s}_{p,q}(\mathbb{R}^n)}<+\infty \,\right\}\text{, }\dot{\mathrm{B}}^{s}_{p,q}(\mathbb{R}^n)=\left\{\, u\in\mathcal{S}'_h(\mathbb{R}^n) \,\big{|}\, \left\lVert {u} \right\rVert_{\dot{\mathrm{B}}^{s}_{p,q}(\mathbb{R}^n)}<+\infty \,\right\}\text{. }
    $}
\end{itemize}
These are all normed vector spaces. For all $k\in\mathbb{N}$, we set
\begin{itemize}
    \item $\mathrm{L}^\infty_h(\mathbb{R}^n):= \mathrm{L}^\infty(\mathbb{R}^n)\cap \mathcal{S}'_h(\mathbb{R}^n$)($=\dot{\mathrm{W}}^{0,\infty}(\mathbb{R}^n)$),
    \item $\dot{\mathrm{C}}^{k}_{0}(\mathbb{R}^n)$ is the space ${\mathrm{C}}^{k}_{0}(\mathbb{R}^n)$ equipped with the (semi-)norm $\lVert {\cdot} \rVert_{\dot{\mathrm{W}}^{k,\infty}(\mathbb{R}^n)}$,
    \item $\dot{\mathrm{C}}^{k}_{b,h}(\mathbb{R}^n)$ is the space ${\mathrm{C}}^{k}_{b,h}(\mathbb{R}^n):={\mathrm{C}}^{k}_{b}(\mathbb{R}^n)\cap\mathcal{S}'_h(\mathbb{R}^n)$ endowed with the (semi-)norm $\lVert {\cdot} \rVert_{\dot{\mathrm{W}}^{k,\infty}(\mathbb{R}^n)}$.
\end{itemize}

We can consider, for $p,q\in[1,+\infty]$, $s\in\mathbb{R}$, the following spaces
\begin{align*}
    \dot{\mathcal{B}}^{s}_{p,\infty}(\mathbb{R}^n):=&\Big\{\,u\in \dot{\mathrm{B}}^{s}_{p,\infty}(\mathbb{R}^n)\,\Big|\, \lim\limits_{|j|\rightarrow+\infty} 2^{js} \lVert\dot{\Delta}_{j}u\rVert_{\mathrm{L}^p(\mathbb{R}^n)}=0\,\Big\},\\
    \dot{\mathrm{B}}^{s,0}_{\infty,q}(\mathbb{R}^n):=&\Big\{\,u\in \dot{\mathrm{B}}^{s}_{\infty,q}(\mathbb{R}^n)\,\Big|\, (\dot{\Delta}_ju)_{j\in\mathbb{Z}}\subset\mathrm{C}_0^0(\mathbb{R}^n)\,\Big\},\\
    \dot{\mathcal{B}}^{s,0}_{\infty,\infty}(\mathbb{R}^n):=& \dot{\mathcal{B}}^{s}_{\infty,\infty}(\mathbb{R}^n)\cap\dot{\mathrm{B}}^{s,0}_{\infty,\infty}(\mathbb{R}^n).
\end{align*}
Finally, one can define their inhomogeneous counterparts ${\mathcal{B}}^{s}_{p,\infty}(\mathbb{R}^n)$, ${\mathrm{B}}^{s,0}_{\infty,q}(\mathbb{R}^n)$, and ${\mathcal{B}}^{s,0}_{\infty,\infty}(\mathbb{R}^n)$ replacing $(\dot{\mathcal{B}},\dot{\mathrm{B}},\dot{\Delta}_j)$ by $({\mathcal{B}},{\mathrm{B}},{\Delta}_j)$.
\end{definition}

The treatment of homogeneous Besov spaces $\dot{\mathrm{B}}^{s}_{p,q}(\mathbb{R}^n)$, $s\in\mathbb{R}$, $p,q\in[1,+\infty]$, defined on $\mathcal{S}'_h(\mathbb{R}^n)$ has been extensively covered in \cite[Chapter~2]{bookBahouriCheminDanchin}. The corresponding construction for homogeneous Sobolev spaces $\dot{\mathrm{H}}^{s,p}(\mathbb{R}^n)$, $s\in\mathbb{R}$, $p\in(1,+\infty)$ has been achieved only recently by the author, \cite[Section~2]{Gaudin2022}. This follows partial constructions such a \cite[Chapter~1]{bookBahouriCheminDanchin} for the case $p=2$, \cite[Chapter~3]{DanchinHieberMuchaTolk2020} for the case $s\in\mathbb{N}$.

The inhomogeneous spaces  $\mathrm{L}^p(\mathbb{R}^n)$, $\mathrm{\mathrm{H}}^{s,p}(\mathbb{R}^n)$, and $\mathrm{B}^{s}_{p,q}(\mathbb{R}^n)$ are all complete for all $p,q\in [1,+\infty]$, $s\in\mathbb{R}$. However, homogeneous spaces are not always complete in this setting (see \cite[Proposition~1.34,~Remark~2.26]{bookBahouriCheminDanchin}). It is shown (see \cite[Theorem~2.25]{bookBahouriCheminDanchin}) that homogeneous Besov spaces $\dot{\mathrm{B}}^{s}_{p,q}(\mathbb{R}^n)$ are complete whenever $(s,p,q)\in\mathbb{R}\times[1,+\infty]\times[1,+\infty]$ satisfies
\begin{align*}\tag{$\mathcal{C}_{s,p,q}$}\label{AssumptionCompletenessExponents}
    \left[ s<\frac{n}{p} \right]\text{ or }\left[q=1\text{ and } s\leqslant\frac{n}{p} \right]\text{, }
\end{align*}
For the remainder of this paper, we denote by $(\mathcal{C}_{s,p})$ the statement $(\mathcal{C}_{s,p,p})$. Similarly, one may show that for $p\in(1,+\infty)$, $\dot{\mathrm{H}}^{s,p}(\mathbb{R}^n)$ is complete whenever $(\mathcal{C}_{s,p})$ is satisfied, see \cite[Proposition~2.2]{Gaudin2022}. See Theorem \ref{thm:CompletenessSpacesRn} below for a more general statement with a proof that includes the Sobolev spaces with Lebesgue index $p=1$.

For all $s>0$, $(p,q)\in(1,+\infty)\times[1,+\infty]$, we have $\mathrm{L}^p(\mathbb{R}^n)\cap \dot{\mathrm{H}}^{s,p}(\mathbb{R}^n) = \mathrm{\mathrm{H}}^{s,p}(\mathbb{R}^n)$, and $\mathrm{L}^p(\mathbb{R}^n)\cap \dot{\mathrm{B}}^{s}_{p,q}(\mathbb{R}^n) =\mathrm{B}^{s}_{p,q}(\mathbb{R}^n)$ with equivalent norms (even when $p=1$ in the case of Besov spaces). See \cite[Theorem~6.3.2]{BerghLofstrom1976} for more details.

We also have the useful and very well-known equivalences of norms for the Riesz potential Sobolev spaces and for the homogeneous Besov spaces. See \cite[Proposition~2.2]{Gaudin2022}, \cite[Proposition~3.7]{DanchinHieberMuchaTolk2020} and \cite[Lemmas~2.1~\&~2.2]{bookBahouriCheminDanchin}.
\begin{proposition}\label{lem:Triebelnormeq} Let $p,q\in[1,+\infty]$ and $s\in\mathbb{R}$.
\begin{enumerate}
    \item  If $p\in(1,+\infty)$, for all $u\in\mathcal{S}'_h(\mathbb{R}^n)$ it holds
\begin{align*}
    \big\lVert(\dot{\Delta}_j u )_{j\in\mathbb{Z}}\big\rVert_{\mathrm{L}^{p}(\mathbb{R}^n,\ell^{2}_s(\mathbb{Z}))}\sim_{p,n,s} \left\lVert{u}\right\rVert_{\dot{\mathrm{H}}^{s,p}(\mathbb{R}^n)}.
\end{align*}
Furthermore, for $m\in\mathbb{N}$, one has 
\begin{align*}
    \lVert{u}\rVert_{\dot{\mathrm{H}}^{s+m,p}(\mathbb{R}^n)} \sim_{p,n,m,s} \sum_{k=1}^n\lVert{ \partial_{x_k}^mu}\rVert_{\dot{\mathrm{H}}^{s,p}(\mathbb{R}^n)} \sim_{p,n,m,s} \lVert{ \nabla^m u}\rVert_{\dot{\mathrm{H}}^{s,p}(\mathbb{R}^n)}.
\end{align*}
In particular, when $s=0$, we have $\dot{\mathrm{W}}^{m,p}(\mathbb{R}^n)= \dot{\mathrm{H}}^{m,p}(\mathbb{R}^n)$ as a set and with equivalence of norms.

\item For $\beta\geqslant 0$, $m\in\mathbb{N}$, all $u\in\mathcal{S}'_h(\mathbb{R}^n)$ it holds
\begin{gather*}
    \lVert{u}\rVert_{\dot{\mathrm{B}}^{s+\beta}_{p,q}(\mathbb{R}^n)} \sim_{p,n,\beta,s}  \lVert{ (-\Delta)^\frac{\beta}{2} u}\rVert_{\dot{\mathrm{B}}^{s}_{p,q}(\mathbb{R}^n)},\\
    \lVert{u}\rVert_{\dot{\mathrm{B}}^{s+m}_{p,q}(\mathbb{R}^n)} \sim_{p,n,m,s} \sum_{k=1}^n\lVert{ \partial_{x_k}^mu}\rVert_{\dot{\mathrm{B}}^{s}_{p,q}(\mathbb{R}^n)} \sim_{p,n,m,s} \lVert{ \nabla^m u}\rVert_{\dot{\mathrm{B}}^{s}_{p,q}(\mathbb{R}^n)}.
\end{gather*}
\end{enumerate}
\end{proposition}

The preceding lemma indicates that one can focus mainly on the treatment of potential Sobolev spaces $\dot{\mathrm{H}}^{s,p}(\mathbb{R}^n)$, $s\in\mathbb{R}$, $1<p<+\infty$, while dealing separately with the cases $p=1,+\infty$.

Now, we state several facts that are, for the most part, well-known. The next proposition is standard.

\begin{proposition}\label{prop:EmbeddSobBesovRn} Let $p,q\in[1,+\infty]$. The following embeddings hold:
\begin{enumerate}
    \item For $1\leqslant r\leqslant q<+\infty$, $s\in\mathbb{R}$,
    \begin{align*}
        {\dot{\mathrm{B}}^{s}_{p,q}(\mathbb{R}^n)}\hookrightarrow{\dot{\mathrm{B}}^{s}_{p,r}(\mathbb{R}^n)}\hookrightarrow{\dot{\mathcal{B}}^{s}_{p,\infty}(\mathbb{R}^n)}\hookrightarrow{\dot{\mathrm{B}}^{s}_{p,\infty}(\mathbb{R}^n)}.
    \end{align*}
    \item For $k\in\mathbb{N}$,
    \begin{align*}
        {\dot{\mathrm{B}}^{k}_{p,1}(\mathbb{R}^n)}\hookrightarrow{\dot{\mathrm{W}}^{k,p}(\mathbb{R}^n)}\hookrightarrow {\dot{\mathrm{B}}^{k}_{p,\infty}(\mathbb{R}^n)}.
    \end{align*}
    \item For $s\in\mathbb{R}$,
    \begin{align*}
        {\dot{\mathrm{B}}^{s}_{p,1}(\mathbb{R}^n)}\hookrightarrow{\dot{\mathrm{H}}^{s,p}(\mathbb{R}^n)}\hookrightarrow {\dot{\mathrm{B}}^{s}_{p,\infty}(\mathbb{R}^n)}.
    \end{align*}
    \item For $s>0$, $k\in\mathbb{N}^\ast$,
    \begin{align*}
        {{\mathrm{W}}^{k,1}(\mathbb{R}^n)}&\hookrightarrow{\dot{\mathrm{W}}^{k,1}(\mathbb{R}^n)} &&\text{ and } &&{{\mathrm{W}}^{k,\infty}(\mathbb{R}^n)}\cap\mathrm{L}^\infty_h(\mathbb{R}^n)\hookrightarrow{\dot{\mathrm{W}}^{k,\infty}(\mathbb{R}^n)};\\
        \text{if } 1<p<+\infty,\,{{\mathrm{H}}^{s,p}(\mathbb{R}^n)}&\hookrightarrow{\dot{\mathrm{H}}^{s,p}(\mathbb{R}^n)} &&\text{ and } &&{\dot{\mathrm{H}}^{-s,p}(\mathbb{R}^n)}\hookrightarrow{{\mathrm{H}}^{-s,p}(\mathbb{R}^n)};\\
        {{\mathrm{B}}^{s}_{p,q}(\mathbb{R}^n)}\cap \mathcal{S}'_h(\mathbb{R}^n)&\hookrightarrow{\dot{\mathrm{B}}^{s}_{p,q}(\mathbb{R}^n)} &&\text{ and } 
        &&\dot{\mathrm{B}}^{-s}_{p,q}(\mathbb{R}^n)\hookrightarrow{{\mathrm{B}}^{-s}_{p,q}(\mathbb{R}^n)}.
    \end{align*}
    \item For $s=0$,
    \begin{align*}
        {\dot{\mathrm{B}}^{0}_{p,1}(\mathbb{R}^n)}\hookrightarrow{{\mathrm{B}}^{0}_{p,1}(\mathbb{R}^n)}\text{ and }{{\mathrm{B}}^{0}_{p,\infty}(\mathbb{R}^n)}\hookrightarrow{\dot{\mathrm{B}}^{0}_{p,\infty}(\mathbb{R}^n)}.
    \end{align*}
\end{enumerate}
\end{proposition}

At the first glance, the nature of the elements in those homogeneous function spaces is unclear. The next lemma states that, in particular, for positive regularity indices, we deal at least with measurable functions. However, using $\mathcal{S}'_h(\mathbb{R}^n)$ as an ambient space is at the cost of a pathological behavior, especially for $\mathrm{L}^\infty$-type spaces. This pathological behavior can be even worse in the case of a weaker definition of $\mathcal{S}'_h(\mathbb{R}^n)$, see \cite[Theorems~21~\&~22]{Cobb24Sprimeh}.

The next lemma will be of a paramount importance throughout the whole paper.

\begin{lemma}[Structure lemma]\label{lem:structLemma} We have the following properties.
\begin{enumerate}
    \item We have the set inclusion $\mathrm{L}^p(\mathbb{R}^n)+\mathrm{C}^0_0(\mathbb{R}^n)\subset \mathcal{S}'_h(\mathbb{R}^n)$, $1\leqslant p <+\infty$.
    \item The spaces $\mathrm{L}^\infty_h(\mathbb{R}^n)$ and $\mathrm{C}_{b,h}^0(\mathbb{R}^n)$ are respectively strict closed subspaces of $\mathrm{L}^\infty(\mathbb{R}^n)$ and $\mathrm{C}_{b}^0(\mathbb{R}^n)$.
    \item The following set inclusions hold, provided $k_s:=\max(0,\lceil s\rceil)$:
    \begin{enumerate}
        \item $\dot{\mathrm{H}}^{s,p}(\mathbb{R}^n) \subset \mathrm{H}^{s,p}(\mathbb{R}^n)+\mathrm{C}^{k_s}_0(\mathbb{R}^n)$, $1\leqslant p <+\infty$, $s\in\mathbb{R}$, 
        \item $\dot{\mathrm{W}}^{k,1}(\mathbb{R}^n) \subset \mathrm{W}^{k,1}(\mathbb{R}^n)+\mathrm{C}^k_0(\mathbb{R}^n)$, $k\in\mathbb{N}$,
        \item $\dot{\mathrm{W}}^{k,\infty}(\mathbb{R}^n) \subset \mathrm{W}^{k,\infty}(\mathbb{R}^n)\cap \mathcal{S}'_h(\mathbb{R}^n)+\mathrm{C}^{k}_{b,h}(\mathbb{R}^n)$, $k\in\mathbb{N}^\ast$,
        \item $\dot{\mathrm{B}}^{s}_{p,q}(\mathbb{R}^n) \subset \mathrm{B}^{s}_{p,q}(\mathbb{R}^n)+\mathrm{C}^{k_s}_0(\mathbb{R}^n)$, $1\leqslant p <+\infty$, $1\leqslant q\leqslant +\infty$, $s\in\mathbb{R}$,
        \item $\dot{\mathrm{B}}^{s}_{\infty,q}(\mathbb{R}^n) \subset {\mathrm{B}}^{s}_{\infty,q}(\mathbb{R}^n)\cap \mathcal{S}'_h(\mathbb{R}^n)+\mathrm{C}^{k_s}_{b,h}(\mathbb{R}^n)$, $1\leqslant q\leqslant +\infty$, $s\in\mathbb{R}$,
        \item $\dot{\mathrm{B}}^{s,0}_{\infty,q}(\mathbb{R}^n) \subset {\mathrm{B}}^{s,0}_{\infty,q}(\mathbb{R}^n)+\mathrm{C}^{k_s}_{0}(\mathbb{R}^n)$, $1\leqslant q \leqslant +\infty$, $s\in\mathbb{R}$.
    \end{enumerate}
    In particular, the following set equalities hold when $s>0$, $q\in[1,+\infty]$,
    \begin{align*}
        \dot{\mathrm{B}}^{s}_{\infty,q}(\mathbb{R}^n) &= {\mathrm{B}}^{s}_{\infty,q}(\mathbb{R}^n)\cap\mathcal{S}'_h(\mathbb{R}^n),\\
        \dot{\mathrm{B}}^{s,0}_{\infty,q}(\mathbb{R}^n) &= {\mathrm{B}}^{s,0}_{\infty,q}(\mathbb{R}^n),
    \end{align*}
    and for all $k\in\mathbb{N}$,
    \begin{align*}
        \dot{\mathrm{W}}^{k,\infty}(\mathbb{R}^n)={\mathrm{W}}^{k,\infty}(\mathbb{R}^n)\cap\mathcal{S}'_h(\mathbb{R}^n).
    \end{align*}
\end{enumerate}
\end{lemma}

\begin{remark} The preceding lemma has two fundamental consequences that will be frequently used in the following sections:
\begin{itemize}
\item  By point \textit{(i)}, for an element $u\in\mathcal{S}'(\mathbb{R}^n)$, a sufficient condition for the membership to $\mathcal{S}'_h(\mathbb{R}^n)$ is  $\dot{S}_0u\in\mathrm{L}^p(\mathbb{R}^n)+\mathrm{C}^0_0(\mathbb{R}^n)$ for some $p\in[1,+\infty)$.
\item In particular, for all $p\in[1,+\infty)$, all $q\in[1,+\infty]$, all $k\in\mathbb{N}$, and all $s\in\mathbb{R}$, considering the inhomogeneous function spaces, one has
\begin{align*}
    \mathrm{W}^{k,1}(\mathbb{R}^n)\text{, }\mathrm{H}^{s,p}(\mathbb{R}^n)\text{, }\mathrm{B}^{s}_{p,q}(\mathbb{R}^n)\text{, }\mathrm{B}^{s,0}_{\infty,q}(\mathbb{R}^n) \subset \mathcal{S}'_h(\mathbb{R}^n).
\end{align*}
\end{itemize}
\end{remark}

\begin{proof} $\bullet$ The point \textit{(i)} follows from Bernstein's inequality \cite[Lemma~2.1]{bookBahouriCheminDanchin} for $\mathrm{L}^p(\mathbb{R}^n)$, $1\leqslant p<+\infty$, and from \cite[Example~9]{Cobb24Sprimeh} for $\mathrm{C}^0_0(\mathbb{R}^n)$.

$\bullet$ For the point \textit{(ii)}, this follows from the strong continuity of the convolution $\mathrm{L}^\infty\ast\mathrm{L}^1\hookrightarrow \mathrm{L}^\infty$, and the fact that $1\notin\mathcal{S}'_h(\mathbb{R}^n)$.

$\bullet$ We now proceed to prove point \textit{(iii)}. First, we address \textit{(d)}, \textit{(e)}, and \textit{(f)}, noting that point \textit{(a)} can be demonstrated in a similar manner.

Let $u\in\dot{\mathrm{B}}^{s}_{p,q}(\mathbb{R}^n)\subset \mathcal{S}'_h(\mathbb{R}^n)$, provided $p,q\in[1,+\infty]$, $s\in\mathbb{R}$, then we can write
\begin{align}\label{eq:LowHighfreqdecomp}
    u=\sum_{j\in\mathbb{Z}} \dot{\Delta}_ju = \sum_{ j<0} \dot{\Delta}_ju +\sum_{j\geqslant0} \dot{\Delta}_ju.
\end{align}
By \cite[Theorem~6.3.2]{BerghLofstrom1976}, one obtains by construction
\begin{align*}
    [\mathrm{I}-\dot{S}_{0}]u=\sum_{j\geqslant0} \dot{\Delta}_ju\in\mathrm{B}^{s}_{p,q}(\mathbb{R}^n).
\end{align*}
Now, the condition $u\in \mathcal{S}'_h(\mathbb{R}^n)$ implies that
\begin{align*}
   \dot{S}_{0}u= \sum_{ j<0} \dot{\Delta}_ju\text{, }\sum_{-N\leqslant j<0} \dot{\Delta}_ju\in\mathrm{L}^\infty(\mathbb{R}^n)
\end{align*}
for all $N\in\mathbb{N}$, with
\begin{align*}
    \Big\lVert \sum_{ j<0} \dot{\Delta}_ju -  \sum_{-N\leqslant j<0} \dot{\Delta}_ju\Big\rVert_{\mathrm{L}^\infty(\mathbb{R}^n)} \xrightarrow[N\rightarrow+\infty]{} 0.
\end{align*}
But since $\dot{\Delta}_ju\in\mathrm{L}^p(\mathbb{R}^n)$ for all $j\in\mathbb{Z}$, for $\chi\in\mathcal{S}(\mathbb{R}^n)$, such that $\mathcal{F}\chi=1$ on $B(0,4)$, we have
\begin{align*}
    \sum_{-N\leqslant j<0} \dot{\Delta}_j u = \chi\ast \left(\sum_{-N\leqslant j<0} \dot{\Delta}_ju \right)\in\mathrm{C}^0_b(\mathbb{R}^n),
\end{align*}
due to the inclusion $\mathcal{S}(\mathbb{R}^n)\ast\mathrm{L}^p(\mathbb{R}^n)\subset\mathrm{C}^0_b(\mathbb{R}^n)$, and even $\mathrm{C}^0_0(\mathbb{R}^n)$ if $p<+\infty$ (or $u\in\dot{\mathrm{B}}^{s,0}_{\infty,q}(\mathbb{R}^n)$). The closedness of $\mathrm{C}^0_b(\mathbb{R})$ and $\mathrm{C}^0_0(\mathbb{R}^n)$ in $\mathrm{L}^\infty(\mathbb{R}^n)$ ensures that
\begin{align*}
    \dot{S}_0u \in\mathrm{C}_b^0(\mathbb{R}^n)\quad\Big( \mathrm{C}^{0}_0(\mathbb{R}^n)\text{, if either }p<+\infty\text{, or }u\in\dot{\mathrm{B}}^{s,0}_{\infty,q}(\mathbb{R}^n)\Big).
\end{align*}
If $p<+\infty$ (or $u\in\dot{\mathrm{B}}^{s,0}_{\infty,q}(\mathbb{R}^n)$), the decomposition \eqref{eq:LowHighfreqdecomp} can be applied to $\nabla^k u\in\mathcal{S}'_h(\mathbb{R}^n)$. By the argument in the proof of \cite[Lemma~3.3]{DanchinHieberMuchaTolk2020}, and reiterating the present argument, we obtain that the low frequency part $\nabla^k \dot{S}_0u$ belongs to $\mathrm{C}^0_0(\mathbb{R}^n)$. Hence $\dot{S}_0u\in\mathrm{C}^k_0(\mathbb{R}^n)$. For $p=+\infty$, we similarly obtain $\dot{S}_0u\in\mathrm{C}^k_b(\mathbb{R}^n)$.

$\bullet$ Now, we prove \textit{(b)} and \textit{(c)}. Let $u\in\dot{\mathrm{W}}^{k,r}(\mathbb{R}^n)$, $k\in\mathbb{N}^\ast$, $r=1,+\infty$. One starts again with the decomposition \eqref{eq:LowHighfreqdecomp}. As before, one obtains that the block of low frequencies lies in $\mathrm{C}^{k}_b(\mathbb{R}^n)$, even in $\mathrm{C}^{k}_0(\mathbb{R}^n)$ when $r=1$. So we just have to show
\begin{align*}
     \sum_{j\geqslant0} \dot{\Delta}_ju\in\mathrm{L}^{r}(\mathbb{R}^n).
\end{align*}
This is true and follows from the fact that, thanks to Proposition \ref{prop:EmbeddSobBesovRn} and \cite[Theorem~6.3.2]{BerghLofstrom1976},
\begin{align*}
    \sum_{j\geqslant0} \dot{\Delta}_ju \in\mathrm{B}^{k}_{r,\infty}(\mathbb{R}^n)\subset \mathrm{L}^{r}(\mathbb{R}^n).
\end{align*}

$\bullet$ The equalities of sets follow directly from points \textit{(ii)-(d)} and \textit{(ii)-(e)}.
\end{proof}

We collect several density results in the next proposition.
\begin{proposition}\label{prop:DensitySobBesovRn} Let $p,q\in[1,+\infty)$, $s\in\mathbb{R}$, $k\in\mathbb{N}^\ast$,
\begin{enumerate}
    \item Let $u\in\mathcal{S}'_h(\mathbb{R}^n)$, if $u\in \dot{\mathrm{B}}^{s}_{p,q}(\mathbb{R}^n)$, then
    \begin{align*}
        \Big\lVert u -  \sum_{|j|\leqslant N} \dot{\Delta}_ju\Big\rVert_{\dot{\mathrm{B}}^{s}_{p,q}(\mathbb{R}^n)} \xrightarrow[N\rightarrow+\infty]{} 0.
    \end{align*}
    The result still holds if we replace $\dot{\mathrm{B}}^{s}_{p,q}$, by $\dot{\mathcal{B}}^{s}_{p,\infty}$, $\dot{\mathrm{B}}^{s,0}_{\infty,q}$, $\dot{\mathcal{B}}^{s}_{\infty,\infty}$, $\dot{\mathrm{H}}^{s,p}$, $\dot{\mathrm{W}}^{k,1}$, $\dot{\mathrm{C}}^k_{0}$ or $\mathrm{C}^0_0$.
    \item The following subspace of $\mathcal{S}(\mathbb{R}^n)$,
    \begin{align*}
        \mathcal{S}_0(\mathbb{R}^n):=\Big\{\,u\in\mathcal{S}(\mathbb{R}^n)\,\Big|\, 0\notin\supp(\mathcal{F}u)\text{ and }\supp(\mathcal{F}u)\text{ is compact.}\,\Big\},
    \end{align*}
    is a dense subspace of $\dot{\mathrm{B}}^{s}_{p,q}(\mathbb{R}^n)$, $\dot{\mathcal{B}}^{s}_{p,\infty}(\mathbb{R}^n)$, $\dot{\mathrm{B}}^{s,0}_{\infty,q}(\mathbb{R}^n)$, $\dot{\mathcal{B}}^{s,0}_{\infty,\infty}(\mathbb{R}^n)$, $\dot{\mathrm{H}}^{s,p}(\mathbb{R}^n)$, $\dot{\mathrm{W}}^{k,1}(\mathbb{R}^n)$, $\dot{\mathrm{C}}^k_0(\mathbb{R}^n)$, and $\mathrm{C}^0_0(\mathbb{R}^n)$.
    \item The space of smooth compactly supported functions, $\mathrm{C}_c^\infty(\mathbb{R}^n)$,
    is a dense subspace of
    \begin{enumerate}
        \item the Besov spaces: $\dot{\mathrm{B}}^{s}_{p,q}(\mathbb{R}^n)$, $s>-n/p'$,  $\dot{\mathrm{B}}^{s,0}_{\infty,q}(\mathbb{R}^n)$, $s>-n$, $\dot{\mathcal{B}}^{s,0}_{\infty,\infty}(\mathbb{R}^n)$, $s\geqslant-n$, $\dot{\mathcal{B}}^{s}_{p,\infty}(\mathbb{R}^n)$, $s\geqslant-n/p'$;
        \item the Sobolev spaces: $\dot{\mathrm{H}}^{s,p}(\mathbb{R}^n)$, $s>-n/p'$, and $\dot{\mathrm{W}}^{k,1}(\mathbb{R}^n)$.
    \end{enumerate}
    \item The space $\mathrm{C}_c^\infty(\mathbb{R}^n)\cap\dot{\mathrm{B}}^{s}_{p,q}(\mathbb{R}^n)$ is a dense subspace of $\dot{\mathrm{B}}^{s}_{p,q}(\mathbb{R}^n)$. 
    
    The result remains true if we replace $\dot{\mathrm{B}}^{s}_{p,q}$ by $\dot{\mathrm{B}}^{s,0}_{\infty,q}$, $\dot{\mathcal{B}}^{s}_{p,\infty}(\mathbb{R}^n)$, $\dot{\mathcal{B}}^{s,0}_{\infty,\infty}(\mathbb{R}^n)$ or $\dot{\mathrm{H}}^{s,p}(\mathbb{R}^n)$.
\end{enumerate}
\end{proposition}

\begin{remark}$\bullet$ An important fact is that the density results presented in \textit{(i)}-\textit{(iii)} are universal: the approximation procedure is the same in all function spaces.

$\bullet$ The density results in \textit{(iii)} and \textit{(iv)} are sharp, in the sense that $$\mathrm{C}_c^\infty(\mathbb{R}^n)\nsubseteq \dot{\mathrm{B}}^{0}_{1,1}(\mathbb{R}^n), \dot{\mathrm{H}}^{-{n}/{2},2}(\mathbb{R}^n).$$
Indeed, one can check that a smooth compactly supported function $u$ which satisfies
\begin{align*}
    \int_{\mathbb{R}^n}u(x) \mathrm{d}x =1,
\end{align*}
does not belong to these spaces. The proof below includes the arguments for the embedding of $\mathrm{C}_c^\infty(\mathbb{R}^n)$ into the appropriate homogeneous function spaces, when such embedding is permitted. See the Steps 3.3, 3.4 and 3.5 in the proof below.
\end{remark}

\begin{proof}\textbf{Step 1:} $\bullet$ Let $u\in\dot{\mathcal{B}}^{s}_{p,\infty}(\mathbb{R}^n)$, where $p\in[1,+\infty]$, we set for $N\in\mathbb{N}$,
\begin{align*}
    u_{N} = \sum_{|j|\leqslant N} \dot{\Delta}_ju.
\end{align*}
Since for all $\ell,m\in\mathbb{Z}$ such that $|\ell-m|\geqslant 2$, we have $\dot{\Delta}_\ell\dot{\Delta}_m=0$, we are able to estimate
\begin{align*}
    \lVert u -u_{N}\rVert_{\dot{\mathrm{B}}^{s}_{p,\infty}(\mathbb{R}^n)} &\leqslant \left(\sup_{|j|\geqslant N+2}\, 2^{js} \lVert \dot{\Delta}_ju\rVert_{\mathrm{L}^p(\mathbb{R}^n)}\right) + 2^{\pm s(N+1)}\lVert  \dot{\Delta}_{\pm(N+1)}u - \dot{\Delta}_{\pm(N+1)} \dot{\Delta}_{\pm N}u\rVert_{\mathrm{L}^p(\mathbb{R}^n)}\\ &\quad + 2^{\pm sN}\lVert  \dot{\Delta}_{\pm N}u - \dot{\Delta}_{\pm N}[ \dot{\Delta}_{ \pm(N-1)}+\dot{\Delta}_{\pm N}]u\rVert_{\mathrm{L}^p(\mathbb{R}^n)}\\
    &\lesssim_{s}\left(\sup_{|j|\geqslant N+2}\, 2^{js} \lVert \dot{\Delta}_ju\rVert_{\mathrm{L}^p(\mathbb{R}^n)}\right) + 2^{\pm s(N+1)}\lVert  \dot{\Delta}_{\pm (N+1)}u\rVert_{\mathrm{L}^p(\mathbb{R}^n)}\\&\quad+ 2^{\pm sN}\lVert  \dot{\Delta}_{\pm N}u\rVert_{\mathrm{L}^p(\mathbb{R}^n)}.
\end{align*}
We recall that $u\in\dot{\mathcal{B}}^{s}_{p,\infty}(\mathbb{R}^n)$ so that it implies $2^{js} \lVert \dot{\Delta}_j u\rVert_{\mathrm{L}^p(\mathbb{R}^n)} \xrightarrow[|j|\rightarrow+\infty]{}0$. We deduce
\begin{align*}
     \lVert u -u_{N}\rVert_{\dot{\mathrm{B}}^{s}_{p,\infty}(\mathbb{R}^n)}\xrightarrow[N\rightarrow+\infty]{}0.
\end{align*}

$\bullet$ For $u\in\dot{\mathrm{B}}^{s}_{p,q}(\mathbb{R}^n)\subset \dot{\mathcal{B}}^{s}_{p,\infty}(\mathbb{R}^n)$, we may estimate similarly
\begin{align*}
    \lVert u -u_{N}\rVert_{\dot{\mathrm{B}}^{s}_{p,q}(\mathbb{R}^n)} &\lesssim_{s} \left(\sum_{|j|\geqslant N+2}\, 2^{jsq} \lVert \dot{\Delta}_ju\rVert_{\mathrm{L}^p(\mathbb{R}^n)}^q\right)^\frac{1}{q} + 2^{\pm j(N+1)}\lVert  \dot{\Delta}_{\pm (N+1)}u\rVert_{\mathrm{L}^p(\mathbb{R}^n)}\\ &\quad + 2^{\pm jN}\lVert  \dot{\Delta}_{\pm N}u\rVert_{\mathrm{L}^p(\mathbb{R}^n)}.
\end{align*}
The results follows from the Dominated Convergence Theorem.

$\bullet$ For the case of $\dot{\mathrm{H}}^{s,p}(\mathbb{R}^n)$, $p\in(1,+\infty)$, and $\dot{\mathrm{W}}^{k,1}(\mathbb{R}^n)$, we use the fact that one can write
\begin{align*}
    u_{N}= [\dot{S}_N- \dot{S}_{-N}]u,
\end{align*}
where $(\dot{S}_N- \dot{S}_{-N})_{N\in\mathbb{N}}$ is a mollifying sequence, as given in \cite[Lemma~E.5.2]{bookHaase2006}, which already contains the case $\mathrm{C}^0_0(\mathbb{R}^n)$ and  $\mathrm{L}^p(\mathbb{R}^n)$, $1<p<+\infty$. Therefore, for $u\in\dot{\mathrm{H}}^{s,p}(\mathbb{R}^n)$
\begin{align*}
     \lVert u-u_N\rVert_{\dot{\mathrm{H}}^{s,p}(\mathbb{R}^n)}=\lVert (-\Delta)^{\frac{s}{2}}u-[\dot{S}_N- \dot{S}_{-N}](-\Delta)^{\frac{s}{2}}u\rVert_{{\mathrm{L}}^{p}(\mathbb{R}^n)}\xrightarrow[N\rightarrow+\infty]{}0. 
\end{align*}
For the case of $u\in\dot{\mathrm{W}}^{k,1}(\mathbb{R}^n)$, from the point \textit{(iii)-(b)} of Lemma \ref{lem:structLemma}, we obtain for all $|\alpha|=k$,
\begin{align*}
    \int_{\mathbb{R}^n}\partial^{\alpha}u(x)\mathrm{~d}x =0.
\end{align*}
Thus, we can apply \cite[Lemma~E.5.2]{bookHaase2006} as before to obtain
\begin{align*}
     \lVert u-u_N\rVert_{\dot{\mathrm{W}}^{k,1}(\mathbb{R}^n)}=\lVert \nabla^k u-[\dot{S}_N- \dot{S}_{-N}]\nabla^k u\rVert_{{\mathrm{L}}^{1}(\mathbb{R}^n)}\xrightarrow[N\rightarrow+\infty]{}0. 
\end{align*}
The same proof applies for $\dot{\mathrm{C}}^k_0(\mathbb{R}^n)$ starting from the case ${\mathrm{C}}^0_0(\mathbb{R}^n)$.

Again, the proof remains valid for $\dot{\mathrm{H}}^{s,1}(\mathbb{R}^n)$ instead of $\dot{\mathrm{W}}^{k,1}(\mathbb{R}^n)$, provided $s>0$, since for $f\in\dot{\mathrm{H}}^{s,1}(\mathbb{R}^n)$, one has $\mathcal{F}[(-\Delta)^{\frac{s}{2}}f](0) = |0|^s\mathcal{F}f(0)=0$, and then
\begin{align*}
    \int_{\mathbb{R}^n} (-\Delta)^{\frac{s}{2}}f(x)\mathrm{~d}x =0.
\end{align*}
\textbf{Step 2:} To prove point \textit{(ii)}, it suffices to follow \cite[Proposition~2.27]{bookBahouriCheminDanchin}, \cite[Lemma~2.5]{Gaudin2022}, with minor modifications. Therefore, we only prove the density result for the function space $\dot{\mathrm{W}}^{k,1}(\mathbb{R}^n)$, $k\geqslant1$. We start similarly, and with the notations introduced in the previous step. For a fixed $\varepsilon>0$, let $N\in\mathbb{N}$ be large enough such that
\begin{align*}
    \lVert u - u_{N}\rVert_{\dot{\mathrm{W}}^{k,1}(\mathbb{R}^n)} < \varepsilon\text{. }
\end{align*}
One has $u_N\in{\mathrm{L}}^{1}(\mathbb{R}^n)\cap \mathrm{C}^\infty_0(\mathbb{R}^n)$.
For $M\geqslant N+1$, $R>0$, provided $\Theta\in \mathrm{C}_c^\infty(\mathbb{R}^n)$, real valued, supported in $B(0,2)$, such that $\Theta_{|_{B(0,1)}}=1$, and $\Theta_R:=\Theta(\cdot /R)$, we introduce
\begin{align*}
    u_{N,M}^R:= (\dot{S}_{M}-\dot{S}_{-M})[\Theta_{R}u_{N}]\text{.}
\end{align*}
By construction, $u_{N,M}^R\in \mathcal{S}_0(\mathbb{R}^n)$.
Since $\dot{\Delta}_{k} u_{N} = 0$, $k\leqslant -M-1$, we have $\dot{S}_{-M}u_{N}=0$, and $\dot{S}_{M}u_{N}=u_{N}$, then
\begin{align*}
    u_{N,M}^R - u_{N} = (\dot{S}_{M}-\dot{S}_{-M})[(\Theta_{R}-1)u_{N}]\text{.}
\end{align*}
By Bernstein's inequalities \cite[Lemma~2.1]{bookBahouriCheminDanchin}, since $\supp \mathcal{F}(u_{N,M}^R - u_{N})\subset \{\,\xi\in\mathbb{R}^n\,|\,3\cdot 2^{-M-2}\leqslant\left\lvert{\xi}\right\rvert \leqslant 2^{M+3}/3\,\}$, we obtain
\begin{align*}
    \lVert u_{N,M}^R - u_{N}\rVert_{\dot{\mathrm{W}}^{k,1}(\mathbb{R}^n)} &\lesssim_{M,k} \lVert u_{N,M}^R - u_{N}\rVert_{{\mathrm{L}}^{1}(\mathbb{R}^n)}\\
    &\lesssim_{M,k} \lVert (\dot{S}_M-\dot{S}_{-M})[(\Theta_{R}-1)u_{N}]\rVert_{{\mathrm{L}}^{1}(\mathbb{R}^n)}\\
    &\lesssim_{M,k} \lVert [(\Theta_{R}-1)u_{N}]\rVert_{{\mathrm{L}}^{1}(\mathbb{R}^n)}\text{. }
\end{align*}
The Dominated Convergence Theorem yields
\begin{align*}
    \lVert u_{N,M}^R - u_{N}\rVert_{\dot{\mathrm{W}}^{k,1}(\mathbb{R}^n)} \xrightarrow[R\rightarrow +\infty]{} 0 \text{. }
\end{align*}
Thus, for $R>0$ big enough,
\begin{align*}
    \lVert u - u_{N,M}^R\rVert_{\dot{\mathrm{W}}^{k,1}(\mathbb{R}^n)} < 2\varepsilon\text{. }
\end{align*}

\textbf{Step 3:} We focus on the case of Besov spaces. For $u\in\dot{\mathrm{B}}^{s}_{p,q}(\mathbb{R}^n)$, we have $u_{N,M}^R\in\mathcal{S}_0(\mathbb{R}^n)$. For a fixed $\varepsilon>0$, we choose $N,M,R$ large enough so that
\begin{align*}
    \lVert u - u_{N,M}^R\rVert_{\dot{\mathrm{B}}^{s}_{p,q}(\mathbb{R}^n)} < \varepsilon\text{. }
\end{align*}
We introduce, for all $\eta>0$, $u_{N,M}^{R,\eta}:=\Theta(\cdot/\eta)u_{N,M}^R\in \mathrm{C}_c^\infty(\mathbb{R}^n)$. By the triangle inequality,
\begin{align*}
    \lVert u - u_{N,M}^{R,\eta}\rVert_{\dot{\mathrm{B}}^{s}_{p,q}(\mathbb{R}^n)} &\leqslant  \lVert u - u_{N,M}^R\rVert_{\dot{\mathrm{B}}^{s}_{p,q}(\mathbb{R}^n)} + \lVert u_{N,M}^R - u_{N,M}^{R,\eta}\rVert_{\dot{\mathrm{B}}^{s}_{p,q}(\mathbb{R}^n)}.
\end{align*}

So it suffices to prove that
\begin{align*}
    \lVert u_{N,M}^R - u_{N,M}^{R,\eta}\rVert_{\dot{\mathrm{B}}^{s}_{p,q}(\mathbb{R}^n)} \xrightarrow[\eta\rightarrow +\infty]{} 0.
\end{align*}

We deal with six subcases.

\textbf{Step 3.1:} The case $s>0$. This is the simplest case. We set $m=\lceil s\rceil+1$. We use the fact the $\mathrm{W}^{m,p}(\mathbb{R}^n)\hookrightarrow{\mathrm{B}}^{s}_{p,q}(\mathbb{R}^n)\hookrightarrow\dot{\mathrm{B}}^{s}_{p,q}(\mathbb{R}^n)$,
\begin{align*}
     \lVert u - u_{N,M}^{R,\eta}\rVert_{\dot{\mathrm{B}}^{s}_{p,q}(\mathbb{R}^n)} &\leqslant  \lVert u - u_{N,M}^R\rVert_{\dot{\mathrm{B}}^{s}_{p,q}(\mathbb{R}^n)} + \lVert u_{N,M}^R - u_{N,M}^{R,\eta}\rVert_{\dot{\mathrm{B}}^{s}_{p,q}(\mathbb{R}^n)} \\
     &\lesssim_{p,s,n} \lVert u - u_{N,M}^R\rVert_{\dot{\mathrm{B}}^{s}_{p,q}(\mathbb{R}^n)} + \lVert u_{N,M}^R - u_{N,M}^{R,\eta}\rVert_{{\mathrm{W}}^{m,p}(\mathbb{R}^n)}.
\end{align*}
By the Leibniz rule and the Dominated Convergence Theorem,
\begin{align*}
    \lVert u_{N,M}^R - u_{N,M}^{R,\eta}\rVert_{{\mathrm{W}}^{m,p}(\mathbb{R}^n)}\xrightarrow[\eta\rightarrow +\infty]{} 0.
\end{align*}
So for $\eta$ large enough,
\begin{align*}
     \lVert u - u_{N,M}^{R,\eta}\rVert_{\dot{\mathrm{B}}^{s}_{p,q}(\mathbb{R}^n)} \lesssim_{p,s,n} \varepsilon. 
\end{align*}

\textbf{Step 3.2:} The case $\dot{\mathcal{B}}^{0}_{p,\infty}(\mathbb{R}^n)$, $\dot{\mathcal{B}}^{0,0}_{\infty,\infty}(\mathbb{R}^n)$. By Proposition \ref{prop:EmbeddSobBesovRn}, we have $\mathrm{L}^p(\mathbb{R}^n)\hookrightarrow\dot{\mathcal{B}}^{0}_{p,\infty}(\mathbb{R}^n)$. As before, we obtain
\begin{align*}
     \lVert u_{N,M}^R - u_{N,M}^{R,\eta}\rVert_{\dot{\mathrm{B}}^{0}_{p,\infty}(\mathbb{R}^n)} &\lesssim_{p,s,n} \lVert u_{N,M}^R - u_{N,M}^{R,\eta}\rVert_{{\mathrm{L}}^{p}(\mathbb{R}^n)}.
\end{align*}
Again, we apply the Dominated Convergence Theorem to take the limit as $\eta$ tends to infinity.

\textbf{Step 3.3:} The case $\dot{\mathcal{B}}^{-{n}/{p'}}_{p,\infty}(\mathbb{R}^n)$, $\dot{\mathcal{B}}^{-n,0}_{\infty,\infty}(\mathbb{R}^n)$. By Bernstein's inequality \cite[Lemma~2.1]{bookBahouriCheminDanchin}, for all $v\in\mathrm{L}^1(\mathbb{R}^n)$, for all $j\in\mathbb{Z}$,
\begin{align*}
    \lVert\dot{\Delta}_j v\rVert_{\mathrm{L}^{p}(\mathbb{R}^n)} \lesssim_{n,p} 2^{j(n-\frac{n}{p})}\lVert\dot{\Delta}_j v\rVert_{\mathrm{L}^{1}(\mathbb{R}^n)}\lesssim_{n,p} 2^{j(n-\frac{n}{p})}\lVert v\rVert_{\mathrm{L}^{1}(\mathbb{R}^n)}.
\end{align*}
Thus, we have obtained $\mathrm{L}^1(\mathbb{R}^n)\hookrightarrow\dot{\mathrm{B}}^{-{n}/{p'}}_{p,\infty}(\mathbb{R}^n)$, so that again it suffices to consider
\begin{align*}
      \lVert u_{N,M}^R - u_{N,M}^{R,\eta}\rVert_{\dot{\mathrm{B}}^{-n/p'}_{p,\infty}(\mathbb{R}^n)}\lesssim_{p,n}\lVert u_{N,M}^R - u_{N,M}^{R,\eta}\rVert_{{\mathrm{L}}^{1}(\mathbb{R}^n)}.
\end{align*}
We can conclude as in the previous step.

\textbf{Step 3.4:} The remaining cases when $-n/p'<s<0$. By Proposition \ref{prop:EmbeddSobBesovRn} and the interpolation inequality \cite[Proposition~2.22]{bookBahouriCheminDanchin}, since we can write $s=(1-\theta)(-n/p')$, one obtains
\begin{align*}
    \lVert u_{N,M}^R - u_{N,M}^{R,\eta}\rVert_{\dot{\mathrm{B}}^{s}_{p,q}(\mathbb{R}^n)} &\leqslant\lVert u_{N,M}^R - u_{N,M}^{R,\eta}\rVert_{\dot{\mathrm{B}}^{s}_{p,1}(\mathbb{R}^n)}\\&\lesssim_{p,n,\theta} \lVert u_{N,M}^R - u_{N,M}^{R,\eta}\rVert_{\dot{\mathrm{B}}^{-n/p'}_{p,\infty}(\mathbb{R}^n)}^{1-\theta}\lVert u_{N,M}^R - u_{N,M}^{R,\eta}\rVert_{\dot{\mathrm{B}}^{0}_{p,\infty}(\mathbb{R}^n)}^{\theta}\\&\lesssim_{p,n,\theta}\lVert u_{N,M}^R - u_{N,M}^{R,\eta}\rVert_{{\mathrm{L}}^{1}(\mathbb{R}^n)}^{1-\theta}\lVert u_{N,M}^R - u_{N,M}^{R,\eta}\rVert_{{\mathrm{L}}^{p}(\mathbb{R}^n)}^{\theta}.
\end{align*}
This allows to conclude by the Dominated Convergence Theorem.

\textbf{Step 3.5:} The case $\dot{\mathrm{B}}^{0}_{p,q}(\mathbb{R}^n),\dot{\mathrm{B}}^{0,0}_{\infty,q}(\mathbb{R}^n)$. For $-n/p'<\zeta_0<0<\zeta_1$, and $\theta\in(0,1)$ such that $(1-\theta)\zeta_0+\theta\zeta_1 =0$, by the interpolation inequality \cite[Proposition~2.22]{bookBahouriCheminDanchin}, one has
\begin{align*}
\lVert u_{N,M}^R - u_{N,M}^{R,\eta} \rVert_{\dot{\mathrm{B}}^{0}_{p,q}(\mathbb{R}^n)} &\lesssim_{p,\zeta_0} \lVert u_{N,M}^R - u_{N,M}^{R,\eta} \rVert_{\dot{\mathrm{B}}^{\zeta_0}_{p,1}(\mathbb{R}^n)}^{1-\theta}\lVert u_{N,M}^R - u_{N,M}^{R,\eta} \rVert_{\dot{\mathrm{B}}^{\zeta_1}_{p,1}(\mathbb{R}^n)}^\theta.
\end{align*}
Then, thanks to the previous Step 3.1 and Step 3.4, that correspond to respectively $\zeta_1=s>0$ and $\zeta_0=s<0$, we can take the limit as $\eta$ tends to infinity.

\textbf{Step 3.6:} For the spaces $\dot{\mathrm{H}}^{s,p}(\mathbb{R}^n)$, $s>0$, and $\dot{\mathrm{W}}^{k,1}(\mathbb{R}^n)$, $k\geqslant 1$, the proof is similar to the case of Besov spaces with $s>0$.

For the spaces $\dot{\mathrm{H}}^{s,p}(\mathbb{R}^n)$, $-n/p'<s<0$, the proof is similar to the case of Besov spaces with $-n/p'<s<0$, since Proposition \ref{prop:EmbeddSobBesovRn} gives us $\dot{\mathrm{B}}^{s}_{p,1}(\mathbb{R}^n)\hookrightarrow\dot{\mathrm{H}}^{s,p}(\mathbb{R}^n)$.

\textbf{Step 4:} The proof of \textit{(iv)} follows from slight modifications of the arguments presented in Step 3. Due to \textit{(iii)}, we can assume without loss of generality that $s\leqslant -n/p'$.

We start from the beginning of Step 3, with $\varepsilon>0$ and $R,N,M>0$ such that
\begin{align*}
    \lVert u - u_{N,M}^R\rVert_{\dot{\mathrm{B}}^{s}_{p,q}(\mathbb{R}^n)} < \varepsilon\text{. }
\end{align*}
  Let $\mathfrak{m}\in\mathbb{N}$ such that $ s+2\mathfrak{m}>0$. Since $u_{N,M}^R\in\mathcal{S}_0(\mathbb{R}^n)$, we do have $(-\Delta)^{-\mathfrak{m}}u_{N,M}^R\in\mathcal{S}_0(\mathbb{R}^n)$. Therefore, for any $\eta>0$, we introduce
  \begin{align*}
      \tilde{u}_{N,M}^{R,\eta} := (-\Delta)^{\mathfrak{m}} \Theta(\cdot/\eta) (-\Delta)^{-\mathfrak{m}}u_{N,M}^R\in\mathrm{C}_c^\infty(\mathbb{R}^n)\cap \dot{\mathrm{B}}^{s}_{p,q}(\mathbb{R}^n).
  \end{align*}
From this, by the arguments in Step 3, since $(-\Delta)^{-\mathfrak{m}}u_{N,M}^R\in\mathcal{S}_0(\mathbb{R}^n)$ and $s+2\mathfrak{m}>0$, we can conclude
\begin{align*}
    \lVert \tilde{u}_{N,M}^{R,\eta} - u_{N,M}^R\rVert_{\dot{\mathrm{B}}^{s}_{p,q}(\mathbb{R}^n)} \lesssim_{p,\mathfrak{m},n} \lVert [\Theta(\cdot/\eta)-1 ](-\Delta)^{-\mathfrak{m}}u_{N,M}^R\rVert_{\dot{\mathrm{B}}^{s+2\mathfrak{m}}_{p,q}(\mathbb{R}^n)}  \xrightarrow[\eta\longrightarrow+\infty]{}0\text{. }
\end{align*}
This finishes the proof.
\end{proof}

\begin{theorem}\label{thm:CompletenessSpacesRn} Let $p,q,r,\kappa\in[1,+\infty]$, $s,\alpha\in\mathbb{R}$ and $m\in\mathbb{N}$. The following normed vector spaces are complete:
\begin{enumerate}
    \item $\dot{\mathrm{B}}^{s}_{p,q}(\mathbb{R}^n)$ and $\dot{\mathrm{B}}^{s}_{p,q}(\mathbb{R}^n)\cap\dot{\mathrm{B}}^{\alpha}_{r,\kappa}(\mathbb{R}^n)$, whenever \eqref{AssumptionCompletenessExponents} is satisfied;
    \item $\dot{\mathrm{H}}^{s,p}(\mathbb{R}^n)$ and $\dot{\mathrm{H}}^{s,p}(\mathbb{R}^n)\cap\dot{\mathrm{H}}^{\alpha,r}(\mathbb{R}^n)$, whenever $p,r<+\infty$, $s<n/p$;
    \item $\dot{\mathrm{W}}^{k,1}(\mathbb{R}^n)$ and $\dot{\mathrm{W}}^{k,1}(\mathbb{R}^n)\cap\dot{\mathrm{W}}^{m,1}(\mathbb{R}^n)$, whenever $k\in\llb 0,n\rrb$.
\end{enumerate}
The result remains valid exchanging the roles of $\dot{\mathrm{B}}^{\alpha}_{r,\kappa}(\mathbb{R}^n)$, $\dot{\mathrm{H}}^{\alpha,r}(\mathbb{R}^n)$ and $\dot{\mathrm{W}}^{m,1}(\mathbb{R}^n)$.
\end{theorem}

\begin{proof}  Point \textit{(i)} is just \cite[Theorem~2.25]{bookBahouriCheminDanchin}. 

\textbf{Step 1:} Now, for point \textit{(ii)}, let $(u_{\ell})_{\ell\in\mathbb{N}}\subset \dot{\mathrm{H}}^{s,p}(\mathbb{R}^n)\cap\dot{\mathrm{H}}^{\alpha,r}(\mathbb{R}^n)$ be a Cauchy sequence in $\dot{\mathrm{H}}^{s,p}(\mathbb{R}^n)\cap\dot{\mathrm{H}}^{\alpha,r}(\mathbb{R}^n)$. By Proposition \ref{prop:EmbeddSobBesovRn}, $(u_{\ell})_{\ell\in\mathbb{N}}$ is a Cauchy sequence in $\dot{\mathrm{B}}^{s}_{p,\infty}(\mathbb{R}^n)\cap\dot{\mathrm{B}}^{\alpha}_{r,\infty}(\mathbb{R}^n)\subset\dot{\mathrm{B}}^{s}_{p,\infty}(\mathbb{R}^n)$. Therefore, there exists $u\in \dot{\mathrm{B}}^{s}_{p,\infty}(\mathbb{R}^n)\cap\dot{\mathrm{B}}^{\alpha}_{r,\infty}(\mathbb{R}^n)\subset\dot{\mathrm{B}}^{s}_{p,\infty}(\mathbb{R}^n)\subset\mathcal{S}'_h(\mathbb{R}^n)$, $f\in\mathrm{L}^p(\mathbb{R}^n)$, $g\in\mathrm{L}^r(\mathbb{R}^n)$, such that
\begin{enumerate}
    \item $u_m\xrightarrow[m\longrightarrow+\infty]{}u$ in $\mathcal{S}'(\mathbb{R}^n)$,
    \item $(-\Delta)^\frac{s}{2}u_m\xrightarrow[m\longrightarrow+\infty]{}f$ in $\mathrm{L}^p(\mathbb{R}^n)$,
    \item $(-\Delta)^\frac{\alpha}{2}u_m\xrightarrow[m\longrightarrow+\infty]{}g$ in $\mathrm{L}^r(\mathbb{R}^n)$.
\end{enumerate}
By uniqueness of the limit in $\mathcal{S}'(\mathbb{R}^n)$ and continuity of the Littlewood-Paley decomposition, we deduce for all $j\in\mathbb{Z}$,
\begin{align*}
    \dot{\Delta}_jf = (-\Delta)^\frac{s}{2}\dot{\Delta}_j u, \text{ and }\dot{\Delta}_jg = (-\Delta)^\frac{\alpha}{2}\dot{\Delta}_j u.
\end{align*}
Since $u,f,g\in\mathcal{S}'_h(\mathbb{R}^n)$, it follows that, in $\mathcal{S}'(\mathbb{R}^n)$,
\begin{align*}
    u=\sum_{j\in\mathbb{Z}}\dot{\Delta}_j u = \sum_{j\in\mathbb{Z}} (-\Delta)^{-\frac{s}{2}}\dot{\Delta}_jf = \sum_{j\in\mathbb{Z}} (-\Delta)^{-\frac{\alpha}{2}}\dot{\Delta}_j g.
\end{align*}
Then, we have
\begin{enumerate}
    \item $(-\Delta)^\frac{s}{2}u_m\xrightarrow[m\longrightarrow+\infty]{}(-\Delta)^\frac{s}{2}u$ in $\mathrm{L}^p(\mathbb{R}^n)$,
    \item $(-\Delta)^\frac{\alpha}{2}u_m\xrightarrow[m\longrightarrow+\infty]{}(-\Delta)^\frac{\alpha}{2}u$ in $\mathrm{L}^r(\mathbb{R}^n)$.
\end{enumerate}

\textbf{Step 2:} For point \textit{(iii)}, we only show the completeness of $\dot{\mathrm{W}}^{n,1}(\mathbb{R}^n)$, otherwise one could proceed as in point \textit{(ii)} to reach the completeness of $\dot{\mathrm{W}}^{k,1}(\mathbb{R}^n)\cap\dot{\mathrm{W}}^{m,1}(\mathbb{R}^n)$, provided $k\in \llb0,n-1\rrb$, $m\in\mathbb{N}$.

Let $u\in\dot{\mathrm{W}}^{n,1}(\mathbb{R}^n)$, by Lemma \ref{lem:structLemma}, we obtain $u\in{\mathrm{W}}^{n,1}(\mathbb{R}^n)+\mathrm{C}^{n}_0(\mathbb{R}^n) \subset \mathrm{C}^{0}_0(\mathbb{R}^n)$, so for all $x\in\mathbb{R}^n$,
\begin{align*}
    u(x) = \int_{-\infty}^{x_1}\int_{-\infty}^{x_2}\ldots \int_{-\infty}^{x_n} \partial_{x_1}\partial_{x_2}\ldots\partial_{x_n} u (t_1,t_2,\ldots,t_n)\mathrm{~d}t_1\mathrm{d}t_2\ldots \mathrm{d}t_n.
\end{align*}
This is straightforward and standard to deduce the inequality
\begin{align*}
    \lVert u\rVert_{\mathrm{L}^\infty(\mathbb{R}^n)}\leqslant \lVert u\rVert_{\dot{\mathrm{W}}^{n,1}(\mathbb{R}^n)}.
\end{align*}
Now, let $(u_\ell)_{\ell\in\mathbb{N}}\subset \dot{\mathrm{W}}^{n,1}(\mathbb{R}^n)$ be a Cauchy sequence in $\dot{\mathrm{W}}^{n,1}(\mathbb{R}^n)$. Therefore, $(u_\ell)_{\ell\in\mathbb{N}}$ is a Cauchy sequence in $\mathrm{C}^0_0(\mathbb{R}^n)$, and $(\nabla^n u_\ell)_{\ell\in\mathbb{N}}$ is a Cauchy sequence in $\mathrm{L}^1(\mathbb{R}^n)$. By completeness, there exists $u\in \mathrm{C}^0_0(\mathbb{R}^n)$, and  for all multi-index $|\alpha|=n$, there exists $f_\alpha\in\mathrm{L}^1(\mathbb{R}^n)$ such that
\begin{enumerate}
    \item $u_\ell\xrightarrow[\ell\longrightarrow+\infty]{}u$ in $\mathrm{C}^0_0(\mathbb{R}^n)\subset\mathcal{S}'_h(\mathbb{R}^n)$,
    \item $\partial^{\alpha}u_\ell\xrightarrow[\ell\longrightarrow+\infty]{}f_\alpha$ in $\mathrm{L}^1(\mathbb{R}^n)\subset\mathcal{S}'_h(\mathbb{R}^n)$.
\end{enumerate}
By uniqueness of the limit in $\mathcal{S}'(\mathbb{R}^n)$, we obtain $\partial^\alpha u =f_\alpha$, which yields the result.
\end{proof}

We recall also the usual interpolation properties. It is of interest to bring to the reader's attention that, except in the case of complex interpolation, the result below does not require any completeness assumption on function spaces involved in any of the real interpolation identities.
\begin{theorem}\label{thm:InterpHomSpacesRn}Let $1\leqslant p_0,p_1,p,q,q_0,q_1\leqslant +\infty$, $s_0,s_1\in\mathbb{R}$, such that $s_0\neq s_1$, and for $\theta\in(0,1)$, let
\begin{align*}
    \left(s,\frac{1}{p_\theta},\frac{1}{q_\theta}\right):= (1-\theta)\left(s_0,\frac{1}{p_0},\frac{1}{q_0}\right)+ \theta\left(s_1,\frac{1}{p_1},\frac{1}{q_1}\right)\text{. }
\end{align*}
If $s_0,s_1\in\mathbb{N}$, we write $k_0:=s_0$ and $k_1:=s_1$.
We have the following interpolation identities with equivalence of norms
\begin{enumerate}
    \item $(\dot{\mathrm{B}}^{s_0}_{p,q_0}(\mathbb{R}^n),\dot{\mathrm{B}}^{s_1}_{p,q_1}(\mathbb{R}^n))_{\theta,q}=(\dot{\mathrm{H}}^{s_0,p}(\mathbb{R}^n),\dot{\mathrm{H}}^{s_1,p}(\mathbb{R}^n))_{\theta,q}=\dot{\mathrm{B}}^{s}_{p,q}(\mathbb{R}^n)$\label{eq:realInterpHomBspqRn};
    \item $(\dot{\mathrm{B}}^{s_0,0}_{\infty,q_0}(\mathbb{R}^n),\dot{\mathrm{B}}^{s_1,0}_{\infty,q_1}(\mathbb{R}^n))_{\theta,q}=(\dot{\mathrm{C}}^{k_0}_0(\mathbb{R}^n),\dot{\mathrm{C}}^{k_1}_{0}(\mathbb{R}^n))_{\theta,q}=\dot{\mathrm{B}}^{s,0}_{\infty,q}(\mathbb{R}^n)$\label{eq:realInterpHomBsinftyqRn};
    \item $(\dot{\mathrm{W}}^{k_0,1}(\mathbb{R}^n),\dot{\mathrm{W}}^{k_1,1}(\mathbb{R}^n))_{\theta,q}=\dot{\mathrm{B}}^{s}_{1,q}(\mathbb{R}^n)$\label{eq:realInterpHomBspqRnL1};
    \item $(\dot{\mathrm{C}}^{k_0}_{b,h}(\mathbb{R}^n),\dot{\mathrm{C}}^{k_1}_{b,h}(\mathbb{R}^n))_{\theta,q}=(\dot{\mathrm{W}}^{k_0,\infty}(\mathbb{R}^n),\dot{\mathrm{W}}^{k_1,\infty}(\mathbb{R}^n))_{\theta,q}=\dot{\mathrm{B}}^{s}_{\infty,q}(\mathbb{R}^n)$\label{eq:realInterpHomBspqRnLinfty};
    \item $[\dot{\mathrm{H}}^{s_0,p_0}(\mathbb{R}^n),\dot{\mathrm{H}}^{s_1,p_1}(\mathbb{R}^n)]_{\theta} = \dot{\mathrm{H}}^{s,p_\theta}(\mathbb{R}^n)$, if $1<p_0,p_1<+\infty$, and $(\mathcal{C}_{s_j,p_j})$ is true for $j\in\{0,1\}$\label{eq:complexInterpHomSobRn};
    \item $[\dot{\mathrm{B}}^{s_0}_{p_0,q_0}(\mathbb{R}^n),\dot{\mathrm{B}}^{s_1}_{p_1,q_1}(\mathbb{R}^n)]_{\theta} = \dot{\mathrm{B}}^{s}_{p_\theta,q_\theta}(\mathbb{R}^n)$, if  $(\mathcal{C}_{s_j,p_j,q_j})$ is satisfied for $j\in\{0,1\}$, $q_\theta\neq+\infty$.\label{eq:complexInterpHomBspqRn}
\end{enumerate}
Moreover, 
\begin{itemize}
    \item Identity \ref{eq:realInterpHomBspqRn} still holds if we replace $\dot{\mathrm{B}}^{s_j}_{p,q_j}(\mathbb{R}^n)$ by $\dot{\mathcal{B}}^{s_j}_{p,\infty}(\mathbb{R}^n)$, $j\in\{0,1\}$, see Proposition~\ref{prop:EmbeddSobBesovRn};
    \item When we replace $(\cdot,\cdot)_{\theta,q}$ by $(\cdot,\cdot)_{\theta}$, all the real interpolation identities still hold with $\dot{\mathcal{B}}^{s}_{p,\infty}$ as an output space ($\dot{\mathcal{B}}^{s,0}_{\infty,\infty}$ in \ref{eq:realInterpHomBsinftyqRn}).
    \item Identity \ref{eq:complexInterpHomBspqRn} remains valid for $q_0=q_1=q_\theta=+\infty$ with the output space $\dot{\mathcal{B}}^{s_\theta}_{p_{\theta},\infty}(\mathbb{R}^n)$.
\end{itemize}
\end{theorem}

\begin{proof} The idea is to follow the strategy presented in the proof of \cite[Theorem~2.6]{Gaudin2022} were the interpolation identities \ref{eq:realInterpHomBspqRn}, \ref{eq:complexInterpHomSobRn} and \ref{eq:complexInterpHomBspqRn} are already proved whenever $1<p,p_0,p_1<+\infty$, $q,q_0,q_1\in[1,+\infty]$, under the assumption that one of any involved space is complete.  However, we want to reach endpoint cases $p=1,+\infty$, with possibly $q=+\infty$, and to remove the completeness assumption. Hence, we need some additional knowledge and to proceed differently, while staying close to \cite[Theorem~2.6]{Gaudin2022}.

\textbf{Step 1:} For $(w_j)_{j\in\mathbb{Z}}\subset{}\mathcal{S}'(\mathbb{R}^n)$, say, for simplicity, with finite support in the discrete variable, we define the map
\begin{align}\label{eq:retractionMapsLp(l2s)intoHsp}
    \tilde{\Sigma}((w_j)_{j\in\mathbb{Z}}) := \sum_{j=-\infty}^{+\infty} \dot{\Delta}_j [w_{j-1}+w_{j}+w_{j+1}] \text{,}
\end{align}
and it satisfies for $v\in \mathcal{S}'_h(\mathbb{R}^n)$
\begin{align*}
    \tilde{\Sigma}((\dot{\Delta}_j v)_{j\in\mathbb{Z}})=v\text{.}
\end{align*}
We aim to show that for any $p,q\in[1,+\infty]$, $s\in\mathbb{R}$, the operator
\begin{align*}
(\dot{\Delta}_j\tilde{\Sigma})_{j\in\mathbb{Z}}\,:\,\ell^q_s(\mathbb{Z},\mathrm{L}^p(\mathbb{R}^n))\longrightarrow \ell^q_s(\mathbb{Z},\mathrm{L}^p(\mathbb{R}^n))
\end{align*}
is well-defined and bounded. Let $(u_m)_{m\in\mathbb{Z}}\subset \ell^1_s(\mathbb{Z},\mathrm{L}^p(\mathbb{R}^n))$, with finite support with respect to the discrete variable. Let $j\in\mathbb{Z}$,
\begin{align*}
    2^{js}\lVert \dot{\Delta}_j\tilde{\Sigma}[(u_m)_{m\in\mathbb{Z}}]\rVert_{\mathrm{L}^p(\mathbb{R}^n)} \leqslant  & 4^{s}2^{(j-2)s} \lVert u_{j-2} \rVert_{\mathrm{L}^p(\mathbb{R}^n)}+2^{1+s} 2^{(j-1)s} \lVert u_{j-1} \rVert_{\mathrm{L}^p(\mathbb{R}^n)}\\ &+3\cdot 2^{js} \lVert u_{j} \rVert_{\mathrm{L}^p(\mathbb{R}^n)}  +2^{1-s}2^{(j+1)s} \lVert u_{j+1} \rVert_{\mathrm{L}^p(\mathbb{R}^n)} \\ &+4^{-s}2^{(j+2)s} \lVert u_{j+2} \rVert_{\mathrm{L}^p(\mathbb{R}^n)} \\
    &\leqslant 2\cdot 4^{1+|s|}\lVert (u_m)_{m\in\mathbb{Z}}\rVert_{\ell^1_s(\mathbb{Z},\mathrm{L}^p(\mathbb{R}^n))}.
\end{align*}
Taking the supremum over $j\in\mathbb{Z}$, we have obtained a bounded map, for all $p\in[1,+\infty]$, all $s\in\mathbb{R}$,
\begin{align*}
(\dot{\Delta}_j\tilde{\Sigma})_{j\in\mathbb{Z}}\,:\,\ell^1_s(\mathbb{Z},\mathrm{L}^p(\mathbb{R}^n))\longrightarrow \ell^\infty_s(\mathbb{Z},\mathrm{L}^p(\mathbb{R}^n)).
\end{align*}
By real interpolation, see \cite[Theorem~5.6.1]{BerghLofstrom1976}, we obtain for all $p,q\in[1,+\infty]$, $s\in\mathbb{R}$, the desired boundedness property
\begin{align*}
(\dot{\Delta}_j\tilde{\Sigma})_{j\in\mathbb{Z}}\,:\,\ell^q_s(\mathbb{Z},\mathrm{L}^p(\mathbb{R}^n))\longrightarrow \ell^q_s(\mathbb{Z},\mathrm{L}^p(\mathbb{R}^n)).
\end{align*}

\textbf{Step 2:} We prove that, for all $p,q\in[1,+\infty]$, $p\neq+\infty$, and $s\in\mathbb{R}$ that satisfy \eqref{AssumptionCompletenessExponents},
\begin{align*}
\tilde{\Sigma}\,:\,\ell^q_s(\mathbb{Z},\mathrm{L}^p(\mathbb{R}^n))\longrightarrow \dot{\mathrm{B}}^{s}_{p,q}(\mathbb{R}^n).
\end{align*}
is well-defined, and bounded.

\textbf{Step 2.1:} When $p,q\in[1,+\infty)$, the result follows by density of elements with finite support with respect to the discrete variable in $\ell^q_s(\mathbb{Z},\mathrm{L}^p(\mathbb{R}^n))$ and the completeness of $\dot{\mathrm{B}}^{s}_{p,q}(\mathbb{R}^n)$.

\textbf{Step 2.2:} We prove the interpolation identity \ref{eq:realInterpHomBspqRn} in the case of Besov spaces, for $p\in[1,+\infty)$, $q\in[1,+\infty]$, and under the additional assumption $(\mathcal{C}_{s_0,p,q_0})$. Let $q_0,q_1\in[1,+\infty)$, $s_0<s<s_1$, such that $(\mathcal{C}_{s_0,p,q_0})$ is satisfied. Let $u\in \dot{\mathrm{B}}^{s_0}_{p,q_0}(\mathbb{R}^n)+\dot{\mathrm{B}}^{s_1}_{p,q_1}(\mathbb{R}^n)$. For $(a,b)\in \dot{\mathrm{B}}^{s_0}_{p,q_0}(\mathbb{R}^n)\times\dot{\mathrm{B}}^{s_1}_{p,q_1}(\mathbb{R}^n)$, such that $u=a+b$, by definition, we have
\begin{align*}
    (\dot{\Delta}_j u)_{j\in\mathbb{Z}} = (\dot{\Delta}_j a)_{j\in\mathbb{Z}} + (\dot{\Delta}_j b)_{j\in\mathbb{Z}} \in \ell^{q_0}_{s_0}(\mathbb{Z},\mathrm{L}^p(\mathbb{R}^n)) + \ell^{q_1}_{s_1}(\mathbb{Z},\mathrm{L}^p(\mathbb{R}^n))\text{.}
\end{align*}
Therefore, by the definition of the $K$-functional and Besov norms, for $t>0$,
\begin{align*}
    K(t,(\dot{\Delta}_j u)_{j\in\mathbb{Z}},\mathrm{L}^p(\mathbb{R}^n,\ell^{q_0}_{s_0}(\mathbb{Z},\mathrm{L}^p(\mathbb{R}^n)),\ell^{q_1}_{s_1}(\mathbb{Z},\mathrm{L}^p(\mathbb{R}^n)))&\leqslant \lVert a \lVert_{\dot{\mathrm{B}}^{s_0}_{p,q_0}(\mathbb{R}^n)} + t \lVert b \lVert_{\dot{\mathrm{B}}^{s_1}_{p,q_1}(\mathbb{R}^n)}\text{.}
\end{align*}
We take the infimum on an all such pairs $(a,b)$, then for all $t>0$,
\begin{align}\label{eq:KfunclqsLpControledByBspq}
    K(t,(\dot{\Delta}_j u)_{j\in\mathbb{Z}},\ell^{q_0}_{s_0}(\mathbb{Z},\mathrm{L}^p(\mathbb{R}^n)),\ell^{q_1}_{s_1}(\mathbb{Z},\mathrm{L}^p(\mathbb{R}^n)))\leqslant K(t,u,\dot{\mathrm{B}}^{s_0}_{p,q_0}(\mathbb{R}^n),\dot{\mathrm{B}}^{s_1}_{p,q_1}(\mathbb{R}^n))\text{.}
\end{align}
We have obtained $(\dot{\Delta}_j u)_{j\in\mathbb{Z}}\in \ell^{q_0}_{s_0}(\mathbb{Z},\mathrm{L}^p(\mathbb{R}^n)) + \ell^{q_1}_{s_1}(\mathbb{Z},\mathrm{L}^p(\mathbb{R}^n))$, so let $(\mathfrak{a},\mathfrak{B})\in \ell^{q_0}_{s_0}(\mathbb{Z},\mathrm{L}^p(\mathbb{R}^n))\times\ell^{q_1}_{s_1}(\mathbb{Z},\mathrm{L}^p(\mathbb{R}^n))$ such that $(\Delta_{j}u)_{j\in\mathbb{Z}}= \mathfrak{a}+\mathfrak{b}$. Since $u\in\mathcal{S}'_h(\mathbb{R}^n)$, and $\mathfrak{a}\in \ell^{q_0}_{s_0}(\mathbb{Z},\mathrm{L}^p(\mathbb{R}^n))$ under the condition $(\mathcal{C}_{s_0,p,q_0})$, by Step 2.1, we do have
\begin{align*}
    u=\tilde{\Sigma}{(\Delta_{j}u)_{j\in\mathbb{Z}}}\text{ and } \tilde{\Sigma}\mathfrak{a}\in\dot{\mathrm{B}}^{s_0}_{p,q_0}(\mathbb{R}^n)\subset\mathcal{S}'_h(\mathbb{R}^n).
\end{align*}
Therefore, the series $\tilde{\Sigma}\mathfrak{b}=\tilde{\Sigma}{(\Delta_{j}u)_{j\in\mathbb{Z}}}-\tilde{\Sigma}\mathfrak{a}$ converges to an element of $\mathcal{S}'_h(\mathbb{R}^n)$, and by Step 1,
\begin{align*}
    \lVert \dot{\Delta}_j\tilde{\Sigma}\mathfrak{b}\rVert_{\ell^{q_1}_{s_1}(\mathbb{Z},\mathrm{L}^p(\mathbb{R}^n))}\lesssim_{s_1}\lVert \mathfrak{b}\rVert_{\ell^{q_1}_{s_1}(\mathbb{Z},\mathrm{L}^p(\mathbb{R}^n))}.
\end{align*}
This yields $\tilde{\Sigma}\mathfrak{b}\in \dot{\mathrm{B}}^{s_1}_{p,q_1}(\mathbb{R}^n)$. Thus for $t>0$, by Step 1,
\begin{align*}
    K(t,u,\dot{\mathrm{B}}^{s_0}_{p,q_0}(\mathbb{R}^n),\dot{\mathrm{B}}^{s_1}_{p,q_1}(\mathbb{R}^n))&\leqslant  \lVert \dot{\Delta}_j\tilde{\Sigma}\mathfrak{a}\rVert_{\ell^{q_0}_{s_0}(\mathbb{Z},\mathrm{L}^p(\mathbb{R}^n))}+  t\lVert \dot{\Delta}_j\tilde{\Sigma}\mathfrak{b}\rVert_{\ell^{q_1}_{s_1}(\mathbb{Z},\mathrm{L}^p(\mathbb{R}^n))} \\
    &\leqslant 4^{2+|s_0|+|s_1|}\left( \lVert \mathfrak{a}\rVert_{\ell^{q_0}_{s_0}(\mathbb{Z},\mathrm{L}^p(\mathbb{R}^n))}+  t\lVert \mathfrak{b}\rVert_{\ell^{q_1}_{s_1}(\mathbb{Z},\mathrm{L}^p(\mathbb{R}^n))}\right).
\end{align*}
One can take the infimum on all such pairs $(\mathfrak{a},\mathfrak{b})$, combined with \eqref{eq:KfunclqsLpControledByBspq}, we obtain
\begin{align*}
    K(t,(\dot{\Delta}_j u)_{j\in\mathbb{Z}},\ell^{q_0}_{s_0}(\mathbb{Z},\mathrm{L}^p(\mathbb{R}^n)),\ell^{q_1}_{s_1}(\mathbb{Z},\mathrm{L}^p(\mathbb{R}^n)))\sim_{s_0,s_1} K(t,u,\dot{\mathrm{B}}^{s_0}_{p,q_0}(\mathbb{R}^n),\dot{\mathrm{B}}^{s_1}_{p,q_1}(\mathbb{R}^n)).
\end{align*}
We multiply by $t^{-\theta}$, we take the $\mathrm{L}^q_\ast$-norm on both sides, so that by \cite[Theorem~5.6.1]{BerghLofstrom1976},
\begin{align*}
     \lVert  u\rVert_{(\dot{\mathrm{B}}^{s_0}_{p,q_0}(\mathbb{R}^n),\dot{\mathrm{B}}^{s_1}_{p,q_1}(\mathbb{R}^n))_{\theta,q}}&\sim_{s_0,s_1,\theta}\lVert \dot{\Delta}_j u\rVert_{(\ell^{q_0}_{s_0}(\mathbb{Z},\mathrm{L}^p(\mathbb{R}^n)),\ell^{q_1}_{s_1}(\mathbb{Z},\mathrm{L}^p(\mathbb{R}^n)))_{\theta,q}} \\ &\sim_{s_0,s_1,\theta} \lVert \dot{\Delta}_j u\rVert_{\ell^{q}_{s}(\mathbb{Z},\mathrm{L}^p(\mathbb{R}^n))} =\lVert u \lVert_{\dot{\mathrm{B}}^{s}_{p,q}(\mathbb{R}^n)}.
\end{align*}

\textbf{Step 2.3:} By Step 2.1, for $p\in[1,+\infty)$, the operator
\begin{align*}
\tilde{\Sigma}\,:\,\ell^1_{s_j}(\mathbb{Z},\mathrm{L}^p(\mathbb{R}^n))\longrightarrow \dot{\mathrm{B}}^{s_j}_{p,1}(\mathbb{Z},\mathrm{L}^p(\mathbb{R}^n)).
\end{align*}
is well-defined and bounded for $s_j\leqslant \frac{n}{p}$, $j\in\{0,1\}$. But Step 2.2 gives the interpolation identity
\begin{align*}
    (\dot{\mathrm{B}}^{s_0}_{p,1}(\mathbb{R}^n),\dot{\mathrm{B}}^{s_1}_{p,1}(\mathbb{R}^n))_{\theta,\infty} = \dot{\mathrm{B}}^{s}_{p,\infty}(\mathbb{R}^n).
\end{align*}
By real interpolation, and \cite[Theorem~5.6.1]{BerghLofstrom1976}, 
\begin{align*}
\tilde{\Sigma}\,:\,\ell^\infty_{s}(\mathbb{Z},\mathrm{L}^p(\mathbb{R}^n))\longrightarrow \dot{\mathrm{B}}^{s}_{p,\infty}(\mathbb{R}^n).
\end{align*}
is well-defined and bounded. Then, for $q_0,q_1=+\infty$, one can prove \ref{eq:realInterpHomBspqRn} exactly as in Step 2.2. 

\textbf{Step 3:} We aim to show the interpolation identity \ref{eq:realInterpHomBspqRn} when $p=+\infty$. 

\textbf{Step 3.1:} We want to prove that for all $s\leqslant0$,
\begin{align*}
    \tilde{\Sigma}\,:\,\ell^1_{s}(\mathbb{Z},\mathrm{L}^\infty(\mathbb{R}^n))\longrightarrow \dot{\mathrm{B}}^{s}_{\infty,1}(\mathbb{R}^n),
\end{align*}
is well-defined and bounded.

Let $\mathfrak{f}=(f_\ell)_{\ell\in\mathbb{Z}}\in\ell^1_{s}(\mathbb{Z},\mathrm{L}^\infty(\mathbb{R}^n))$, by definition, we have for all $m\in\mathbb{N}$,
\begin{align*}
    \sum_{j\in\mathbb{Z}} 2^{js}\lVert\dot{\Delta}_jf_{j\pm m} \rVert_{\mathrm{L}^\infty(\mathbb{R}^n)} \leqslant 2^{m|s|} \sum_{j\in\mathbb{Z}} 2^{js} \lVert f_{j} \rVert_{\mathrm{L}^\infty(\mathbb{R}^n)}<+\infty.
\end{align*}
Hence, we just have to prove that $\tilde{\Sigma} [\mathfrak{f}]$ converges to an element of $\mathcal{S}'_h(\mathbb{R}^n)$. We prove first that it converges to an element of $\mathcal{S}'(\mathbb{R}^n)$.
For all $N\in\mathbb{N}^\ast$, we introduce the finite sums
\begin{align*}
    \tilde{\Sigma}_+^{N}[\mathfrak{f}]:=\dot{\Delta}_{-1}\dot{\Delta}_{0}[f_{-2}+f_{-1}+f_0]+ \dot{\Delta}_{0}[\dot{\Delta}_{0}+\dot{\Delta}_{1}][f_{-1}+f_0+f_1]+\sum_{j= 1}^N\dot{\Delta}_j [f_{j-1}+f_{j}+f_{j+1}].
\end{align*}
For all $N\in\mathbb{N}$, one can check that $\tilde{\Sigma}_+^{N}[\mathfrak{f}]\in {\mathrm{B}}^{s}_{\infty,1}(\mathbb{R}^n)$, and for $M>N\geqslant 1$,
\begin{align*}
    \lVert \tilde{\Sigma}_+^{N}[\mathfrak{f}] -\tilde{\Sigma}_+^{M}[\mathfrak{f}]\rVert_{{\mathrm{B}}^{s}_{\infty,1}(\mathbb{R}^n)} \lesssim_{s} \sum_{j=N-1}^{M+1} 2^{js} \lVert f_{j} \rVert_{\mathrm{L}^\infty(\mathbb{R}^n)}\xrightarrow[N,M\rightarrow+\infty]{} 0.
\end{align*}
Then, one obtains the convergence of the series
\begin{align*}
    \tilde{\Sigma}_+[\mathfrak{f}]:=\dot{\Delta}_{-1}\dot{\Delta}_{0}[f_{-2}+f_{-1}+f_0]+ \dot{\Delta}_{0}[\dot{\Delta}_{0}+\dot{\Delta}_{1}][f_{-1}+f_0+f_1]+\sum_{j\geqslant 1}\dot{\Delta}_j [f_{j-1}+f_{j}+f_{j+1}] 
\end{align*}
in the space ${\mathrm{B}}^{s}_{\infty,1}(\mathbb{R}^n)$. Since $s\leqslant 0$, the series
\begin{align*}
    \tilde{\Sigma}_-[\mathfrak{f}]:=\dot{\Delta}_{0}\dot{\Delta}_{-1}[f_{-1}+f_0+f_1] + \dot{\Delta}_{-1}(\dot{\Delta}_{-1}+\dot{\Delta}_{-2})[f_{-2}+f_{-1}+f_0] +  \sum_{j\leqslant-2} \dot{\Delta}_j [f_{j-1}+f_{j}+f_{j+1}]
\end{align*}
converges absolutely in $\mathrm{L}^\infty(\mathbb{R}^n)$. Hence, 
\begin{align*}
    \tilde{\Sigma}[\mathfrak{f}] = \tilde{\Sigma}_+[\mathfrak{f}]+ \tilde{\Sigma}_-[\mathfrak{f}] \subset \mathrm{B}^{s}_{\infty,1}(\mathbb{R}^n)+  \mathrm{L}^\infty(\mathbb{R}^n) \subset \mathcal{S}'(\mathbb{R}^n).
\end{align*}
It remains to show that $\tilde{\Sigma}[\mathfrak{f}]\in\mathcal{S}'_h(\mathbb{R}^n)$. Let $\Theta\in\mathrm{C}^\infty_c(\mathbb{R}^n)$, real-valued, such that $\supp \Theta\subset B(0,R)$, for some fixed $R>0$, for any $\lambda > 0$, one has
\begin{align*}
    \lVert \Theta(\lambda \mathfrak{D}) \tilde{\Sigma}[\mathfrak{f}]\rVert_{\mathrm{L}^\infty(\mathbb{R}^n)} &\leqslant 3 \lVert \mathcal{F}^{-1}\Theta\rVert_{\mathrm{L}^1(\mathbb{R}^n)} \sum_{j\leqslant -\log_{3}(\lambda)+\log_3(R)+4} \lVert f_{j} \rVert_{\mathrm{L}^\infty(\mathbb{R}^n)}\\ &\leqslant 3 \lVert \mathcal{F}^{-1}\Theta\rVert_{\mathrm{L}^1(\mathbb{R}^n)}  \sum_{j\leqslant -\log_{3}(\lambda)+\log_{3}(R)+4} 2^{js}\lVert f_{j} \rVert_{\mathrm{L}^\infty(\mathbb{R}^n)}\xrightarrow[\lambda\rightarrow+\infty]{} 0,
\end{align*}
which proves $\tilde{\Sigma}[\mathfrak{f}]\in\mathcal{S}'_h(\mathbb{R}^n)$. Moreover, if $q=1$ and $s=0$, one has  $\tilde{\Sigma}[\mathfrak{f}]\in\mathrm{C}_{b,h}^0(\mathbb{R}^n)$.

\textbf{Step 3.2:} We prove here the interpolation identity \ref{eq:realInterpHomBspqRn} for $p=+\infty$, $q\in[1,+\infty]$, as well as the boundedness property
\begin{align*}
    \tilde{\Sigma}\,:\,\ell^q_{s}(\mathbb{Z},\mathrm{L}^\infty(\mathbb{R}^n))\longrightarrow \dot{\mathrm{B}}^{s}_{\infty,q}(\mathbb{R}^n),\qquad q\in(1,+\infty].
\end{align*}

First, the case $q_0,q_1=1$. Let $s_0\leqslant 0$, $s_0<s<s_1$, we let $u\in\dot{\mathrm{B}}^{s_0}_{\infty,1}(\mathbb{R}^n)+\dot{\mathrm{B}}^{s_1}_{\infty,1}(\mathbb{R}^n)$, as in Step 2.2, we obtain for all $t>0$,
\begin{align*}
    K(t,(\dot{\Delta}_j u)_{j\in\mathbb{Z}},\ell^{1}_{s_0}(\mathbb{Z},\mathrm{L}^\infty(\mathbb{R}^n)),\ell^{1}_{s_1}(\mathbb{Z},\mathrm{L}^\infty(\mathbb{R}^n)))\sim_{s_0,s_1} K(t,u,\dot{\mathrm{B}}^{s_0}_{\infty,1}(\mathbb{R}^n),\dot{\mathrm{B}}^{s_1}_{\infty,1}(\mathbb{R}^n)).
\end{align*}
Again, by \cite[Theorem~5.6.1]{BerghLofstrom1976},
\begin{align*}
     \lVert  u\rVert_{(\dot{\mathrm{B}}^{s_0}_{\infty,1}(\mathbb{R}^n),\dot{\mathrm{B}}^{s_1}_{\infty,1}(\mathbb{R}^n))_{\theta,q}}&\sim_{s_0,s_1,\theta}\lVert (\dot{\Delta}_j u)_{j\in\mathbb{Z}}\rVert_{(\ell^{1}_{s_0}(\mathbb{Z},\mathrm{L}^\infty(\mathbb{R}^n)),\ell^{1}_{s_1}(\mathbb{Z},\mathrm{L}^\infty(\mathbb{R}^n)))_{\theta,q}} \\ &\sim_{s_0,s_1,\theta} \lVert (\dot{\Delta}_j u)_{j\in\mathbb{Z}}\rVert_{\ell^{q}_{s}(\mathbb{Z},\mathrm{L}^\infty(\mathbb{R}^n))} =\lVert u \lVert_{\dot{\mathrm{B}}^{s}_{\infty,q}(\mathbb{R}^n)}.
\end{align*}

In particular, choosing $s_0,s_1\leqslant0$, we can interpolate
\begin{align*}
    \tilde{\Sigma}\,:\,\ell^1_{s_j}(\mathbb{Z},\mathrm{L}^\infty(\mathbb{R}^n))\longrightarrow \dot{\mathrm{B}}^{s_j}_{\infty,1}(\mathbb{R}^n),\, j\in\{0,1\},
\end{align*}
in order to obtain the boundedness of
\begin{align*}
     \tilde{\Sigma}\,:\,\ell^q_{s}(\mathbb{Z},\mathrm{L}^\infty(\mathbb{R}^n))\longrightarrow \dot{\mathrm{B}}^{s}_{\infty,q}(\mathbb{R}^n),\, s<0, \, q\in[1,+\infty].
\end{align*}
Therefore, we may reproduce the present step replacing $q_0,q_1=1$, by any value $q_0,q_1\in[1,+\infty]$. This concludes the proof of identity \ref{eq:realInterpHomBspqRn} for all $p,q,q_0,q_1\in[1,+\infty]$, $s,s_0,s_1\in\mathbb{R}$, such that $(\mathcal{C}_{s_0,p,q_0})$.

\textbf{Step 4:} We have proved the identities \ref{eq:realInterpHomBspqRn} and \ref{eq:realInterpHomBsinftyqRn} under the assumption $(\mathcal{C}_{s_0,p,q_0})$. We want to remove this additional condition and to prove the identity \ref{eq:realInterpHomBspqRn} in the case of Sobolev spaces, as well as the identities \ref{eq:realInterpHomBspqRnL1} and \ref{eq:realInterpHomBspqRnLinfty}.

To do so, let $s_0\in\mathbb{R}$ be arbitrary with $s_0<s_1$. By previous steps, one can find $\tilde{s_0}<s_0$, and $r\in[1,+\infty]$ such that $(\mathcal{C}_{\tilde{s_0},p,r})$ is satisfied, by previous steps one has
\begin{align*}
    (\dot{\mathrm{B}}^{\tilde{s_0}}_{p,r}(\mathbb{R}^n),\dot{\mathrm{B}}^{s_1}_{p,q_1}(\mathbb{R}^n))_{\frac{s_0-\Tilde{s_0}}{s_1-\Tilde{s_0}},q_0}=\dot{\mathrm{B}}^{s_0}_{p,q_0}(\mathbb{R}^n).
\end{align*}
Therefore, by the extremal reiteration property, Lemma \ref{lem:ExtrmReitPropnonComp}, we deduce
\begin{align*}
(\dot{\mathrm{B}}^{s_0}_{p,q_0}(\mathbb{R}^n),\dot{\mathrm{B}}^{s_1}_{p,q_1}(\mathbb{R}^n))_{\theta,q} &= \left( (\dot{\mathrm{B}}^{\tilde{s_0}}_{p,r}(\mathbb{R}^n),\dot{\mathrm{B}}^{s_1}_{p,q_1}(\mathbb{R}^n))_{\frac{s_0-\Tilde{s_0}}{s_1-\Tilde{s_0}},q_0} , \dot{\mathrm{B}}^{s_1}_{p,q_1}(\mathbb{R}^n)\right)_{\frac{s-\Tilde{s_0}}{s_1-\Tilde{s_0}},q}\\
&=(\dot{\mathrm{B}}^{\tilde{s_0}}_{p,r}(\mathbb{R}^n),\dot{\mathrm{B}}^{s_1}_{p,q_1}(\mathbb{R}^n))_{\frac{s-\Tilde{s_0}}{s_1-\Tilde{s_0}},q} = \dot{\mathrm{B}}^{s}_{p,q}(\mathbb{R}^n).
\end{align*}
 
The interpolation identities \ref{eq:realInterpHomBspqRn}, \ref{eq:realInterpHomBspqRnL1} and \ref{eq:realInterpHomBspqRnLinfty} follow from Proposition \ref{prop:EmbeddSobBesovRn}.

\textbf{Step 5:} The real interpolation identity \ref{eq:realInterpHomBsinftyqRn}. Since $\dot{\Delta}_j\,:\,\mathrm{C}^0_0(\mathbb{R}^n)\rightarrow\mathrm{C}^0_0(\mathbb{R}^n)$, the operators $(\dot{\Delta}_j)_{j\in\mathbb{Z}}$, $\tilde{\Sigma}$ and $(\dot{\Delta}_j\tilde{\Sigma})_{j\in\mathbb{Z}}$ restrict as bounded operators
\begin{align}
    \bullet \, &\tilde{\Sigma}\,:\,\ell^q_s(\mathbb{Z},\mathrm{C}^0_0(\mathbb{R}^n))\longrightarrow \dot{\mathrm{B}}^{s,0}_{\infty,q}(\mathbb{R}^n)\text{, $q\in[1,+\infty]$, $s\in\mathbb{R}$ such that $(\mathcal{C}_{s,\infty,q})$,}\nonumber\\
    \bullet \, &(\dot{\Delta}_j)_{j\in\mathbb{Z}}\,:\,\dot{\mathrm{B}}^{s,0}_{\infty,q}(\mathbb{R}^n)\longrightarrow \ell^q_s(\mathbb{Z},\mathrm{C}^0_0(\mathbb{R}^n))\text{,  $q\in[1,+\infty]$, $s\in\mathbb{R}$,}\label{eq:liftOperatorLinftyRealInterpRn}\\
    \bullet \, &(\dot{\Delta}_j\tilde{\Sigma})_{j\in\mathbb{Z}}\,:\,\ell^q_s(\mathbb{Z},\mathrm{C}^0_0(\mathbb{R}^n))\longrightarrow \ell^q_s(\mathbb{Z},\mathrm{C}^0_0(\mathbb{R}^n))\text{, $q\in[1,+\infty]$, $s\in\mathbb{R}$.}\nonumber
\end{align}
When $q=+\infty$, we have the same boundedness properties with $(c^0_{s},\dot{\mathcal{B}}^{s,0}_{\infty,\infty})$ instead of $(\ell^\infty_s,\dot{\mathrm{B}}^{s,0}_{\infty,\infty})$. Following Step 3.1, we obtain $\dot{\mathrm{B}}^{0,0}_{\infty,1}(\mathbb{R}^n)\hookrightarrow\mathrm{C}^0_0(\mathbb{R}^n)$. Therefore, it suffices to reproduce Step 3.2 and Step 4.

\textbf{Step 6:} The complex interpolation identity \ref{eq:complexInterpHomBspqRn} follows from a standard retraction and co-retraction argument, since for all $s\in\mathbb{R}$, $p,q\in[1,+\infty]$ such that \eqref{AssumptionCompletenessExponents}, we have the well-defined bounded maps
\begin{align*}
     (\dot{\Delta}_{j})_{j\in\mathbb{Z}}\,:\,\dot{\mathrm{B}}^{s}_{p,q}(\mathbb{R}^n) \longrightarrow\ell^q_{s}(\mathbb{Z},\mathrm{L}^p(\mathbb{R}^n)) \text{, and }\tilde{\Sigma}\,:\,\ell^q_{s}(\mathbb{Z},\mathrm{L}^p(\mathbb{R}^n))\longrightarrow \dot{\mathrm{B}}^{s}_{p,q}(\mathbb{R}^n),
\end{align*}
that satisfy $\tilde{\Sigma}[(\dot{\Delta}_{j})_{j\in\mathbb{Z}}]=\mathrm{I}$. The proof is now complete.
\end{proof}

\begin{remark} The Steps 3.1 and 3.2 in the proof above do imply that, provided $s\leqslant 0$ and $q\in[1,+\infty]$ satisfy the condition $(\mathcal{C}_{s,\infty,q})$, the Besov spaces
\begin{align*}
    \dot{\mathfrak{B}}^{s}_{\infty,q}(\mathbb{R}^n):=\{\,u\in\mathcal{S}'(\mathbb{R}^n)\,|\,\lVert u\rVert_{\dot{\mathrm{B}}^{s}_{\infty,q}(\mathbb{R}^n)}<+\infty\,\},
\end{align*}
are such that one has the inclusion
\begin{align*}
    \dot{\mathfrak{B}}^{s}_{\infty,q}(\mathbb{R}^n)\subset \mathcal{S}'_h(\mathbb{R}^n).
\end{align*}
Then necessarily $ \dot{\mathfrak{B}}^{s}_{\infty,q}(\mathbb{R}^n)= \dot{\mathrm{B}}^{s}_{\infty,q}(\mathbb{R}^n)$. 
\end{remark}

\begin{proposition}[ {\cite[Proposition~2.7]{Gaudin2022}} ]\label{prop:dualityRieszpotential} For any $s\in\mathbb{R}$, $p\in(1,+\infty)$,
\begin{align*}
\left\{\begin{array}{cl}
 \dot{\mathrm{H}}^{s,p}\times \dot{\mathrm{H}}^{-s,p'} &\longrightarrow \mathbb{C}\\
(u, v) &\longmapsto \sum\limits_{\left|j-j'\right| \leq 1} \left\langle\dot{\Delta}_{j} u, \dot{\Delta}_{j'} v\right\rangle_{\mathbb{R}^n}
\end{array}\right.
\end{align*}
defines a continuous bilinear functional on $\dot{\mathrm{H}}^{s,p}(\mathbb{R}^n)\times \dot{\mathrm{H}}^{-s,p'}(\mathbb{R}^n)$. Denote by $\mathcal{V}^{-s,p'}$ the set of functions $v\in\mathcal{S}(\mathbb{R}^n)\cap\dot{\mathrm{H}}^{-s,p'}(\mathbb{R}^n)$ such that $\left\lVert{v}\right\rVert_{\dot{\mathrm{H}}^{-s,p'}(\mathbb{R}^n)}\leqslant 1$. If $u\in\mathcal{S}'_h(\mathbb{R}^n)$, then we have
\begin{align*}
    \left\lVert{u}\right\rVert_{\dot{\mathrm{H}}^{s,p}(\mathbb{R}^n)} = \sup\limits_{\substack{v\in \mathcal{V}^{-s,p'}}} \big\lvert\big\langle u,  v\big\rangle_{\mathbb{R}^n}\big\rvert\text{. }
\end{align*}
Moreover, if $(\mathcal{C}_{s,p})$ is satisfied, $\dot{\mathrm{H}}^{s,p}(\mathbb{R}^n)$ is reflexive and we have
\begin{align}\label{eq:dualityRieszPotential}
    (\dot{\mathrm{H}}^{-s,p'}(\mathbb{R}^n))' = \dot{\mathrm{H}}^{s,p}(\mathbb{R}^n)\text{. }
\end{align}
\end{proposition}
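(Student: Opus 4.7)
The plan is to reduce everything to $\mathrm{L}^p/\mathrm{L}^{p'}$ duality via the isometry $(-\Delta)^{s/2}:\dot{\mathrm{H}}^{s,p}(\mathbb{R}^n)\longrightarrow \mathrm{L}^p(\mathbb{R}^n)$ induced by the Littlewood-Paley definition, together with the near-orthogonality of dyadic blocks. For the continuity of the bilinear pairing, the first observation is that $\dot{\Delta}_j u$ and $\dot{\Delta}_{j'}v$ have Fourier supports in disjoint dyadic annuli as soon as $|j-j'|\geqslant 2$, so the restriction $|j-j'|\leqslant 1$ already captures every non-vanishing term. Using $(-\Delta)^{s/2}\dot{\Delta}_j = \dot{\Delta}_j(-\Delta)^{s/2}$, each term equals $\langle(-\Delta)^{s/2}\dot{\Delta}_j u,(-\Delta)^{-s/2}\dot{\Delta}_{j'}v\rangle_{\mathbb{R}^n}$, and spectral localization collapses the double sum to $\langle W, W'\rangle_{\mathbb{R}^n}$, where $W:=\sum_j(-\Delta)^{s/2}\dot{\Delta}_j u\in\mathrm{L}^p(\mathbb{R}^n)$ and $W':=\sum_{j'}(-\Delta)^{-s/2}\dot{\Delta}_{j'}v\in\mathrm{L}^{p'}(\mathbb{R}^n)$ by the very definitions of the two norms. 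Hölder's inequality then yields the bound by $\lVert u\rVert_{\dot{\mathrm{H}}^{s,p}(\mathbb{R}^n)}\lVert v\rVert_{\dot{\mathrm{H}}^{-s,p'}(\mathbb{R}^n)}$.

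For the supremum representation, one direction follows once I check that for $v\in\mathcal{V}^{-s,p'}\subset\eus{S}(\mathbb{R}^n)$ the bilinear pairing coincides with the Schwartz duality pairing $\langle u,v\rangle_{\mathbb{R}^n}$ (same spectral-telescoping argument). For the reverse inequality I would exploit density of $\eus{S}_0(\mathbb{R}^n)$ in $\mathrm{L}^{p'}(\mathbb{R}^n)$ and $\mathrm{L}^p$-duality to write $\lVert u\rVert_{\dot{\mathrm{H}}^{s,p}(\mathbb{R}^n)}=\lVert W\rVert_{\mathrm{L}^p(\mathbb{R}^n)}=\sup_{g\in\eus{S}_0,\,\lVert g\rVert_{\mathrm{L}^{p'}}\leqslant 1}|\langle W,g\rangle_{\mathbb{R}^n}|$. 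For such a $g$, set $v:=(-\Delta)^{s/2}g\in\eus{S}_0(\mathbb{R}^n)$: its Fourier support being compact and away from $0$, only finitely many $\dot{\Delta}_{j'}$ act non-trivially on $v$, so $\lVert v\rVert_{\dot{\mathrm{H}}^{-s,p'}(\mathbb{R}^n)}=\lVert g\rVert_{\mathrm{L}^{p'}(\mathbb{R}^n)}\leqslant 1$ and $v\in\mathcal{V}^{-s,p'}$, with $\langle W,g\rangle_{\mathbb{R}^n}=\langle u,v\rangle_{\mathbb{R}^n}$.

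Under $(\mathcal{C}_{s,p})$, $\dot{\mathrm{H}}^{s,p}(\mathbb{R}^n)$ is complete, so $(-\Delta)^{s/2}$ embeds it isometrically as a closed subspace of the reflexive space $\mathrm{L}^p(\mathbb{R}^n)$; reflexivity of $\dot{\mathrm{H}}^{s,p}(\mathbb{R}^n)$ follows immediately. The sup representation already provides an isometric embedding $\dot{\mathrm{H}}^{s,p}(\mathbb{R}^n)\hookrightarrow(\dot{\mathrm{H}}^{-s,p'}(\mathbb{R}^n))'$. Given $\phi\in(\dot{\mathrm{H}}^{-s,p'}(\mathbb{R}^n))'$, the functional $g\longmapsto\phi((-\Delta)^{s/2}g)$ is bounded on $(\eus{S}_0,\lVert\cdot\rVert_{\mathrm{L}^{p'}})$ with norm at most $\lVert\phi\rVert$, so Hahn-Banach and Riesz representation produce $w\in\mathrm{L}^p(\mathbb{R}^n)$ with $\lVert w\rVert_{\mathrm{L}^p}\leqslant\lVert\phi\rVert$; the distribution $u:=(-\Delta)^{-s/2}w$, defined block-by-block inside $\eus{S}'_h(\mathbb{R}^n)$, then lies in $\dot{\mathrm{H}}^{s,p}(\mathbb{R}^n)$ and represents $\phi$ on $\eus{S}_0$, hence on all of $\dot{\mathrm{H}}^{-s,p'}(\mathbb{R}^n)$ by density.

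The main technical obstacle will be the rigorous justification of the collapse $\sum_{|j-j'|\leqslant 1}\langle(-\Delta)^{s/2}\dot{\Delta}_j u,(-\Delta)^{-s/2}\dot{\Delta}_{j'}v\rangle_{\mathbb{R}^n}=\langle W,W'\rangle_{\mathbb{R}^n}$ in Step 1 when $u\in\eus{S}'_h(\mathbb{R}^n)$ is not a priori an $\mathrm{L}^p$-function; one has to argue via partial sums controlled in $\mathrm{L}^p$ and invoke a dominated-convergence-type argument for the duality pairing. This is precisely the point where choosing $\eus{S}'_h$ as the ambient space (rather than $\eus{S}'/\mathbb{C}[x]$) earns its keep, by removing the polynomial ambiguity that would otherwise obstruct Littlewood-Paley reassembly.
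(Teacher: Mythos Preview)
The paper does not prove this proposition; it is quoted as background from \cite[Proposition~2.11]{Gaudin2022}, so there is no in-paper argument to compare against. Your approach is the natural one and is essentially correct: the reduction to $\mathrm{L}^p/\mathrm{L}^{p'}$ duality via the Riesz isometry, the use of $\eus{S}_0$ as a testing class for the sup formula, and the Hahn--Banach/Riesz argument for surjectivity under $(\mathcal{C}_{s,p})$ are exactly what one expects to find in the cited reference.

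Two points deserve slightly more care. First, in the surjectivity step you write $u:=(-\Delta)^{-s/2}w$ ``defined block-by-block inside $\eus{S}'_h$''; this is precisely where the hypothesis $(\mathcal{C}_{s,p})$ is doing work, since for $s<n/p$ the partial sums $\sum_{|j|\leqslant N}(-\Delta)^{-s/2}\dot{\Delta}_j w$ converge in $\mathrm{L}^{np/(n-sp)}(\mathbb{R}^n)\subset\eus{S}'_h(\mathbb{R}^n)$ via block-wise Sobolev embedding, whereas for $s\geqslant n/p$ nothing prevents the low-frequency tail from diverging by a polynomial and you cannot land in $\eus{S}'_h$. Second, the collapse $\sum_{|j-j'|\leqslant 1}\langle\cdots\rangle=\langle W,W'\rangle$ you flag at the end can be handled without any reassembly argument: bound $\big|\sum_{|j-j'|\leqslant 1}\dot{\Delta}_j u(x)\,\dot{\Delta}_{j'}v(x)\big|$ pointwise by Cauchy--Schwarz, obtaining a constant times the product of square functions $\big(\sum_j 2^{2js}|\dot{\Delta}_j u|^2\big)^{1/2}\big(\sum_{j'} 2^{-2j's}|\dot{\Delta}_{j'}v|^2\big)^{1/2}$, then integrate, apply H\"older, and invoke Lemma~\ref{lem:Triebelnormeq}. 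This is the route taken in the Besov analogue \cite[Proposition~2.29]{bookBahouriCheminDanchin} and avoids the delicate limit passage altogether.
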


\begin{proposition}[ {\cite[Proposition~2.29]{bookBahouriCheminDanchin}} ]\label{prop:DualityBesovRn} For any $s\in\mathbb{R}$, $p,q\in[1,+\infty]$,
\begin{align}\label{eq:BilinFormDualityBesovRn}
\left\{\begin{array}{cl}
 \dot{\mathrm{B}}^{s}_{p,q}\times \dot{\mathrm{B}}^{-s}_{p',q'} &\longrightarrow \mathbb{C}\\
(u, v) &\longmapsto \sum\limits_{\left|j-j'\right| \leq 1} \left\langle\dot{\Delta}_{j} u, \dot{\Delta}_{j'} v\right\rangle_{\mathbb{R}^n}
\end{array}\right.
\end{align}
defines a continuous bilinear functional on $\dot{\mathrm{B}}^{s}_{p,q}(\mathbb{R}^n)\times \dot{\mathrm{B}}^{-s}_{p',q'}(\mathbb{R}^n)$. Denote by $\mathcal{Q}^{-s}_{p',q'}$ the set of functions $v\in\mathcal{S}(\mathbb{R}^n)\cap\dot{\mathrm{B}}^{-s}_{p',q'}(\mathbb{R}^n)$ such that $\left\lVert{v}\right\rVert_{\dot{\mathrm{B}}^{-s}_{p',q'}(\mathbb{R}^n)}\leqslant 1$. If $u\in\mathcal{S}'_h(\mathbb{R}^n)$, then we have
\begin{align*}
    \left\lVert{u}\right\rVert_{\dot{\mathrm{B}}^{s}_{p,q}(\mathbb{R}^n)} \sim_{s,p,q} \sup\limits_{\substack{v\in \mathcal{Q}^{-s}_{p',q'}}} \big\lvert\big\langle u,  v\big\rangle_{\mathbb{R}^n}\big\rvert\text{. }
\end{align*}
Moreover, we have the duality identities
\begin{enumerate}
    \item $\dot{\mathrm{B}}^{s}_{p,q}(\mathbb{R}^n) = (\dot{\mathrm{B}}^{-s}_{p',q'}(\mathbb{R}^n))'$, $p,q\in(1,+\infty]$, $s<n/p$; \label{eq:dualityBesovRn1}
    \item $\dot{\mathrm{B}}^{s}_{p,1}(\mathbb{R}^n) = (\dot{\mathcal{B}}^{-s}_{p',\infty}(\mathbb{R}^n))'$, $p\in(1,+\infty]$, $s\leqslant n/p$;\label{eq:dualityBesovRn2}
    \item $\dot{\mathrm{B}}^{s}_{1,q}(\mathbb{R}^n) = (\dot{\mathrm{B}}^{-s,0}_{\infty,q'}(\mathbb{R}^n))'$, $q\in(1,+\infty]$, $s< n$;\label{eq:dualityBesovRn3}
    \item $\dot{\mathrm{B}}^{s}_{1,1}(\mathbb{R}^n) = (\dot{\mathcal{B}}^{-s,0}_{\infty,\infty}(\mathbb{R}^n))'$, $s\leqslant n$.\label{eq:dualityBesovRn4}
\end{enumerate}
\end{proposition}

\begin{proof} Since the first part of the statement is given by \cite[Proposition~2.29]{bookBahouriCheminDanchin}. We only prove the duality identities.
We use the bounded maps $\tilde{\Sigma}$, $(\dot{\Delta}_j\tilde{\Sigma})_{j\in\mathbb{Z}}$, and $(\dot{\Delta}_j)_{j\in\mathbb{Z}}$ as introduced in \eqref{eq:retractionMapsLp(l2s)intoHsp}.
We do have the same boundedness properties with the operator
\begin{align}\label{eq:ResconstructOpAst}
    \tilde{\Sigma}_{\ast}[(w_j)_{j\in\mathbb{Z}}] := \sum_{j\in\mathbb{Z}} \dot{\Delta}_j w_j\text{, } (w_j)_{j\in\mathbb{Z}}\subset\mathcal{S}'(\mathbb{R}^n)
\end{align}
replacing $\tilde{\Sigma}$.

We focus on \ref{eq:dualityBesovRn3}. By the boundedness properties of the bilinear form \eqref{eq:BilinFormDualityBesovRn}, we have a canonical embedding $\dot{\mathrm{B}}^{s}_{1,q}(\mathbb{R}^n) \hookrightarrow (\dot{\mathrm{B}}^{-s,0}_{\infty,q'}(\mathbb{R}^n))'$. We prove the embedding is surjective.

Let $U\in(\dot{\mathrm{B}}^{-s,0}_{\infty,q'}(\mathbb{R}^n))'$. We use the boundedness properties \eqref{eq:liftOperatorLinftyRealInterpRn}: for all $(f_j)_{j\in\mathbb{Z}}\subset\mathrm{C}^0_0(\mathbb{R}^n)$ with finite support with respect to the discrete variable, we do have
\begin{align*}
    \tilde{\Sigma}_{\ast}[(f_j)_{j\in\mathbb{Z}}] \in \dot{\mathrm{B}}^{-s,0}_{\infty,q'}(\mathbb{R}^n).
\end{align*}
Therefore, by density of finitely supported sequences in $\ell^{q'}_{s}(\mathbb{Z},\mathrm{C}^0_0(\mathbb{R}^n))$, $U$ induces an element $\mathfrak{U}$ that belongs to $\big(\ell^{q'}_{s}(\mathbb{Z},\mathrm{C}^0_0(\mathbb{R}^n))\big)'$ by the formula
\begin{align*}
    \mathfrak{U}\,:\, (f_j)_{j\in\mathbb{Z}} \longmapsto \langle U, \Tilde{\Sigma}(f_j)_{j\in\mathbb{Z}}\rangle.
\end{align*}
By Lemma \ref{lem:DualitySequencespaces}, there exists $\mathfrak{u} =(\mathfrak{u}_j)_{j\in\mathbb{Z}}\in{\ell^q_s(\mathbb{Z},\mathcal{M}(\mathbb{R}^n))}$, such that for all $\varphi\in\dot{\mathrm{B}}^{-s,0}_{\infty,q'}(\mathbb{R}^n)$,
\begin{align*}
    \langle U, \varphi\rangle=\langle \mathfrak{U}, (\dot{\Delta}_j\varphi)_{j\in\mathbb{Z}} \rangle= \sum_{j\in\mathbb{Z}}\langle \mathfrak{u}_j, \dot{\Delta}_j \varphi \rangle_{\mathbb{R}^n}.
\end{align*}
We would like to set $u:=\sum_{j\in\mathbb{Z}} {\dot{\Delta}_j}\mathfrak{u}_j = \tilde{\Sigma}_{\ast}[(\mathfrak{u}_j)_{j\in\mathbb{Z}}]$. By standard properties of the convolution, the operator $\dot{\Delta}_j\,:\,\mathcal{M}(\mathbb{R}^n)\longrightarrow \mathrm{L}^1(\mathbb{R}^n)$ is bounded with norm at most $1$. For $s<n$, $\dot{\mathrm{B}}^{s}_{1,q}(\mathbb{R}^n)$ is a complete space, and
\begin{align}\label{eq:}
    \lVert \tilde{\Sigma}_{\ast}[(\mathfrak{u}_j)_{j\in\llb-N,N\rrb}]\rVert_{\dot{\mathrm{B}}^{s}_{1,q}(\mathbb{R}^n)}&\leqslant (1+2^{|s|+1}) 
    \big\lVert (\lVert{\dot{\Delta}_j}\mathfrak{u}_j\rVert_{\mathrm{L}^1(\mathbb{R}^n)})_{j\in\llb-N,N\rrb}\big\rVert_{\ell^{q}_s(\mathbb{Z})}\\
    &\lesssim_{s} \big\lVert (\lVert\mathfrak{u}_j \rVert_{\mathcal{M}(\mathbb{R}^n)})_{j\in\llb-N,N\rrb} \big\rVert_{\ell^{q}_s(\mathbb{Z})} = \big\lVert (\mathfrak{u}_j)_{j\in\llb-N,N\rrb}\big\rVert_{\ell^{q}_s(\mathbb{Z},\mathcal{M}(\mathbb{R}^n))}, 
\end{align}
so that $\tilde{\Sigma}_{\ast}$ extends uniquely by completeness as an operator defined on $\ell^{q}_s(\mathbb{Z},\mathcal{M}(\mathbb{R}^n))$ with values in $\dot{\mathrm{B}}^{s}_{1,q}(\mathbb{R}^n)$. Hence, the series $u:=\sum_{j\in\mathbb{Z}} {\dot{\Delta}_j}\mathfrak{u}_j = \tilde{\Sigma}_{\ast}[(\mathfrak{u}_j)_{j\in\mathbb{Z}}]$ is well-defined as an element of $\dot{\mathrm{B}}^{s}_{1,q}(\mathbb{R}^n)$, and we have
\begin{align*}
    \langle U, \varphi\rangle=\langle u, \varphi \rangle_{\mathbb{R}^n}\text{, }\forall \varphi\in\dot{\mathrm{B}}^{-s,0}_{\infty,q'}(\mathbb{R}^n) \text{, and }  \lVert u \rVert_{\dot{\mathrm{B}}^{s}_{1,q}(\mathbb{R}^n)}\lesssim_{s} \lVert (\mathfrak{u}_j)_{j\in\mathbb{Z}}\rVert_{\ell^{q}_s(\mathbb{Z},\mathcal{M}(\mathbb{R}^n))}.
\end{align*}
The remaining duality identities can be proven similarly.
\end{proof}

\begin{proposition}[ {\cite[Proposition~2.9]{Gaudin2022}, \cite[Lemma~12]{DanchinMucha2009}} ]\label{prop:SobolevMultiplier} For all $p,q\in[1,+\infty]$, for all $s\in (-1+\frac{1}{p},\frac{1}{p})$, for all $u\in\dot{\mathrm{B}}^{s}_{p,q}(\mathbb{R}^{n})$ (resp. $\dot{\mathrm{H}}^{s,p}(\mathbb{R}^n)$, $p\neq1,+\infty$), one has
\begin{align*}
    \lVert \mathbbm{1}_{\mathbb{R}^n_+} u \rVert_{\dot{\mathrm{B}}^{s}_{p,q}(\mathbb{R}^{n})} \lesssim_{s,p,n} \lVert u \rVert_{\dot{\mathrm{B}}^{s}_{p,q}(\mathbb{R}^{n})}\text{ }\text{ (resp. }  \lVert \mathbbm{1}_{\mathbb{R}^n_+} u \rVert_{\dot{\mathrm{H}}^{s,p}(\mathbb{R}^{n})} \lesssim_{s,p,n} \lVert u \rVert_{\dot{\mathrm{H}}^{s,p}(\mathbb{R}^{n})}   \text{ ). }
\end{align*}
The same result still holds with $\{{\mathrm{H}},{\mathrm{B}}\}$ instead of $\{\dot{\mathrm{H}},\dot{\mathrm{B}}\}$.
\end{proposition}

\subsection{Function spaces by restriction}\label{sec:FunctionSpacesRestrict}
Let $s\in\mathbb{R}$, $k\in\mathbb{N}$, $p,q\in[1,+\infty]$ and $\Omega$ be an open set of $\mathbb{R}^n$.

For $\mathrm{X}\in\{ \mathrm{B}^{s}_{p,q}, \dot{\mathrm{B}}^{s}_{p,q},  {\mathrm{H}}^{s,p}, \dot{\mathrm{H}}^{s,p},  {\mathrm{W}}^{k,p}, \dot{\mathrm{W}}^{k,p}\}$\footnote{Of course, in the case of Besov spaces, one can also consider $\mathrm{B}^{s,0}_{\infty,q}$, $\mathcal{B}^{s}_{p,\infty}$, etc.}, we define
\begin{align*}
    \mathrm{X}(\Omega):= \mathrm{X}(\mathbb{R}^n)_{|_{\Omega}}\text{, }
\end{align*}
with the quotient norm $\lVert u \rVert_{\mathrm{X}(\Omega)}:= \inf\limits_{\substack{\Tilde{u}\in \mathrm{X}(\mathbb{R}^n),\\ \tilde{u}_{|_{\Omega}}=u\, .}} \lVert \Tilde{u} \rVert_{\mathrm{X}(\mathbb{R}^n)}$. A direct consequence of the definition of those spaces is the density of $\mathcal{S}_0(\overline{\Omega})\subset\mathcal{S}(\overline{\Omega})$ in each of them, the completeness and reflexivity when their counterpart on $\mathbb{R}^n$ are complete and reflexive. We can also define
\begin{align*}
    \mathrm{X}_0(\Omega):= \left\{\,u\in \mathrm{X}(\mathbb{R}^n) \,\Big{|}\, \supp u \subset \overline{\Omega} \right\}\text{, }
\end{align*}
with its natural induced norm $\lVert  u \rVert_{\mathrm{X}_0(\Omega)}:= \lVert  u \rVert_{\mathrm{X}(\mathbb{R}^n)}$. We always have the canonical continuous injection,
\begin{align*}
    \mathrm{X}_0(\Omega)\hookrightarrow \mathrm{X}(\Omega) \text{. }
\end{align*}

Since there is a natural embedding $\mathcal{S}'(\mathbb{R}^n)\hookrightarrow \mathcal{D}'(\mathbb{R}^n)$, we also have the inclusion,
\begin{align*}
    \mathrm{X}(\Omega) \subset \mathcal{D}'(\Omega)\text{,}
\end{align*}
where $\mathcal{D}'(\Omega)=(\mathrm{C}_c^\infty(\Omega))'$ is the topological vector space of distributions on $\Omega$.

If $\mathrm{X}$ and $\mathrm{Y}$ are different function spaces
\begin{itemize}
    \item  if one has a continuous embedding,
\begin{align*}
    \mathrm{Y}(\mathbb{R}^n)\hookrightarrow \mathrm{X}(\mathbb{R}^n) \text{. }
\end{align*}
A direct consequence of the definition is that
\begin{align*}
    \mathrm{Y}(\Omega)\hookrightarrow \mathrm{X}(\Omega) \text{, }
\end{align*}
and similarly with $\mathrm{X}_0$ and $\mathrm{Y}_0$.
    
    \item We denote $[\mathrm{X}\cap \mathrm{Y}](\Omega)$ the restriction of $\mathrm{X}(\mathbb{R}^n)\cap \mathrm{Y}(\mathbb{R}^n)$ to $\Omega$. In general, without any further information, one only has the continuous embedding: 
    \begin{align*}
        [\mathrm{X}\cap \mathrm{Y}](\Omega)\hookrightarrow \mathrm{X}(\Omega)\cap \mathrm{Y}(\Omega) \text{. }
    \end{align*}
\end{itemize}

The results corresponding to those obtained for the whole space $\mathbb{R}^n$ in the previous subsection are usually carried over by the existence of an appropriate
extension operator
\begin{align*}
    \mathcal{E}\,:\, \mathcal{S}'(\Omega)\longrightarrow \mathcal{S}'(\mathbb{R}^n)\text{, }
\end{align*}
bounded from $\mathrm{X}(\Omega)$ to $\mathrm{X}(\mathbb{R}^n)$.

Following the proofs of \cite[Proposition~3.22]{DanchinHieberMuchaTolk2020} and \cite[Lemma~3.13]{Gaudin2022}, the definition of function spaces by restriction yields the next result.

\begin{lemma}\label{lem:EmbeddingInterpHomSobspacesSpeLip} Let $\mathrm{X}(\mathbb{R}^n)$, $\mathrm{Y}(\mathbb{R}^n)$ and $\mathrm{Z}(\mathbb{R}^n)$  be three compatible function spaces over $\mathbb{R}^n$ such that for some $q\in[1,+\infty]$, $\theta\in(0,1)$,
\begin{align*}
    (\mathrm{X}(\mathbb{R}^n),\mathrm{Y}(\mathbb{R}^n))_{\theta,q}=\mathrm{Z}(\mathbb{R}^n).
\end{align*}
We have the continuous embeddings
\begin{align}
    \mathrm{Z}(\Omega)\hookrightarrow&(\mathrm{X}(\Omega),\mathrm{Y}(\Omega))_{\theta,q}\text{, }\label{eq:HomInterpEmbedding1}\\
    \mathrm{Z}_0(\Omega)\hookleftarrow&(\mathrm{X}_0(\Omega),\mathrm{Y}_0(\Omega))_{\theta,q} \text{.}\label{eq:HomInterpEmbedding3}
\end{align}
\end{lemma}

\subsection{Quick overview of inhomogeneous function spaces on (special) Lipschitz domains}\label{sec:InhomSpacesSpeLip}

From now on, and until the end of the paper, $\Omega$ will be a fixed special Lipschitz domain given by a fixed uniformly Lipschitz function $\phi\,:\,\mathbb{R}^{n-1}\longrightarrow \mathbb{R}$, \textit{i.e.},
\begin{align*}
    \Omega :=\{\,(x',x_n)\in\mathbb{R}^{n-1}\times\mathbb{R}\,|\, x_n>\phi(x')\,\}\text{.}
\end{align*}
We also set the following global bi-Lipschitz map of $\mathbb{R}^n=\mathbb{R}^{n-1}\times\mathbb{R}$,
\begin{align}\label{eq:globalChangecoordSpeLip}
    \Psi \,:\, (x',x_n)\longmapsto (x',x_n+\phi(x'))\text{.} 
\end{align}
For which, we have
\begin{align}\label{eq:globalChangecoordJac}
    \Psi(\mathbb{R}^n_+) = \Omega\text{, } \Psi^{-1}(\Omega) = \mathbb{R}^n_+ \text{ and } \mathrm{det}(\nabla \Psi) =  \mathrm{det}(\nabla(\Psi^{-1})) = 1\text{.} 
\end{align}

This subsection is dedicated to recall few selected facts about inhomogeneous function spaces on Lipschitz domains. A substantial part of the presented results is used in the next sections to carry over the corresponding ones for the homogeneous scales of function spaces. One may also see this subsection as a roadmap for the results we aim to reproduce. We follow closely the presentation given in \cite[Section~3A]{Gaudin2022}.

 $\bullet$ \textbf{Extension operators:} For a suitable extension operator in the case of inhomogeneous function spaces on a (special) Lipschitz domain, a notable approach was achieved by Stein in \cite[Chapter~VI,~Section~3]{Stein1970}, for Sobolev spaces with non-negative index, and Besov spaces of positive index of regularity (this follows by real interpolation). A full and definitive result for the inhomogeneous case on Lipschitz domains, and even in a more general case (allowing $p,q$ to be less than $1$ considering the whole Besov and Triebel-Lizorkin scales), was given by Rychkov in \cite{Rychkov1999} where the extension operator is known to be universal and to cover even negative regularity index. 

 $\bullet$ \textbf{Interpolation property:} The extension operator provided by Rychkov can be used to prove, thanks to \cite[Theorem~6.4.2]{BerghLofstrom1976}, if $(\mathfrak{H},\mathfrak{B})\in\{(\mathrm{H},\mathrm{B}), (\mathrm{H}_0,\mathrm{B}_{\cdot,\cdot,0})\}$,
\begin{align}
    [\mathfrak{H}^{s_0,p_0}(\Omega),\mathfrak{H}^{s_1,p_1}(\Omega)]_\theta=\mathfrak{H}^{s,p_\theta}(\Omega)\text{, }&\qquad (\mathfrak{B}^{s_0}_{p,q_0}(\Omega),\mathfrak{B}^{s_1}_{p,q_1}(\Omega))_{\theta,q} = \mathfrak{B}^{s}_{p,q}(\Omega)\text{, }\\
    (\mathfrak{H}^{s_0,p}(\Omega),\mathfrak{H}^{s_1,p}(\Omega))_{\theta,q}= \mathfrak{B}^{s}_{p,q}(\Omega)\text{, }&\qquad [\mathfrak{B}^{s_0}_{p_0,q_0}(\Omega),\mathfrak{B}^{s_1}_{p_1,q_1}(\Omega)]_{\theta} = \mathfrak{B}^{s}_{p_\theta,q_\theta}(\Omega)\text{, }
\end{align}
whenever $(p_0,q_0),(p_1,q_1),(p,q)\in[1,+\infty]^2$($p_j\neq 1,+\infty$, for the complex interpolation of Sobolev (Bessel potential) spaces), $s_0\neq s_1$ two real numbers, and $\theta\in(0,1)$ such that
\begin{align*}
    \left(s,\frac{1}{p_\theta},\frac{1}{q_\theta}\right):= (1-\theta)\left(s_0,\frac{1}{p_0},\frac{1}{q_0}\right)+ \theta\left(s_1,\frac{1}{p_1},\frac{1}{q_1}\right)\text{,}
\end{align*}
with $q_\theta<+\infty$.

 $\bullet$ \textbf{Subspaces of functions supported in the domain:} A nice property is that the description of the boundary yields the following density results, for all $p\in(1,+\infty)$, $q\in[1,+\infty)$, $s\in\mathbb{R}$,
\begin{align}
    \mathrm{\mathrm{H}}^{s,p}_0(\Omega)= \overline{\mathrm{C}_c^\infty(\Omega)}^{\lVert \cdot \rVert_{\mathrm{\mathrm{H}}^{s,p}(\mathbb{R}^n)}}\text{,}\quad\text{and}\quad \mathrm{B}^{s}_{p,q,0}(\Omega)= \overline{\mathrm{C}_c^\infty(\Omega)}^{\lVert \cdot \rVert_{\mathrm{B}^{s}_{p,q}(\mathbb{R}^n)}}\text{. }
\end{align}
One may check \cite[Section~2]{JerisonKenig1995} for the treatment in the case of Sobolev spaces, the case of Besov spaces follows by an interpolation argument, see \cite[Theorem~3.4.2]{BerghLofstrom1976}. As a direct consequence, one has from \cite[Proposition~2.9]{JerisonKenig1995} and \cite[Theorem~3.7.1]{BerghLofstrom1976}, that for all $s\in\mathbb{R}$, $p\in(1,+\infty)$, $q\in[1,+\infty)$,
\begin{align}
    (\mathrm{\mathrm{H}}^{s,p}(\Omega))' = &\mathrm{H}^{-s,p'}_0(\Omega)\text{, }\, (\mathrm{B}^{s}_{p,q}(\Omega))'=\mathrm{B}^{-s}_{p',q',0}(\Omega) \text{, }\\
    &(\mathrm{B}^{s}_{p,q,0}(\Omega))'=\mathrm{B}^{-s}_{p',q'}(\Omega)\text{. }
\end{align}

And finally, thanks to a modified version of Proposition \ref{prop:SobolevMultiplier}, we also have a particular case of equality of Sobolev spaces, with equivalent norms, for all $p\in(1,+\infty)$, $q\in[1,+\infty]$, $s\in(-1+\frac{1}{p},\frac{1}{p})$,
\begin{align}
    \mathrm{\mathrm{H}}^{s,p}(\Omega) = \mathrm{\mathrm{H}}^{s,p}_0(\Omega)\text{, }\, \mathrm{B}^{s}_{p,q}(\Omega)=\mathrm{B}^{s}_{p,q,0}(\Omega) \text{. }
\end{align}
In the case of Besov spaces, everything remains true when $p=1$.

 $\bullet$ \textbf{Further construction and properties of function spaces:} The interested reader will find an explicit and way more general (and still valid, for the most part of it, in the case of a special Lipschitz domain) treatment for bounded Lipschitz domains in \cite{KaltonMayborodaMitrea2007}, where the Triebel-Lizorkin scale, including Hardy spaces, and other endpoint function spaces are also considered.

A recent and accessible exposition is available in \cite[Chapters~8~\&~11]{Leoni2023}. It deals with inhomogeneous Sobolev-Slobodeckij spaces $\mathrm{W}^{s,p}(\Omega)$, which coincides with usual Sobolev spaces when $s\in\mathbb{Z}$, and with diagonal Besov spaces $\mathrm{B}^{s}_{p,p}(\Omega)$ when $s\in\mathbb{R}\setminus \mathbb{Z}$. The case of indices $s\in[0,1]$ is treated in the case of Lipschitz domains, and $s\in[0,m+1]$ in the case where $\Omega$ is a $\mathrm{C}^{m,1}$ domain.

All the results presented above will be used without being mentioned and are assumed to be well-known to the reader.


\section{Homogeneous Sobolev and Besov spaces on special Lipschitz domains.}\label{sec:ConstrucHomFunctionSpacesSpeLip}

As in \cite[Subsection~3B]{Gaudin2022}, one may expect to recover similar results for the scale of homogeneous Sobolev and Besov as mentioned in Section \ref{sec:InhomSpacesSpeLip}. However, this approach still faces the same issues raised in the introduction of \cite[Subsection~3B]{Gaudin2022}: the lack of completeness for the whole scale of function spaces, and whether Rychkov's extension operator, given in \cite{Rychkov1999}, satisfies homogeneous estimates remains unknown.

The extension method employed in \cite[Chapter~3]{DanchinHieberMuchaTolk2020} and \cite[Subsection~3B1]{Gaudin2022} -- by means of the global change of coordinates and the extension operator by higher order reflection around the boundary -- will fail for high regularities. Indeed, the global bi-Lipschitz map $\Psi$ is obviously not sufficiently regular, making it impossible to take its derivatives more than once. Moreover, even if it were a smooth global diffeomorphism, higher order derivatives would produce inhomogeneous parts with lower order terms. However, this method still makes sense for regularity indices $s\in(-1+1/p,1]$.

The main idea here is to use Stein's universal extension operator introduced in \cite[Chapter~VI]{Stein1970}, for which we have \textbf{homogeneous} estimates for non-negative integers indices of regularity. More precisely, we have the next statement.

\begin{theorem}[ {\cite[Chapter~VI,~Section~3,~Theorem~5']{Stein1970}} ]\label{thm:SteinsExtensionOp} For $\Omega$ a special Lipschitz domain, there exists a well-defined linear operator
\begin{align*}
    \mathcal{E}\,:\,\mathrm{L}^1(\Omega)+\mathrm{L}^\infty(\Omega) \longrightarrow \mathrm{L}^1(\mathbb{R}^n)+\mathrm{L}^\infty(\mathbb{R}^n)
\end{align*}
such that for all $u\in\mathrm{L}^1(\Omega)+\mathrm{L}^\infty(\Omega)$,
\begin{align*}
    [{\mathcal{E}u}]_{|_{\Omega}} = u\text{,}
\end{align*}
and for all $p\in[1,+\infty]$, $m\in\mathbb{N}$, if $u\in\mathrm{W}^{m,p}(\Omega)$, we have $\mathcal{E}u\in \mathrm{W}^{m,p}(\mathbb{R}^n)$, with the following estimates
\begin{equation}\label{eq:SteinExtOpEstInteger}
    \lVert \nabla^k (\mathcal{E}u)\rVert_{\mathrm{L}^p(\mathbb{R}^n)}\lesssim_{p,n,k,\partial\Omega}\lVert \nabla^k u\rVert_{\mathrm{L}^p(\Omega)},\quad k\in\llb 0,m\rrb.
\end{equation}
We also have $\mathcal{E}[\mathrm{C}^{m}_0(\overline{\Omega})] \subset \mathrm{C}^{m}_0(\mathbb{R}^n)$ and $\mathcal{E}[\mathrm{C}^{m}_b(\overline{\Omega})] \subset \mathrm{C}^{m}_b(\mathbb{R}^n)$, for all $m\in\mathbb{N}$.
\end{theorem}

From there, the goal is to fall in a setting so that one just has to use the proofs in \cite[Section~3]{Gaudin2022}, or at least reproduce it with appropriate modifications. Indeed, these proofs mainly depend on the existence of good extension operator with appropriate homogeneous estimates, and in some case, on the reflexivity of considered Sobolev spaces. Our first goal and starting point is to obtain \eqref{eq:SteinExtOpEstInteger} for homogeneous Sobolev norms of fractional order.

\subsection{Homogeneous Sobolev spaces on special Lipschitz domains}\label{sec:HomSobSpacesSpeLip}

The following preliminary lemma is a direct consequence of the definition of function spaces by restriction, Lemma \ref{lem:structLemma} and Theorem \ref{thm:SteinsExtensionOp}.

\begin{lemma}\label{lem:SumSpacesDomainSteinExtOp}We have the following independent properties:
\begin{enumerate}
    \item For $s \in\mathbb{R}$, $p\in(1,+\infty)$, $\dot{\mathrm{H}}^{s,p}(\Omega)\subset \mathrm{H}^{s,p}(\Omega)+\mathrm{C}^{k_s}_0(\overline{\Omega})$, where $k_s=\max(0,\lceil s\rceil)$.
    \item For $k\in\mathbb{N}^\ast$, we have the set inclusions
    \begin{align*}
        \dot{\mathrm{W}}^{k,1}(\Omega) \subset \mathrm{W}^{k,1}(\Omega)+\mathrm{C}^k_0(\overline{\Omega}),\\
        \dot{\mathrm{W}}^{k,\infty}(\Omega) \subset \mathrm{W}^{k,\infty}(\Omega).
    \end{align*}
    \item The extension operator $\mathcal{E}$ introduced in Theorem \ref{thm:SteinsExtensionOp} is such that it satisfies the boundedness properties
    \begin{align*}
        \mathcal{E}\,&:\,\mathrm{H}^{s,p}(\Omega)+\mathrm{C}^{m}_0(\overline{\Omega}) \longrightarrow \mathrm{H}^{s,p}(\mathbb{R}^n)+\mathrm{C}^{m}_0(\mathbb{R}^n),\\
        \mathcal{E}\,&:\,\mathrm{W}^{k,1}(\Omega)+\mathrm{C}^{m}_0(\overline{\Omega}) \longrightarrow \mathrm{W}^{k,1}(\mathbb{R}^n)+\mathrm{C}^{m}_0(\mathbb{R}^n),
    \end{align*}
    for all $p\in(1,+\infty)$, all $s\geqslant0$, all $k,m\in\mathbb{N}$.
\end{enumerate}
\end{lemma}

\medbreak

For the homogeneous Sobolev spaces over $\mathrm{L}^\infty$, Theorem \ref{thm:SteinsExtensionOp} already gives the desired homogeneous estimates, without being clear that the range is a subset of $\mathcal{S}'_h(\mathbb{R}^n)$ or not. We mean that in general, it is not known if $\mathcal{E}({\mathrm{L}}^{\infty}_h(\Omega))\subset {\mathrm{L}}^{\infty}_h(\mathbb{R}^n)$, nor if $\mathcal{E}(\mathrm{C}^k_{h,b}(\overline{\Omega}))\subset \mathrm{C}^k_{b,h}(\mathbb{R}^n)$. However, we do have
\begin{align*}
   \mathcal{E}(\mathrm{C}^{m}_0(\overline{\Omega}))\subset \mathrm{C}^{m}_0(\mathbb{R}^n)\subset \mathcal{S}'_h(\mathbb{R}^n).
\end{align*}
 
Hence, we will take a focus on the case $p\in[1,+\infty)$, and more especially the case of fractional regularity indices $s\in\mathbb{R}$ when $p\in(1,+\infty)$. This is because, according to Lemma \ref{lem:SumSpacesDomainSteinExtOp}, $\mathcal{E}$ is entirely and uniquely well-defined on $\dot{\mathrm{H}}^{s,p}(\Omega)$ whenever $s\geqslant 0$, and one has
\begin{align*}
     \mathcal{E}\left(\dot{\mathrm{H}}^{s,p}(\Omega)\right)\subset\mathcal{E}\left(\mathrm{H}^{s,p}(\Omega)+\mathrm{C}^{\lceil s\rceil}_0(\overline{\Omega})\right)\subset\mathrm{H}^{s,p}(\mathbb{R}^n)+\mathrm{C}^{\lceil s\rceil}_0(\mathbb{R}^n)\subset \mathcal{S}'_h(\mathbb{R}^n),
\end{align*}
and similarly for ${\mathrm{W}}^{k,1}$ instead of ${\mathrm{H}}^{s,p}$.

Upon initial examination, it might appear that obtaining boundedness with respect to the norms of fractional-order (Riesz potential) homogeneous Sobolev spaces could be achieved by simply applying complex interpolation to \eqref{eq:SteinExtOpEstInteger}. While the idea is morally correct, one cannot directly employ complex interpolation: our definition makes these function spaces incomplete at high regularity, and completeness is essential for defining complex interpolation. While, the standard complex interpolation procedure is unavailable for the normed spaces we consider, we can still interpolate the inequalities by other means.

\medbreak

\begin{proposition}\label{prop:ExtOpHomSobSpaces} The extension operator $\mathcal{E}$ introduced in Theorem \ref{thm:SteinsExtensionOp} is such that for all $p\in(1,+\infty)$, all $s\geqslant 0$, and all $u\in\dot{\mathrm{H}}^{s,p}(\Omega)$,
\begin{align*}
    {\mathcal{E}u}_{|_{\Omega}} = u\text{,}
\end{align*}
with the estimate
\begin{align*}
    \left\lVert \mathcal{E}u \right\rVert_{\dot{\mathrm{H}}^{s,p}(\mathbb{R}^n)}\lesssim_{p,s,n,\partial\Omega} \left\lVert u \right\rVert_{\dot{\mathrm{H}}^{s,p}(\Omega)}\text{.}
\end{align*}

The same result holds for the Sobolev spaces $\dot{\mathrm{W}}^{k,1}(\Omega)$, $k\in\mathbb{N}$.
\end{proposition}

\begin{proof} Let $\mathcal{E}$ denote Stein's extension operator given in Theorem \ref{thm:SteinsExtensionOp}.

\textbf{Step 1:} The case $1< p< +\infty$. For all $m\in\mathbb{N}$, $p\in(1,+\infty)$, $u\in \mathrm{H}^{m,p}(\Omega)$, by the definition of function spaces by restriction, one obtains that
\begin{align*}
    \left\lVert \nabla^m u\right\rVert_{\mathrm{L}^p(\Omega)} \leqslant \lVert u \rVert_{\dot{\mathrm{H}}^{m,p}(\Omega)}&\lesssim_{p,n,m}\inf\limits_{\substack{\Tilde{u}\in \dot{\mathrm{H}}^{m,p}(\mathbb{R}^n),\\ \tilde{u}_{|_\Omega}=u\, .}} \lVert \nabla^m\Tilde{u} \rVert_{\mathrm{L}^p(\mathbb{R}^n)}\\ &\lesssim_{p,n,m} \left\lVert \nabla^m (\mathcal{E}u)\right\rVert_{\mathrm{L}^p(\mathbb{R}^n)}\lesssim_{p,n,m,\partial\Omega}\left\lVert \nabla^m u\right\rVert_{\mathrm{L}^p(\Omega)}\text{.}
\end{align*}
Hence, Stein's extension operator $\mathcal{E}$ satisfies for all $u\in \mathrm{H}^{m,p}(\Omega)$
\begin{align}\label{eq:estimateSteinExtOphomHmpOmegaRn}
    \lVert \mathcal{E}u \rVert_{\dot{\mathrm{H}}^{m,p}(\mathbb{R}^n)}\lesssim_{p,n,m,\partial\Omega} \lVert u \rVert_{\dot{\mathrm{H}}^{m,p}(\Omega)}\text{ . }
\end{align}
So that $\mathcal{E}\,:\,\dot{\mathrm{H}}^{m,p}(\Omega)\longrightarrow \dot{\mathrm{H}}^{m,p}(\mathbb{R}^n)$ is bounded on subspace $\mathrm{H}^{m,p}(\Omega)$.

The estimate \eqref{eq:estimateSteinExtOphomHmpOmegaRn} implies, that for all $U\in\mathrm{H}^{m,p}(\mathbb{R}^n)$, by the definition of function spaces by restriction
\begin{align*}
    \lVert \mathcal{E} [\mathbbm{1}_\Omega U] \rVert_{\dot{\mathrm{H}}^{m,p}(\mathbb{R}^n)}\lesssim_{p,n,m,\partial\Omega} \lVert U \rVert_{\dot{\mathrm{H}}^{m,p}(\mathbb{R}^n)}\text{ . }
\end{align*}
Therefore, if one uses Proposition \ref{lem:Triebelnormeq},
\begin{align}\label{eq:estimateSteinExtOphomFmp2RnRn}
    \lVert (\dot{\Delta}_j\mathcal{E} [\mathbbm{1}_\Omega U])_{j\in\mathbb{Z}} \rVert_{\mathrm{L}^{p}(\mathbb{R}^n,\ell^2_m(\mathbb{Z}))}\lesssim_{p,n,m,\partial\Omega} \lVert  (\dot{\Delta}_j U)_{j\in\mathbb{Z}} \rVert_{\mathrm{L}^{p}(\mathbb{R}^n,\ell^2_m(\mathbb{Z}))}\text{ . }
\end{align}

For $v=(v_j)_{j\in\mathbb{Z}}\in\mathrm{L}^p(\mathbb{R}^n,\ell^2_m(\mathbb{Z}))$ with finite support with respect to the discrete variable, we set
\begin{align*}
    \Xi_\Omega v:= \Big( \dot{\Delta}_j \mathcal{E}\big[\mathbbm{1}_{\Omega}\big(\sum_{k\in\mathbb{Z}} \dot{\Delta}_k\big[v_{k-1} + v_k +v_{k+1}\big]\big)\big]\Big)_{j\in\mathbb{Z}}
\end{align*}
and since $v$ has finite support with respect to the discrete variable we may define the auxiliary function $V:=\sum_{k\in\mathbb{Z}} \dot{\Delta}_k\big[v_{k-1} + v_k +v_{k+1}\big] \in \mathrm{H}^{m,p}(\mathbb{R}^n)$, and we obtain, by \cite[Proposition~6.1.4]{bookGrafakos2014Classical},
\begin{align*}
    \lVert \Xi_\Omega v\rVert_{\mathrm{L}^{p}(\mathbb{R}^n,\ell^2_m(\mathbb{Z}))} & \lesssim_{p,n,m,\partial\Omega} \lVert  (\dot{\Delta}_j V)_{j\in\mathbb{Z}} \rVert_{\mathrm{L}^{p}(\mathbb{R}^n,\ell^2_m(\mathbb{Z}))}\lesssim_{p,n,m,\partial\Omega} \lVert  v \rVert_{\mathrm{L}^{p}(\mathbb{R}^n,\ell^2_m(\mathbb{Z}))} \text{ . }
\end{align*}

It follows that $\Xi_\Omega$ extends uniquely as a bounded linear operator on $\mathrm{L}^{p}(\mathbb{R}^n,\ell^2_m(\mathbb{Z}))$ for all $p\in(1,+\infty)$, $m\in\mathbb{N}$, which is consistent on elements whose support in the discrete variable is finite. It remains consistent for all elements of the form $(\dot{\Delta}_j U)_{j\in\mathbb{Z}}$, provided $U\in\mathrm{H}^{m,p}(\mathbb{R}^n)+\mathrm{C}^{\lceil s\rceil}_0(\mathbb{R}^n)$, we have by construction and uniqueness of the continuous extension provided by Theorem \ref{thm:SteinsExtensionOp},
\begin{align*}
    \Xi_\Omega [(\dot{\Delta}_j U)_{j\in\mathbb{Z}}]  = \big(\dot{\Delta}_j \mathcal{E}\big[\mathbbm{1}_{\Omega}U\big]\big)_{j\in\mathbb{Z}}\text{.}
\end{align*}

The complex interpolation of mixed weighted Lebesgue spaces, see \cite[Theorems~5.1.2~\&~5.6.3]{BerghLofstrom1976}, yields that
\begin{align*}
    \Xi_\Omega\,:\,\mathrm{L}^{p}(\mathbb{R}^n,\ell^2_s(\mathbb{Z}))\longrightarrow\mathrm{L}^{p}(\mathbb{R}^n,\ell^2_s(\mathbb{Z}))
\end{align*}
is a well-defined bounded linear operator for all $s\geqslant0$, $p\in(1,+\infty)$. Then, the map
\begin{align}\label{eq:LiftedExtOPFspLpl2s}
    \Xi_\Omega[(\dot{\Delta}_j [\cdot])_{j\in\mathbb{Z}}]\,:\,\dot{\mathrm{H}}^{s,p}(\mathbb{R}^n)\longrightarrow\mathrm{L}^{p}(\mathbb{R}^n,\ell^2_s(\mathbb{Z}))
\end{align}
is also well-defined and bounded by Proposition \ref{lem:Triebelnormeq}. Provided $s\geqslant0$, $\mathcal{E}(v+w)\in\mathrm{H}^{s,p}(\mathbb{R}^n)+\mathrm{C}^0_{\lceil s\rceil}(\mathbb{R}^n)$ is already entirely determined for all $v\in\mathrm{H}^{s,p}(\Omega)$, $w\in\mathrm{C}^{\lceil s\rceil}_0(\overline{\Omega})$ by Theorem \ref{thm:SteinsExtensionOp}, and does not depend on the choice of representatives. Hence, for $U\in \dot{\mathrm{H}}^{s,p}(\mathbb{R}^n)\subset{\mathrm{H}}^{s,p}(\mathbb{R}^n)+\mathrm{C}^{\lceil s\rceil}_0(\mathbb{R}^n)$ such that $U_{|_{\Omega}}=u$, by uniqueness of the continuous extension of $\Xi_\Omega$, and the mapping properties of $\mathcal{E}$ provided by Lemma \ref{lem:SumSpacesDomainSteinExtOp}, we have
\begin{align*}
     \Xi_\Omega[(\dot{\Delta}_j U)_{j\in\mathbb{Z}}] = \big(\dot{\Delta}_k \mathcal{E}[\mathbbm{1}_{\Omega}U]\big)_{k\in\mathbb{Z}} = \big(\dot{\Delta}_k \mathcal{E}u\big)_{k\in\mathbb{Z}}\text{.}
\end{align*}
Thus, one may use the estimate \eqref{eq:LiftedExtOPFspLpl2s} and Proposition \ref{lem:Triebelnormeq} to deduce
\begin{align*}
    \left\lVert \mathcal{E}u \right\rVert_{\dot{\mathrm{H}}^{s,p}(\mathbb{R}^n)}\lesssim_{p,s,n,\partial\Omega} \left\lVert U \right\rVert_{\dot{\mathrm{H}}^{s,p}(\mathbb{R}^n)}\text{.}
\end{align*}
However, $U$ is an arbitrary $\dot{\mathrm{H}}^{s,p}$-extension of $u$, so that by the definition of function spaces by restriction, it holds that
\begin{align*}
    \left\lVert \mathcal{E}u \right\rVert_{\dot{\mathrm{H}}^{s,p}(\mathbb{R}^n)}\lesssim_{p,s,n,\partial\Omega} \left\lVert u \right\rVert_{\dot{\mathrm{H}}^{s,p}(\Omega)}\text{.}
\end{align*}

\textbf{Step 2:} The case $p=1$. Let $u\in\dot{\mathrm{W}}^{k,1}(\Omega)\subset \mathrm{W}^{k,1}(\Omega)+\mathrm{C}^{k}_0(\overline{\Omega})$, then by Lemma \ref{lem:SumSpacesDomainSteinExtOp}, $\mathcal{E}u$ is well-defined. It remains to show the continuity with respect to the $\dot{\mathrm{W}}^{k,1}$-norm. Let $U\in\dot{\mathrm{W}}^{k,1}(\mathbb{R}^n)$, be an arbitrary extension of $u$, \textit{i.e.} such that $U_{|_{\Omega}}=u$. We write $U$ as
\begin{align*}
    U = \dot{S}_0 U +[\mathrm{I}-\dot{S}_0] U =: U_-+U_+,
\end{align*}
and we introduce for all $N\in\mathbb{N}$,
\begin{align*}
    U_{-,N} := [\dot{S}_0-\dot{S}_{-N}]U.
\end{align*}
We follow the proof of Lemma \ref{lem:structLemma}: since $U\in\dot{\mathrm{W}}^{k,1}(\mathbb{R}^n)\subset\mathcal{S}'_h(\mathbb{R}^n)$, we have for all $N\in\mathbb{N}$,
\begin{align*}
    U_{-,N},\,U_{-}\in\mathrm{C}^k_0(\mathbb{R}^n)\text{ and }U_{-,N}, U_+\in\mathrm{W}^{k,1}(\mathbb{R}^n),
\end{align*}
with the property
\begin{align*}
    \lVert U_{-}-U_{-,N}\rVert_{\mathrm{W}^{k,\infty}(\mathbb{R}^n)}\xrightarrow[N\rightarrow+\infty]{}0.
\end{align*}
Thus, $(U_{-,N})_{N\in\mathbb{N}}$ converges to $U_{-}$ everywhere as well as the sequences of all its derivatives up to the order $k$. By Theorem \ref{thm:SteinsExtensionOp}, the same goes for $(\mathcal{E}[\mathbbm{1}_{\Omega}U_{-,N}])_{N\in\mathbb{N}}$ converging to $\mathcal{E}[\mathbbm{1}_{\Omega}U_{-}]$ as well as the derivatives up to the order $k$. By the triangle inequality and the Fatou Lemma, Theorem \ref{thm:SteinsExtensionOp}, and the definition of function space by restriction, 
\begin{align*}
    \lVert \nabla^k \mathcal{E}u\rVert_{\mathrm{L}^1(\mathbb{R}^n)}&\leqslant \liminf_{N\rightarrow+\infty }\,\lVert \nabla^k \mathcal{E}[\mathbbm{1}_{\Omega}U_{-,N}]\rVert_{\mathrm{L}^1(\mathbb{R}^n)} + \lVert \nabla^k \mathcal{E}[\mathbbm{1}_{\Omega}U_{+}]\rVert_{\mathrm{L}^1(\mathbb{R}^n)}\\
    &\lesssim_{k,n,\partial\Omega} \liminf_{N\rightarrow+\infty }\,\lVert \nabla^k U_{-,N}\rVert_{\mathrm{L}^1(\mathbb{R}^n)} + \lVert \nabla^k U_{+}\rVert_{\mathrm{L}^1(\mathbb{R}^n)}\\
    &\lesssim_{k,n,\partial\Omega}\lVert \nabla^k U\rVert_{\mathrm{L}^1(\mathbb{R}^n)}.
\end{align*}
We take then the infimum on all such $U$, this yields the result.
\end{proof}

\begin{remark}\label{rmk:HomInterpLift}The method employed here is quite general, and could be adapted to the interpolation of many other kinds of linear operators.

The general idea is to lift the operator at a level for which we can take \textit{a} completion without losing any ambient structure information: here at the level of anisotropic Lebesgue spaces $\mathrm{L}^p(\ell^2_s)$ instead of taking abstract completion of our Sobolev spaces $\dot{\mathrm{H}}^{s,p}$. From this point, one perform  the complex interpolation, then one may hope to get back to a subset of those spaces for which we can compute explicitly the operator, which was exactly what we have done. This is a key point for complex interpolation of operators in the case of non-complete spaces, when one wants to preserve homogeneous estimates.

To be more explicit, for the specific proof above, we applied complex interpolation on the top level of the following family of densely defined commutative diagrams :

\begin{equation*}
\begin{tikzcd}[ampersand replacement=\&]
    {\mathrm{L}^{p}(\ell_{m}^2)} \arrow[d, "\mathrm{R}_{\Omega}\circ\Tilde{\Sigma}"'] \arrow[r, "\Xi_{\Omega}"] \& {\mathrm{L}^{p}(\ell_{m}^2)}\\
    {\dot{\mathrm{H}}^{m,p}(\Omega)} \arrow[r,"\mathcal{E}"'] \& {\dot{\mathrm{H}}^{m,p}(\mathbb{R}^n)} \arrow[u, "(\dot{\Delta}_j)_{j\in\mathbb{Z}}"'] 
\end{tikzcd}\text{ , }\qquad m\in\mathbb{N}.
\end{equation*}

Similar ideas appear in the work of Auscher and Amenta \cite[Chapter~4,~Sections~4.2~\&~4.3]{AmentaAuscher2018}, where interpolation for realizations of abstract operator-adapted Hardy spaces is concerned.
\end{remark}

Proposition \ref{prop:ExtOpHomSobSpaces} is already a powerful enough tool to carry many results. However, this Stein's extension operator has its use restricted to non-negative indices of regularity for the Sobolev scale and positive indices of regularity for the Besov scale. It would be of interest to be able to look at similar properties for regularity indices $s\in(-1+1/p,1/p)$.

We need to carry over the behavior of the global change of coordinates on the homogeneous scale.

For any measurable function $u$ on either $\Omega$ or $\mathbb{R}^n$, and any measurable function $v$ on either $\mathbb{R}^n_+$ or $\mathbb{R}^n$,  we introduce the maps
\begin{align}\label{eq:Tmapsglobalchangeofcoodinates}
    T_\phi u := u\circ \Psi\text{,} \quad\text{ and }\quad T_\phi^{-1} v = v\circ \Psi^{-1} \text{.}
\end{align}
We recall that the definition of $\Psi$ and its basic properties were given in \eqref{eq:globalChangecoordSpeLip} and \eqref{eq:globalChangecoordJac}.

From now on, whenever a result explicitly demonstrates the boundedness of $T_\phi$, or related operator, on homogeneous function spaces, we will sometimes mention the validity of the concerned result in the case of inhomogeneous spaces as well. This is important because there may be an impact on the well-definedness and boundedness of subordinate operators in the context of homogeneous function spaces. See Lemma \ref{lem:structLemma}, Lemma \ref{lem:SumSpacesDomainSteinExtOp}, \textit{(i)} \& \textit{{(ii)}}, and Lemma \ref{lem:structLemBesovSpeLip}.

We mention now that $T_\phi$, and $T_\phi^{-1}$, act as a bijective isometry on $\mathrm{L}^p(\mathbb{R}^n)$, $p\in[1,+\infty]$, on $\mathrm{C}_0^0(\mathbb{R}^n)$ and on $\mathrm{C}_b^0(\mathbb{R}^n)$. However, we do not know whether $T_\phi(\mathrm{C}_{b,h}^0(\mathbb{R}^n))$ is contained in $\mathrm{C}_{b,h}^0(\mathbb{R}^n)$. 

\begin{proposition}\label{prop:globalMapChangeHsp} Let $p\in(1,+\infty)$, $s\in[-1,1]$ and $\mathcal{T}\in\{T_\phi,T_\phi^{-1}\}$. For all $u\in \dot{\mathrm{H}}^{s,p}(\mathbb{R}^n)$, we have $\mathcal{T}u \in \dot{\mathrm{H}}^{s,p}(\mathbb{R}^n)$ with the estimate,
\begin{align*}
    \lVert \mathcal{T}u\rVert_{\dot{\mathrm{H}}^{s,p}(\mathbb{R}^n)} \lesssim_{s,n,\partial\Omega}\lVert u\rVert_{\dot{\mathrm{H}}^{s,p}(\mathbb{R}^n)}\text{.}
\end{align*}
The result also holds for $\dot{\mathrm{W}}^{k,1}(\mathbb{R}^n)$, $k\in\{0,1\}$, instead of $\dot{\mathrm{H}}^{s,p}(\mathbb{R}^n)$.

The result remains true with $\{\mathrm{H},\, \mathrm{W}\}$ instead of $\{\dot{\mathrm{H}},\, \dot{\mathrm{W}}\}$.
\end{proposition}

\begin{proof}We set $\mathcal{T}=T_\phi$ and $\mathcal{T}^{\ast}=T_\phi^{-1}$. First, we check that $\mathcal{T}$, is well-defined on $\dot{\mathrm{H}}^{s,p}(\mathbb{R}^n)$, $\dot{\mathrm{W}}^{k,1}(\mathbb{R}^n)$ and that $\mathcal{T}(\dot{\mathrm{H}}^{s,p}(\mathbb{R}^n)),\mathcal{T}(\dot{\mathrm{W}}^{k,1}(\mathbb{R}^n))\subset\mathcal{S}'_h(\mathbb{R}^n)$. To achieve this, we examine the well-known behavior in the inhomogeneous case. Let $u\in{\mathrm{W}}^{1,p}(\mathbb{R}^n)$, we recall that the following equalities hold almost everywhere
\begin{align*}
    \partial_{x_k}(\mathcal{T}u) = \mathcal{T}(\partial_{x_k}u) \pm \partial_{x_k}\phi \mathcal{T}(\partial_{x_n}u)\text{, }\qquad \partial_{x_n}(\mathcal{T}u)=\mathcal{T}(\partial_{x_n}u)\text{ , } k\in\llb 1,n-1\rrb\text{.}
\end{align*}
We recall that $\mathcal{T}$ is bounded on $\mathrm{L}^p(\mathbb{R}^n)$, and that, moreover, the Jacobian determinant of $\Psi$ is $1$, see \eqref{eq:globalChangecoordJac}. Therefore, we obtain
\begin{align*}
    \lVert \nabla \mathcal{T}u \lVert_{\mathrm{L}^p(\mathbb{R}^n)} \leqslant \lVert \partial_{x_n} u &\lVert_{\mathrm{L}^p(\mathbb{R}^n)} + \sum_{k=1}^{n-1} \lVert \partial_{x_k} u \lVert_{\mathrm{L}^p(\mathbb{R}^n)} + \lVert \partial_{x_k}\phi \lVert_{\mathrm{L}^{\infty}}\lVert \partial_{x_n} u \lVert_{\mathrm{L}^p(\mathbb{R}^n)}\\
    &\leqslant (1+(n-1)\lVert \nabla'\phi \lVert_{\mathrm{L}^{\infty}(\mathbb{R}^{n-1})})\lVert \nabla u \lVert_{\mathrm{L}^p(\mathbb{R}^n)}\text{.}
\end{align*}
Similar computations yield,
\begin{align*}
    \lVert \nabla \mathcal{T}^{\ast}u \lVert_{\mathrm{L}^p(\mathbb{R}^n)} \leqslant (1+(n-1)\lVert \nabla'\phi \lVert_{\mathrm{L}^{\infty}(\mathbb{R}^{n-1})})\lVert \nabla u \lVert_{\mathrm{L}^p(\mathbb{R}^n)}\text{.}
\end{align*}
This estimate has been established regardless of the value of $p\in[1,+\infty]$. When $p\in(1,+\infty)$, $s\in(0,1)$, the boundedness of $\mathcal{T}$ and $\mathcal{T}^\ast$ on $\mathrm{H}^{s,p}(\mathbb{R}^n)$ follows by complex interpolation, the case $s\in[-1,0)$ holds by duality. One can also check, that $\mathcal{T}\,:\,\mathrm{C}^{1}_0(\mathbb{R}^n)\longrightarrow \mathrm{C}_0^{0,1}(\mathbb{R}^n)$. Hence, for all $p\in(1,+\infty)$, we have obtained the boundedness of
\begin{align*}
    \mathcal{T}\,:\,\mathrm{H}^{s,p}(\mathbb{R}^n)+\mathrm{C}^{1}_0(\mathbb{R}^n)\longrightarrow \mathrm{H}^{s,p}(\mathbb{R}^n)+\mathrm{C}_0^{0,1}(\mathbb{R}^n)\subset \mathcal{S}'_h(\mathbb{R}^n).
\end{align*}

Thus, $\mathcal{T}$ is well-defined on  $\mathrm{H}^{s,p}(\mathbb{R}^n)+\mathrm{C}^{1}_0(\mathbb{R}^n)$, and therefore on $\dot{\mathrm{H}}^{s,p}(\mathbb{R}^n)$, $s\in[-1,1]$, with values in $\mathcal{S}_h'(\mathbb{R}^n)$. The same goes with ${\mathrm{W}}^{k,1}(\mathbb{R}^n)$, $k\in\{0,1\}$, instead of ${\mathrm{H}}^{s,p}(\mathbb{R}^n)$. The same statements hold for $\mathcal{T}^\ast$. It remains to show continuity with respect to the homogeneous Sobolev norms.

The boundedness with respect to homogeneous norms in the case $s=k=0,1$, $p\in[1,+\infty)$ can be achieved as before, there is nothing to do.

Now, we assume $p\in(1,+\infty)$. For $v\in\mathrm{L}^2(\mathbb{R}^n)\cap \dot{\mathrm{H}}^{-1,p}(\mathbb{R}^n)$, by Proposition \ref{prop:dualityRieszpotential}, and since the Jacobian determinant of $\Psi^{-1}$ is $1$,
\begin{align*}
    \lVert \mathcal{T}v \lVert_{\dot{\mathrm{H}}^{-1,p}(\mathbb{R}^n)} &= \sup\limits_{\substack{u\in \mathcal{S}(\mathbb{R}^n)\\\lVert u \rVert_{\dot{\mathrm{H}}^{1,p'}(\mathbb{R}^n)}\leqslant 1}} \left\lvert \int_{\mathbb{R}^n} \mathcal{T}v(x)\, u(x) \,\mathrm{d}x \right\rvert\\
    &=  \sup\limits_{\substack{u\in \mathcal{S}(\mathbb{R}^n)\\\lVert u \rVert_{\dot{\mathrm{H}}^{1,p'}(\mathbb{R}^n)}\leqslant 1}} \left\lvert \int_{\mathbb{R}^n} v(x)\, \mathcal{T}^{\ast}u(x) \,\mathrm{d}x \right\rvert\\
    &\leqslant \lVert v \lVert_{\dot{\mathrm{H}}^{-1,p}(\mathbb{R}^n)} \left( \sup\limits_{\substack{u\in \mathcal{S}(\mathbb{R}^n)\\\lVert u \rVert_{\dot{\mathrm{H}}^{1,p'}(\mathbb{R}^n)}\leqslant 1}} \lVert \mathcal{T}^{\ast} u \lVert_{\dot{\mathrm{H}}^{1,p'}(\mathbb{R}^n)} \right)\\
    &\lesssim_{n,\partial\Omega} \lVert v \lVert_{\dot{\mathrm{H}}^{-1,p}(\mathbb{R}^n)}\text{.}
\end{align*}
The same goes for $\mathcal{T}^\ast$. Hence, $\mathcal{T}$ (resp. $\mathcal{T}^\ast$) extends uniquely as a bounded linear operator on $\dot{\mathrm{H}}^{-1,p}(\mathbb{R}^n)$. But since $\mathcal{T}$ (resp. $\mathcal{T}^\ast$) is known to be bounded on $\mathrm{L}^p(\mathbb{R}^n)$, by complex interpolation given in Theorem~\ref{thm:InterpHomSpacesRn}, $\mathcal{T}$ (resp. $\mathcal{T}^\ast$) is then a bounded linear operator on $\dot{\mathrm{H}}^{s,p}(\mathbb{R}^n)$, for all $s\in[-1,0]$. One may repeat the duality argument, thanks to the boundedness on $\dot{\mathrm{H}}^{-s,p'}(\mathbb{R}^n)$ we just proved, to obtain for $s\in[0,1]$, $u\in\dot{\mathrm{H}}^{s,p}(\mathbb{R}^n)$,
\begin{align*}
    \lVert \mathcal{T}u \lVert_{\dot{\mathrm{H}}^{s,p}(\mathbb{R}^n)}\lesssim_{s,n,\partial\Omega} \lVert u \lVert_{\dot{\mathrm{H}}^{s,p}(\mathbb{R}^n)}\text{.}
\end{align*}
We proceed similarly for $\mathcal{T}^\ast$.
\end{proof}

\begin{remark} Everything still holds for more general bi-Lipschitz transformations with constant Jacobian determinants. One may probably want to generalize Proposition \ref{prop:globalMapChangeHsp} to obtain a result similar to \cite[Lemma~2.1.1]{DanchinMucha2015}.
\end{remark}

We can deduce from Proposition \ref{prop:globalMapChangeHsp} several interesting corollaries.

\begin{corollary}\label{cor:HomSobolevMultiplierSpeLip}For all $p\in(1,+\infty)$, for all $s\in (-1+\frac{1}{p},\frac{1}{p})$, for all $u\in\dot{\mathrm{H}}^{s,p}(\mathbb{R}^n)$,
\begin{align*}
    \lVert \mathbbm{1}_{\Omega} u \rVert_{\dot{\mathrm{H}}^{s,p}(\mathbb{R}^{n})} \lesssim_{s,p,n,\partial\Omega} \lVert u \rVert_{\dot{\mathrm{H}}^{s,p}(\mathbb{R}^{n})}.
\end{align*}
The same result still holds with ${\mathrm{H}}$ instead of $\dot{\mathrm{H}}$.
\end{corollary}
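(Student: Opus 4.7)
My plan is to transport the flat-boundary multiplier estimate from Proposition \ref{prop:SobolevMultiplier} to the special Lipschitz setting by conjugating with the bi-Lipschitz change of variables $T_\phi$ and $T_\phi^{-1}$ studied in Proposition \ref{prop:globalMapChangeHsp}. The pointwise identity
\begin{align*}
    T_\phi(\mathbbm{1}_\Omega u) \;=\; (\mathbbm{1}_\Omega\circ \Psi)\cdot T_\phi u \;=\; \mathbbm{1}_{\mathbb{R}^n_+}\cdot T_\phi u,
\end{align*}
which is valid since $\Psi(\mathbb{R}^n_+)=\Omega$ and $\Psi$ is a bijection of $\mathbb{R}^n$, gives the factorization
\begin{align*}
    \mathbbm{1}_\Omega u \;=\; T_\phi^{-1}\bigl[\mathbbm{1}_{\mathbb{R}^n_+}\,T_\phi u\bigr],
\end{align*}
and chaining the three corresponding estimates should immediately yield the conclusion.

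First I would verify that all invoked propositions apply in the regularity range $s\in(-1+\tfrac{1}{p},\tfrac{1}{p})$. Since $n\geqslant 2$ and $p\in(1,+\infty)$, we have $s<\tfrac{1}{p}<\tfrac{n}{p}$, so the completeness condition $(\mathcal{C}_{s,p})$ holds; moreover $s\in(-1,1)$, hence both $s$ and $-s$ lie in $[-1,1]$, which is exactly the range covered by Proposition \ref{prop:globalMapChangeHsp}. Consequently $T_\phi$ and $T_\phi^{-1}$ are bounded endomorphisms of $\dot{\mathrm{H}}^{s,p}(\mathbb{R}^n)$ with constants depending only on $p,n$ and $\partial\Omega$, while Proposition \ref{prop:SobolevMultiplier} provides the $\mathbbm{1}_{\mathbb{R}^n_+}$-multiplier boundedness on the same space.

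Second, I would justify the factorization displayed above on a dense subset, since for negative $s$ the product $\mathbbm{1}_\Omega u$ is only defined by continuous extension of the multiplier. Taking $u\in\eus{S}_0(\mathbb{R}^n)\cap\mathrm{L}^p(\mathbb{R}^n)$ (which is dense in $\dot{\mathrm{H}}^{s,p}(\mathbb{R}^n)$ for $s$ in this range), the identity holds pointwise almost everywhere because $T_\phi$ acts by composition on measurable representatives. Chaining the three estimates we obtain
\begin{align*}
    \lVert \mathbbm{1}_\Omega u\rVert_{\dot{\mathrm{H}}^{s,p}(\mathbb{R}^n)}
    \;\lesssim\; \lVert \mathbbm{1}_{\mathbb{R}^n_+}T_\phi u\rVert_{\dot{\mathrm{H}}^{s,p}(\mathbb{R}^n)}
    \;\lesssim\; \lVert T_\phi u\rVert_{\dot{\mathrm{H}}^{s,p}(\mathbb{R}^n)}
    \;\lesssim\; \lVert u\rVert_{\dot{\mathrm{H}}^{s,p}(\mathbb{R}^n)},
\end{align*}
where the implicit constants depend only on $s,p,n,\partial\Omega$. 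By density and the boundedness of the multiplier $\mathbbm{1}_{\mathbb{R}^n_+}$ given by Proposition \ref{prop:SobolevMultiplier}, the inequality extends to all of $\dot{\mathrm{H}}^{s,p}(\mathbb{R}^n)$.

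The main (minor) obstacle is to make the factorization well-defined for distributional $u$: once the pointwise identity is established on the dense subspace $\eus{S}_0(\mathbb{R}^n)\cap\mathrm{L}^p(\mathbb{R}^n)$ and the three operators involved are bounded on a complete ambient space, both sides admit unique continuous extensions and the inequality passes to the closure. The inhomogeneous statement follows along the same lines, combining the homogeneous estimate with the $\mathrm{L}^p$-boundedness of multiplication by $\mathbbm{1}_\Omega$, or equivalently using the inhomogeneous parts of Propositions \ref{prop:SobolevMultiplier} and \ref{prop:globalMapChangeHsp}.
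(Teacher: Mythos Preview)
Your proposal is correct and follows exactly the paper's approach: the paper's proof consists of the single line ``It suffices to write $\mathbbm{1}_{\Omega}u=T_\phi^{-1}\mathbbm{1}_{\mathbb{R}^n_+}T_\phi u$ then to apply Propositions \ref{prop:globalMapChangeHsp} and \ref{prop:SobolevMultiplier}.'' You have simply supplied the details (verifying the range conditions for $(\mathcal{C}_{s,p})$ and for Proposition \ref{prop:globalMapChangeHsp}, and justifying the factorization on a dense subspace), which is entirely appropriate.
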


\begin{proof} It suffices to write $\mathbbm{1}_{\Omega}u=T_\phi^{-1}\mathbbm{1}_{\mathbb{R}^n_+}T_\phi u$ then to apply Propositions \ref{prop:globalMapChangeHsp}~and~\ref{prop:SobolevMultiplier}.
\end{proof}

\begin{corollary}\label{cor:IsomHomSobSpacesRn+SpeLip} Let $p\in(1,+\infty)$, $s\in[-1,1]$. For all $u\in \dot{\mathrm{H}}^{s,p}(\Omega)$, one has $T_\phi u\in \dot{\mathrm{H}}^{s,p}(\mathbb{R}^n_+)$ with the estimate
\begin{align*}
    \lVert T_\phi u \rVert_{\dot{\mathrm{H}}^{s,p}(\mathbb{R}^n_+)}\lesssim_{p,s,n,\partial\Omega} \left\lVert u \right\rVert_{\dot{\mathrm{H}}^{s,p}(\Omega)}\text{.}
\end{align*}
In particular, $T_\phi\,:\, \dot{\mathrm{H}}^{s,p}(\Omega) \longrightarrow \dot{\mathrm{H}}^{s,p}(\mathbb{R}^n_+)$ is an isomorphism. The result still holds if we replace $(\Omega,\mathbb{R}^n_+,T_\phi)$ by $(\mathbb{R}^n_+,\Omega,T_\phi^{-1})$, and also holds if we replace $\dot{\mathrm{H}}^{s,p}$ by $\dot{\mathrm{W}}^{k,1}$, $k\in\{0,1\}$.

The result remains true with $\{\mathrm{H},\, \mathrm{W}\}$ instead of $\{\dot{\mathrm{H}},\, \dot{\mathrm{W}}\}$.
\end{corollary}

\begin{proof}Follows from the definition of function spaces by restriction and Proposition \ref{prop:globalMapChangeHsp}.
\end{proof}

\begin{corollary}\label{cor:ExtOpHomSobSpacesrestric} There exists a well-defined linear operator $\mathrm{E}$, such that for all $p\in(1,+\infty)$, $s\in(-1+1/p,1]$, for all $u\in \dot{\mathrm{H}}^{s,p}(\mathbb{R}^n)$, we have
\begin{align*}
    [{\mathrm{E}u}]_{|_{\Omega}} = u\text{,}
\end{align*}
with the estimate
\begin{align*}
    \left\lVert \mathrm{E}u \right\rVert_{\dot{\mathrm{H}}^{s,p}(\mathbb{R}^n)}\lesssim_{p,s,n,\partial\Omega} \left\lVert u \right\rVert_{\dot{\mathrm{H}}^{s,p}(\Omega)}\text{.}
\end{align*}
 The result also holds if we replace $\dot{\mathrm{H}}^{s,p}$, by $\dot{\mathrm{W}}^{k,1}$, $k\in\{0,1\}$. The result remains true with $\{\mathrm{H},\, \mathrm{W}\}$ instead of $\{\dot{\mathrm{H}},\, \dot{\mathrm{W}}\}$.
\end{corollary}

\begin{proof} We introduce the extension operator on the half space by even reflection, for any measurable function $u\,:\,\mathbb{R}^n_+\longrightarrow \mathbb{C}$, and for almost every $(x',x_n)\in\mathbb{R}^{n-1}\times\mathbb{R}$,
\begin{align}\label{eq:FlatExtensionOpRn+}
    \tilde{\mathrm{E}}u (x',x_n) &:= \begin{cases}
  u(x',x_n)\,&\text{, if } (x',x_n)\in\mathbb{R}^{n-1}\times(0,+\infty)\text{, }\\    
  u(x',-x_n)\,&\text{, if } (x',x_n)\in\mathbb{R}^{n-1}\times(-\infty,0) \text{. }
\end{cases}
\end{align}
The operator $\tilde{\mathrm{E}}$ is known to have the desired properties when $\Omega=\mathbb{R}^n_+$, see, \textit{e.g.}, \cite[Proposition~3.1]{Gaudin2022}, it can be improved as
\begin{align*}
    \tilde{\mathrm{E}}\,:\,\mathrm{H}^{s,p}(\mathbb{R}^n_+)+\mathrm{C}^{1}_0(\mathbb{R}^n_+)\longrightarrow \mathrm{H}^{s,p}(\mathbb{R}^n)+\mathrm{C}_0^{0,1}(\mathbb{R}^n)\subset \mathcal{S}'_h(\mathbb{R}^n),
\end{align*}
with the desired homogeneous estimates on $\dot{\mathrm{H}}^{s,p}$, $s\in(-1+1/p,1]$: to see it, one can reproduce the Step 2 in the proof of Proposition \ref{prop:ExtOpHomSobSpaces}.

Now, it suffices to set
\begin{align*}
    {\mathrm{E}}:=T_{\phi}^{-1}\tilde{\mathrm{E}}T_{\phi}\text{.}
\end{align*}
The boundedness properties follows from Propositions~\ref{prop:globalMapChangeHsp}~and~\ref{prop:SobolevMultiplier} when $s\in(-1+1/p,1/p)$. When $s\geqslant 0$, it suffices to apply Corollary \ref{cor:IsomHomSobSpacesRn+SpeLip}.
\end{proof}

The next proposition is of paramount importance for the rest of the paper, it will carry over the interpolation properties as well as many density results.

\begin{proposition}\label{prop:ExtOpIntersecHomHspSpeLip}Let $p_j\in(1,+\infty)$, $s_j> -1+\tfrac{1}{p_j}$, $j\in\{0,1\}$. Assume  that one of the two following conditions is satisfied
\begin{enumerate}
    \item $s_0,s_1\leqslant1$ and $\mathbb{E}=\mathrm{E}$, given by Corollary \ref{cor:ExtOpHomSobSpacesrestric};\label{firstcaseExtIntersecHsp} or
    \item $s_0, s_1 \geqslant 0$ and $\mathbb{E}=\mathcal{E}$, given by Proposition \ref{prop:ExtOpHomSobSpaces}.
\end{enumerate}

Then for all $u\in \dot{\mathrm{H}}^{s_0,p_0}(\Omega)\cap \dot{\mathrm{H}}^{s_1,p_1}(\Omega)$, we have  $\mathbb{E}u\in \dot{\mathrm{H}}^{s_j,p_j}(\mathbb{R}^n)$, $j\in\{0,1\}$, with the estimate
\begin{align}\label{eq:decoupledEstExtOp}
    \lVert \mathbb{E} u \rVert_{\dot{\mathrm{H}}^{s_j,p_j}(\mathbb{R}^n)} \lesssim_{s_j,p_j,n}   \lVert  u \rVert_{\dot{\mathrm{H}}^{s_j,p_j}(\Omega)}\text{. }
\end{align}
A similar result holds if, for $j\in\{0,1\}$, we replace $\dot{\mathrm{H}}^{s_j,p_j}$, by $\dot{\mathrm{W}}^{k_j,1}$ $k_j\in\mathbb{N}$.
\end{proposition}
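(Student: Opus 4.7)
\textbf{Plan for the proof of Proposition~\ref{prop:ExtOpIntersecHomHspSpeLip}.}
The objective is to establish the decoupled estimate~\eqref{eq:decoupledEstExtOp}, from which the equality of vector spaces and the Banach/density statements follow routinely. Under $(\mathcal{C}_{s_0,p_0})$ alone, the $j=0$ bound is immediate from Corollary~\ref{cor:ExtOpHomSobSpacesrestric} or Proposition~\ref{prop:ExtOpHomSobSpaces}, so the entire difficulty concentrates on the $j=1$ estimate: $(\mathcal{C}_{s_1,p_1})$ is not postulated and $\dot{\mathrm{H}}^{s_1,p_1}(\Omega)$ may fail to be complete, so one cannot simply invoke those two results. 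The unifying idea is that in both cases $\mathbb{E}$ is given by a concrete, $(s,p)$-independent formula, so $\mathbb{E}u$ is univocally determined by $u$, and it suffices to run a separate $(s_1,p_1)$-continuity argument for the same formula.

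For case~(\ref{firstcaseExtIntersecHsp}) I would use the factorisation $\mathrm{E} = T_\phi^{-1} \tilde{\mathrm{E}} T_\phi$. Each of the three factors is independently bounded at regularity $(s_1,p_1)$: the changes of coordinates $T_\phi, T_\phi^{-1}$ by Proposition~\ref{prop:globalMapChangeHsp} (combined with Corollary~\ref{cor:IsomHomSobSpacesRn+SpeLip} when $s_1 \geqslant 0$), and the half-space even reflection $\tilde{\mathrm{E}}: \dot{\mathrm{H}}^{s_1,p_1}(\mathbb{R}^n_+) \to \dot{\mathrm{H}}^{s_1,p_1}(\mathbb{R}^n)$ by the multiplier and reflection arguments recalled in the proof of Corollary~\ref{cor:ExtOpHomSobSpacesrestric} (which ultimately rely on Proposition~\ref{prop:SobolevMultiplier}). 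Picking a near-optimal extension $V_1 \in \dot{\mathrm{H}}^{s_1,p_1}(\mathbb{R}^n)$ of $u$ and applying the formula to $(V_1)_{|_{\Omega}} = u$, one chains the three bounds to obtain $\lVert \mathrm{E}u \rVert_{\dot{\mathrm{H}}^{s_1,p_1}(\mathbb{R}^n)} \lesssim \lVert V_1 \rVert_{\dot{\mathrm{H}}^{s_1,p_1}(\mathbb{R}^n)}$, and passing to the infimum over $V_1$ yields~\eqref{eq:decoupledEstExtOp}.

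For case two, I would re-run the $\Xi_\Omega$ construction from the proof of Proposition~\ref{prop:ExtOpHomSobSpaces}. The lifted operator $\Xi_\Omega$ is bounded on $\mathrm{L}^{p_1}(\mathbb{R}^n, \ell^2_{s_1}(\mathbb{Z}))$ for every $p_1 \in (1,+\infty)$ and every $s_1 \geqslant 0$. The key identity to prove is $\Xi_\Omega[(\dot{\Delta}_j V_1)_{j\in\mathbb{Z}}] = (\dot{\Delta}_j \mathcal{E}u)_{j\in\mathbb{Z}}$ for every extension $V_1 \in \dot{\mathrm{H}}^{s_1,p_1}(\mathbb{R}^n)$ of $u$, where $\mathcal{E}u$ is the element of $\dot{\mathrm{H}}^{s_0,p_0}(\mathbb{R}^n)$ produced from $(\mathcal{C}_{s_0,p_0})$. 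I would first establish it on the dense subclass of dyadic truncations $\sum_{|j|\leqslant N} \dot{\Delta}_j V_1$ (which lie in $\mathrm{H}^{s_1,p_1}(\mathbb{R}^n)$, where both sides reduce to the inhomogeneous Stein-operator formula) and then propagate by continuity of $\Xi_\Omega$ on $\mathrm{L}^{p_1}(\ell^2_{s_1})$. Combined with Lemma~\ref{lem:Triebelnormeq} and passage to the infimum over $V_1$, this is~\eqref{eq:decoupledEstExtOp}.

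The equality $\dot{\mathrm{H}}^{s_0,p_0}(\Omega) \cap \dot{\mathrm{H}}^{s_1,p_1}(\Omega) = [\dot{\mathrm{H}}^{s_0,p_0} \cap \dot{\mathrm{H}}^{s_1,p_1}](\Omega)$ then follows at once, the non-trivial inclusion being witnessed by $\tilde{u} := \mathbb{E}u$. Completeness reduces to that of $\dot{\mathrm{H}}^{s_0,p_0}(\mathbb{R}^n) \cap \dot{\mathrm{H}}^{s_1,p_1}(\mathbb{R}^n)$, which is Banach by a Fatou argument on the dyadic characterisation of Lemma~\ref{lem:Triebelnormeq}: any Cauchy sequence there converges in the complete $\dot{\mathrm{H}}^{s_0,p_0}(\mathbb{R}^n)$, and almost everywhere convergence of the Littlewood--Paley blocks along a subsequence together with Fatou's lemma force the limit to lie in $\dot{\mathrm{H}}^{s_1,p_1}(\mathbb{R}^n)$ and to be the Cauchy limit there as well. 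Density of $\eus{S}_0(\overline{\Omega})$ in the intersection is obtained by dyadic truncation of an extension (which converges simultaneously in both scales by dominated convergence) followed by restriction to $\Omega$. The principal obstacle is the consistency step in case two, where one must certify that the value of $\Xi_\Omega$ computed at regularity $(s_1,p_1)$ genuinely coincides with the $\mathcal{E}u$ produced from the $(s_0,p_0)$-completeness; this is precisely the place where the absence of $(\mathcal{C}_{s_1,p_1})$ is felt.
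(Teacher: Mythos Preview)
Your treatment of case~(ii) and of the equality/completeness/density conclusions is essentially the paper's own: the paper also reuses the lifted operator $\Xi_\Omega$ from the proof of Proposition~\ref{prop:ExtOpHomSobSpaces}, notes that $\mathcal{E}u\in\dot{\mathrm{H}}^{s_0,p_0}(\mathbb{R}^n)\subset\eus{S}'_h(\mathbb{R}^n)$ from $(\mathcal{C}_{s_0,p_0})$, and asserts the identity $(\dot{\Delta}_k\mathcal{E}u)_k=\Xi_\Omega[(\dot{\Delta}_k U)_k]$ for any $\dot{\mathrm{H}}^{s_1,p_1}$-extension $U$, then concludes via Lemma~\ref{lem:Triebelnormeq} and passage to the infimum. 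You correctly flag the consistency step as the delicate point.

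Your case~(i) argument, however, has a genuine gap. The factorisation $\mathrm{E}=T_\phi^{-1}\tilde{\mathrm{E}}T_\phi$ is of course correct, but the three bounds you want to chain at regularity $(s_1,p_1)$ all come from Proposition~\ref{prop:globalMapChangeHsp}, Corollary~\ref{cor:IsomHomSobSpacesRn+SpeLip} and Corollary~\ref{cor:ExtOpHomSobSpacesrestric}, each of which carries the dichotomy ``either $(\mathcal{C}_{s,p})$ holds, or the input lies in the \emph{inhomogeneous} space $\mathrm{H}^{s,p}$''. When $(\mathcal{C}_{s_1,p_1})$ fails (which is precisely when $s_1\in[n/p_1,1]$, so $p_1\geqslant n$), your near-optimal extension $V_1\in\dot{\mathrm{H}}^{s_1,p_1}(\mathbb{R}^n)$ need not belong to $\mathrm{H}^{s_1,p_1}(\mathbb{R}^n)$, so you cannot feed it to Proposition~\ref{prop:globalMapChangeHsp}; neither can you conclude that $T_\phi u\in\dot{\mathrm{H}}^{s_1,p_1}(\mathbb{R}^n_+)$ from Corollary~\ref{cor:IsomHomSobSpacesRn+SpeLip}, nor that $\tilde{\mathrm{E}}T_\phi u\in\dot{\mathrm{H}}^{s_1,p_1}(\mathbb{R}^n)$ from Corollary~\ref{cor:ExtOpHomSobSpacesrestric}. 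The absence of $(\mathcal{C}_{s_1,p_1})$ is felt in case~(i) just as much as in case~(ii), and direct chaining does not circumvent it.

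The paper handles case~(i) by an entirely different mechanism, referring to \cite[Proposition~2.18]{Gaudin2022}: one takes a derivative to drop the regularity by one unit. The point, spelled out in the paragraph preceding the $\Xi_\Omega$ argument, is that the window $(-1+1/p_1,1]$ always contains an interval of length $1$, so if $s_1\geqslant n/p_1$ then $s_1-1<n/p_1$ and $(\mathcal{C}_{s_1-1,p_1})$ holds; one can then work at the complete level $s_1-1$ and recover the $s_1$-estimate. This derivative-reduction trick is what makes case~(i) go through, and it is exactly what fails for $\mathbb{E}=\mathcal{E}$ when $p_1$ is large (since $[0,n/p_1)$ can have length $<1$), forcing the $\Xi_\Omega$ route there.
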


\begin{proof}Let $p_j\in(1,+\infty)$, $s_j>-1+1/p_j$. One may expect to reproduce the proof of \cite[Proposition~3.3]{Gaudin2022}, but this possible only when $\mathbb{E}=\mathrm{E}$, $s_j\leqslant1$.

Now, we set $\mathbb{E}=\mathcal{E}$, $s_j\geqslant 0$, and we consider the operator $\Xi_{\Omega}$, as introduced in \eqref{eq:LiftedExtOPFspLpl2s}. For $u\in\dot{\mathrm{H}}^{s_0,p_0}(\Omega)\cap \dot{\mathrm{H}}^{s_1,p_1}(\Omega)$, and $U\in\dot{\mathrm{H}}^{s_1,p_1}(\mathbb{R}^n)$ such that $U_{|_{\Omega}}=u$, we recall that we have $\mathcal{E}u \in\dot{\mathrm{H}}^{s_0,p_0}(\mathbb{R}^n)\subset \mathcal{S}'_h(\mathbb{R}^n)$. One also has
\begin{align*}
    \big(\dot{\Delta}_k \mathcal{E}u\big)_{k\in\mathbb{Z}} = \big(\dot{\Delta}_k \mathcal{E}[\mathbbm{1}_{\Omega}U]\big)_{k\in\mathbb{Z}} = \Xi_\Omega[(\dot{\Delta}_k U)_{k\in\mathbb{Z}}] \in {\mathrm{L}}^{p_1}(\mathbb{R}^n,\ell^2_{s_1}(\mathbb{Z})) \text{.}
\end{align*}
Therefore, by Proposition \ref{lem:Triebelnormeq}, since $\mathcal{E}u\in\mathcal{S}'_h(\mathbb{R}^n)$,
\begin{align*}
    \lVert \mathcal{E}u\rVert_{\dot{\mathrm{H}}^{s_1,p_1}(\mathbb{R}^n)}&\sim_{p_1,s_1,n} \big\lVert (\dot{\Delta}_k \mathcal{E}u)_{k\in\mathbb{Z}} \big\rVert_{{\mathrm{L}}^{p_1}(\mathbb{R}^n,\ell^2_{s_1}(\mathbb{Z}))}\\
    &\sim_{p_1,s_1,n}\lVert \Xi_\Omega[(\dot{\Delta}_k U)_{k\in\mathbb{Z}}] \rVert_{{\mathrm{L}}^{p_1}(\mathbb{R}^n,\ell^2_{s_1}(\mathbb{Z}))}\\
    &\lesssim_{p_1,s_1,n,\partial\Omega} \lVert U\rVert_{\dot{\mathrm{H}}^{s_1,p_1}(\mathbb{R}^n)}\text{.}
\end{align*}
As in the proof of Proposition \ref{prop:ExtOpHomSobSpaces}, since $U$ is an arbitrary extension of $u$ in $\dot{\mathrm{H}}^{s_1,p_1}(\mathbb{R}^n)$, taking the infimum on all such $U$ yields
\begin{align*}
    \lVert \mathcal{E}u\rVert_{\dot{\mathrm{H}}^{s_1,p_1}(\mathbb{R}^n)}\lesssim_{p_1,s_1,n,\partial\Omega} \lVert u\rVert_{\dot{\mathrm{H}}^{s_1,p_1}(\Omega)}\text{.}
\end{align*}
Thus for $u\in \dot{\mathrm{H}}^{s_0,p_0}(\Omega)\cap \dot{\mathrm{H}}^{s_1,p_1}(\Omega)$, and by the definition of restriction spaces,
\begin{align*}
    \lVert u \rVert_{[\dot{\mathrm{H}}^{s_0,p_0}\cap \dot{\mathrm{H}}^{s_1,p_1}](\Omega)} \leqslant \lVert \mathcal{E} u \rVert_{\dot{\mathrm{H}}^{s_0,p_0}(\mathbb{R}^n)} + \lVert \mathcal{E} u \rVert_{\dot{\mathrm{H}}^{s_1,p_1}(\mathbb{R}^n)} \lesssim_{s_0,s_1,\partial\Omega}^{p_0,p_1,n}   \lVert  u \rVert_{\dot{\mathrm{H}}^{s_0,p_0}(\Omega)} + \lVert  u \rVert_{\dot{\mathrm{H}}^{s_1,p_1}(\Omega)}\text{. }
\end{align*}
This yields the result.
\end{proof}

Now, we want to work with homogeneous Sobolev spaces whose elements are supported in $\overline{\Omega}$.

\begin{proposition}\label{prop:ProOpIntersecHomHsp0SpeLip}Let $p_j\in(1,+\infty)$, $s_j> -1+\tfrac{1}{p_j}$, $j\in\{0,1\}$. We assume that either
\begin{enumerate}
    \item $s_0,s_1\leqslant 1$; or
    \item $s_0,s_1\geqslant 0$.
\end{enumerate}
Then there exists a linear operator $\mathcal{P}_0$ such that for all $u\in \dot{\mathrm{H}}^{s_0,p_0}(\mathbb{R}^n)\cap \dot{\mathrm{H}}^{s_1,p_1}(\mathbb{R}^n)$, we have  $\mathcal{P}_0u\in \dot{\mathrm{H}}^{s_j,p_j}_0(\Omega)$, $j\in\{0,1\}$, with the estimate
\begin{align*}
    \lVert \mathcal{P}_0 u \rVert_{\dot{\mathrm{H}}^{s_j,p_j}(\mathbb{R}^n)} \lesssim_{s_j,p_j,n,\partial\Omega}   \lVert  u \rVert_{\dot{\mathrm{H}}^{s_j,p_j}(\mathbb{R}^n)}\text{. }
\end{align*}
The result also holds if, for $j\in\{0,1\}$, we replace $\dot{\mathrm{H}}^{s_j,p_j}$, by either $\dot{\mathrm{W}}^{k_j,1}$, $k_j\in\mathbb{N}$. 
\end{proposition}

\begin{proof} One mentions $\overline{\Omega}^c$ is also a special Lipschitz domain. If $\mathbb{E}$ is an extension operator for $\Omega$ provided by Theorem \ref{prop:ExtOpIntersecHomHspSpeLip}, we denote by $\mathbb{E}^{-}$ the extension operator from $\overline{\Omega}^c$ to $\mathbb{R}^n$, and we set  for all $u\in \dot{\mathrm{H}}^{s_0,p_0}(\mathbb{R}^n)\cap \dot{\mathrm{H}}^{s_1,p_1}(\mathbb{R}^n)$
\begin{align*}
    \mathcal{P}_0 u := u - \mathbb{E}^{-}[\mathbbm{1}_{\overline{\Omega}^c} u]\text{.}
\end{align*}
In this case, the boundedness properties follow from Theorem \ref{prop:ExtOpIntersecHomHspSpeLip}.
\end{proof}

The next proposition improves the similar result \cite[Proposition~3.9]{Gaudin2022} with a new proof. 
\begin{proposition}\label{prop:densityCcinftyinhomHspspeLip} Let $s_0,s_1\in\mathbb{R}$, $p_0,p_1\in(1,+\infty)$. The space $\mathrm{C}_c^\infty(\Omega)$ is a dense subspace of
\begin{enumerate}
    \item $\dot{\mathrm{H}}^{s_0,p_0}_0(\Omega)\cap \dot{\mathrm{H}}^{s_1,p_1}_0(\Omega)$, if $s_j>-1+1/p_j$, $j\in\{0,1\}$;\label{eq:densityCcinftyInterHsp}
    \item $\dot{\mathrm{H}}^{s_0,p_0}_0(\Omega)$, $-n/p_0'<s_0<0$;\label{eq:densityCcinftyHspNeg}
    \item $\dot{\mathrm{W}}^{k_0,1}_0(\Omega)\cap \dot{\mathrm{W}}^{k_1,1}_0(\Omega)$, for $k_0,k_1\in\mathbb{N}$.\label{eq:densityCcinftyWk1}
\end{enumerate}
\end{proposition}

\begin{proof}\textbf{Step 1:} We prove \ref{eq:densityCcinftyInterHsp} through three subcases: first the case $s_0,s_1\geqslant 0$, second the case $s_j\in(-1+1/p_j,1]$, $j\in\{0,1\}$, and third the case $-1+1/p_0<s_0<0$ and $s_1>1$.

\textbf{Step 1.1:} We assume $s_0,s_1\geqslant 0$. Let $u\in \dot{\mathrm{H}}^{s_0,p_0}_0(\Omega)\cap \dot{\mathrm{H}}^{s_1,p_1}_0(\Omega)\subset\dot{\mathrm{H}}^{s_0,p_0}(\mathbb{R}^n)\cap \dot{\mathrm{H}}^{s_1,p_1}(\mathbb{R}^n)$. Following Step 3 in the proof of Proposition \ref{prop:DensitySobBesovRn}, for $\varepsilon>0$ fixed, let $M>N\in\mathbb{N}$, $\eta,R>0$ such that
\begin{align*}
    \lVert u - u_{N,M}^{R,\eta}\rVert_{\dot{\mathrm{H}}^{s_0,p_0}(\mathbb{R}^n)\cap \dot{\mathrm{H}}^{s_1,p_1}(\mathbb{R}^n)} <\varepsilon.
\end{align*}
We consider the projection operator $\mathcal{P}_0$ given by Proposition \ref{prop:ProOpIntersecHomHsp0SpeLip} for $s_0,s_1\geqslant0$, and associated with Stein's extension operator. Since $\mathcal{P}_0 u =u$,
\begin{align*}
    \lVert u -  \mathcal{P}_0[u_{N,M}^{R,\eta}]\rVert_{\dot{\mathrm{H}}^{s_0,p_0}(\mathbb{R}^n)\cap \dot{\mathrm{H}}^{s_1,p_1}(\mathbb{R}^n)} = \lVert  \mathcal{P}_0[u] -  \mathcal{P}_0[u_{N,M}^{R,\eta}]\rVert_{\dot{\mathrm{H}}^{s_0,p_0}(\mathbb{R}^n)\cap \dot{\mathrm{H}}^{s_1,p_1}(\mathbb{R}^n)} \lesssim_{p_0,p_1,s_0,s_1,n} \varepsilon.
\end{align*}
By construction, one has $\mathcal{P}_0[u_{N,M}^{R,\eta}]\in{\mathrm{H}}^{m,p_0}_0(\Omega)\cap{\mathrm{H}}^{m,p_1}_0(\Omega)\cap\mathrm{C}^{\infty}_0(\mathbb{R}^n)$, for all $m\in\mathbb{N}$. For $\Theta\in\mathrm{C}^\infty_c(\mathbb{R}^n)$, real valued, non-negative, such that $\supp \Theta \subset B(0,2)$, with $\Theta=1$ on $B(0,1)$, and $\zeta,h>0$, we set 
\begin{align*}
    u_{N,M,0}^{R,\eta,\zeta,h}:=\tau_{-h}\left(\Theta(\cdot/\zeta)\mathcal{P}_0[u_{N,M}^{R,\eta}]\right)
\end{align*}
where $\tau_{-h}f(x',x_n) := f(x',x_n-h)$, for all measurable functions $f\,:\,\mathbb{R}^n\longrightarrow\mathbb{C}$. We have
\begin{equation*}
    \supp (u_{N,M,0}^{R,\eta,\zeta,h}) \subset \overline{\Omega_{h,\zeta}}:= \overline{\{\,(x', x_n)\in\mathbb{R}^{n-1}\times \mathbb{R}\,|\, x_n>\phi(x')+h\,\}}\cap\overline{B(0,2\zeta)},
\end{equation*}
so that $\overline{\Omega_{h,\zeta}}$ is compact with $\overline{\Omega_{h,\zeta}}\cap \Omega^c=\emptyset$. For $m =\lfloor s_0\rfloor+\lfloor s_1\rfloor+2$, $j\in\{0,1\}$,
\begin{align*}
    \lVert \mathcal{P}_0[u_{N,M}^{R,\eta}] - u_{N,M,0}^{R,\eta,\zeta,h}\rVert_{\dot{\mathrm{H}}^{s_j,p_j}(\mathbb{R}^n)} &\leqslant \lVert \mathcal{P}_0[u_{N,M}^{R,\eta}] - u_{N,M,0}^{R,\eta,\zeta,0}\rVert_{{\mathrm{H}}^{m,p_j}(\mathbb{R}^n)}+ \lVert u_{N,M,0}^{R,\eta,\zeta,0} - u_{N,M,0}^{R,\eta,\zeta,h}\rVert_{{\mathrm{H}}^{m,p_j}(\mathbb{R}^n)}\\
    &\leqslant \lVert \mathcal{P}_0[u_{N,M}^{R,\eta}] - u_{N,M,0}^{R,\eta,\zeta,0}\rVert_{{\mathrm{H}}^{m,p_j}(\mathbb{R}^n)}+ \lVert u_{N,M,0}^{R,\eta,\zeta,0} - u_{N,M,0}^{R,\eta,\zeta,h}\rVert_{{\mathrm{H}}^{m,p_j}(\mathbb{R}^n)}.
\end{align*}
By strong continuity of translations, as $h$ tends to $0$,
\begin{align*}
    \limsup_{h\rightarrow 0_+} \,\lVert \mathcal{P}_0[u_{N,M}^{R,\eta}] - u_{N,M,0}^{R,\eta,\zeta,h}\rVert_{\dot{\mathrm{H}}^{s_j,p_j}(\mathbb{R}^n)} \leqslant \lVert \mathcal{P}_0[u_{N,M}^{R,\eta}] - u_{N,M,0}^{R,\eta,\zeta,0}\rVert_{{\mathrm{H}}^{m,p_j}(\mathbb{R}^n)}.
\end{align*}
By the Dominated Convergence Theorem, as $\zeta$ tends to infinity, we can choose $\zeta$ large enough and $h$ close enough to $0$, so that
\begin{align*}
     \,\lVert \mathcal{P}_0[u_{N,M}^{R,\eta}] - u_{N,M,0}^{R,\eta,\zeta,h}\rVert_{\dot{\mathrm{H}}^{s_j,p_j}(\mathbb{R}^n)} <\varepsilon.
\end{align*}
Hence, the triangle inequality yields
\begin{align*}
    \lVert u -  u_{N,M,0}^{R,\eta,\zeta,h}\rVert_{\dot{\mathrm{H}}^{s_0,p_0}(\mathbb{R}^n)\cap \dot{\mathrm{H}}^{s_1,p_1}(\mathbb{R}^n)} \lesssim_{p_0,p_1,s_0,s_1,n} \varepsilon.
\end{align*}

\textbf{Step 1.2:}  We assume $s_j\in(-1+1/p_j,1]$, for $j\in\{0,1\}$. Let $u\in \dot{\mathrm{H}}^{s_0,p_0}_0(\Omega)\cap \dot{\mathrm{H}}^{s_1,p_1}_0(\Omega)\subset\dot{\mathrm{H}}^{s_0,p_0}(\mathbb{R}^n)\cap \dot{\mathrm{H}}^{s_1,p_1}(\mathbb{R}^n)$. As for the previous step, let $\varepsilon>0$ fixed, $M>N\geqslant0$, $R,\eta>0$ sufficiently large, and we also consider the projection operator $\mathcal{P}_0$ given by Proposition~\ref{prop:ProOpIntersecHomHsp0SpeLip} for $s_j\in(-1+1/p_j,1]$ for $j\in\{0,1\}$. The projection operator $\mathcal{P}_0$ is the one associated with the extension operator by rough reflection around the boundary, introduced in Corollary~\ref{cor:ExtOpHomSobSpacesrestric}. Since $u^{R,\eta}_{N,M}\in\mathrm{C}^\infty_c(\mathbb{R}^n)$, we obtain $\mathcal{P}_0[u^{R,\eta}_{N,M}]\in\mathrm{C}^{0,1}_c(\mathbb{R}^n)$, with $\supp(\mathcal{P}_0[u^{R,\eta}_{N,M}])\subset \overline{\Omega}$, and the estimate
\begin{align*}
    \lVert u -  \mathcal{P}_0[u_{N,M}^{R,\eta}]\rVert_{\dot{\mathrm{H}}^{s_0,p_0}(\mathbb{R}^n)\cap \dot{\mathrm{H}}^{s_1,p_1}(\mathbb{R}^n)} = \lVert  \mathcal{P}_0[u] -  \mathcal{P}_0[u_{N,M}^{R,\eta}]\rVert_{\dot{\mathrm{H}}^{s_0,p_0}(\mathbb{R}^n)\cap \dot{\mathrm{H}}^{s_1,p_1}(\mathbb{R}^n)} \lesssim_{p_0,p_1,s_0,s_1,n} \varepsilon.
\end{align*}
Now, let $h>\zeta>0$, and let $(\Theta_\gamma)_{\gamma>0}\subset\mathrm{C}^\infty_c(\mathbb{R}^n)$ be a mollifying net that satisfies the support property $\supp(\Theta_\gamma)\subset \overline{B(0,\gamma)}$, for all $\gamma >0$. We set $u_{N,M}^{R,\eta,\gamma,h}:=\tau_{-h}\Theta_\gamma\ast \mathcal{P}_0[u_{N,M}^{R,\eta}]$,
\begin{equation*}
    \supp (u_{N,M}^{R,\eta,\gamma,h}) \subset \overline{\Omega_{h,\gamma}}:= \overline{\{\,(x', x_n)\in\mathbb{R}^{n-1}\times \mathbb{R}\,|\, x_n>\phi(x')+h\,\}}+\overline{B(0,\gamma)}\subset {\Omega},
\end{equation*}
so that $\overline{\Omega_{h,\gamma}}\cap \Omega^c=\emptyset$. Therefore, by strong continuity of translations, and mollification, we have $u_{N,M}^{R,\eta,\gamma,h}\in\mathrm{C}^\infty_c(\Omega)$ arbitrarily close to $\mathcal{P}_0[u_{N,M}^{R,\eta}]$, for $0<\gamma<h$ both close enough to $0$.

\textbf{Step 1.3:} Now, we assume $-1+1/p_0<s_0<0$ and $s_1>1$. One notes that $\dot{\mathrm{H}}^{s_0,p_0}_0(\Omega)\cap \dot{\mathrm{H}}^{s_1,p_1}_0(\Omega)$ is a reflexive Banach space. We consider $\tfrac{1}{q_0}:=\tfrac{1}{p_0}-\tfrac{s_0}{n}$, the following embedding is true
\begin{align*}
    \mathrm{L}^{q_0}(\Omega)\cap \mathrm{H}^{s_1,p_1}_0(\Omega) \hookrightarrow \dot{\mathrm{H}}^{s_0,p_0}_0(\Omega)\cap \dot{\mathrm{H}}^{s_1,p_1}_0(\Omega) \hookrightarrow \dot{\mathrm{H}}^{s_0,p_0}_0(\Omega)\hookrightarrow {\mathrm{H}}^{s_0,p_0}_0(\Omega)\text{.}
\end{align*}
We can summarize it has
\begin{align*}
    \mathrm{L}^{q_0}(\Omega)\cap \mathrm{H}^{s_1,p_1}_0(\Omega) \hookrightarrow \dot{\mathrm{H}}^{s_0,p_0}_0(\Omega)\cap \dot{\mathrm{H}}^{s_1,p_1}_0(\Omega) \hookrightarrow {\mathrm{H}}^{s_0,p_0}_0(\Omega)\text{,}
\end{align*}
and each involved space is reflexive. One may dualize it to deduce,
\begin{align*}
     {\mathrm{H}}^{-s_0,p_0'}(\Omega)  \xhookrightarrow{\iota}({\mathrm{H}}^{s_0,p_0}_0(\Omega))'\hookrightarrow (\dot{\mathrm{H}}^{s_0,p_0}_0(\Omega)\cap \dot{\mathrm{H}}^{s_1,p_1}_0(\Omega))'&\hookrightarrow (\mathrm{L}^{q_0}(\Omega)\cap \mathrm{H}^{s_1,p_1}_0(\Omega))'\\&\xhookrightarrow{\iota^{-1}} \mathrm{L}^{q_0'}(\Omega)+ \mathrm{H}^{-s_1,p_1'}(\Omega) \text{.}
\end{align*}
Here, the map $\iota$ denotes the canonical isomorphism that identifies ${\mathrm{H}}^{-s_0,p_0'}(\Omega)$ with $({\mathrm{H}}^{s_0,p_0}_0(\Omega))'$ and $\mathrm{L}^{q_0'}(\Omega)+ \mathrm{H}^{-s_1,p_1'}(\Omega)$ with $(\mathrm{L}^{q_0}(\Omega)\cap \mathrm{H}^{s_1,p_1}_0(\Omega))'$. A direct consequence is that $\iota(\mathcal{S}(\overline{\Omega}))\subset(\dot{\mathrm{H}}^{s_0,p_0}_0(\Omega)\cap \dot{\mathrm{H}}^{s_1,p_1}_0(\Omega))'$, but $\iota^{-1}[\iota(\mathcal{S}(\overline{\Omega}))]=\mathcal{S}(\overline{\Omega})$ is dense in $\mathrm{L}^{q_0'}(\Omega)+ \mathrm{H}^{-s_1,p_1'}(\Omega)$, so $\iota(\mathcal{S}(\overline{\Omega}))$ is dense in $(\mathrm{L}^{q_0}(\Omega)\cap \mathrm{H}^{s_1,p_1}_0(\Omega))'$. Thus, the following embedding is dense
\begin{align*}
    (\dot{\mathrm{H}}^{s_0,p_0}_0(\Omega)\cap \dot{\mathrm{H}}^{s_1,p_1}_0(\Omega))'\hookrightarrow (\mathrm{L}^{q_0}(\Omega)\cap \mathrm{H}^{s_1,p_1}_0(\Omega))'.
\end{align*}
By reflexivity, also is the next one:
\begin{align*}
    \mathrm{L}^{q_0}(\Omega)\cap \mathrm{H}^{s_1,p_1}_0(\Omega)\hookrightarrow\dot{\mathrm{H}}^{s_0,p_0}_0(\Omega)\cap \dot{\mathrm{H}}^{s_1,p_1}_0(\Omega).
\end{align*}
Then, since $\mathrm{C}_c^\infty(\Omega)$ is dense subspace $\mathrm{L}^{q_0}(\Omega)\cap \mathrm{H}^{s_1,p_1}_0(\Omega)$, by successive approximations,  it holds that $\mathrm{C}_c^\infty(\Omega)$ is dense in $\dot{\mathrm{H}}^{s_0,p_0}_0(\Omega)\cap \dot{\mathrm{H}}^{s_1,p_1}_0(\Omega)$.

\textbf{Step 2:} For the density result \ref{eq:densityCcinftyHspNeg}, provided $\frac{1}{q}=\frac{1}{p_0}-\frac{s_0}{n}$, we have the embeddings
\begin{align*}
    \mathrm{L}^q(\Omega) \hookrightarrow \dot{\mathrm{H}}^{s_0,p_0}_0(\Omega) \hookrightarrow {\mathrm{H}}^{s_0,p_0}_0(\Omega).
\end{align*}
Therefore, we may reproduce a simpler version of the arguments presented in the previous Step 1.3. 

\textbf{Step 3:} For the density result \ref{eq:densityCcinftyWk1}, if $k_0=0<k_1$ the result belongs to the range of the standard theory of Sobolev spaces, since it reduces to the density of $\mathrm{C}_c^\infty(\Omega)$ in the inhomogeneous Sobolev space $\mathrm{W}^{k_1,1}_0(\Omega)$. If $1\leqslant k_0\leqslant k_1$, we reproduce \textit{verbatim} the Step 1.1.
\end{proof}

The next corollary is fundamental for a proper theory of Sobolev spaces involving boundary values. This is a direct combination of Proposition \ref{prop:densityCcinftyinhomHspspeLip} and Corollary \ref{cor:HomSobolevMultiplierSpeLip}.
\begin{corollary}\label{cor:Hdp=Hsp0SpeLip} For all $p\in(1,+\infty)$, $s\in(-1+\frac{1}{p},\frac{1}{p})$,
\begin{align*}
    \dot{\mathrm{H}}^{s,p}_0(\Omega)=\dot{\mathrm{H}}^{s,p}(\Omega)\text{.}
\end{align*}
In particular, $\mathrm{C}_c^\infty(\Omega)$ is dense in $\dot{\mathrm{H}}^{s,p}(\Omega)$ for the same range of indices.
\end{corollary}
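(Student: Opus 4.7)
The plan is to identify the two spaces via the natural restriction map, using a multiplier argument as the key ingredient, and then transfer the density statement through that identification.

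First I would note the easy direction: by construction, the canonical restriction-to-$\Omega$ map gives the continuous embedding $\dot{\mathrm{H}}^{s,p}_0(\Omega)\hookrightarrow \dot{\mathrm{H}}^{s,p}(\Omega)$. For the reverse inclusion, the observation driving the whole argument is that on the range $s\in(-1+\tfrac{1}{p},\tfrac{1}{p})$ the indicator $\mathbbm{1}_\Omega$ is a bounded pointwise multiplier on $\dot{\mathrm{H}}^{s,p}(\mathbb{R}^n)$, which is precisely Corollary \ref{cor:HomSobolevMultiplierSpeLip}. Combined with an extension, this will turn any class $u\in \dot{\mathrm{H}}^{s,p}(\Omega)$ into a genuine distribution on $\mathbb{R}^n$ supported in $\overline{\Omega}$.

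Concretely, given $u\in \dot{\mathrm{H}}^{s,p}(\Omega)$, I would use the extension operator $\mathrm{E}$ from Corollary \ref{cor:ExtOpHomSobSpacesrestric} (which applies since $s\in(-1+\tfrac{1}{p},1]$ and $(\mathcal{C}_{s,p})$ holds because $s<\tfrac{1}{p}\leqslant \tfrac{n}{p}$) to produce $\mathrm{E}u\in \dot{\mathrm{H}}^{s,p}(\mathbb{R}^n)$ with $(\mathrm{E}u)_{|_\Omega}=u$ and $\|\mathrm{E}u\|_{\dot{\mathrm{H}}^{s,p}(\mathbb{R}^n)}\lesssim \|u\|_{\dot{\mathrm{H}}^{s,p}(\Omega)}$. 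Then by Corollary \ref{cor:HomSobolevMultiplierSpeLip},
\begin{align*}
    \|\mathbbm{1}_\Omega\,\mathrm{E}u\|_{\dot{\mathrm{H}}^{s,p}(\mathbb{R}^n)}\lesssim_{s,p,n,\partial\Omega}\|\mathrm{E}u\|_{\dot{\mathrm{H}}^{s,p}(\mathbb{R}^n)}\lesssim_{s,p,n,\partial\Omega}\|u\|_{\dot{\mathrm{H}}^{s,p}(\Omega)}.
\end{align*}
The distribution $\mathbbm{1}_\Omega\,\mathrm{E}u$ manifestly has support in $\overline{\Omega}$ and restricts to $u$ on $\Omega$, so it exhibits $u$ as an element of $\dot{\mathrm{H}}^{s,p}_0(\Omega)$ with comparable norm, yielding the converse embedding. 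Hence the two spaces coincide with equivalent norms.

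For the final density claim, I would observe that the window $s\in(-1+\tfrac{1}{p},\tfrac{1}{p})=(-\tfrac{1}{p'},\tfrac{1}{p})$ is strictly contained in $(-\tfrac{n}{p'},\tfrac{n}{p})$ because $n\geqslant 2$. Proposition \ref{prop:densityCcinftyinhomHspspeLip1}(i) therefore yields density of $\mathrm{C}_c^\infty(\Omega)$ in $\dot{\mathrm{H}}^{s,p}_0(\Omega)$, and the equality of spaces just proved transports this density to $\dot{\mathrm{H}}^{s,p}(\Omega)$. The argument is almost entirely assembly of results already on the table; the only point that could cause friction is checking that the multiplier statement and the extension statement are simultaneously available on the whole strip $(-1+\tfrac{1}{p},\tfrac{1}{p})$, but both ranges are indeed compatible with the hypotheses of Corollaries \ref{cor:HomSobolevMultiplierSpeLip} and \ref{cor:ExtOpHomSobSpacesrestric}, so no additional work is required.
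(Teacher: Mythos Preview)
Your proof is correct and follows the same approach as the paper, which simply states the result as a direct combination of Proposition~\ref{prop:densityCcinftyinhomHspspeLip1} and Corollary~\ref{cor:HomSobolevMultiplierSpeLip}. The only cosmetic difference is that you invoke the extension operator $\mathrm{E}$ from Corollary~\ref{cor:ExtOpHomSobSpacesrestric}, whereas one may equally well take any $\tilde u\in\dot{\mathrm{H}}^{s,p}(\mathbb{R}^n)$ with $\tilde u_{|_\Omega}=u$ coming straight from the definition of the restriction space; either way the multiplier bound on $\mathbbm{1}_\Omega$ does the work.
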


\begin{proposition}\label{prop:DualitySobolevDomain} Let $p\in(1,+\infty)$, $s\in(-{n}/{p'},+\infty)$, we have
\begin{align*}
    (\dot{\mathrm{H}}^{s,p}_0(\Omega))' = \dot{\mathrm{H}}^{-s,p'}(\Omega) \text{ and }
    (\dot{\mathrm{H}}^{s,p}(\Omega))' = \dot{\mathrm{H}}^{-s,p'}_0(\Omega)\text{. }
\end{align*}
\end{proposition}

\begin{proof}We propose a proof similar but different from \cite[Proposition~3.11]{Gaudin2022}, due to the fact that we reach regularity indices $s\geqslant n/p$.

\textbf{Step 1:} Let $p\in(1,+\infty)$, $s\in(-{n}/{p'},+\infty)$. We want to prove $(\dot{\mathrm{H}}^{s,p}(\Omega))' = \dot{\mathrm{H}}^{-s,p'}_0(\Omega)$. Let $u\in\dot{\mathrm{H}}^{-s,p'}_0(\Omega)\subset\dot{\mathrm{H}}^{-s,p'}(\mathbb{R}^n)$. Then, by the definition of function spaces by restriction, $u$ induces a linear form on $\dot{\mathrm{H}}^{s,p}(\Omega)$ as follows:
\begin{align*}
    v\longmapsto \langle u,\tilde{v} \rangle_{\mathbb{R}^n},
\end{align*}
where $\tilde{v}$ is any $\dot{\mathrm{H}}^{s,p}$-extension of $v$. This map is well-defined and does not depend on the choice of the extension $\Tilde{v}$ of $v$. Indeed, let $v'$ be another $\dot{\mathrm{H}}^{s,p}$-extension of $v$, then $\tilde{v}-v'\in\dot{\mathrm{H}}^{s,p}_0(\overline{\Omega}^c)$. By Proposition \ref{prop:densityCcinftyinhomHspspeLip}, there exists a sequence $(v_\ell)_{\ell\in\mathbb{N}}\subset \mathrm{C}_c^\infty(\overline{\Omega}^c)$ that converges towards $\tilde{v}-v'$. It implies, 
\begin{align*}
   \langle u,\tilde{v} \rangle_{\mathbb{R}^n}- \langle u,v' \rangle_{\mathbb{R}^n} =\langle u,\tilde{v}-v' \rangle_{\mathbb{R}^n} = \lim_{\ell\rightarrow+\infty} \langle u,v_\ell \rangle_{\mathbb{R}^n}=0.
\end{align*}
Therefore, we have a well-defined, injective and bounded map
\begin{align}\label{eq:mapduality1Hsp}
\left\{\begin{array}{cl}
\dot{\mathrm{H}}^{-s,p'}_0(\Omega) &\longrightarrow (\dot{\mathrm{H}}^{s,p}(\Omega))'\\
u &\longmapsto \,\,\, \big\langle u,\tilde{\cdot}\big\rangle_{\mathbb{R}^n}
\end{array}\right.\text{. }
\end{align}
We prove the reverse embedding. Let $U\in(\dot{\mathrm{H}}^{s,p}(\Omega))'$, we use the definition of function spaces by restriction to deduce that $U$ induces an element of $(\dot{\mathrm{H}}^{s,p}(\mathbb{R}^n))'$, by means of
\begin{align*}
    v\longmapsto \langle U, \mathbbm{1}_{\Omega}\tilde{v} \rangle
\end{align*}
where $\tilde{v}$ is, as before, any $\dot{\mathrm{H}}^{s,p}$-extension of $v$ to the whole $\mathbb{R}^n$. Consequently, by Proposition \ref{prop:dualityRieszpotential}, there exists $u\in\dot{\mathrm{H}}^{-s,p'}(\mathbb{R}^n)$, such that for all $\tilde{v}\in\dot{\mathrm{H}}^{s,p}(\mathbb{R}^n)$
\begin{align*}
    \langle U, \mathbbm{1}_{\Omega}\tilde{v} \rangle = \langle u, \tilde{v} \rangle_{\mathbb{R}^n}.
\end{align*}
In particular, for all $v\in\mathrm{C}_c^\infty(\overline{\Omega}^{c})$
\begin{align*}
    \langle U, \mathbbm{1}_{\Omega}\tilde{v} \rangle = \langle u, \tilde{v} \rangle_{\mathbb{R}^n}=0.
\end{align*}
This yields $u\in\dot{\mathrm{H}}^{-s,p'}_0(\Omega)$. The map \eqref{eq:mapduality1Hsp} is onto, and therefore bijective. It is even isometric.

\textbf{Step 2:} We want to show $(\dot{\mathrm{H}}^{s,p}_0(\Omega))' = \dot{\mathrm{H}}^{-s,p'}(\Omega)$. Let $u\in\dot{\mathrm{H}}^{-s,p'}(\Omega)$, then we can consider $\Tilde{u}$ an arbitrary $\dot{\mathrm{H}}^{-s,p'}$-extension of $u$ to the whole $\mathbb{R}^n$, so that we obtain a continuous linear functional on $\dot{\mathrm{H}}^{s,p}_0(\Omega)$ with the map
\begin{align*}
    v\longmapsto \langle \Tilde{u},v\rangle_{\mathbb{R}^n}.
\end{align*}
Again, by Proposition \ref{prop:densityCcinftyinhomHspspeLip}, it does not depend on the choice of the extension. Indeed, if $u'$ is another $\dot{\mathrm{H}}^{-s,p'}$-extension, then $\tilde{u}-u'\in\dot{\mathrm{H}}^{s,p}_0(\overline{\Omega}^c)$, since $\mathrm{C}_c^\infty(\Omega)$ is a dense subspace of $\dot{\mathrm{H}}^{s,p}_0(\Omega)$, for $(v_\ell)_{\ell\in\mathbb{N}}\subset \mathrm{C}_c^\infty({\Omega})$ that converges towards $v$, we deduce
\begin{align*}
    \langle \Tilde{u},v\rangle_{\mathbb{R}^n}-\langle u',v\rangle_{\mathbb{R}^n} = \lim_{\ell\rightarrow+\infty} \langle \Tilde{u}-u',v_\ell\rangle_{\mathbb{R}^n}=0,
\end{align*}
due to support considerations. Thus, we have a well-defined, injective and bounded map
\begin{align}\label{eq:mapduality2Hsp}
\left\{\begin{array}{cl}
\dot{\mathrm{H}}^{-s,p'}(\Omega) &\longrightarrow (\dot{\mathrm{H}}^{s,p}_0(\Omega))'\\
u &\longmapsto \,\,\, \big\langle \tilde{u},{\cdot}\big\rangle_{\mathbb{R}^n}
\end{array}\right.\text{. }
\end{align}
We prove that the map \eqref{eq:mapduality2Hsp} is onto. Let $U\in (\dot{\mathrm{H}}^{s,p}_0(\Omega))'$. We aim to represent $U$ by an element $\mathfrak{u}\in\dot{\mathrm{H}}^{-s,p'}(\mathbb{R}^n)$, and then by its restriction $u:=\mathfrak{u}_{|_{\Omega}}\in\dot{\mathrm{H}}^{-s,p'}(\Omega)$.

Thus, by the Hahn-Banach Theorem, there exists a linear form $\mathfrak{U}\in(\dot{\mathrm{H}}^{s,p}(\mathbb{R}^n))'$ (not unique) such that
\begin{align*}
    \langle U,\varphi\rangle=\langle \mathfrak{U},\varphi\rangle, \forall\varphi\in\dot{\mathrm{H}}^{s,p}_0(\Omega)\text{, and } \lVert U\rVert_{(\dot{\mathrm{H}}^{s,p}_0(\Omega))'}=\lVert \mathfrak{U}\rVert_{(\dot{\mathrm{H}}^{s,p}(\mathbb{R}^n))'}.
\end{align*}
By duality on the whole space, Proposition \ref{prop:dualityRieszpotential}, there exists a unique $\mathfrak{u}\in \dot{\mathrm{H}}^{-s,p'}(\mathbb{R}^n)$ associated to $\mathfrak{U}$ (we recall that, however, $\mathfrak{U}$ is not unique), such that
\begin{align*}
     \langle \mathfrak{U},v\rangle=\langle \mathfrak{u},v\rangle_{\mathbb{R}^n}, \forall v\in\dot{\mathrm{H}}^{s,p}(\mathbb{R}^n)\text{, and } \lVert U\rVert_{(\dot{\mathrm{H}}^{s,p}_0(\Omega))'}=\lVert \mathfrak{U}\rVert_{(\dot{\mathrm{H}}^{s,p}(\mathbb{R}^n))'}=\lVert \mathfrak{u}\rVert_{\dot{\mathrm{H}}^{-s,p'}(\mathbb{R}^n)}.
\end{align*}

Let $\mathfrak{u}'\in \dot{\mathrm{H}}^{-s,p'}(\mathbb{R}^n)$, be such that 
\begin{align*}
    \langle \mathfrak{u}',v\rangle_{\mathbb{R}^n}=\langle \mathfrak{u},v\rangle_{\mathbb{R}^n}, \forall v\in\dot{\mathrm{H}}^{s,p}_0(\Omega).
\end{align*}
Since $\mathrm{C}_c^\infty(\Omega)\subset\dot{\mathrm{H}}^{s,p}_0(\Omega)$, one obtains that $\mathfrak{u}'_{|_\Omega}=\mathfrak{u}_{|_\Omega}$. Thus, $u:=\mathfrak{u}_{|_\Omega}$ is uniquely determined, and we have,
\begin{align*}
    \langle u,\varphi\rangle_{\Omega}=\langle U,\varphi\rangle, \forall \varphi\in\dot{\mathrm{H}}^{s,p}_0(\Omega).
\end{align*}
The map \eqref{eq:mapduality2Hsp} is onto.
\end{proof}

We recall the following result. 
\begin{proposition}[ {\cite[Theorem~3.3]{CostabelTaggartMcIntosh2013}} ]\label{prop:BogovskiCostabelTaggartMcIntosh}For $p\in(1,+\infty)$, $s<n/p$, the following operator is onto
\begin{align*}
    \div\,:\, \dot{\mathrm{H}}^{s,p}_0(\Omega,\mathbb{C}^n)&\longrightarrow\dot{\mathrm{H}}^{s-1,p}_0(\Omega)\\
    u\qquad&\longmapsto \sum_{k=1}^n \partial_{x_k}u_k.
\end{align*}
and admits a bounded right inverse $\eus{B}\,:\,\dot{\mathrm{H}}^{s-1,p}_0(\Omega)\longrightarrow\dot{\mathrm{H}}^{s,p}_0(\Omega,\mathbb{C}^n)$.
\end{proposition}
\begin{remark} The original statement \cite[Theorem~3.3]{CostabelTaggartMcIntosh2013} was actually built in a different setting. However, when $s<n/p$, any construction of homogeneous function spaces $\dot{\mathrm{H}}^{s,p}(\mathbb{R}^n)$  can be identified as a subspace of $\mathcal{S}'_h(\mathbb{R}^n)$. In particular, all the claimed properties remain meaningful here: especially preserving the support in $\overline{\Omega}$ for the operator $\eus{B}$.
\end{remark}

\begin{proposition}\label{prop:EqNormNablakHsp} Let $p\in(1,+\infty)$, $m\in\llb 1,+\infty\llb$, $s\geqslant m-1+1/p$, for all $u\in \dot{\mathrm{H}}^{s,p}(\Omega)$,
\begin{align*}
    \lVert \nabla^m u \rVert_{\dot{\mathrm{H}}^{s-m,p}(\Omega)} \sim_{s,m,p,n,\partial\Omega} \lVert u \rVert_{\dot{\mathrm{H}}^{s,p}(\Omega)}\text{. }
\end{align*}
In particular, $\lVert \nabla^m\cdot \rVert_{\dot{\mathrm{H}}^{s-m,p}(\Omega)}$ is an equivalent norm on $\dot{\mathrm{H}}^{s,p}(\Omega)$. The result remains true if we replace $(\dot{\mathrm{H}}^{s,p},\dot{\mathrm{H}}^{s-m,p},s)$ by $(\dot{\mathrm{W}}^{k,1},\dot{\mathrm{W}}^{k-m,1},k)$, provided $k\in\mathbb{N}$, $m\in\llb0,k\rrb$.
\end{proposition}

\begin{proof} For all $m\in\llb 1,+\infty\llb$, $p\in(1,+\infty)$, $s\in\mathbb{R}$, $u\in \dot{\mathrm{H}}^{s,p}(\Omega)$, the estimate
\begin{align*}
    \lVert \nabla^m u \rVert_{\dot{\mathrm{H}}^{s-m,p}(\Omega)} \lesssim_{s,k,p,n} \lVert u \rVert_{\dot{\mathrm{H}}^{s,p}(\Omega)}
\end{align*}
always holds by the definition of function spaces by restriction. Therefore, it suffices to prove the reverse inequality.

$\bullet$ For $s\geqslant 1$,  following the arguments in the proof of \cite[Proposition~2.18]{JerisonKenig1995}, for $m\in\llb 1,n\rrb$, there is a linear operator $\mathcal{T}_k$ which enjoys exactly the same boundedness properties as $\mathcal{E}$, and which satisfies the commutation property $\partial_{x_k}\mathcal{E} = \mathcal{E}[\partial_{x_k}] + \mathcal{T}_k[\partial_{x_n}]$.  Hence, one obtains,
\begin{align}\label{eq:estHomHigherOrderSteinExt}
    \lVert u \rVert_{\dot{\mathrm{H}}^{s,p}(\Omega)} \leqslant \lVert \mathcal{E}u \rVert_{\dot{\mathrm{H}}^{s,p}(\mathbb{R}^n)} \lesssim_{s,p,n} \lVert \nabla \mathcal{E}u \rVert_{\dot{\mathrm{H}}^{s-1,p}(\mathbb{R}^n)} \lesssim_{s,p,n,\partial\Omega} \lVert \nabla u \rVert_{\dot{\mathrm{H}}^{s-1,p}(\Omega)}\text{. }
\end{align}
This yields the result when $s\geqslant m$, and $m=1$. 

For $m\geqslant 2$, one derives the  result by induction, reproducing the procedure above. The proof remains similar for the spaces $\dot{\mathrm{W}}^{k,1}$, $\dot{\mathrm{W}}^{k-m,1}$.

$\bullet$ For $p\in(1,+\infty)$, $s\in(1/p,1)$, by Proposition \ref{prop:DualitySobolevDomain},
\begin{align*}
    \lVert u \rVert_{\dot{\mathrm{H}}^{s,p}(\Omega)} =\sup_{\substack{\varphi\in\dot{\mathrm{H}}^{-s,p'}_0(\Omega)\\ \lVert \varphi \rVert_{\dot{\mathrm{H}}^{-s,p'}_0(\Omega)} \leqslant 1}} |\langle u, \varphi \rangle|.
\end{align*}
But since $-s+1<1-1/p=1/p'<n/p'$, by Proposition \ref{prop:BogovskiCostabelTaggartMcIntosh}, one has
\begin{align*}
    \lVert u \rVert_{\dot{\mathrm{H}}^{s,p}(\Omega)}  &\lesssim_{p,s,n}^{\partial\Omega}\sup_{\substack{\Psi\in\dot{\mathrm{H}}^{-s+1,p'}_0(\Omega)^n\\ \lVert \Psi \rVert_{\dot{\mathrm{H}}^{-s+1,p'}_0(\Omega)} \leqslant 1}} |\langle u, \div \Psi \rangle|\\
    &\lesssim_{p,s,n}^{\partial\Omega} \sup_{\substack{\Psi\in\dot{\mathrm{H}}^{-s+1,p'}_0(\Omega)^n\\ \lVert \Psi \rVert_{\dot{\mathrm{H}}^{-s+1,p'}_0(\Omega)} \leqslant 1}} |\langle \nabla u, \Psi \rangle| = \lVert \nabla u \rVert_{\dot{\mathrm{H}}^{s-1,p}(\Omega)}.
\end{align*}
Finally, combining the case $m=1$  $s\in(1/p,1)$, and the case $m\in\mathbb{N}$, $s\geqslant1$, in order to deduce the statement.
\end{proof}

Given the description of the dual spaces, we can now state that the restriction operations and the intersection of spaces indeed commute with each other, in the most relevant cases.

\begin{proposition}\label{prop:IntersecRestricHomSobSpaces} Let $p_0,p_1\in(1,+\infty)$, $s_0,s_1\in\mathbb{R}$, such that $s_j>-1+1/p_j$, $j\in\{0,1\}$. We assume that either
\begin{enumerate}
    \item $s_0,s_1\leqslant 1$; or
    \item $s_0,s_1\geqslant 0$.
\end{enumerate}

The following equality of vector spaces holds with equivalence of norms, 
\begin{align*}
    \dot{\mathrm{H}}^{s_0,p_0}(\Omega)\cap \dot{\mathrm{H}}^{s_1,p_1}(\Omega) = [\dot{\mathrm{H}}^{s_0,p_0}\cap \dot{\mathrm{H}}^{s_1,p_1}](\Omega)\text{. }
\end{align*}
In particular, $\dot{\mathrm{H}}^{s_0,p_0}(\Omega)\cap \dot{\mathrm{H}}^{s_1,p_1}(\Omega)$ admits $\mathcal{S}_0(\overline{\Omega})$ and $\mathrm{C}_c^\infty(\overline{\Omega})$ as dense subspaces and is a Banach space whenever $(\mathcal{C}_{s_0,p_0})$ is satisfied.

A similar result holds if, for $j\in\{0,1\}$, we replace $\dot{\mathrm{H}}^{s_j,p_j}$, by $\dot{\mathrm{W}}^{k_j,1}$, $k_j\in\mathbb{N}$.
\end{proposition}

\begin{proof}For $p_0,p_1\in(1,+\infty)$, $s_j>-1+1/p_j$, $j\in\{0,1\}$, by the definition of function spaces by restriction, $[\dot{\mathrm{H}}^{s_0,p_0}\cap \dot{\mathrm{H}}^{s_1,p_1}](\Omega)$ admits  $\mathrm{C}_c^\infty(\overline{\Omega})$ and $\mathcal{S}_0(\overline{\Omega})$ as dense subspaces and the following continuous embedding also holds by construction,
\begin{align*}
    [\dot{\mathrm{H}}^{s_0,p_0}\cap \dot{\mathrm{H}}^{s_1,p_1}](\Omega)\hookrightarrow\dot{\mathrm{H}}^{s_0,p_0}(\Omega)\cap \dot{\mathrm{H}}^{s_1,p_1}(\Omega)\text{. }
\end{align*}
By Proposition \ref{prop:ExtOpIntersecHomHspSpeLip}, if one of the two following condition is satisfied
\begin{enumerate}
    \item $s_j\geqslant 0$, for $j\in\{0,1\}$; or
    \item $s_j\in(-1+1/p_j,1]$, $j\in\{0,1\}$;
\end{enumerate}
for all $u\in\dot{\mathrm{H}}^{s_0,p_0}(\Omega)\cap \dot{\mathrm{H}}^{s_1,p_1}(\Omega)$, one has $\mathbb{E}u\in\dot{\mathrm{H}}^{s_0,p_0}(\mathbb{R}^n)\cap \dot{\mathrm{H}}^{s_1,p_1}(\mathbb{R}^n)$ so that
\begin{align*}
    \lVert u\rVert_{[\dot{\mathrm{H}}^{s_0,p_0}\cap \dot{\mathrm{H}}^{s_1,p_1}](\Omega)}\leqslant \lVert \mathbb{E}u\rVert_{\dot{\mathrm{H}}^{s_0,p_0}(\mathbb{R}^n)}+\lVert \mathbb{E}u\rVert_{\dot{\mathrm{H}}^{s_1,p_1}(\mathbb{R}^n)} \lesssim_{p_0,p_1,s_0,s_1}^{n,\partial\Omega} \lVert u\rVert_{\dot{\mathrm{H}}^{s_0,p_0}(\Omega)}+\lVert u\rVert_{\dot{\mathrm{H}}^{s_1,p_1}(\Omega)}.
\end{align*}
Therefore, we have the reverse inclusion, and the conclusion follows. The arguments remain valid if we replace $\dot{\mathrm{H}}^{s_j,p_j}$, by $\dot{\mathrm{W}}^{k_j,1}$, $k_j\in\mathbb{N}$, $j\in\{0,1\}$.
\end{proof}

Now, we can state our first interpolation result.

\begin{proposition}\label{prop:InterpHomSobSpeLip}Let $p_j\in(1,+\infty)$, $s_j\in(-n/p_{j}',n/p_j)$, for $j\in\{0,1\}$ and for $\theta\in(0,1)$ we set,
\begin{align*}
    \left(s,\frac{1}{p}\right):= (1-\theta)\left(s_0,\frac{1}{p_0}\right)+ \theta\left(s_1,\frac{1}{p_1}\right)\text{.}
\end{align*}
Then, we have the interpolation identities,
\begin{align}
    [\dot{\mathrm{H}}^{s_0,p_0}(\Omega),\dot{\mathrm{H}}^{s_1,p_1}(\Omega)]_{\theta}&=\dot{\mathrm{H}}^{s,p}(\Omega)\text{, } \label{eq:CompInterpHomSobSpelip1}\\
    [\dot{\mathrm{H}}^{s_0,p_0}_0(\Omega),\dot{\mathrm{H}}^{s_1,p_1}_0(\Omega)]_{\theta}&=\dot{\mathrm{H}}^{s,p}_0(\Omega)\label{eq:CompInterpHomSobSpelip2}\text{.}
\end{align}
\end{proposition}

\begin{proof}\textbf{Step 1:} We prove the interpolation equalities \eqref{eq:CompInterpHomSobSpelip1} and \eqref{eq:CompInterpHomSobSpelip2} assuming for now that one of the two following condition is satisfied
\begin{enumerate}
    \item $s_j\geqslant 0$, for $j\in\{0,1\}$;
    \item $s_j\in(-1+1/p_j,1]$, $j\in\{0,1\}$.
\end{enumerate}
In this case, it suffices to assert that, for ${\dot{\mathfrak{H}}}\in\{ \dot{\mathrm{H}},\dot{\mathrm{H}}_0\}$, the couple $\{ {\dot{\mathfrak{H}}}^{s_0,p_0}(\Omega),{\dot{\mathfrak{H}}}^{s_1,p_1}(\Omega)\}$ is a retract of the couple $\{ \dot{\mathrm{H}}^{s_0,p_0}(\mathbb{R}^n),\dot{\mathrm{H}}^{s_1,p_1}(\mathbb{R}^n)\}$, thanks to \cite[Theorem~6.4.2]{BerghLofstrom1976}. Indeed, both retractions and coretractions are given by
\begin{align*}
    \mathbb{E} \,:\, \dot{\mathrm{H}}^{s_j,p_j}(\Omega)\longrightarrow \dot{\mathrm{H}}^{s_j,p_j}(\mathbb{R}^n) &\text{ and } \mathrm{R}_{{\Omega}} \,:\, \dot{\mathrm{H}}^{s_j,p_j}(\mathbb{R}^n)\longrightarrow \dot{\mathrm{H}}^{s_j,p_j}(\Omega) \text{, }\\
    \iota \,:\, \dot{\mathrm{H}}^{s_j,p_j}_0(\Omega)\longrightarrow \dot{\mathrm{H}}^{s_j,p_j}(\mathbb{R}^n) &\text{ and }\,\,\mathcal{P}_{0} \,:\, \dot{\mathrm{H}}^{s_j,p_j}(\mathbb{R}^n)\longrightarrow \dot{\mathrm{H}}^{s_j,p_j}_0(\Omega) \text{. }
\end{align*}
where $\mathbb{E}$ and $\mathcal{P}_0$ are given by Theorem \ref{prop:ExtOpIntersecHomHspSpeLip} and Proposition \ref{prop:ProOpIntersecHomHsp0SpeLip}, respectively. $\mathrm{R}_{{\Omega}}$ stands for the restriction operator, and $\iota$ for the canonical embedding.

\textbf{Step 2:} We want to remove the conditions $\textit{(i)}$ and $\textit{(ii)}$. To do so, thanks to the previous step, let $-1+1/p_0<s_0<0$, $s_1> 1$, and $\lambda,\mu\in(0,1)$, $r_0,r_1\in(1,+\infty)$ such that
\begin{align*}
    [\dot{\mathfrak{H}}^{s_0,p_0}(\Omega),\dot{\mathfrak{H}}^{1,r_1}(\Omega)]_{\lambda}=\mathrm{L}^{r_0}(\Omega) \quad \text{ and }\quad [\mathrm{L}^{r_0}(\Omega),\dot{\mathfrak{H}}^{s_1,{p}_1}(\Omega)]_{\mu}=\dot{\mathfrak{H}}^{1,r_1}(\Omega).
\end{align*}

\begin{figure}[H]
\centering
\begin{tikzpicture}[yscale=0.6,xscale=6]
  \draw[->] (-0.1,0) -- (1.1,0) node[right,yshift=-2mm] {$1/p$};
  \draw[->] (0,-1) -- (0,3.5) node[above] {$s$};

  \draw[domain=0:1,smooth,variable=\x,blue] plot ({\x},{3*\x}) node[right] {$s=n/p$};
  \fill[blue!30,opacity=0.3] (0,0) -- plot[domain=0:1] (0,0) -- (0,-1) -- (1,-1) -- (1,0) -- cycle;
  \fill[blue!30,opacity=0.3] (0,0) -- plot[domain=0:1] (\x,{3*\x}) -- (1,3) -- (1,0) -- cycle;
  \draw[domain=0:1,smooth,variable=\x,red] plot ({\x},{-1+\x}) node[right,yshift=2mm] {$s=-1+1/p$};
  \draw[dashed] (1,3) -- (0,3)  node[left] {$s=n$};
  \draw[dashed] (1,1) -- (0,1)  node[left] {$s=1$};
  \draw[circle,fill,inner sep=1pt] (0,0) node[below left] {$0$};
  \draw[circle,fill,inner sep=1pt] (1,0) node[below right] {$1$};

  \draw[domain=0.10:0.75,smooth,variable=\x,black] plot ({\x},{3.5*\x-1});
  \node[circle,fill,inner sep=1.5pt,label=below:$\mathrm{L}^{r_0}$] at (2/7,0) {};
  \node[circle,fill,inner sep=1.5pt,label=below right:${\dot{\mathfrak{H}}}^{1,r_1}$] at (4/7,1) {};
  \node[circle,fill,inner sep=1.5pt,label=above:${\dot{\mathfrak{H}}}^{s_0,p_0}$] at (0.10,-0.65) {};
  \node[circle,fill,inner sep=1.5pt,label=above right:${\dot{\mathfrak{H}}}^{s_1,p_1}$] at (3/4,13/8) {};
\end{tikzpicture}
\caption{Hardy-Littlewood-Sobolev-Kato diagram: representation of the chosen interpolation scale to apply Wolff's reiteration theorem.}
\end{figure}

We can apply Wolff's reiteration theorem \cite[Theorem~2]{JansonNilssonPeetre1984}, see also \cite[Theorem~2]{Wolff1982} for the original statement, so we deduce for $\theta,\eta\in(0,1)$ such that $\theta=\lambda\eta$ and $1-\eta=(1-\mu)(1-\theta)$:
\begin{align*}
    [\dot{\mathfrak{H}}^{s_0,p_0}(\Omega),\dot{\mathfrak{H}}^{s_1,p_1}(\Omega)]_{\theta}=\mathrm{L}^{r_0}(\Omega) \quad \text{ and }\quad [\dot{\mathfrak{H}}^{s_0,p_0}(\Omega),\dot{\mathfrak{H}}^{s_1,{p}_1}(\Omega)]_{\eta}=\dot{\mathfrak{H}}^{1,r_1}(\Omega).
\end{align*}

We finally apply the reiteration theorem for complex interpolation \cite[Theorem~4.6.1]{BerghLofstrom1976}:
\begin{itemize}
    \item $s\in(0,s_1)$, $1/p=(1-s/s_1)/r_0+ (s/s_1)/p_1$,
    \begin{align*}
     [\dot{\mathfrak{H}}^{s_0,p_0}(\Omega),\dot{\mathfrak{H}}^{s_1,p_1}(\Omega)]_{\frac{s-s_0}{s_1-s_0}} &= \big[ [\dot{\mathfrak{H}}^{s_0,p_0}(\Omega),\dot{\mathfrak{H}}^{s_1,p_1}(\Omega)]_\theta,\dot{\mathfrak{H}}^{s_1,p_1}(\Omega)\big]_{\frac{s}{s_1}}\\ &=[ \mathrm{L}^{r_0}(\Omega),\dot{\mathfrak{H}}^{s_1,p_1}(\Omega)]_{\frac{s}{s_1}} = \dot{\mathfrak{H}}^{s,p}(\Omega).
    \end{align*}
    \item $s\in(s_0,0)\subset (s_0,1)$, $1/p=(1-s/1-s_0)/p_0+ (s-s_0/1-s_0)/r_1$,
    \begin{align*}
     [\dot{\mathfrak{H}}^{s_0,p_0}(\Omega),\dot{\mathfrak{H}}^{s_1,p_1}(\Omega)]_{\frac{s-s_0}{s_1-s_0}} &= \big[ \dot{\mathfrak{H}}^{s_0,p_0}(\Omega),[\dot{\mathfrak{H}}^{s_0,p_0}(\Omega),\dot{\mathfrak{H}}^{s_1,p_1}(\Omega)]_\eta\big]_{\frac{s-s_0}{1-s_0}}\\ &= \big[ \dot{\mathfrak{H}}^{s_0,p_0}(\Omega),\dot{\mathfrak{H}}^{1,r_1}(\Omega)\big]_{\frac{s-s_0}{1-s_0}} = \dot{\mathfrak{H}}^{s,p}(\Omega).
    \end{align*}
\end{itemize}

Thus, the equalities \eqref{eq:CompInterpHomSobSpelip1} and \eqref{eq:CompInterpHomSobSpelip2} hold in all cases whenever $s_j>-1+1/p_j$, $j\in\{0,1\}$.

\textbf{Step 3:} To prove the interpolation identities \eqref{eq:CompInterpHomSobSpelip1} and \eqref{eq:CompInterpHomSobSpelip2}, for $s_j\in(-n/p_j',1/p_j)$, we argue by duality. By construction, $\dot{\mathrm{H}}^{s_0,p_0}(\Omega)\cap\dot{\mathrm{H}}^{s_1,p_1}(\Omega)$ is dense in $\dot{\mathrm{H}}^{s_j,p_j}(\Omega)$, $j\in\{0,1\}$, since it contains $\mathcal{S}_0(\overline{\Omega})$. Proposition \ref{prop:densityCcinftyinhomHspspeLip} gives the density of $\dot{\mathrm{H}}^{s_0,p_0}_0(\Omega)\cap\dot{\mathrm{H}}^{s_1,p_1}_0(\Omega)$ in $\dot{\mathrm{H}}^{s_j,p_j}_0(\Omega)$, $j\in\{0,1\}$. Since all involved spaces are reflexive, by \cite[Corollary~4.5.2]{BerghLofstrom1976} we obtain
\begin{align*}
    [{\dot{\mathfrak{H}}}^{s_0,p_0}(\Omega),{\dot{\mathfrak{H}}}^{s_1,p_1}(\Omega)]_{\theta}={\dot{\mathfrak{H}}}^{s,p}(\Omega)\text{, } s_j\in(-n/p_j',1/p_j)\text{, }j\in\{0,1\}\text{.}
\end{align*}
Again, Wolff's reiteration theorem \cite[Theorem~2]{JansonNilssonPeetre1984}, and the standard reiteration theorem for complex interpolation \cite[Theorem~4.6.1]{BerghLofstrom1976}, applied to Steps 2 and 3, yield the identities \eqref{eq:CompInterpHomSobSpelip1} and \eqref{eq:CompInterpHomSobSpelip2} for $s_j\in(-n/p_j',n/p_j)$.
\end{proof}

\subsection{Homogeneous Besov spaces on special Lipschitz domains}\label{sec:HomBesovSpacesSpeLip}

We want to carry over the behavior of extension and projection operators up to the scale of Besov spaces. We start with the following basic lemma, directly inherited from Lemma \ref{lem:structLemma}, that gives information about the elements that constitute homogeneous Besov spaces defined by restriction.

\begin{lemma}\label{lem:structLemBesovSpeLip} We set $k_s:=\max(0,\lceil s\rceil)$.
We have the inclusions of sets:
\begin{enumerate}
        \item $\dot{\mathrm{B}}^{s}_{p,q}(\Omega) \subset \mathrm{B}^{s}_{p,q}(\Omega)+\mathrm{C}^{k_s}_0(\overline{\Omega})$, $1\leqslant p <+\infty$, $1\leqslant q\leqslant +\infty$, $s\in\mathbb{R}$.
        \item $\dot{\mathrm{B}}^{s,0}_{\infty,q}(\Omega) \subset \mathrm{B}^{s,0}_{\infty,q}(\Omega)+\mathrm{C}^{k_s}_0(\overline{\Omega})$, $1\leqslant q\leqslant +\infty$, $s\in\mathbb{R}$.
        \item $\dot{\mathrm{B}}^{s}_{\infty,q}(\Omega) \subset {\mathrm{B}}^{s}_{\infty,q}(\Omega)\cap \mathcal{S}'_h(\mathbb{R}^n)_{|_{\Omega}}+\mathrm{C}^{k_s}_{b,h}(\overline{\Omega})$, $1\leqslant q\leqslant +\infty$, $s\in\mathbb{R}$.
\end{enumerate}
In particular, for $s>0$ and $q\in[1,+\infty]$, we have the following equalities of sets
\begin{align*}
    \dot{\mathrm{B}}^{s,0}_{\infty,q}(\Omega) = {\mathrm{B}}^{s,0}_{\infty,q}(\Omega) \text{ and } \dot{\mathrm{B}}^{s}_{\infty,q}(\Omega) = {\mathrm{B}}^{s}_{\infty,q}(\Omega)\cap \mathcal{S}'_h(\mathbb{R}^n)_{|_{\Omega}}.
\end{align*}
\end{lemma}

However, we recall that we do not know whether the extension operator $\mathbb{E}$, introduced in Proposition~\ref{prop:ExtOpIntersecHomHspSpeLip}, maps $\mathrm{C}_{b,h}^0(\overline{\Omega})$ to  $\mathrm{C}_{b,h}^0(\mathbb{R}^n)$. Thus, the treatment of end-point Besov spaces $\dot{\mathrm{B}}^{s}_{\infty,q}$ will be lacunary and quite unpleasant. Things will get easier in the case of the spaces $\dot{\mathrm{B}}^{s,0}_{\infty,q}$ and $\dot{\mathcal{B}}^{s,0}_{\infty,\infty}$.

\begin{proposition}\label{prop:ExtProj0HomBspq}Let $p,q\in [1,+\infty]$, $p<+\infty$, and let $s> -1+\tfrac{1}{p}$. Let us  consider the extension operator $\mathbb{E}$ (resp. $\mathcal{P}_0$) as in Theorem \ref{prop:ExtOpIntersecHomHspSpeLip} (resp.  Proposition \ref{prop:ProOpIntersecHomHsp0SpeLip}).
We assume moreover that
\begin{itemize}
    \item $s > 0$, if $\mathbb{E}=\mathcal{E}$; or
    \item $s < 1$, if $\mathbb{E}=\mathrm{E}$.
\end{itemize}
Then $\mathbb{E}u\in \dot{\mathrm{B}}^{s}_{p,q}(\mathbb{R}^n)$, (resp. $\mathcal{P}_0u\in \dot{\mathrm{B}}^{s}_{p,q,0}(\Omega)$) and we have the estimate
\begin{align}\label{eq:HomEstimBesovExtOp}
    \lVert \mathbb{E}u \rVert_{\dot{\mathrm{B}}^{s}_{p,q}(\mathbb{R}^n)} \lesssim_{s,p,n,\partial\Omega}   \lVert  u \rVert_{\dot{\mathrm{B}}^{s}_{p,q}(\Omega)}\textrm{. (resp. } \lVert \mathcal{P}_0 u \rVert_{\dot{\mathrm{B}}^{s}_{p,q}(\mathbb{R}^n)} \lesssim_{s,p,n,\partial\Omega}   \lVert  u \rVert_{\dot{\mathrm{B}}^{s}_{p,q}(\mathbb{R}^n)}\textrm{.)}
\end{align}
The result also holds with $\dot{\mathcal{B}}^{s}_{p,\infty}$ instead of $\dot{\mathrm{B}}^{s}_{p,\infty}$.
\end{proposition}

\begin{proof} We follow the proof \cite[Corollary~3.14]{Gaudin2022}, but we remove the need for the density argument, as well as the need for the completeness of involved function spaces.

Since $\mathcal{P}_0=\mathrm{I}- \mathbb{E}^{-}[\mathbbm{1}_{\overline{\Omega}^c}\cdot]$, $\mathbb{E}^{-}$ is the extension operator from $\overline{\Omega}^c$ to the whole space $\mathbb{R}^n$. Let $p\in[1,+\infty)$, $q\in[1,+\infty]$, $s>-1+1/p$. We deal with the specific case $s<1$. Let $u\in\dot{\mathrm{B}}^{s}_{p,q}(\Omega)$. We assume $p>1$, by Lemma \ref{lem:EmbeddingInterpHomSobspacesSpeLip}, for $-1+1/p<s_0<s<1$, $s=(1-\theta)s_0+\theta\cdot1$, we have $u\in(\dot{\mathrm{H}}^{s_0,p}(\Omega),\dot{\mathrm{W}}^{1,p}(\Omega))_{\theta,q}\subset \dot{\mathrm{H}}^{s_0,p}(\Omega)+\dot{\mathrm{W}}^{1,p}(\Omega)$. Let $(a,b)\in\dot{\mathrm{H}}^{s_0,p}(\Omega)\times\dot{\mathrm{W}}^{1,p}(\Omega)$ such that $u=a+b$. By Theorem \ref{prop:ExtOpIntersecHomHspSpeLip}, we can write
\begin{align*}
    \mathrm{E}u = \mathrm{E}a+\mathrm{E}b\in\dot{\mathrm{H}}^{s_0,p}(\mathbb{R}^n)+\dot{\mathrm{W}}^{1,p}(\mathbb{R}^n)\in\mathcal{S}'_h(\mathbb{R}^n)
\end{align*}
which implies, for all $t>0$, the following estimates:
\begin{align*}
    K(t,\mathrm{E}u,\dot{\mathrm{H}}^{s_0,p}(\mathbb{R}^n),\dot{\mathrm{W}}^{1,p}(\mathbb{R}^n))\leqslant \lVert \mathrm{E}a \rVert_{\dot{\mathrm{H}}^{s_0,p}(\mathbb{R}^n)} + t \lVert \mathrm{E}b \rVert_{\dot{\mathrm{W}}^{1,p}(\mathbb{R}^n)}\lesssim_{s_0,p,n}^{\partial\Omega}\lVert a \rVert_{\dot{\mathrm{H}}^{s_0,p}(\Omega)} + t \lVert b \rVert_{\dot{\mathrm{W}}^{1,p}(\Omega)}.
\end{align*}
One takes the infimum on all such pairs $(a,b)$ to deduce the inequality
\begin{align*}
    K(t,\mathbb{E}u,\dot{\mathrm{H}}^{s_0,p}(\mathbb{R}^n),\dot{\mathrm{W}}^{1,p}(\mathbb{R}^n))\lesssim_{s_0,p,n}^{\partial\Omega} K(t,u,\dot{\mathrm{H}}^{s_0,p}(\Omega),\dot{\mathrm{W}}^{1,p}(\Omega)).
\end{align*}
Therefore, multiplying by $t^{-\theta}$, then one takes the $\mathrm{L}^q_{\ast}$-norms with respect to $t$,  by applying Theorem \ref{thm:InterpHomSpacesRn} and Lemma \ref{lem:EmbeddingInterpHomSobspacesSpeLip},
\begin{align*}
    \lVert \mathbb{E}u\rVert_{\dot{\mathrm{B}}^{s}_{p,q}(\mathbb{R}^n)}\sim_{s,s_0,\theta} \lVert \mathbb{E}u\rVert_{(\dot{\mathrm{H}}^{s_0,p}(\mathbb{R}^n),\dot{\mathrm{W}}^{1,p}(\mathbb{R}^n))_{\theta,q}} \lesssim_{s_0,p,n}^{\partial\Omega,\theta} \lVert u\rVert_{(\dot{\mathrm{H}}^{s_0,p}(\Omega),\dot{\mathrm{W}}^{1,p}(\Omega))_{\theta,q}} \lesssim_{s_0,p,n}^{\partial\Omega,\theta} \lVert u\rVert_{\dot{\mathrm{B}}^{s}_{p,q}(\Omega)}.
\end{align*}
When $p=1$, it suffices to replace $\dot{\mathrm{H}}^{s_0,p}$ by $\mathrm{L}^1$ (noting that $p=1$ implies $s>0$). 

For the case of arbitrary $s>0$, $p\in[1,+\infty)$, the proof remains the same, replacing the couple $(\dot{\mathrm{H}}^{s_0,p},\dot{\mathrm{W}}^{1,p})$ by $({\mathrm{L}}^{p},\dot{\mathrm{W}}^{m,p})$, with $0<s<m$, $m\in\mathbb{N}^\ast$.
\end{proof}

\begin{lemma}\label{lem:GlobalchangeCoordBesov} Let $p,q\in[1,+\infty]$, $p<+\infty$, $s\in (-1,1)$  and $\mathcal{T}\in\{T_\phi,T_\phi^{-1}\}$. For all $u\in \dot{\mathrm{B}}^{s}_{p,q}(\mathbb{R}^n)$, we have $\mathcal{T} u\in \dot{\mathrm{B}}^{s}_{p,q}(\mathbb{R}^n)$ with the estimate
\begin{align*}
    \lVert \mathcal{T} u \rVert_{\dot{\mathrm{B}}^{s}_{p,q}(\mathbb{R}^n)}\lesssim_{p,s,n,\partial\Omega} \left\lVert u \right\rVert_{\dot{\mathrm{B}}^{s}_{p,q}(\mathbb{R}^n)}\text{.}
\end{align*}
Moreover, 
\begin{itemize}
    \item the result also holds for $\dot{\mathcal{B}}^{s}_{p,\infty}(\mathbb{R}^n)$, $\dot{\mathrm{B}}^{s,0}_{\infty,q}(\mathbb{R}^n)$, $\dot{\mathcal{B}}^{s,0}_{\infty,\infty}(\mathbb{R}^n)$, $s\in(-1,1)$;
    \item the result also holds for $\dot{\mathrm{B}}^{0}_{\infty,1}(\mathbb{R}^n)$, $\dot{\mathrm{B}}^{s}_{\infty,q}(\mathbb{R}^n)$, $\dot{\mathcal{B}}^{s}_{\infty,q}(\mathbb{R}^n)$, $s\in(-1,0)$;
    \item if $[s\geqslant 0$, $q>1]$ or $[s>0$, $q\geqslant1]$, for $u\in  \dot{\mathrm{B}}^{s}_{\infty,q}(\mathbb{R}^n)$, we only have $\mathcal{T} u\in  {\mathrm{B}}^{s}_{\infty,q}(\mathbb{R}^n)+\mathrm{C}^{0,1}_{b}(\mathbb{R}^n)$ and the estimate holds with respect to the $\dot{\mathrm{B}}^{s}_{\infty,q}(\mathbb{R}^n)$-norm.
\end{itemize}
The result remains true\footnote{and can be stated in a way simpler way} with $\{\mathrm{B},\, \mathrm{\mathcal{B}}\}$ instead of $\{\dot{\mathrm{B}},\, \dot{\mathcal{B}}\}$.
\end{lemma}

\begin{proof} \textbf{Step 1:} The case $p\in[1,+\infty)$, $q\in[1,+\infty]$, $0<s<1$. By Proposition \ref{prop:globalMapChangeHsp}, and by real interpolation between $\mathrm{L}^p(\mathbb{R}^n)$ and $\dot{\mathrm{W}}^{1,p}(\mathbb{R}^n)$, see Theorem \ref{thm:InterpHomSpacesRn}, one obtains that $T_\phi$, and $T_\phi^{-1}$, maps $\dot{\mathrm{B}}^{s}_{p,q}(\mathbb{R}^n)$ into itself. The same goes for the space $\dot{\mathcal{B}}^{s}_{p,\infty}(\mathbb{R}^n)$.

\textbf{Step 2:} The case $p\in(1,+\infty]$, $q\in[1,+\infty]$, $-1<s<0$. Due to the identity $T_\phi^\ast = T_\phi^{-1}$, by duality, see Proposition \ref{prop:DualityBesovRn}, it holds that $T_\phi$, and $T_\phi^{-1}$, maps $\dot{\mathrm{B}}^{s}_{p,q}(\mathbb{R}^n)$ into itself.

\textbf{Step 3:} The case $p=+\infty$, $s\in[0,1)$. First let $u\in \dot{\mathrm{B}}^{1}_{\infty,1}(\mathbb{R}^n) \subset \mathrm{W}^{1,\infty}(\mathbb{R}^n)$, we have $T_\phi u\in \mathrm{W}^{1,\infty}(\mathbb{R}^n)$, and for $j\in\mathbb{Z}$, by Bernstein's inequality \cite[Lemma~2.1]{bookBahouriCheminDanchin},
\begin{align}\label{eq:estimB1infty1ell1inftyLinfty}
    2^{j}\lVert\dot{\Delta}_{j}T_\phi u\rVert_{\mathrm{L}^\infty(\mathbb{R}^n)}\lesssim_{n} \lVert\nabla T_\phi u\rVert_{\mathrm{L}^\infty(\mathbb{R}^n)} \lesssim_{p,n,\partial\Omega} \lVert\nabla u\rVert_{\mathrm{L}^\infty(\mathbb{R}^n)} \lesssim_{p,n,\partial\Omega} \lVert u\rVert_{\dot{\mathrm{B}}^{1}_{\infty,1}(\mathbb{R}^n)}.
\end{align}
Thus, by this and Step 2, one has the boundedness properties
\begin{align*}
    (\dot{\Delta}_{j}T_\phi)_{j\in\mathbb{Z}}\,:\, \dot{\mathrm{B}}^{1}_{\infty,1}(\mathbb{R}^n) &\longrightarrow \ell^{\infty}_{1}(\mathbb{Z},\mathrm{L}^\infty(\mathbb{R}^n)),\\
    \,:\,\dot{\mathrm{B}}^{s_0}_{\infty,1}(\mathbb{R}^n) &\longrightarrow \ell^{\infty}_{s_0}(\mathbb{Z},\mathrm{L}^\infty(\mathbb{R}^n)),
\end{align*}
for any $-1<s_0<0$. By real interpolation, thanks to Theorem \ref{thm:InterpHomSpacesRn}, for all $q\in[1,+\infty]$, all $s\in(-1,1)$,
\begin{align*}
    (\dot{\Delta}_{j}T_\phi)_{j\in\mathbb{Z}}\,:\, \dot{\mathrm{B}}^{s}_{\infty,q}(\mathbb{R}^n) &\longrightarrow  \ell^{q}_{s}(\mathbb{Z},\mathrm{L}^\infty(\mathbb{R}^n))
\end{align*}
is well-defined and bounded the result still holds for $(\dot{\Delta}_{j}T_\phi^{-1})_{j\in\mathbb{Z}}$.

\textbf{Step 4:} $T_\phi$ preserves $\dot{\mathrm{B}}^{s,0}_{\infty,q}(\mathbb{R}^n)$ and  $\dot{\mathcal{B}}^{s,0}_{\infty,\infty}(\mathbb{R}^n)$ for $s\in(-1,1)$. For $u\in\dot{\mathrm{B}}^{s,0}_{\infty,q}(\mathbb{R}^n)$, we have $T_\phi u\in \mathcal{S}'(\mathbb{R}^n)$.

We assume $q<+\infty$. Let $(u_\ell)_{\ell\in\mathbb{N}}\subset \mathrm{C}_c^\infty(\mathbb{R}^n)$ such that in converges to $u$ in $\dot{\mathrm{B}}^{s,0}_{\infty,q}(\mathbb{R}^n)$. One obtains $T_\phi u_\ell \in\mathrm{C}^{0,1}_c(\mathbb{R}^n)$, so as $\ell$ tends to infinity, it holds that $\dot{\Delta}_{j} T_\phi u\in\mathrm{C}^{0}_0(\mathbb{R}^n)$ for all $j\in\mathbb{Z}$. When $s<0$ or $q=1, s\leqslant 0$, this also implies $T_\phi u \in\dot{\mathrm{B}}^{s,0}_{\infty,q}(\mathbb{R}^n)$.

It remains to show $T_\phi u\in \mathcal{S}'_h(\mathbb{R}^n)$ when $s\geqslant 0$. First, if $s>0$, for $u\in\dot{\mathrm{B}}^{s,0}_{\infty,q}(\mathbb{R}^n)$, $\dot{\Delta}_ju\in \mathrm{C}^{0}_0(\mathbb{R}^n)$, so we obtain
\begin{align*}
    u = [\mathrm{I}-\dot{S}_0]u+ \dot{S}_0 u \in {\mathrm{B}}^{s,0}_{\infty,q}(\mathbb{R}^n) + \mathrm{C}^{0}_0(\mathbb{R}^n) \subset \mathrm{C}^{0}_0(\mathbb{R}^n).
\end{align*}
Hence, $T_\phi u\in \mathrm{C}^{0}_0(\mathbb{R}^n) \subset\mathcal{S}'_h(\mathbb{R}^n)$.

Thus for $q\in[1,+\infty)$, $s\in(-1,0)\cup(0,1)$, we have a bounded operator
\begin{align*}
    T_\phi\,:\, \dot{\mathrm{B}}^{s,0}_{\infty,q}(\mathbb{R}^n) &\longrightarrow \dot{\mathrm{B}}^{s,0}_{\infty,q}(\mathbb{R}^n).
\end{align*}

By real interpolation, $T_\phi$ maps boundedly $\dot{\mathcal{B}}^{s,0}_{\infty,\infty}(\mathbb{R}^n)$ into itself, as well as  $\dot{\mathrm{B}}^{s,0}_{\infty,q}(\mathbb{R}^n)$, for all $q\in[1,+\infty]$, $s\in(-1,1)$.

Everything still holds for $T_\phi^{-1}$.

\textbf{Step 5:} The case $p\in[1,+\infty)$, $q\in[1,+\infty]$, $s=0$. By duality, see Proposition \ref{prop:DualityBesovRn}, one obtains that $T_\phi$ maps $\dot{\mathrm{B}}^{s}_{1,q}(\mathbb{R}^n)$ into itself, for all $s\in(-1,1)$, $q\in[1,+\infty]$. Thus, with Steps 1 and 2, we have obtained a bounded operator
\begin{align*}
    T_\phi\,:\, \dot{\mathrm{B}}^{s}_{p,q}(\mathbb{R}^n) &\longrightarrow  \dot{\mathrm{B}}^{s}_{p,q}(\mathbb{R}^n)\text{, } p\in[1,+\infty)\text{, }q\in[1,+\infty]\text{, } s\in(-1,0)\cup(0,1),
\end{align*}
and similarly for $T_\phi^{-1}$.

For those remaining cases, $s=0$ or $\dot{\mathcal{B}}^{s}_{p,\infty}$, $p\in[1,+\infty)$, $s\in(-1,1)$, the result follows from real interpolation, see Theorem \ref{thm:InterpHomSpacesRn}.
\end{proof}

Below, we deal separately with the case $p=+\infty$. Before that, we need the fundamental multiplier result in the case of Besov spaces.

\begin{proposition}\label{prop:HomBesovMultiplierSpeLip}For all $p,q\in[1,+\infty]$, for all $s\in (-1+\frac{1}{p},\frac{1}{p})$, for all $u\in\dot{\mathrm{B}}^{s}_{p,q}(\mathbb{R}^n)$,
\begin{align*}
    \lVert \mathbbm{1}_{\Omega} u \rVert_{\dot{\mathrm{B}}^{s}_{p,q}(\mathbb{R}^{n})} \lesssim_{s,p,n,\partial\Omega} \lVert u \rVert_{\dot{\mathrm{B}}^{s}_{p,q}(\mathbb{R}^{n})}
\end{align*}
Moreover, for ${\dot{\mathfrak{B}}}\in\{\dot{\mathcal{B}}^{s}_{p,\infty},\dot{\mathrm{B}}^{s,0}_{\infty,q},\dot{\mathcal{B}}^{s,0}_{\infty,\infty}\}$, we do have $\mathbbm{1}_{\Omega} {\dot{\mathfrak{B}}}(\mathbb{R}^n)\subset {\dot{\mathfrak{B}}}(\mathbb{R}^n)$.
\end{proposition}

\begin{proof} It suffices to write $\mathbbm{1}_{\Omega}=T_\phi^{-1}\mathbbm{1}_{\mathbb{R}^n_+}T_\phi$, and to apply Proposition \ref{prop:SobolevMultiplier} and Lemma \ref{lem:GlobalchangeCoordBesov}. The fact that $\dot{\mathcal{B}}^{s}_{p,\infty}(\mathbb{R}^n)$ is preserved through the multiplication by $\mathbbm{1}_{\Omega}$ follows by real interpolation.

$\bullet$ It preserves $\dot{\mathrm{B}}^{s,0}_{\infty,q}(\mathbb{R}^n)$, $q\in[1,+\infty)$. Let $u\in\dot{\mathrm{B}}^{s,0}_{\infty,q}(\mathbb{R}^n)$, it is sufficient to prove $\dot{\Delta}_j[\mathbbm{1}_{\Omega}u]\in\mathrm{C}^0_0(\mathbb{R}^n)$, for $j\in\mathbb{Z}$. Since $q<+\infty$, there exists $(u_\ell)_{\ell\in\mathbb{N}}\subset\mathrm{C}^\infty_c(\mathbb{R}^n)$ converging to $u$ in $\dot{\mathrm{B}}^{s}_{\infty,q}(\mathbb{R}^n)$. The multiplication by $\mathbbm{1}_{\Omega}$ is bounded on $\dot{\mathrm{B}}^{s}_{\infty,q}(\mathbb{R}^n)$, and one has
\begin{align*}
    2^{js}\lVert\dot{\Delta}_j[\mathbbm{1}_{\Omega}u]- \dot{\Delta}_j[\mathbbm{1}_{\Omega}u_\ell]\rVert_{\mathrm{L}^\infty(\mathbb{R}^n)}\leqslant \lVert \mathbbm{1}_{\Omega}[u-u_\ell] \rVert_{\dot{\mathrm{B}}^{s}_{\infty,q}(\mathbb{R}^n)} \lesssim_{s,\partial\Omega}\lVert u-u_\ell\rVert_{\dot{\mathrm{B}}^{s}_{\infty,q}(\mathbb{R}^n)}\xrightarrow[\ell\rightarrow+\infty]{} 0.
\end{align*}
Due to the compact support of $\mathbbm{1}_{\Omega}u_\ell$, we have $\dot{\Delta}_j[\mathbbm{1}_{\Omega}u_\ell]\in\mathrm{C}^0_0(\mathbb{R}^n)$. Since $\mathrm{C}^0_0(\mathbb{R}^n)$  is closed in $\mathrm{L}^\infty(\mathbb{R}^n)$, it gives $\dot{\Delta}_j[\mathbbm{1}_{\Omega}u]\in\mathrm{C}^0_0(\mathbb{R}^n)$.

$\bullet$ By real interpolation from the case $\dot{\mathrm{B}}^{s,0}_{\infty,q}(\mathbb{R}^n)$, $1\leqslant q <+\infty$, see Theorem \ref{thm:InterpHomSpacesRn}, we obtain that the multiplication by $\mathbbm{1}_{\Omega}$ preserves respectively $\dot{\mathrm{B}}^{s,0}_{\infty,\infty}(\mathbb{R}^n)$ and $\dot{\mathcal{B}}^{s,0}_{\infty,\infty}(\mathbb{R}^n)$.
\end{proof}

\begin{proposition}\label{prop:ExtProj0HomBspinfty} Let $q\in[1,+\infty]$, $s\in(-1,+\infty)$. Let us  consider the extension operator $\mathbb{E}$ (resp. $\mathcal{P}_0$) as in Proposition \ref{prop:ExtProj0HomBspq}. For all $u\in\dot{\mathrm{B}}^{s}_{\infty,q}(\Omega)$ (resp. $\dot{\mathrm{B}}^{s}_{\infty,q}(\mathbb{R}^n)$), we have the homogeneous estimate
\begin{align}\label{eq:HomEstimBesovExtOppinfty}
    \lVert \mathbb{E}u \rVert_{\dot{\mathrm{B}}^{s}_{\infty,q}(\mathbb{R}^n)} \lesssim_{s,n,\partial\Omega}   \lVert  u \rVert_{\dot{\mathrm{B}}^{s}_{\infty,q}(\Omega)}\textrm{. (resp. } \lVert \mathcal{P}_0 u \rVert_{\dot{\mathrm{B}}^{s}_{\infty,q}(\mathbb{R}^n)} \lesssim_{s,n,\partial\Omega}   \lVert  u \rVert_{\dot{\mathrm{B}}^{s}_{\infty,q}(\mathbb{R}^n)}\textrm{.)}
\end{align}
More precisely,
\begin{itemize}
    \item if $s>0$, $\mathbb{E}u\in{\mathrm{B}}^{s}_{\infty,q}(\mathbb{R}^n)$ and $\mathcal{P}_0u\in{\mathrm{B}}^{s}_{\infty,q,0}(\Omega)$;
    \item if $s=0$, $q>1$, $\mathbb{E}u\in{\mathrm{B}}^{0}_{\infty,q}(\mathbb{R}^n)+\mathrm{C}_b^0(\mathbb{R}^n)$, and $\mathcal{P}_0u\in{\mathrm{B}}^{s}_{\infty,q,0}(\Omega)+\mathrm{C}_b^0(\mathbb{R}^n)$;
    \item if $(\mathcal{C}_{s,\infty,q})$, one has $\mathbb{E}u\in \dot{\mathrm{B}}^{s}_{\infty,q}(\mathbb{R}^n)$ and $\mathcal{P}_0 u\in \dot{\mathrm{B}}^{s}_{\infty,q,0}(\Omega)$;
    \item for all $s\in(-1,+\infty)$, one has $\mathbb{E}(\dot{\mathrm{B}}^{s,0}_{\infty,q}(\Omega))\subset\dot{\mathrm{B}}^{s,0}_{\infty,q}(\mathbb{R}^n)$ and $\mathcal{P}_0(\dot{\mathrm{B}}^{s,0}_{\infty,q}(\mathbb{R}^n))\subset \dot{\mathrm{B}}^{s,0}_{\infty,q,0}(\Omega)$. Similarly, with $\dot{\mathcal{B}}^{s,0}_{\infty,\infty}$ instead of $\dot{\mathrm{B}}^{s,0}_{\infty,q}$.
\end{itemize}
\end{proposition}

\begin{remark}\label{rmk:RemarkNonOptimalitypequalinfty} We point out several phenomenons concerning the statements in Lemma \ref{lem:GlobalchangeCoordBesov} and Proposition \ref{prop:ExtProj0HomBspinfty}.
\begin{itemize}
    \item The main issue in those results is about to know if $T_\phi u, \mathbb{E}u, \mathcal{P}_0u\in\mathcal{S}'_h(\mathbb{R}^n)$ or not. This is only known when $(\mathcal{C}_{s,\infty,q})$ is satisfied, or when the corresponding spaces on $\mathbb{R}^n$ are the strong closures of $\mathcal{S}_0(\mathbb{R}^n)$.
    \item When $s>0$, one has the norm inequalities, for all $w\in\mathcal{S}'(\mathbb{R}^n)$,
    \begin{align*}
        \lVert w\rVert_{{\mathrm{B}}^{-s}_{\infty,q}(\mathbb{R}^n)}\lesssim_{s,n,q}\lVert w\rVert_{\dot{\mathrm{B}}^{-s}_{\infty,q}(\mathbb{R}^n)} \text{, and }\lVert w\rVert_{\dot{\mathrm{B}}^{s}_{\infty,q}(\mathbb{R}^n)}\lesssim_{s,n,q}\lVert w\rVert_{{\mathrm{B}}^{s}_{\infty,q}(\mathbb{R}^n)}.
    \end{align*}
    So estimates like \eqref{eq:HomEstimBesovExtOppinfty} are still a refinement of the statement $T_\phi u, \mathbb{E}u, \mathcal{P}_0 u\in {\mathrm{B}}^{s}_{\infty,q}(\mathbb{R}^n)$ in all cases.
\end{itemize}
\end{remark}

\begin{proof}[of Proposition \ref{prop:ExtProj0HomBspinfty}]\textbf{Step 1:} The case $s\in(-1,1)$, $\mathbb{E}=\mathrm{E}$ the operator by rough reflection around the boundary.

\textbf{Step 1.1:} The boundedness of $\mathrm{E}$. We reintroduce the operator $\tilde{\mathrm{E}}$ as in \eqref{eq:FlatExtensionOpRn+}.

We write $\tilde{\mathrm{E}}=\tilde{\mathrm{E}}[\mathbbm{1}_{\mathbb{R}^n_+} \cdot]= \mathbbm{1}_{\mathbb{R}^n_+} + \sigma(\mathbbm{1}_{\mathbb{R}^n_+}[\cdot])$, where $\sigma(f)(x',x_n):=f(x',-x_n)$ for all measurable functions $f\,:\,\mathbb{R}^n\longrightarrow \mathbb{C}$, all $(x',x_n)\in\mathbb{R}^{n-1}\times\mathbb{R}$.

Thus, the dual operator of $\tilde{\mathrm{E}}$ is $\tilde{\mathrm{E}}^\ast=\mathbbm{1}_{\mathbb{R}^n_+}[\mathrm{I} -\sigma]$. By Proposition \ref{prop:SobolevMultiplier}, $\tilde{\mathrm{E}}^\ast$ is well-defined and bounded on $\dot{\mathrm{B}}^{s}_{1,q}(\mathbb{R}^n)$, $q\in[1,+\infty]$, $0<s<1$. By duality, see Proposition \ref{prop:DualityBesovRn}, and real interpolation, we have a bounded operator
\begin{align*}
    \tilde{\mathrm{E}}[\mathbbm{1}_{\mathbb{R}^n_+}\cdot]\,:\,\dot{\mathrm{B}}^{s}_{\infty,q}(\mathbb{R}^n)\longrightarrow\dot{\mathrm{B}}^{s}_{\infty,q}(\mathbb{R}^n)\text{, }-1<s<0\text{, }q\in[1,+\infty].
\end{align*}

Lemma \ref{lem:GlobalchangeCoordBesov} implies that ${\mathrm{E}}[\mathbbm{1}_\Omega\cdot]=T_\phi^{-1}\tilde{\mathrm{E}}T_\phi\mathbbm{1}_\Omega$ is bounded on  $\dot{\mathrm{B}}^{s}_{\infty,q}(\mathbb{R}^n)$, $s\in(-1,0)$, $q\in[1,+\infty]$.

Now let $U\in\dot{\mathrm{B}}^{1}_{\infty,1}(\mathbb{R}^n)\subset\mathrm{W}^{1,\infty}(\mathbb{R}^n)$, we have ${\mathrm{E}}[\mathbbm{1}_\Omega U]\in\mathrm{W}^{1,\infty}(\mathbb{R}^n)$ accompanied by the estimates, obtained as in \eqref{eq:estimB1infty1ell1inftyLinfty},
\begin{align*}
   \lVert (\dot{\Delta}_j {\mathrm{E}}[\mathbbm{1}_\Omega U])_{j\in\mathbb{Z}}\rVert_{\ell_1^\infty(\mathbb{Z},\mathrm{L}^\infty(\mathbb{R}^n))}\lesssim_{n,\partial\Omega} \lVert U \rVert_{\dot{\mathrm{B}}^{1}_{\infty,1}(\mathbb{R}^n)}.
\end{align*}
The operator $(\dot{\Delta}_j {\mathrm{E}}[\mathbbm{1}_\Omega U])_{j\in\mathbb{Z}}$ is bounded from $\dot{\mathrm{B}}^{1}_{\infty,1}(\mathbb{R}^n)$ to $\ell_1^\infty(\mathbb{Z},\mathrm{L}^\infty(\mathbb{R}^n))$, and by the same trick, from $\dot{\mathrm{B}}^{s_0}_{\infty,1}(\mathbb{R}^n)$ to $\ell_{s_0}^\infty(\mathbb{Z},\mathrm{L}^\infty(\mathbb{R}^n))$, where $s_0\in(-1,0)$. By real interpolation, for $s\in(-1,1)$, $q\in[1,+\infty]$, all $U\in\dot{\mathrm{B}}^{s}_{\infty,q}(\mathbb{R}^n)$, we obtain the estimate
\begin{align*}
   \lVert {\mathrm{E}}[\mathbbm{1}_\Omega U] \rVert_{\dot{\mathrm{B}}^{s}_{\infty,q}(\mathbb{R}^n)}\lesssim_{n,\partial\Omega} \lVert U \rVert_{\dot{\mathrm{B}}^{s}_{\infty,q}(\mathbb{R}^n)}.
\end{align*}

\textbf{Step 1.2:} The lifted extension operator $\mathbb{E}=\mathrm{E}$ preserves $\dot{\mathrm{B}}^{s,0}_{\infty,q}(\mathbb{R}^n)$ and $\dot{\mathcal{B}}^{s,0}_{\infty,\infty}(\mathbb{R}^n)$. We assume $U\in\dot{\mathrm{B}}^{s,0}_{\infty,q}(\mathbb{R}^n)$, where $q\in[1,+\infty)$, for $s\neq0$. For $(U_\ell)_{\ell\in\mathbb{N}}\subset\mathrm{C}^\infty_c(\mathbb{R}^n)$ converging to $U$ in $\dot{\mathrm{B}}^{s}_{\infty,q}(\mathbb{R}^n)$,
\begin{align*}
    2^{js}\lVert\dot{\Delta}_j{\mathrm{E}}[\mathbbm{1}_\Omega U]- \dot{\Delta}_j{\mathrm{E}}[\mathbbm{1}_\Omega U_\ell]\rVert_{\mathrm{L}^\infty(\mathbb{R}^n)}\lesssim_{s,\partial\Omega} \lVert U-U_\ell \rVert_{\dot{\mathrm{B}}^{s}_{\infty,q}(\mathbb{R}^n)} \xrightarrow[\ell\rightarrow+\infty]{} 0.
\end{align*}
Due to the compact support of ${\mathrm{E}}[\mathbbm{1}_\Omega U_\ell]$, we have $\dot{\Delta}_j{\mathrm{E}}[\mathbbm{1}_\Omega U_\ell]\in\mathrm{C}^0_0(\mathbb{R}^n)$, and therefore $\dot{\Delta}_j{\mathrm{E}}[\mathbbm{1}_\Omega U]$ is an element of $\mathrm{C}^0_0(\mathbb{R}^n)$.

If $s<0$, then we automatically have ${\mathrm{E}}[\mathbbm{1}_\Omega U]\in\eus{S}'_h(\mathbb{R}^n)$. If $s>0$,
\begin{align*}
    {\mathrm{E}}[\mathbbm{1}_\Omega U]\in {\mathrm{E}}[\mathbbm{1}_\Omega {\mathrm{B}}^{s,0}_{\infty,q}(\mathbb{R}^n)] + {\mathrm{E}}[\mathbbm{1}_\Omega {\mathrm{C}}^{0}_{0}(\mathbb{R}^n)] \subset{\mathrm{E}}[\mathbbm{1}_\Omega {\mathrm{C}}^{0}_{0}(\mathbb{R}^n)] \subset  {\mathrm{C}}^{0}_{0}(\mathbb{R}^n)\subset\eus{S}'_h(\mathbb{R}^n).
\end{align*}
So, for $s\in(-1,0)\cup(0,1)$, $q\in[1,+\infty)$, for all $ U\in \dot{\mathrm{B}}^{s,0}_{\infty,q}(\mathbb{R}^n)$, we have ${\mathrm{E}}[\mathbbm{1}_\Omega U]\in \dot{\mathrm{B}}^{s,0}_{\infty,q}(\mathbb{R}^n)$. By real interpolation, it remains true when $s=0$, and for the spaces $\dot{\mathcal{B}}^{s,0}_{\infty,\infty}(\mathbb{R}^n)$, $s\in(-1,1)$.

\textbf{Step 1.3:} The boundedness of $\mathrm{E}$ as an extension operator. Let $u\in\dot{\mathrm{B}}^{s}_{\infty,q}(\Omega)$, then for all $U\in\dot{\mathrm{B}}^{s}_{\infty,q}(\mathbb{R}^n)$ such that $U_{|_\Omega}=u$, we have $\mathrm{E}u={\mathrm{E}}[\mathbbm{1}_\Omega U]$ and the estimates
\begin{align*}
    \lVert {\mathrm{E}}u \rVert_{\dot{\mathrm{B}}^{s}_{\infty,q}(\mathbb{R}^n)}=\lVert {\mathrm{E}}[\mathbbm{1}_\Omega U] \rVert_{\dot{\mathrm{B}}^{s}_{\infty,q}(\mathbb{R}^n)}\lesssim_{n,\partial\Omega} \lVert U \rVert_{\dot{\mathrm{B}}^{s}_{\infty,q}(\mathbb{R}^n)}.
\end{align*}
We take the infimum on all such $U$, which yields
\begin{align*}
    \lVert {\mathrm{E}}u \rVert_{\dot{\mathrm{B}}^{s}_{\infty,q}(\mathbb{R}^n)}\lesssim_{n,\partial\Omega} \lVert u \rVert_{\dot{\mathrm{B}}^{s}_{\infty,q}(\Omega)}.
\end{align*}

This finishes Step 1, since this procedure remains valid for the spaces $\dot{\mathrm{B}}^{s,0}_{\infty,q}$ and $\dot{\mathcal{B}}^{s,0}_{\infty,\infty}$, thanks to Step 1.2. In particular, for $u\in\dot{\mathrm{B}}^{s,0}_{\infty,q}(\Omega)$, $\mathrm{E}u$ is both $\dot{\mathrm{B}}^{s,0}_{\infty,q}$ and $\dot{\mathrm{B}}^{s}_{\infty,q}$-extension of $u$. Thus,
\begin{align*}
    \lVert u\rVert_{\dot{\mathrm{B}}^{s,0}_{\infty,q}(\Omega)}\sim_{s,n,\partial\Omega}\lVert u\rVert_{\dot{\mathrm{B}}^{s}_{\infty,q}(\Omega)}.
\end{align*}
The argument remains valid for the spaces $\dot{\mathrm{B}}^{s}_{\infty,\infty}$, $\dot{\mathrm{B}}^{s,0}_{\infty,\infty}$, $\dot{\mathcal{B}}^{s}_{\infty,\infty}$ and $\dot{\mathcal{B}}^{s,0}_{\infty,\infty}$.

\textbf{Step 2:} The case $s>0$, with Stein's extension operator $\mathbb{E}=\mathcal{E}$.

\textbf{Step 2.1:} The spaces $\dot{\mathrm{B}}^{s,0}_{\infty,q}$ and $\dot{\mathcal{B}}^{s,0}_{\infty,\infty}$. We interpolate the boundedness of
\begin{align*}
    \mathcal{E}\,:\,\dot{\mathrm{C}}^k_0(\overline{\Omega})\longrightarrow \dot{\mathrm{C}}^k_0(\mathbb{R}^n)\text{, } k\in\mathbb{N}\text{, }
\end{align*}
provided by Theorem \ref{thm:SteinsExtensionOp}. By Theorem \ref{thm:InterpHomSpacesRn} and Lemma \ref{lem:EmbeddingInterpHomSobspacesSpeLip}, for all  $u\in\dot{\mathrm{B}}^{s,0}_{\infty,q}(\Omega)\hookrightarrow
({\mathrm{C}}^0_0(\overline{\Omega}),\dot{\mathrm{C}}^k_0(\overline{\Omega}))_{\theta,q}$, where $\theta\in(0,1)$, $k\in\mathbb{N}^\ast$, $s=\theta k$, $q\in[1,+\infty]$, we have
\begin{align*}
    \lVert \mathcal{E}u\rVert_{\dot{\mathrm{B}}^{s}_{\infty,q}(\mathbb{R}^n)}\lesssim_{p,k,n,\theta} \lVert \mathcal{E}u \rVert_{({\mathrm{C}}^0_0(\mathbb{R}^n),\dot{\mathrm{C}}^k_0(\mathbb{R}^n))_{\theta,q}}\lesssim_{p,n,k}^{\partial\Omega,\theta} \lVert u \rVert_{({\mathrm{C}}^0_0(\overline{\Omega}),\dot{\mathrm{C}}^k_0(\overline{\Omega}))_{\theta,q}}\lesssim_{p,n,k}^{\partial\Omega,\theta} \lVert u\rVert_{\dot{\mathrm{B}}^{s,0}_{\infty,q}(\Omega)}.
\end{align*}

The same proofs apply for the spaces $\dot{\mathcal{B}}^{s,0}_{\infty,\infty}$, $s>0$. We can conclude as in Step 1.3.

\textbf{Step 2.2:} The spaces $\dot{\mathrm{B}}^{s}_{\infty,q}$. For all $u\in\dot{\mathrm{B}}^{s}_{\infty,q}(\Omega)\subset {\mathrm{B}}^{s}_{\infty,q}(\Omega)$, thus $\mathcal{E}u\in {\mathrm{B}}^{s}_{\infty,q}(\mathbb{R}^n)$. We set $q=1$, $s=k\in\mathbb{N}$. As in Step 1.1, and the definition of function spaces by restriction, the linear operator
\begin{align*}
    (\dot{\Delta}_{j}\mathcal{E})_{j\in\mathbb{Z}}\,:\, \dot{\mathrm{B}}^{k}_{\infty,1}(\Omega) &\longrightarrow \ell^{\infty}_{k}(\mathbb{Z},\mathrm{L}^\infty(\mathbb{R}^n))\text{, }k\in\mathbb{N}.
\end{align*}
The boundedness with respect to the $\dot{\mathrm{B}}^{s}_{\infty,q}$-norms follows again from real interpolation.

To conclude properly the Step 2, we claim $\mathcal{E}$ also preserves $\mathrm{B}^{s,0}_{\infty,q}$-spaces as in Step 1.3. This yields for all $s>0$, all $u\in\mathrm{B}^{s,0}_{\infty,q}(\Omega)$,
\begin{align*}
    \lVert u\rVert_{\dot{\mathrm{B}}^{s,0}_{\infty,q}(\Omega)}\sim_{s,n,\partial\Omega}\lVert u\rVert_{\dot{\mathrm{B}}^{s}_{\infty,q}(\Omega)}.
\end{align*}

\textbf{Step 3:} Finally, the properties for $\mathcal{P}_0$ are carried over by the formula $\mathcal{P}_0=\mathrm{I}-\mathbb{E}^{-}[\mathbbm{1}_{\overline{\Omega}^c}\cdot]$.
\end{proof}

This allows to improve the boundedness properties of the extension operator by rough reflection and its subordinated projection operator by duality.

\begin{lemma}\label{lem:RoughExtensionProjBesov} Let $p,q\in[1,+\infty]$, $p<+\infty$, $s\in(-1,1)$. We consider the extension operator $\mathrm{E}$ and its subordinated projection $\mathcal{P}_0$ as in Proposition \ref{prop:ExtProj0HomBspq}. Each operator maps boundedly
\begin{align*}
    \mathrm{E}\,:\,\dot{\mathrm{B}}^{s}_{p,q}(\Omega)\longrightarrow \dot{\mathrm{B}}^{s}_{p,q}(\mathbb{R}^n),\text{ and }
    \mathcal{P}_0\,:\,\dot{\mathrm{B}}^{s}_{p,q}(\mathbb{R}^n)\longrightarrow \dot{\mathrm{B}}^{s}_{p,q,0}(\Omega).
\end{align*}
Moreover, when $q=+\infty$, the result still holds with  $\mathcal{B}$ instead of $\mathrm{B}$.
\end{lemma}

\begin{proof} If $s>-1+1/p$, the result is already contained in Proposition \ref{prop:ExtProj0HomBspq}. Therefore, we assume $s\in(-1,0)$, and let $q<+\infty$.

We let $u\in\dot{\mathrm{B}}^{s}_{p,q}(\Omega)$, and consider $U\in \dot{\mathrm{B}}^{s}_{p,q}(\mathbb{R}^n)$ such that $u = U_{|_{\Omega}}$. With the notation introduced in the Step 1.1 of the proof of Proposition \ref{prop:ExtProj0HomBspinfty}, since $\mathrm{E}=T_{\phi}^{-1}\Tilde{\mathrm{E}}T_{\phi}$, one has for $\varphi\in\mathcal{S}(\mathbb{R}^n)$,
\begin{align*}
    \big\langle \mathrm{E}u, \varphi\big\rangle_{\mathbb{R}^n} &=\big\langle U,  \left[T_{\phi}^{-1}\mathbbm{1}_{\mathbb{R}^n_+}[\mathrm{I}-\sigma]T_{\phi}\right]\varphi\big\rangle_{\mathbb{R}^n}\\
    &= \big\langle U,  \mathcal{P}_0\varphi\big\rangle_{\mathbb{R}^n}.
\end{align*}
Hence, by Proposition \ref{prop:ExtProj0HomBspq} (and \ref{prop:ExtProj0HomBspinfty} if $p=1$, noticing that $\mathcal{S}(\mathbb{R}^n)\subset\dot{\mathrm{B}}^{-s,0}_{\infty,q'}(\mathbb{R}^n)$), we deduce
\begin{align*}
    \big\lvert \big\langle \mathrm{E}u, \varphi\big\rangle_{\mathbb{R}^n} \big\rvert \lesssim_{p,s,n}^{\partial\Omega} \lVert U \rVert_{\dot{\mathrm{B}}^{s}_{p,q}(\mathbb{R}^n)}\lVert \varphi \rVert_{\dot{\mathrm{B}}^{-s}_{p',q'}(\mathbb{R}^n)}
\end{align*}
Now, by Proposition \ref{prop:DualityBesovRn} and the definition of function spaces by restriction, we obtain
\begin{align*}
    \lVert \mathrm{E}u \rVert_{\dot{\mathrm{B}}^{s}_{p,q}(\mathbb{R}^n)}\lesssim_{p,s,n}^{\partial\Omega}\lVert u \rVert_{\dot{\mathrm{B}}^{s}_{p,q}(\Omega)}.
\end{align*}
The remaining cases $(s,p)=(0,1)$ and $q=+\infty$ are deduced as in the proof of Proposition \ref{prop:ExtProj0HomBspq}, by a manual real interpolation procedure thanks to Lemma \ref{lem:EmbeddingInterpHomSobspacesSpeLip}.
\end{proof}

The previous results, such as Lemma \ref{lem:GlobalchangeCoordBesov} or Proposition \ref{prop:ExtProj0HomBspinfty}, are not really satisfying with regard to the spaces $\dot{\mathrm{B}}^{s}_{\infty,q}$. Consequently, the focus should primarily be on the spaces $\dot{\mathrm{B}}^{s,0}_{\infty,q}$ in the remainder of this paper. When it comes to the behavior of intersection of homogeneous Besov spaces, things get better.

\begin{proposition}\label{prop:IntersecExtHomBspqProjHomBspq0}Let $p_j,q_j\in[1,+\infty]$, $s_j>-1$, $j\in\{0,1\}$, and consider the extension operator $\mathbb{E}$ given by Proposition \ref{prop:ExtProj0HomBspq}. We assume either $s_0,s_1>0$, or $s_0,s_1<1$, and that,
\begin{enumerate}
    \item $(p_0,p_1)\neq(+\infty,+\infty)$; or
    \item $(\mathcal{C}_{s_0,p_0,q_0})$ is satisfied.
\end{enumerate}
It holds that
\begin{align*}
    [\dot{\mathrm{B}}^{s_0}_{p_0,q_0}\cap \dot{\mathrm{B}}^{s_1}_{p_1,q_1}](\Omega)=\dot{\mathrm{B}}^{s_0}_{p_0,q_0}(\Omega)\cap \dot{\mathrm{B}}^{s_1}_{p_1,q_1}(\Omega),
\end{align*}
and for all $u\in \dot{\mathrm{B}}^{s_0}_{p_0,q_0}(\Omega)\cap \dot{\mathrm{B}}^{s_1}_{p_1,q_1}(\Omega)$, we have $\mathbb{E}u\in \dot{\mathrm{B}}^{s_j}_{p_j,q_j}(\mathbb{R}^n)$, $j\in\{0,1\}$, with the estimate
\begin{align*}
    \lVert \mathbb{E} u \rVert_{\dot{\mathrm{B}}^{s_j}_{p_j,q_j}(\mathbb{R}^n)} \lesssim_{s_j,p_j,n}^{\partial\Omega}   \lVert  u \rVert_{\dot{\mathrm{B}}^{s_j}_{p_j,q_j}(\Omega)}\text{. }
\end{align*}
Moreover,
\begin{itemize}
    \item when $p_j=+\infty$, one can replace $\dot{\mathrm{B}}^{s_j}_{\infty,q_j}$ by $\dot{\mathrm{B}}^{s_j,0}_{\infty,q_j}$ and remove the assumptions \textit{(i)} and \textit{(ii)};
    \item when $q_j=+\infty$, one can replace $\dot{\mathrm{B}}^{s_j}_{p_j,\infty}$ by $\dot{\mathcal{B}}^{s_j}_{p_j,\infty}$;
    \item when $p_j=q_j=+\infty$, one can replace $\dot{\mathrm{B}}^{s_j}_{\infty,\infty}$ by $\dot{\mathrm{B}}^{s_j,0}_{\infty,\infty}$ and remove the assumptions \textit{(i)} and \textit{(ii)}.
\end{itemize}
\end{proposition}

\begin{remark}We have the following similar result: for all $p,q\in[1,+\infty]$, $p<+\infty$, $s>0$, one has with equivalence of norms
\begin{align*}
    \mathrm{L}^p(\Omega)\cap\dot{\mathrm{B}}^{s}_{p,q}(\Omega)={\mathrm{B}}^{s}_{p,q}(\Omega),\text{ and }
    \mathrm{C}^0_0(\overline{\Omega})\cap\dot{\mathrm{B}}^{s,0}_{\infty,q}(\Omega)={\mathrm{B}}^{s,0}_{\infty,q}(\Omega).
\end{align*}
(The equality of sets ${\mathrm{B}}^{s,0}_{\infty,q}(\Omega)=\dot{\mathrm{B}}^{s,0}_{\infty,q}(\Omega)$ is already known.)
\end{remark}

\begin{proof} One just has to reproduce the proof of Proposition \ref{prop:ExtOpIntersecHomHspSpeLip} and Proposition \ref{prop:IntersecRestricHomSobSpaces}. This is possible thanks to Propositions \ref{prop:ExtProj0HomBspq} and \ref{prop:ExtProj0HomBspinfty}.

We just make a comment on  the case $p_0<+\infty$, $p_1=+\infty$, which also applies to the case $p_0=p_1=+\infty$ under the assumption $(\mathcal{C}_{s_0,\infty,q_0})$. 

If $u\in\dot{\mathrm{B}}^{s_0}_{p_0,q_0}(\Omega)\cap\dot{\mathrm{B}}^{s_1}_{p_1,q_1}(\Omega)$, The biggest issue is about to know if $\mathbb{E}u\in \dot{\mathrm{B}}^{s_1}_{p_1,q_1}(\mathbb{R}^n)$, which reduces to show $\mathbb{E}u\in\mathcal{S}'_h(\mathbb{R}^n)$. Indeed, by Proposition \ref{prop:ExtProj0HomBspinfty}, it turns out that
\begin{align*}
    (\dot{\Delta}_j\mathbb{E}u)_{j\in\mathbb{Z}} \in \ell^{q_1}_{s_1}(\mathbb{Z},\mathrm{L}^{p_1}(\mathbb{R}^n)).
\end{align*}
Since we do have $\mathbb{E}u\in\dot{\mathrm{B}}^{s_0}_{p_0,q_0}(\mathbb{R}^n)\subset\mathcal{S}'_h(\mathbb{R}^n)$, this yields
\begin{align*}
    \mathbb{E}u\in \dot{\mathrm{B}}^{s_1}_{p_1,q_1}(\mathbb{R}^n).
\end{align*}
Therefore, the result still holds when  $p_0<+\infty$ and $p_1=+\infty$, or when $p_0=p_1=+\infty$ under the assumption $(\mathcal{C}_{s_0,\infty,q_0})$.

In all the remaining cases, one always has $\mathbb{E}u\in\mathcal{S}'_h(\mathbb{R}^n)$. This ends the proof.
\end{proof}

\begin{proposition}\label{prop:densityCcinftyBspq0SpeLip}For $1\leqslant p,q<+\infty$, $s>-1$, the space $\mathrm{C}_c^\infty(\Omega)$ is a dense subspace of 
\begin{enumerate}
    \item $\dot{\mathrm{B}}^{s}_{p,q,0}(\Omega)$, $s>-n/p'$;
    \item $\dot{\mathcal{B}}^{s}_{p,\infty,0}(\Omega)$, $s\geqslant -n/p'$;
    \item $\dot{\mathrm{B}}^{s,0}_{\infty,q,0}(\Omega)$, $\dot{\mathcal{B}}^{s,0}_{\infty,\infty,0}(\Omega)$.
\end{enumerate}
\end{proposition}

\begin{remark} Again, the approximation procedure is universal if either $s>0$ or $-1<s<1$.
\end{remark}

\begin{proof} Thanks to the boundedness of the operator $\mathcal{P}_0$ on the scale of Besov spaces provided by Proposition \ref{prop:ExtProj0HomBspq} and Lemma \ref{lem:RoughExtensionProjBesov}, one may reproduce the Steps 1.1 and 1.2 in the proof of Proposition \ref{prop:densityCcinftyinhomHspspeLip}.
\end{proof}

\begin{proposition}\label{prop:Bspq=Bspq0SpeLip} For all $p,q\in[1,+\infty]$, $s\in(-1+\frac{1}{p},\frac{1}{p})$,
\begin{align*}
    \dot{\mathrm{B}}^{s}_{p,q,0}(\Omega)=\dot{\mathrm{B}}^{s}_{p,q}(\Omega)\text{.}
\end{align*}
In particular, if $p,q<+\infty$, the space $\mathrm{C}_c^\infty(\Omega)$ is dense in $\dot{\mathrm{B}}^{s}_{p,q}(\Omega)$ and $\dot{\mathcal{B}}^{s}_{p,\infty}(\Omega)$. The result still holds for $\dot{\mathrm{B}}^{s,0}_{\infty,q}$ and $\dot{\mathcal{B}}^{s,0}_{\infty,\infty}$, $s\in(-1,0)$.
\end{proposition}

\begin{proof} The equality of function spaces is straightforward from Proposition \ref{prop:HomBesovMultiplierSpeLip} and the previous Proposition \ref{prop:densityCcinftyBspq0SpeLip}.
\end{proof}

We recover the standard and well-know equivalence of norms, for which we give an easy proof.

\begin{proposition}\label{prop:EquivNormsBesovSpeLip} Let $1\leqslant p,q\leqslant +\infty$, $p<+\infty$, and $s\in\mathbb{R}$. The following equivalences of norms hold:
\begin{enumerate}
    \item if $m\in\mathbb{N}$ and $s>0$, for all $u\in\dot{\mathrm{B}}^{s}_{p,q}(\Omega)$,
    \begin{align*}
        \lVert u\rVert_{\dot{\mathrm{B}}^{s+m}_{p,q}(\Omega)}\sim_{p,s,n}^{m,\partial\Omega}\lVert \nabla^m u\rVert_{\dot{\mathrm{B}}^{s}_{p,q}(\Omega)};
    \end{align*}
    A similar result holds for $\dot{\mathrm{B}}^{s+m,0}_{\infty,q}(\Omega)$.
    \item if $0<s<1$, for all $u\in\dot{\mathrm{B}}^{s}_{p,p}(\Omega)$,
    \begin{align*}
        \lVert u\rVert_{\dot{\mathrm{B}}^{s}_{p,p}(\Omega)}\sim_{p,s,n}^{\partial\Omega} \left(\int_{\Omega}\int_{\Omega} \frac{| u(x)- u(y)|^p}{|x-y|^{ps+n}} \mathrm{~d}x\mathrm{d}y\right)^\frac{1}{p};
    \end{align*}
    A similar result holds for $u\in\dot{\mathrm{B}}^{s,0}_{\infty,\infty}(\Omega)+\dot{\mathcal{B}}^{s}_{\infty,\infty}(\Omega)\subset\dot{\mathrm{B}}^{s}_{\infty,\infty}(\Omega)$ with
    \begin{align*}
        \lVert u\rVert_{\dot{\mathrm{B}}^{s}_{\infty,\infty}(\Omega)}\sim_{s,n}^{\partial\Omega} \sup_{\substack{x,y\in\Omega\\ x\neq y}}\frac{| u(x)- u(y)|}{|x-y|^{s}}.
    \end{align*}
\end{enumerate}
\end{proposition}

\begin{proof} \textbf{Step 1:} The proof of point \textit{(i)} follows the lines in the proof of Proposition \ref{prop:EqNormNablakHsp}.

\textbf{Step 2:} The proof of \textit{(ii)}. Let $0<s<1$, $1\leqslant p\leqslant+\infty$.

\textbf{Step 2.1:} The case $1\leqslant p<+\infty$.
Let $u\in\dot{\mathrm{B}}^{s}_{p,p}(\Omega)$. By the definition of function spaces by restriction, let ${U}\in\dot{\mathrm{B}}^{s}_{p,p}(\mathbb{R}^n)$ such that $U_{|_{\Omega}}=u$, and $\lVert U \rVert_{\dot{\mathrm{B}}^{s}_{p,p}(\mathbb{R}^n)}\leqslant2\lVert u \rVert_{\dot{\mathrm{B}}^{s}_{p,p}(\Omega)}$. By \cite[Theorem~2.36]{bookBahouriCheminDanchin},
\begin{align*}
    \left(\int_{\Omega}\int_{\Omega} \frac{|u(x)-u(y)|^p}{|x-y|^{ps+n}} \mathrm{~d}x\mathrm{d}y\right)^\frac{1}{p} &\leqslant \left(\int_{\mathbb{R}^n}\int_{\mathbb{R}^n} \frac{|U(x)-U(y)|^p}{|x-y|^{ps+n}} \mathrm{~d}x\mathrm{d}y\right)^\frac{1}{p}\\ &\lesssim_{p,s,n} \lVert U \rVert_{\dot{\mathrm{B}}^{s}_{p,p}(\mathbb{R}^n)} \lesssim_{p,s,n} \lVert u \rVert_{\dot{\mathrm{B}}^{s}_{p,p}(\Omega)}.
\end{align*}
When $p=+\infty$, the proof remains similar.

For the reverse inequality when $p<+\infty$, we consider $\mathrm{E}=T_{\phi}^{-1}\Tilde{\mathrm{E}}T_\phi$, the extension operator as in Proposition \ref{prop:ExtProj0HomBspq}. By the definition of function spaces by restriction, \cite[Theorem~2.36]{bookBahouriCheminDanchin} and Lemma \ref{lem:GlobalchangeCoordBesov}, one obtains the inequalities
\begin{align*}
    \lVert u \rVert_{\dot{\mathrm{B}}^{s}_{p,p}(\Omega)} \leqslant \lVert \mathrm{E}u \rVert_{\dot{\mathrm{B}}^{s}_{p,p}(\mathbb{R}^n)}\lesssim_{p,s,n}^{\partial\Omega}  \left(\int_{\mathbb{R}^n}\int_{\mathbb{R}^n} \frac{|\Tilde{\mathrm{E}}T_\phi u(x)-\Tilde{\mathrm{E}}T_\phi u(y)|^p}{|x-y|^{ps+n}} \mathrm{~d}x\mathrm{d}y\right)^\frac{1}{p}.
\end{align*}
We recall that $\tilde{\mathrm{E}}$ is the extension operator from $\mathbb{R}^n_+$ to $\mathbb{R}^n$ by even reflection, see \eqref{eq:FlatExtensionOpRn+}. 
\begin{align*}
    \lVert u \rVert_{\dot{\mathrm{B}}^{s}_{p,p}(\Omega)} \lesssim_{p,s,n}^{\partial\Omega}   &2 \left(\int_{\mathbb{R}^n_+}\int_{\mathbb{R}^n_+} \frac{|T_\phi u(x)-T_\phi u(y)|^p}{|x-y|^{ps+n}} \mathrm{~d}x\mathrm{d}y\right)^\frac{1}{p}\\ &+ 2 \left(\int_{\mathbb{R}^n_+}\int_{\mathbb{R}^n_+} \frac{|T_\phi u(x)-T_\phi u(y)|^p}{|(x'-y', x_n+y_n)|^{ps+n}} \mathrm{~d}x'\mathrm{d}x_n\mathrm{d}y'\mathrm{d}y_n\right)^\frac{1}{p}\\
    \lesssim_{p,s,n}^{\partial\Omega} &\left(\int_{\mathbb{R}^n_+}\int_{\mathbb{R}^n_+} \frac{|T_\phi u(x)-T_\phi u(y)|^p}{|x-y|^{ps+n}} \mathrm{~d}x\mathrm{d}y\right)^\frac{1}{p}.
\end{align*}
Here, the last line was obtained by means of $|(x'-y', x_n-y_n)|\leqslant|(x'-y', x_n+y_n)|$, provided $x_n,y_n\geqslant 0$. We also have the inequality, valid for all $(x',x_n),(y',y_n)\in\mathbb{R}^n_+$,
\begin{align*}
    |(x'-y', x_n-y_n)|&\leqslant \big|\big(x'-y', x_n+\phi(x')-(y_n+\phi(y'))\big)\big| + \lVert \nabla' \phi\rVert_{\mathrm{L}^\infty(\mathbb{R}^{n-1})}|x'-y'|\\&\lesssim_{\partial\Omega} \big|\big(x'-y', x_n+\phi(x')-(y_n+\phi(y'))\big)\big|\\
    &\lesssim_{\partial\Omega} |(x'-y', x_n-y_n)|.
\end{align*}
So that, by the change of variable $(x',x_n)\mapsto (x',x_n+\phi(x'))$, with Jacobian determinant $1$,
\begin{align*}
    \lVert u \rVert_{\dot{\mathrm{B}}^{s}_{p,p}(\Omega)} &\lesssim_{p,s,n}^{\partial\Omega}\left(\int_{\mathbb{R}^n_+}\int_{\mathbb{R}^n_+} \frac{|u(x',x_n+\phi(x'))-u(y',y_n+\phi(y'))|^p}{\big|\big(x'-y', x_n+\phi(x')-(y_n+\phi(y'))\big)\big|^{ps+n}} \mathrm{~d}x'\mathrm{d}x_n\mathrm{d}y'\mathrm{d}y_n\right)^\frac{1}{p}\\
    &\lesssim_{p,s,n}^{\partial\Omega}\left(\int_{\Omega}\int_{\Omega} \frac{|u(x)-u(y)|^p}{|x-y|^{ps+n}} \mathrm{~d}x\mathrm{d}y\right)^\frac{1}{p}.
\end{align*}

\textbf{Step 2.2:} The case  $p=+\infty$. As for the Step 2.1, the definition of function spaces by restriction and \cite[Theorem~2.36]{bookBahouriCheminDanchin} yield directly for all $u\in \dot{\mathrm{B}}^{s}_{\infty,\infty}(\Omega)$,
\begin{align}\label{eq:EstBsinftyinftyFiniteDiff1}
        \sup_{\substack{x,y\in\Omega\\ x\neq y}}\frac{| u(x)- u(y)|}{|x-y|^{s}} \lesssim_{s,n} \lVert u\rVert_{\dot{\mathrm{B}}^{s}_{\infty,\infty}(\Omega)}.
\end{align}
For the reverse inequality, let $s<s_1<1$. By Theorem \ref{thm:InterpHomSpacesRn}, \cite[Theorem~3.4.2]{BerghLofstrom1976}, Proposition~\ref{prop:IntersecExtHomBspqProjHomBspq0}, and the definition of function spaces by restriction imply that $\dot{\mathrm{B}}^{0}_{\infty,1}(\Omega)\cap \dot{\mathrm{B}}^{s_1}_{\infty,\infty}(\Omega)$ is strongly dense in $\dot{\mathcal{B}}^{s}_{\infty,\infty}(\Omega)$. So, let $(u_\ell)_{\ell\in\mathbb{N}} \subset \dot{\mathrm{B}}^{0}_{\infty,1}(\Omega)\cap \dot{\mathrm{B}}^{s_1}_{\infty,\infty}(\Omega)$ such that it converges to $u$ in $\dot{\mathcal{B}}^{s}_{\infty,\infty}(\Omega)$. For $\ell\in\mathbb{N}$, Proposition \ref{prop:IntersecExtHomBspqProjHomBspq0} implies $\mathrm{E}u_\ell\in \dot{\mathrm{B}}^{s}_{\infty,\infty}(\mathbb{R}^n)$, and by the definition of function spaces by restriction, we can proceed as in Step 2.1, yielding
\begin{align*}
    \lVert u_\ell \rVert_{\dot{\mathrm{B}}^{s}_{\infty,\infty}(\Omega)}\lesssim_{s,n}^{\partial\Omega}\sup_{\substack{x,y\in\Omega\\ x\neq y}}\frac{| u_\ell(x)- u_\ell(y)|}{|x-y|^{s}}.
\end{align*}
Now, by the continuity provided by \eqref{eq:EstBsinftyinftyFiniteDiff1}, one can pass to the limit, in order to obtain
\begin{align*}
    \lVert u \rVert_{\dot{\mathrm{B}}^{s}_{\infty,\infty}(\Omega)}\lesssim_{s,n}^{\partial\Omega}\sup_{\substack{x,y\in\Omega\\ x\neq y}}\frac{| u(x)- u(y)|}{|x-y|^{s}}.
\end{align*}

For $u\in\dot{\mathrm{B}}^{s,0}_{\infty,\infty}(\Omega)$, the proof is similar to Step 2.1. Finally, it implies that the results holds for $u\in\dot{\mathrm{B}}^{s,0}_{\infty,\infty}(\Omega)+\dot{\mathcal{B}}^{s}_{\infty,\infty}(\Omega)$.
\end{proof}

\begin{theorem}\label{thm:RealInterpHomSpacesSpeLip} Let $p,q,q_0,q_1\in[1,+\infty]$, $p<+\infty$, $s_0,s_1\in\mathbb{R}$, and $\theta\in(0,1)$.

We assume $s_0\neq s_1$ and we set $s:=(1-\theta)s_0+\theta s_1$. It holds that
\begin{enumerate}
    \item $(\dot{\mathrm{B}}^{s_0}_{p,q_0}(\Omega),\dot{\mathrm{B}}^{s_1}_{p,q_1}(\Omega))_{\theta,q}=(\dot{\mathrm{H}}^{s_0,p}(\Omega),\dot{\mathrm{H}}^{s_1,p}(\Omega))_{\theta,q}=\dot{\mathrm{B}}^{s}_{p,q}(\Omega)$, $s_j>-1$, $j\in\{0,1\}$;\label{eq:InterpBesovOmega1}
    \item $(\dot{\mathrm{B}}^{s_0,0}_{\infty,q_0}(\Omega),\dot{\mathrm{B}}^{s_1,0}_{\infty,q_1}(\Omega))_{\theta,q}=\dot{\mathrm{B}}^{s,0}_{\infty,q}(\Omega)$, $s_j>-1$, $j\in\{0,1\}$;\label{eq:InterpBesovOmega2}
    \item $(\dot{\mathrm{B}}^{s_0}_{\infty,q_0}(\Omega),\dot{\mathrm{B}}^{s_1}_{\infty,q_1}(\Omega))_{\theta,q}=\dot{\mathrm{B}}^{s}_{\infty,q}(\Omega)$, $-1<s_j<1$, $j\in\{0,1\}$ and $(\mathcal{C}_{s,\infty,q})$;\label{eq:InterpBesovOmega3}
    \item $(\dot{\mathrm{W}}^{k_0,1}(\Omega),\dot{\mathrm{W}}^{k_1,1}(\Omega))_{\theta,q}=\dot{\mathrm{B}}^{s}_{1,q}(\Omega)$, $k_j=s_j\in\mathbb{N}$, $j\in\{0,1\}$;
\end{enumerate}
Moreover,
\begin{itemize}
    \item when $q=+\infty$, one can replace $((\cdot,\cdot)_{\theta,\infty}, \mathrm{B}^{s})$ by $((\cdot,\cdot)_{\theta}, \mathcal{B}^{s})$;
    \item the result remains true if we replace  $(\dot{\mathrm{W}},\dot{\mathrm{H}},\dot{\mathrm{B}},\dot{\mathcal{B}})$ by $(\dot{\mathrm{W}}_0,\dot{\mathrm{H}}_0,\dot{\mathrm{B}}_{\cdot,\cdot,0},\dot{\mathcal{B}}_{\cdot,\cdot,0})$.
\end{itemize}
\end{theorem}

\begin{remark} The complex interpolation result corresponding to \ref{eq:complexInterpHomBspqRn} from Theorem \ref{thm:InterpHomSpacesRn} is also available under the additional assumption $s_j > -1$.
\end{remark}

\begin{proof}\textbf{Step 1:} We prove \ref{eq:InterpBesovOmega1} in the case of Besov spaces. More precisely,
\begin{equation}\label{eq:InterpBesovOmega1Proof}
    (\dot{\mathrm{B}}^{s_0}_{p,q_0}(\Omega),\dot{\mathrm{B}}^{s_1}_{p,q_1}(\Omega))_{\theta,q}=\dot{\mathrm{B}}^{s}_{p,q}(\Omega) ,\,1\leqslant p<+\infty,\,s_j>-1,\,j\in\{0,1\}.
\end{equation}

\textbf{Step 1.1:} We prove \eqref{eq:InterpBesovOmega1Proof} assuming for now that one of the following condition is satisfied
\begin{enumerate}
    \item $s_j > 0$, for $j\in\{0,1\}$;
    \item $s_j\in(-1,1)$, $j\in\{0,1\}$;
    \item $(\mathcal{C}_{s_j,p,q_j})$, for $j\in\{0,1\}$ (implying $(\mathcal{C}_{s,p,q})$).
\end{enumerate}

Under the conditions \textit{(i)} and \textit{(ii)}, this is a direct consequence of Proposition \ref{prop:ExtProj0HomBspq}, since it asserts that one has a retraction and a coretraction given by the operators
\begin{align*}
    \mathbb{E} \,:\, \dot{\mathrm{B}}^{\tilde{s}}_{p,\tilde{q}}(\Omega)\longrightarrow \dot{\mathrm{B}}^{\tilde{s}}_{p,\tilde{q}}(\mathbb{R}^n) &\text{ and } \mathrm{R}_{{\Omega}} \,:\, \dot{\mathrm{B}}^{\tilde{s}}_{p,\tilde{q}}(\mathbb{R}^n)\longrightarrow \dot{\mathrm{B}}^{\tilde{s}}_{p,\tilde{q}}(\Omega) \ \text{. }
\end{align*}
where $(\tilde{s},\Tilde{q}) \in\{ (s_0,q_0),(s_1,q_1),(s,q)\}$.

Under the condition \textit{(iii)}, this is a consequence of Wolff's reiteration theorem \cite[Theorem~1]{JansonNilssonPeetre1984}, since all involved spaces are complete.

This gives the full result when $p=1$.

\textbf{Step 1.2:} We assume $1<p<+\infty$, we want to show \eqref{eq:InterpBesovOmega1Proof} for $s_0\leqslant 0$ and $s_1\geqslant 1$. Let $\tilde{s}_0\leqslant 0<\alpha_0<\alpha_1<n/p$, we assume $0< \tilde{s}_0+1\leqslant \min(1,n/p)$, and we also let $m\in\mathbb{N}$ such that $\tilde{s}_0+m> n/p$.

\begin{figure}[H]
\centering
\begin{tikzpicture}[yscale=0.6,xscale=6]
  \draw[->] (-0.1,0) -- (1.1,0) node[right,yshift=-2mm] {$1/p$};
  \draw[->] (0,-1) -- (0,4.5) node[above] {$s$};

  \draw[domain=0:1,smooth,variable=\x,blue] plot ({\x},{3*\x}) node[right] {$s=n/p$};
  \fill[blue!30,opacity=0.3] (0,0) -- plot[domain=0:1] (0,0) -- (0,-1) -- (1,-1) -- (1,0) -- cycle;
  \fill[blue!30,opacity=0.3] (0,0) -- plot[domain=0:1] (\x,{3*\x}) -- (1,3) -- (1,0) -- cycle;
  \draw[domain=0:1,smooth,variable=\x,red] plot ({\x},{-1+\x}) node[right,yshift=2mm] {$s=-1+1/p$};
  \draw[domain=0:1/3,smooth,variable=\x,dashed] plot ({\x},{3*\x-1}) ;
  \draw[domain=0:1/3,smooth,variable=\x,dashed] plot ({\x},{3*\x}) ;
  \draw[dashed] (1,1) -- (0,1)  node[left] {$s=1$};
  \draw[domain=0:1/3,smooth,variable=\x,dashed] plot ({\x},{3*\x+1}) ;
  \draw[dashed] (1,2) -- (0,2);
  \draw[domain=0:1/3,smooth,variable=\x,dashed] plot ({\x},{3*\x+2}) ;
  \draw[dashed] (1,3) -- (0,3);  
  \draw[domain=0:1/3,smooth,variable=\x,dashed] plot ({\x},{3*\x+3}) ;
  \draw[dashed] (1,4) -- (1/3,4);
  \draw[circle,fill,inner sep=1pt] (0,0) node[below left] {$0$};
  \draw[circle,fill,inner sep=1pt] (1,0) node[below right] {$1$};

  \draw[domain=-0.2:3.8,smooth,variable=\y,black] plot ({0.63},{\y});
  \node[circle,fill,inner sep=1.5pt,label=below right:$\dot{\mathrm{B}}^{\tilde{s}_0}_{p,1}$] at (0.63,0-0.2) {};
  \node[circle,fill,inner sep=1.5pt,label=left:$\dot{\mathrm{B}}^{\alpha_0}_{p,\infty}$] at (0.63,0.38) {};
  \node[circle,fill,inner sep=1.5pt,label=right:$\dot{\mathrm{B}}^{\alpha_1}_{p,\infty}$] at (0.63,1.45) {};
  \node[circle,fill,inner sep=1.5pt,label=above :$\dot{\mathrm{B}}^{\tilde{s}_0+m}_{p,1}$] at (0.63,3.8) {};
\end{tikzpicture}
\caption{Hardy-Littlewood-Sobolev-Kato diagram: representation of the chosen interpolation scale to apply Wolff's reiteration theorem.}
\label{fig:BesovspacesWolffsReiter}
\end{figure}
By the definition of function spaces by restriction, Proposition \ref{prop:EmbeddSobBesovRn}, since $\tilde{s}_0\leqslant0$ and $\alpha_1>\alpha_0>0$, one has the chain of inclusions\footnote{The first inclusion holds because $\tilde{s}_0\leqslant 0$ and because of the definition of function spaces by restriction:
\begin{align*}
    \lVert u\rVert_{{\mathrm{B}}^{\tilde{s}_0+m}_{p,1}(\Omega)} \sim \lVert u\rVert_{{\mathrm{B}}^{\tilde{s}_0}_{p,1}(\Omega)}+\lVert \nabla^m u\rVert_{{\mathrm{B}}^{\tilde{s}_0}_{p,1}(\Omega)}\lesssim \lVert u\rVert_{\dot{\mathrm{B}}^{\tilde{s}_0}_{p,1}(\Omega)}+\lVert \nabla^m u\rVert_{\dot{\mathrm{B}}^{\tilde{s}_0}_{p,1}(\Omega)}\lesssim \lVert u\rVert_{\dot{\mathrm{B}}^{\tilde{s}_0}_{p,1}(\Omega)}+\lVert  u\rVert_{\dot{\mathrm{B}}^{\tilde{s}_0+m}_{p,1}(\Omega)}.
\end{align*}}
\begin{align*}
\dot{\mathrm{B}}^{\tilde{s}_0}_{p,1}(\Omega)\cap\dot{\mathrm{B}}^{\tilde{s}_0+m}_{p,1}(\Omega) \subset {\mathrm{B}}^{\tilde{s}_0+m}_{p,1}(\Omega) \subset {\mathrm{B}}^{\alpha_1}_{p,\infty}(\Omega) \subset \dot{\mathrm{B}}^{\alpha_0}_{p,\infty}(\Omega)\cap\dot{\mathrm{B}}^{\alpha_1}_{p,\infty}(\Omega).
\end{align*}
Moreover, one can check that, for $\mathfrak{a}=\alpha_0,\alpha_1$, the space
\begin{align*}
(\dot{\mathrm{B}}^{\tilde{s}_0}_{p,1}(\Omega)+ \dot{\mathrm{B}}^{\mathfrak{a}}_{p,\infty}(\Omega))\cap \dot{\mathrm{B}}^{\tilde{s}_0+m}_{p,1}(\Omega)
\end{align*}
is complete, this by means of the assumptions on $\tilde{s}_0,\mathfrak{a}<n/p$, and $0\leqslant\tilde{s}_0+1 \leqslant \min(1,n/p)$ thanks to the Proposition \ref{prop:EquivNormsBesovSpeLip}. Therefore by Wolff's reiteration Theorem, Lemma \ref{lem:WOlffReitThmnonComp}, by the extremal reiteration property, Lemma \ref{lem:ExtrmReitPropnonComp}, and with Step 1.1, we obtain for all $-1<\tilde{s}_0<s<\tilde{s}_0+m$, $q\in[1,+\infty]$, satisfying $\tilde{s}_0\leqslant 0$, $\tilde{s}_0+m\in\mathbb{N}^\ast + [0,\min(1,n/p)]$,
\begin{align*}
     \dot{\mathrm{B}}^{s}_{p,q}(\Omega) = (\dot{\mathrm{B}}^{\tilde{s}_0}_{p,1}(\Omega),\dot{\mathrm{B}}^{\tilde{s}_0+m}_{p,1}(\Omega))_{\frac{s-\tilde{s}_0}{m},q}.
\end{align*}
Now, we perform a bootstrap procedure twice. First, for any $\tilde{s}_0<s_0<s$, $s_0\leqslant0$, we can write, thanks to the extremal reiteration property, Lemma \ref{lem:ExtrmReitPropnonComp},
\begin{align*}
(\dot{\mathrm{B}}^{s_0}_{p,q_0}(\Omega),\dot{\mathrm{B}}^{\tilde{s}_0+m}_{p,1}(\Omega))_{\frac{s-s_0}{\tilde{s}_1-{s}_0},q} &= \left((\dot{\mathrm{B}}^{\tilde{s}_0}_{p,1}(\Omega),\dot{\mathrm{B}}^{\tilde{s}_0+m}_{p,1}(\Omega))_{\frac{s_0-\tilde{s}_0}{m},q_0},\dot{\mathrm{B}}^{\tilde{s}_0+m}_{p,1}(\Omega)\right)_{\frac{s-s_0}{m},q}\\
&= (\dot{\mathrm{B}}^{\tilde{s}_0}_{p,1}(\Omega),\dot{\mathrm{B}}^{\tilde{s}_0+m}_{p,1}(\Omega))_{\frac{s-\tilde{s}_0}{m},q}=\dot{\mathrm{B}}^{s}_{p,q}(\Omega).
\end{align*}
Now for $s_1>0$, one can always find $m\in\mathbb{N}$ such that $s_0<s_1<\tilde{s_0}+m$, and then for $q_1\in[1,+\infty]$, similarly, but we apply the preceding interpolation identity in order to obtain
\begin{align*}
(\dot{\mathrm{B}}^{s_0}_{p,q_0}(\Omega),\dot{\mathrm{B}}^{s_1}_{p,q_1}(\Omega))_{\frac{s-s_0}{s_1-s_0},q} &= \left(\dot{\mathrm{B}}^{s_0}_{p,q_0}(\Omega),(\dot{\mathrm{B}}^{s_0}_{p,q_0}(\Omega),\dot{\mathrm{B}}^{\tilde{s}_1}_{p,1}(\Omega))_{\frac{s_1-s_0}{\tilde{s}_0+m-s_0},q_1}\right)_{\frac{s-s_0}{s_1-s_0},q}\\
&=(\dot{\mathrm{B}}^{s_0}_{p,q_0}(\Omega),\dot{\mathrm{B}}^{\tilde{s}_1}_{p,1}(\Omega))_{\frac{s-s_0}{\tilde{s}_0+m-{s}_0},q}=\dot{\mathrm{B}}^{s}_{p,q}(\Omega).
\end{align*}
Thus \eqref{eq:InterpBesovOmega1Proof} holds in any cases.

\textbf{Step 2:} We prove \ref{eq:InterpBesovOmega2}, namely
\begin{align}\label{eq:InterpBesovOmega2Proof}
(\dot{\mathrm{B}}^{s_0,0}_{\infty,q_0}(\Omega),\dot{\mathrm{B}}^{s_1,0}_{\infty,q_1}(\Omega))_{\theta,q}=\dot{\mathrm{B}}^{s,0}_{\infty,q}(\Omega)\text{, } s_0,s_1>-1\text{, } 1\leqslant q_0,q_1,q\leqslant+\infty.
\end{align}
\textbf{Step 2.1:} As in Step 1.1, \eqref{eq:InterpBesovOmega2Proof} holds in the following subcases:
\begin{enumerate}
\item $s_j>0$, for $j\in\{0,1\}$;
\item $-1<s_j<1$ , for $j\in\{0,1\}$.
\end{enumerate}

Moreover, the same retraction-coretraction arguments give the identities:
\begin{align}\label{eq:InterpBesovOmega2Proof-2}
\dot{\mathrm{B}}^{s,0}_{\infty,q}(\Omega):=\left\{\begin{array}{cl}
(\dot{\mathrm{C}}^{k_0}_{0}(\overline{\Omega}),\dot{\mathrm{C}}^{k_1}_{0}(\overline{\Omega}))_{\theta,q} \text{, }& k_0,k_1\in\mathbb{N}\text{, } 1\leqslant q\leqslant+\infty,\\
(\dot{\mathrm{C}}^{k_0}_{0}(\overline{\Omega}),\dot{\mathrm{B}}^{s_1,0}_{\infty,q_1}({\Omega}))_{\theta,q} \text{, }& k_0\in\mathbb{N}\text{, }s_1>0\text{, } 1\leqslant q,q_1\leqslant+\infty.\\
(\dot{\mathrm{B}}^{0,0}_{\infty,1}({\Omega}),\dot{\mathrm{B}}^{s_1,0}_{\infty,q_1}({\Omega}))_{\theta,q}\text{, }& s_1>0\text{, } 1\leqslant q,q_1\leqslant+\infty
\end{array}\right.
\end{align}
The third interpolation equality follows from the second one in combination with Lemma \ref{lem:EmbeddingInterpHomSobspacesSpeLip} and the embedding $\dot{\mathrm{B}}^{0,0}_{\infty,1}({\Omega})\hookrightarrow {\mathrm{C}}^{0}_{0}(\overline{\Omega})$.

\textbf{Step 2.2:} The case $s_1\geqslant 1$, $s_0\leqslant0$. We assume $s_0<0$, and wet let $0<\alpha<1$. We recall that, one has the sets equalities and inclusions $$\dot{\mathrm{B}}^{s_1,0}_{\infty,q_1}({\Omega})={\mathrm{B}}^{s_1,0}_{\infty,q_1}({\Omega})\subset {\mathrm{B}}^{\alpha,0}_{\infty,\infty}({\Omega})=\dot{\mathrm{B}}^{\alpha,0}_{\infty,\infty}({\Omega})\subset \mathrm{C}_0^0(\overline{\Omega}).$$

By previous Step 2.1, and by definition of function spaces by restriction
\begin{align*}
({\mathrm{C}}^{0}_{0}(\overline{\Omega}),\dot{\mathrm{B}}^{s_1,0}_{\infty,q_1}({\Omega}))_{\frac{\alpha}{s_1},\infty} = \dot{\mathrm{B}}^{\alpha,0}_{\infty,\infty}(\Omega)\text{, and } (\dot{\mathrm{B}}^{s_0,0}_{\infty,q_0}({\Omega}),\dot{\mathrm{B}}^{\alpha,0}_{\infty,\infty}({\Omega}))_{\frac{-s_0}{\alpha-s_0},1} = \dot{\mathrm{B}}^{0,0}_{\infty,1}({\Omega})\hookrightarrow {\mathrm{C}}^{0}_{0}(\overline{\Omega}).
\end{align*}

Thus, by Lemmas \ref{lem:EmbeddingInterpHomSobspacesSpeLip}, \ref{lem:ExtrmReitPropnonComp} and  \ref{lem:WOlffReitThmnonComp2}, for $0<s<s_1$,
\begin{align*}
    \dot{\mathrm{B}}^{s,0}_{\infty,q}(\Omega)\hookrightarrow(\dot{\mathrm{B}}^{s_0,0}_{\infty,q_0}({\Omega}),\dot{\mathrm{B}}^{s_1,0}_{\infty,q_1}({\Omega}))_{\frac{s-s_0}{s_1-s_0},q} &= \left((\dot{\mathrm{B}}^{s_0,0}_{\infty,q_0}({\Omega}),\dot{\mathrm{B}}^{s_1,0}_{\infty,q_1}({\Omega}))_{\frac{-s_0}{s_1-s_0},1},\dot{\mathrm{B}}^{s_1,0}_{\infty,q_1}({\Omega})\right)_{\frac{s}{s_1},q}
    \\
    &\hookrightarrow({\mathrm{C}}^{0}_{0}(\overline{\Omega}),\dot{\mathrm{B}}^{s_1,0}_{\infty,q_1}({\Omega}))_{\frac{s}{s_1},q} =\dot{\mathrm{B}}^{s,0}_{\infty,q}(\Omega)
\end{align*}
where the last equality is given by \eqref{eq:InterpBesovOmega2Proof-2}. Now for $s_0<s\leqslant0$, $0<\beta<1$ and $r\in[1,+\infty]$, Step 2.1 and Lemma \ref{lem:ExtrmReitPropnonComp} yield
\begin{align*}
    \dot{\mathrm{B}}^{s,0}_{\infty,q}(\Omega) &=(\dot{\mathrm{B}}^{s_0,0}_{\infty,q_0}({\Omega}),\dot{\mathrm{B}}^{\beta,0}_{\infty,r}({\Omega}))_{\frac{s-s_0}{\beta-s_0},q}\\
    &=\left(\dot{\mathrm{B}}^{s_0,0}_{\infty,q_0}({\Omega}),(\dot{\mathrm{B}}^{s_0,0}_{\infty,q_0}({\Omega}),\dot{\mathrm{B}}^{s_1,0}_{\infty,q_1}({\Omega}))_{\frac{\beta-s_0}{s_1-s_0},r}\right)_{\frac{s-s_0}{\beta-s_0},q}\\ &= (\dot{\mathrm{B}}^{s_0,0}_{\infty,q_0}({\Omega}),\dot{\mathrm{B}}^{s_1,0}_{\infty,q_1}({\Omega}))_{\frac{s-s_0}{s_1-s_0},q}.
\end{align*}
The case $s_0=0$ can be deduced with the same argument. Thus, \eqref{eq:InterpBesovOmega2Proof} holds in any cases.

\textbf{Step 3:} We prove \ref{eq:InterpBesovOmega3}. As in Step 1.1, by Proposition \ref{prop:ExtProj0HomBspinfty}, \ref{eq:InterpBesovOmega3} holds whenever $(\mathcal{C}_{s_j,\infty,q_j})$ is satisfied for $j\in\{0,1\}$.

We recall that, by Lemma \ref{lem:EmbeddingInterpHomSobspacesSpeLip}, one always has
\begin{align*}
    \dot{\mathrm{B}}^{s}_{\infty,q}(\Omega)\hookrightarrow(\dot{\mathrm{B}}^{s_0}_{\infty,q_0}(\Omega),\dot{\mathrm{B}}^{s_1}_{\infty,q_1}(\Omega))_{\theta,q}.
\end{align*}
So we just have to prove the reverse embedding.

We assume $(\mathcal{C}_{s_0,\infty,q_0})$ and temporarily assume $q<+\infty$. The Theorem \ref{thm:InterpHomSpacesRn}, \cite[Theorem~3.4.2]{BerghLofstrom1976}, Proposition~\ref{prop:IntersecExtHomBspqProjHomBspq0}, and the definition of function spaces by restriction imply that $\dot{\mathrm{B}}^{s_0}_{\infty,q_0}(\Omega)\cap \dot{\mathrm{B}}^{s_1}_{\infty,q_1}(\Omega)$ is strongly dense in $\dot{\mathrm{B}}^{s}_{\infty,q}(\Omega)$. Let $u\in\dot{\mathrm{B}}^{s_0}_{\infty,q_0}(\Omega)\cap \dot{\mathrm{B}}^{s_1}_{\infty,q_1}(\Omega)$, and $(a,b)\in\dot{\mathrm{B}}^{s_0}_{\infty,q_0}(\Omega)\times \dot{\mathrm{B}}^{s_1}_{\infty,q_1}(\Omega)$ such that
\begin{align*}
    u=a+b.
\end{align*}
By Proposition \ref{prop:IntersecExtHomBspqProjHomBspq0}, one has $\mathrm{E}u\in \dot{\mathrm{B}}^{s_0}_{\infty,q_0}(\mathbb{R}^n)\cap \dot{\mathrm{B}}^{s_1}_{\infty,q_1}(\mathbb{R}^n)$ and $\mathrm{E}a\in\dot{\mathrm{B}}^{s_0}_{\infty,q_0}(\mathbb{R}^n)$, so that
\begin{align*}
    \mathrm{E}b =\mathrm{E}u - \mathrm{E}a\in &\left[\dot{\mathrm{B}}^{s_0}_{\infty,q_0}(\mathbb{R}^n)\cap \dot{\mathrm{B}}^{s_1}_{\infty,q_1}(\mathbb{R}^n) + \dot{\mathrm{B}}^{s_0}_{\infty,q_0}(\mathbb{R}^n)\right]\cap\dot{\mathrm{B}}^{s_1}_{\infty,q_1}(\mathbb{R}^n)\\ \qquad & \subset \dot{\mathrm{B}}^{s_0}_{\infty,q_0}(\mathbb{R}^n)\cap \dot{\mathrm{B}}^{s_1}_{\infty,q_1}(\mathbb{R}^n).
\end{align*}
So, by Proposition \ref{prop:IntersecExtHomBspqProjHomBspq0} again, for $t>0$ and the definition of the $K$-functional:
\begin{align*}
    K(t,\mathrm{E}u,\dot{\mathrm{B}}^{s_0}_{\infty,q_0}(\mathbb{R}^n), \dot{\mathrm{B}}^{s_1}_{\infty,q_1}(\mathbb{R}^n)) \lesssim_{s_0,s_1,n}^{\partial\Omega} \lVert a\rVert_{\dot{\mathrm{B}}^{s_0}_{\infty,q_0}(\Omega)} + t\lVert b\rVert_{\dot{\mathrm{B}}^{s_1}_{\infty,q_1}(\Omega)}.
\end{align*}
Taking the infimum over all such pairs $(a,b)$ yields for all $t>0$,
\begin{align*}
    K(t,\mathrm{E}u,\dot{\mathrm{B}}^{s_0}_{\infty,q_0}(\mathbb{R}^n), \dot{\mathrm{B}}^{s_1}_{\infty,q_1}(\mathbb{R}^n)) \lesssim_{s_0,s_1,n}^{\partial\Omega} K(t,u,\dot{\mathrm{B}}^{s_0}_{\infty,q_0}(\Omega), \dot{\mathrm{B}}^{s_1}_{\infty,q_1}(\Omega)).
\end{align*}
One can multiply by $t^{-\theta}$, take the $\mathrm{L}^q_{\ast}$-norm on both sides, so that by the definition of function spaces by restriction, Theorem \ref{thm:InterpHomSpacesRn} and Lemma \ref{lem:EmbeddingInterpHomSobspacesSpeLip},
\begin{align*}
    \lVert u\rVert_{\dot{\mathrm{B}}^{s}_{\infty,q}(\Omega)}\leqslant \lVert \mathrm{E}u\rVert_{\dot{\mathrm{B}}^{s}_{\infty,q}(\mathbb{R}^n)} &\lesssim_{s_0,s_1,n}^{\theta} \lVert \mathrm{E}u\rVert_{(\dot{\mathrm{B}}^{s_0}_{\infty,q_0}(\mathbb{R}^n),\dot{\mathrm{B}}^{s_1}_{\infty,q_1}(\mathbb{R}^n))_{\theta,q}}\\
    &\lesssim_{s_0,s_1,n}^{\theta,\partial\Omega}\lVert u\rVert_{(\dot{\mathrm{B}}^{s_0}_{\infty,q_0}(\Omega),\dot{\mathrm{B}}^{s_1}_{\infty,q_1}(\Omega))_{\theta,q}}\lesssim_{s_0,s_1,n}^{\theta,\partial\Omega}\lVert u\rVert_{\dot{\mathrm{B}}^{s}_{\infty,q}(\Omega)}.
\end{align*}
Therefore, for all $u\in\dot{\mathrm{B}}^{s_0}_{\infty,q_0}(\Omega)\cap \dot{\mathrm{B}}^{s_1}_{\infty,q_1}(\Omega)$,
\begin{align}\label{eq:equivNormsInterpBesovInftySpeLip}
    \lVert u\rVert_{(\dot{\mathrm{B}}^{s_0}_{\infty,q_0}(\Omega),\dot{\mathrm{B}}^{s_1}_{\infty,q_1}(\Omega))_{\theta,q}}\sim_{s_0,s_1,n}^{\theta,\partial\Omega}\lVert u\rVert_{\dot{\mathrm{B}}^{s}_{\infty,q}(\Omega)}.
\end{align}
We mention again that $\dot{\mathrm{B}}^{s}_{\infty,q}(\Omega)\hookrightarrow(\dot{\mathrm{B}}^{s_0}_{\infty,q_0}(\Omega),\dot{\mathrm{B}}^{s_1}_{\infty,q_1}(\Omega))_{\theta,q}$ and that $\dot{\mathrm{B}}^{s_0}_{\infty,q_0}(\Omega)\cap \dot{\mathrm{B}}^{s_1}_{\infty,q_1}(\Omega)$ is strongly dense in both spaces since $q<+\infty$. Thus, \eqref{eq:equivNormsInterpBesovInftySpeLip} holds for all $u\in\dot{\mathrm{B}}^{s}_{\infty,q}(\Omega)$.

Now, if $q=+\infty$ and $s<0$, the result follows by the standard reiteration Theorem \cite[Theorem~3.5.3]{BerghLofstrom1976}.

\textbf{Step 4:} Similar arguments to those used in the previous Steps 1, 2 and 3 allow deducing the result for $\dot{\mathrm{B}}_{\cdot,\cdot,0}$ instead of $\dot{\mathrm{B}}$. However, to reproduce Step 2, one has to be aware that the natural embedding is reversed, that is
\begin{align*}
    (\dot{\mathrm{B}}^{s_0,0}_{\infty,q_0,0}({\Omega}),\dot{\mathrm{B}}^{s_1,0}_{\infty,q_1,0}({\Omega}))_{\theta,q} \hookrightarrow \dot{\mathrm{B}}^{s,0}_{\infty,q,0}(\Omega)
\end{align*}
and that we need to replace the space $\dot{\mathrm{C}}^k_0(\overline{\Omega})$ by
\begin{align*}
\dot{\mathrm{C}}^k_{0,0}({\Omega}) := \{\, u\in{\mathrm{C}}^k_{0}({\mathbb{R}^n})\,|\,\supp u \subset \overline{\Omega}\,\}=\mathcal{P}_0[{\mathrm{C}}^k_{0}({\mathbb{R}^n})].
\end{align*}
endowed with the (semi-)norm $\lVert\cdot\rVert_{\dot{\mathrm{W}}^{k,\infty}(\mathbb{R}^n)}$. Instead of Lemma \ref{lem:WOlffReitThmnonComp2}, one has to use Lemma \ref{lem:WOlffReitThmnonComp3} with  $\lambda = \frac{-s_0}{\alpha-s_0}$, $r=+\infty$,
\begin{align*}
\mathrm{X}_1=\dot{\mathrm{B}}^{s_0,0}_{\infty,q_0,0}(\Omega),\,\mathrm{X}_2=\mathrm{C}^0_{0,0}(\Omega),\,\mathrm{X}_3=\dot{\mathrm{B}}^{\alpha,0}_{\infty,1,0}(\Omega),\,\mathrm{X}_4=\dot{\mathrm{B}}^{s_1,0}_{\infty,q_1,0}(\Omega),
\end{align*}
where $-1<s_0<0<\alpha<1\leqslant s_1$. The details are left to the reader.

\textbf{Step 5:} Now, the remaining interpolation identities follow directly from the natural embeddings, for $({\dot{\mathfrak{W}}},{\dot{\mathfrak{H}}},{\dot{\mathfrak{B}}})\in\{(\dot{\mathrm{W}},\dot{\mathrm{H}},\dot{\mathrm{B}}),(\dot{\mathrm{W}}_0,\dot{\mathrm{H}}_0,\dot{\mathrm{B}}_{\cdot,\cdot,0})\}$,
\begin{align*}
    \dot{\mathfrak{B}}^{s}_{p,1}(\Omega)\hookrightarrow &\dot{\mathfrak{H}}^{s,p}(\Omega)\hookrightarrow \dot{\mathfrak{B}}^{s}_{p,\infty}(\Omega)\text{, } s\in\mathbb{R}\text{, }p\in[1,+\infty)\text{;}\\
    \dot{\mathfrak{B}}^{k}_{1,1}(\Omega)\hookrightarrow &\dot{\mathfrak{W}}^{k,1}(\Omega)\hookrightarrow \dot{\mathfrak{B}}^{k}_{1,\infty}(\Omega)\text{, } k\in\mathbb{N},
\end{align*}
obtained by the definition of function spaces by restriction and Proposition \ref{prop:EmbeddSobBesovRn}.
\end{proof}

\begin{corollary}\label{cor:weakastdensityBspinfty}Let $p\in(1,+\infty)$, let $-1<s<n/p$ such that $s\geqslant -n/p'$, then the space $\mathrm{C}_c^\infty(\Omega)$ is weakly-$\ast$ dense in $\dot{\mathrm{B}}^{s}_{p,\infty,0}(\Omega)$.
\end{corollary}

\begin{proof}We follow a modified version of the argument given in the proof of \cite[Corollary~3.19]{Gaudin2022}. For $-1<s_0<s<s_1<n/p$, by Theorem \ref{thm:RealInterpHomSpacesSpeLip}, \cite[Theorems~3.4.2~\&~3.7.1]{BerghLofstrom1976} and Proposition \ref{prop:DualitySobolevDomain}, we have the weakly-$\ast$ dense embedding
\begin{align*}
    \dot{\mathcal{B}}^{s}_{p,\infty,0}(\Omega) = ( \dot{\mathrm{H}}^{s_0,p}_0(\Omega),\dot{\mathrm{H}}^{s_1,p}_0(\Omega))_{\theta} \hookrightarrow &( \dot{\mathrm{H}}^{s_0,p}_0(\Omega),\dot{\mathrm{H}}^{s_1,p}_0(\Omega))_{\theta}''\\
    \hookrightarrow&( \dot{\mathrm{H}}^{s_0,p}_0(\Omega),\dot{\mathrm{H}}^{s_1,p}_0(\Omega))_{\theta,\infty} =  \dot{\mathrm{B}}^{s}_{p,\infty,0}(\Omega).
\end{align*}
By Proposition \ref{prop:densityCcinftyBspq0SpeLip}, the space $\mathrm{C}^\infty_c(\Omega)$ in a strongly dense subspace of $\dot{\mathcal{B}}^{s}_{p,\infty,0}(\Omega)$, and therefore is a weakly-$\ast$ dense subspace of $\dot{\mathrm{B}}^{s}_{p,\infty,0}(\Omega)$.
\end{proof}

\begin{theorem}\label{thm:dualityBesovSpeLip}Let $p,q\in[1,+\infty]$, $s\in\mathbb{R}$, if \eqref{AssumptionCompletenessExponents} is satisfied then the following canonical isomorphisms hold
\begin{align*}
    (\dot{\mathrm{B}}^{-s}_{p',q',0}(\Omega))'=\dot{\mathrm{B}}^{s}_{p,q}(\Omega)\, \text{, }&\, (\dot{\mathrm{B}}^{-s}_{p',q'}(\Omega))'=\dot{\mathrm{B}}^{s}_{p,q,0}(\Omega)\text{, } p,q>1\text{;}\\
    (\dot{\mathcal{B}}^{-s}_{p',\infty,0}(\Omega))'=\dot{\mathrm{B}}^{s}_{p,1}(\Omega)\, \text{, }&\, (\dot{\mathcal{B}}^{-s}_{p',\infty}(\Omega))'=\dot{\mathrm{B}}^{s}_{p,1,0}(\Omega)\text{, }p>1,q=1\text{;}\\
    (\dot{\mathrm{B}}^{-s,0}_{\infty,q',0}(\Omega))'=\dot{\mathrm{B}}^{s}_{1,q}(\Omega)\, \text{, }&\, (\dot{\mathrm{B}}^{-s,0}_{\infty,q'}(\Omega))'=\dot{\mathrm{B}}^{s}_{1,q,0}(\Omega)\text{, }p=1,q>1\text{;}\\
    (\dot{\mathcal{B}}^{-s,0}_{\infty,\infty,0}(\Omega))'=\dot{\mathrm{B}}^{s}_{1,1}(\Omega)\, \text{, }&\, (\dot{\mathcal{B}}^{-s,0}_{\infty,\infty}(\Omega))'=\dot{\mathrm{B}}^{s}_{1,1,0}(\Omega)\text{, } p=q=1\text{. }
\end{align*}
\end{theorem}

\begin{remark} Above Theorem \ref{thm:dualityBesovSpeLip}, \cite[Theorem~3.7.1~\&~Remark]{BerghLofstrom1976} and \cite[Theorem~1]{JansonNilssonPeetre1984} can be used to improve Theorem \ref{thm:RealInterpHomSpacesSpeLip}. These results allow us to refine the lower bounds "$s_j > -1$, $j \in \{0,1\}$" in \ref{eq:InterpBesovOmega1}-\ref{eq:InterpBesovOmega3} into
\begin{align*}
    s_j > \min(-1, n/p'), \, j \in \{0,1\}. \text{ (even } s_j \geqslant -n/p' \text{ if } q_j = +\infty \text{).}
\end{align*}
Since Steps 1 and 2 of the proof of Theorem \ref{thm:RealInterpHomSpacesSpeLip} must also be reproduced and readapted, we do not present the proof of such improvement.
\end{remark}

\begin{proof} We only prove
\begin{align*}
   (\dot{\mathrm{B}}^{-s}_{p',q'}(\Omega))'=\dot{\mathrm{B}}^{s}_{p,q,0}(\Omega)\text{, }p,q>1; 
\end{align*}
under the assumption $p=+\infty$ or $q=+\infty$. Other cases admit a similar proof.

\textbf{Step 1:} The case $p=+\infty$, $q>1$, $s<0$:
\begin{align*}
    (\dot{\mathrm{B}}^{-s}_{1,q'}(\Omega))'=\dot{\mathrm{B}}^{s}_{\infty,q,0}(\Omega)
\end{align*}

Let $u\in \dot{\mathrm{B}}^{s}_{\infty,q,0}(\Omega)$, by the definition of function spaces by restriction, $u$ induces a linear form on $\dot{\mathrm{B}}^{-s}_{1,q'}(\Omega)$ as follows:
\begin{align*}
    v\longmapsto \langle u,\tilde{v} \rangle_{\mathbb{R}^n},
\end{align*}
where $\tilde{v}$ is any $\dot{\mathrm{B}}^{-s}_{1,q'}$-extension of $v$. This map is well-defined and does not depend on the choice of the extension $\Tilde{v}$ of $v$. Indeed, let $v'$ be another $\dot{\mathrm{B}}^{-s}_{1,q'}$-extension of $v$, then $\tilde{v}-v'\in\dot{\mathrm{B}}^{-s}_{1,q',0}(\overline{\Omega}^c)$. Since $-s>0$, by Proposition \ref{prop:densityCcinftyBspq0SpeLip}, there exists a sequence $(v_\ell)_{\ell\in\mathbb{N}}\subset \mathrm{C}_c^\infty(\overline{\Omega}^c)$ that converges towards $\tilde{v}-v'$. It implies, 
\begin{align*}
   \langle u,\tilde{v} \rangle_{\mathbb{R}^n}- \langle u,v' \rangle_{\mathbb{R}^n} =\langle u,\tilde{v}-v' \rangle_{\mathbb{R}^n} = \lim_{\ell\rightarrow+\infty} \langle u,v_\ell \rangle_{\mathbb{R}^n}=0.
\end{align*}
Therefore, we have a well-defined, injective and bounded map
\begin{align}\label{eq:mapduality1BspqSpeLip}
\left\{\begin{array}{cl}
\dot{\mathrm{B}}^{-s}_{\infty,q',0}(\Omega) &\longrightarrow (\dot{\mathrm{B}}^{s}_{1,q}(\Omega))'\\
u &\longmapsto \,\,\, \big\langle u,\tilde{\cdot}\big\rangle_{\mathbb{R}^n}
\end{array}\right.\text{. }
\end{align}
We show that the map is surjective. Let $\mathfrak{U}\in (\dot{\mathrm{B}}^{s}_{1,q}(\Omega))'$, it induces a bounded linear functional on $\dot{\mathrm{B}}^{s}_{1,q}(\mathbb{R}^n)$ by means of
\begin{align*}
    \tilde{v}\longmapsto\langle \mathfrak{U}, \mathbbm{1}_{\Omega} \tilde{v}\rangle.
\end{align*}
Therefore by Proposition \ref{prop:DualityBesovRn}, there exists $\mathfrak{u}\in \dot{\mathrm{B}}^{-s}_{\infty,q'}(\mathbb{R}^n)$, such that for all $v\in\dot{\mathrm{B}}^{s}_{1,q}(\Omega)$ and all $\tilde{v}\in\dot{\mathrm{B}}^{s}_{1,q}(\mathbb{R}^n)$ such that $\Tilde{v}_{|_\Omega}=v$,
\begin{align}\label{eq:proofDualityBracketEquality1}
\langle \mathfrak{U}, v\rangle = \langle\mathfrak{U}, \mathbbm{1}_{\Omega} \tilde{v}\rangle = \langle\mathfrak{u}, \tilde{v}\rangle_{\mathbb{R}^n}
\end{align}
with the estimate $\lVert\mathfrak{U}\rVert_{(\dot{\mathrm{B}}^{s}_{1,q}(\Omega))'}\sim_{s,n} \lVert \mathfrak{u}\rVert_{\dot{\mathrm{B}}^{-s}_{\infty,q'}(\mathbb{R}^n)}$.

Testing the equality \eqref{eq:proofDualityBracketEquality1} with $\Tilde{v}\in\mathrm{C}^\infty_c(\overline{\Omega}^c)$ implies $\supp \mathfrak{u} \subset \overline{\Omega}$. Therefore, $\mathfrak{u}$ is an element of $\dot{\mathrm{B}}^{-s}_{\infty,q',0}(\Omega)$, and consequently the map \eqref{eq:mapduality1BspqSpeLip} is surjective.

\textbf{Step 2:} The case $q=+\infty$, $p>1$, $s<n/p$:
\begin{align*}
    (\dot{\mathrm{B}}^{-s}_{p',1}(\Omega))'=\dot{\mathrm{B}}^{s}_{p,\infty,0}(\Omega).
\end{align*}
As before, we have a well-defined injective and bounded map
\begin{align}\label{eq:mapduality2BspqSpeLip}
\left\{\begin{array}{cl}
\dot{\mathrm{B}}^{s}_{p,\infty,0}(\Omega) &\longrightarrow (\dot{\mathrm{B}}^{-s}_{p',1}(\Omega))'\\
u &\longmapsto \,\,\, \big\langle u,\tilde{\cdot}\big\rangle_{\mathbb{R}^n}
\end{array}\right.\text{. }
\end{align}
Indeed, for $u\in\dot{\mathrm{B}}^{s}_{p,\infty,0}(\Omega)$, and $v\in\dot{\mathrm{B}}^{-s}_{p',1}(\Omega)$, $\Tilde{v},v'\in\dot{\mathrm{B}}^{-s}_{p',1}(\mathbb{R}^n)$ such that $\Tilde{v}_{|_{\Omega}}=v'_{|_{\Omega}}=v$, one has $\supp (\Tilde{v}-v')\subset {\Omega}^{c}$, and in particular $\Tilde{v}-v'\in \dot{\mathrm{B}}^{-s}_{p',1,0}(\overline{\Omega}^c)$.
\begin{itemize}
    \item If $s<\min(1,n/p)$, by Proposition \ref{prop:densityCcinftyBspq0SpeLip} one can find a sequence $(w_\ell)_{\ell\in\mathbb{N}}\subset \mathrm{C}_c^\infty(\overline{\Omega}^c)$ such that it converges strongly to $\Tilde{v}-v'$ in $\dot{\mathrm{B}}^{-s}_{p',1}(\mathbb{R}^n)$, and therefore
    \begin{align*}
   \langle u,\tilde{v} \rangle_{\mathbb{R}^n}- \langle u,v' \rangle_{\mathbb{R}^n} =\langle u,\tilde{v}-v' \rangle_{\mathbb{R}^n} = \lim_{\ell\rightarrow+\infty} \langle u,w_\ell \rangle_{\mathbb{R}^n}=0.
    \end{align*}
    \item if $s\geqslant0$\footnote{Note that it implies necessarily $p<+\infty$, due to the conditions $q=+\infty$ and \eqref{AssumptionCompletenessExponents}.}, by Corollary \ref{cor:weakastdensityBspinfty} there exists a sequence $(u_\ell)_{\ell\in\mathbb{N}}\subset \mathrm{C}_c^\infty({\Omega})$ that converges weakly-$\ast$ to $u$ in $\dot{\mathrm{B}}^{s}_{p,\infty}(\mathbb{R}^n)$. Hence,
    \begin{align*}
   \langle u,\tilde{v} \rangle_{\mathbb{R}^n}- \langle u,v' \rangle_{\mathbb{R}^n} =\langle u,\tilde{v}-v' \rangle_{\mathbb{R}^n} = \lim_{\ell\rightarrow+\infty} \langle u_\ell,\tilde{v}-v'  \rangle_{\mathbb{R}^n}=0.
    \end{align*}
\end{itemize}
The map \eqref{eq:mapduality2BspqSpeLip} does not depend on the choice of the extension $\tilde{v}\in \dot{\mathrm{B}}^{-s}_{p',1}(\mathbb{R}^n)$ of $v\in\dot{\mathrm{B}}^{-s}_{p',1}(\Omega)$. One can conclude as in the end of Step 1 to obtain the ontoness of the map \eqref{eq:mapduality2BspqSpeLip}.
\end{proof}

We finish this section with a result that will be useful to build the trace theorem in the next Section \ref{sec:TraceTHM}. This is a direct consequence of Lemma \ref{lem:GlobalchangeCoordBesov} and the definition of function spaces by restriction. To keep it short, and since they won't be of use in the next Section, we omit the mention of end-point function spaces.

\begin{lemma}\label{lem:IsomHomBesovSpacesRn+SpeLip} Let $p,q\in[1,+\infty]$, and $s\in (-1,1)$. For all $u\in \dot{\mathrm{B}}^{s}_{p,q}(\Omega)$, we have $T_\phi u\in \dot{\mathrm{B}}^{s}_{p,q}(\mathbb{R}^n_+)$ with the estimate
\begin{align*}
    \lVert T_\phi u \rVert_{\dot{\mathrm{B}}^{s}_{p,q}(\mathbb{R}^n_+)}\lesssim_{p,s,n,\partial\Omega} \left\lVert u \right\rVert_{\dot{\mathrm{B}}^{s}_{p,q}(\Omega)}\text{.}
\end{align*}
The result still holds if we replace $(\Omega,\mathbb{R}^n_+,T_\phi)$ by $(\mathbb{R}^n_+,\Omega,T_\phi^{-1})$.
\end{lemma}


\section{The trace theorem for homogeneous function spaces}\label{sec:TraceTHM}

In the previous section, an appropriate construction of homogeneous Sobolev and Besov spaces on special Lipschitz domains was given with their interpolation properties. Now, we aim to make sense of boundary values in homogeneous function spaces, which constitutes the second main focus of this present paper.

The first subsection is devoted to the study of function spaces on the boundary.

The second one concerns the transference of properties on the flat upper half-space to the bent one via the global change of coordinates. However, we want to reach the sharp range of regularity $(1/p,1+1/p)$ for the trace result. The main issues occur when $s\in[1,1+1/p)$, since we do not have more than one full gradient under the action of the global change of coordinates. To circumvent this issue, we introduce an anisotropic trace result inherited from the recent work of the author \cite[Theorem~4.7]{Gaudin2023}. This result is obtained from the $\dot{\mathrm{H}}^{s-1,p}(\mathrm{L}^p)$-maximal regularity for the Poisson semigroup $(e^{-t(-\Delta')^{1/2}})_{t\geqslant 0}$ on $\mathbb{R}^{n-1}$.

The last section is devoted to the statement of the main theorem, and several straightforward consequences.

Before we dive into the heart of the matter, the author thinks it is worth highlighting a few points:
\begin{itemize}
    \item We do not assume full knowledge of trace theory for inhomogeneous Sobolev and Besov spaces on Lipschitz domains. One only requires knowledge of the trace result for Sobolev spaces over the flat half-space $\mathrm{H}^{s,p}(\mathbb{R}^n_+)$, $1<p<+\infty$, $1/p<s\leqslant 1$, see Proposition~\ref{prop:TraceEmbeddingInhomHspRn+} and Remark \ref{rmk:traceWellDefSobSpaces} below. 
    \item From our main trace theorem below, Theorem~\ref{thm:TraceSpeLipopti}, one may deduce the general trace result for inhomogeneous function spaces over special Lipschitz domains, such as in Theorem~\ref{thm:genericinhomTrace}. As usual, the case of inhomogeneous function spaces on bounded Lipschitz domains follows from the standard localization procedure as described, \textit{e.g.}, in \cite[Proof of Theorem~1]{Ding1996}.
    \item However, to reach the trace Theorem for higher regularities, say for the space $\dot{\mathrm{H}}^{s,p}(\Omega)$ with indices $1<p<+\infty$, $1< s<1+1/p$, we use a few advanced results from operator theory and from the theory of vector-valued Sobolev spaces in Subsection~\ref{sec:AnisotropicTrcThm}. We claim that one can partially circumvent the use of such advanced technology, but circumventing it would require much more tedious work to obtain the estimates, especially the one in Corollary~\ref{cor:homAnisotropicTraceEst} below. We chose not to present this approach here to be concise and because, otherwise, it would substantially increase the length of this paper.
\end{itemize}

\subsection{Function spaces on the boundary}

To define the trace as in the case of inhomogeneous function spaces, we have to define first (homogeneous) Besov spaces on the boundary $\partial \Omega$. To do so, since we have the definition of our special Lipschitz domain $$\Omega=\left\{\left. (x',x_n)\in\mathbb{R}^{n-1}\times\mathbb{R}\right|x_n>\phi(x')\right\},$$ where $\phi\,:\,\mathbb{R}^{n-1}\longrightarrow \mathbb{R}$ is uniformly Lipschitz, we recall that the surface measure on the boundary $\partial\Omega=\left\{ (x',\phi(x'))\text{, } x'\in\mathbb{R}^{n-1}\right\}\subset\mathbb{R}^n$ is defined as
\begin{align*}
    \sigma (A) := \int_{\mathbb{R}^{n-1}} \mathbbm{1}_{A}(x',\phi(x')){\sqrt{1+\lvert\nabla' \phi(x')\rvert^2}}\,{\mathrm{d}x'}\text{,}
\end{align*}
where $A$ is any Lebesgue-measurable set of $\partial\Omega$.

We also recall that $\sigma$ is the unique Borel measure on $\partial\Omega$ so that we have the integration by parts formula
\begin{align}\label{eq:intbyPartsFormulae}
    \int_{\Omega} \partial_{x_k} u(x) v(x) \,\mathrm{d}x= -\int_{\Omega}  u(x) \partial_{x_k} v(x) \,\mathrm{d}x + \int_{\partial\Omega} u(x)v(x)\nu_k(x)\,\mathrm{d}\sigma_x\text{, } k\in\llb 1,n \rrb\text{,} 
\end{align}
provided $u,v\in\mathrm{C}^{0,1}_c(\mathbb{R}^n)$, the space of complex-valued compactly supported Lipschitz functions. And in \eqref{eq:intbyPartsFormulae}, $\nu_j$ stands for the $j$-th component of the outward unit normal of $\Omega$, defined almost everywhere on $\partial\Omega$ by 
\begin{align*}
    \nu := \frac{1}{{\sqrt{\lvert\nabla' \phi\rvert^2+1}}}(\nabla' \phi,-1)\text{.}
\end{align*}

We introduce the pushforward map from $\partial\Omega$ to $\mathbb{R}^{n-1}$ for any measurable function $u\,:\,\partial\Omega \longrightarrow \mathbb{C}$,
\begin{align}
    S_\phi u (x') := u(x',\phi(x'))\text{, } x'\in\mathbb{R}^{n-1} \text{.}
\end{align}
We also have the pullback map defined for any measurable function $v\,:\,\mathbb{R}^{n-1} \longrightarrow \mathbb{C}$,
\begin{align}\label{eq:pullbackBoundaryRn+}
    S_\phi^{-1} v (y) := v(y')\text{, } y\in\partial\Omega \text{.}
\end{align}

 To construct, and investigate the properties of, the homogeneous function spaces on the boundary, we are going to follow and elaborate the ideas given in \cite[Chapter~2,~Section~2.2]{DanchinMucha2015} and \cite[Section~2]{Ding1996}.

\begin{definition} For $p\in[1,+\infty]$, $s\in(0,1)$, for any measurable function $f$ on $\partial\Omega$, we define the following quantities
\begin{align*}
    \lVert f \rVert_{\mathrm{L}^p(\partial\Omega)}^p := \int_{\partial\Omega}\lvert f(x)\rvert^p ~\mathrm{d}\sigma_x &\text{, }\qquad\lVert f \rVert_{\dot{\mathrm{B}}^{s}_{p,p}(\partial\Omega)}^p:= \int_{\partial\Omega}\int_{\partial\Omega} \frac{\lvert f(x)-f(y) \rvert^p}{\lvert x-y\rvert^{ps+n-1}} ~\mathrm{d}\sigma_x\mathrm{d}\sigma_y\text{, }
\end{align*}
\begin{align*}
    \lVert u \rVert_{\dot{\mathrm{W}}^{1,p}(\partial\Omega)} :=\lVert S_\phi^{-1} [\nabla' S_\phi u] \rVert_{\mathrm{L}^p(\partial\Omega)},
\end{align*}
with the usual modification when $p=+\infty$. We set,
\begin{itemize}
    \item $\mathrm{L}^p(\partial\Omega):=\{\, u\,:\,\partial\Omega\longrightarrow \mathbb{C}\,\text{ meas.}\,|\,\lVert u \rVert_{\mathrm{L}^p(\partial\Omega)}<+\infty\,\}$;
    \item $\dot{\mathrm{B}}^s_{p,p}(\partial\Omega):=\{\, u\in\mathrm{L}^1_{\text{loc}}(\partial\Omega)\,|\, S_\phi u \in \eus{S}'_h(\mathbb{R}^{n-1})\,\&\,\lVert u \rVert_{\dot{\mathrm{B}}^s_{p,p}(\partial\Omega)}<+\infty\,\}$;
    \item $\dot{\mathrm{W}}^{1,p}(\partial\Omega):=\{\, u\in\mathrm{L}^1_{\text{loc}}(\partial\Omega)\,|\, S_\phi u \in \eus{S}'_h(\mathbb{R}^{n-1})\,\&\,\lVert u \rVert_{\dot{\mathrm{W}}^{1,p}(\partial\Omega)} <+\infty\,\}$;
    \item $\mathrm{C}_b^0(\partial\Omega)$ and $\mathrm{C}_0^0(\partial\Omega)$ stands respectively for the normed vector spaces of bounded continuous functions, and continuous functions that vanish at infinity. Both are endowed with the $\mathrm{L}^\infty$-norm.
    \item For $\mathcal{C}\in\{\mathrm{C}_b,\mathrm{C}_0\}$, we define
    \begin{align*}
        \mathcal{C}^{0,1}(\partial\Omega):=\Big\{\,u\in\mathcal{C}^0(\partial\Omega)\,\Big|\, \sup_{\substack{x,y\in\partial\Omega\\x\neq y}} \frac{|u(x)-u(y)|}{|x-y|}<+\infty \,\Big\}.
    \end{align*}
    We write $\dot{\mathcal{C}}^{0,1}(\partial\Omega)$ when endowed with the (semi-)norms,
    \begin{align*}
        u\mapsto \sup_{\substack{x,y\in\partial\Omega\\x\neq y}} \frac{|u(x)-u(y)|}{|x-y|}\text{, or equivalently } u\mapsto \lVert u\rVert_{\dot{\mathrm{W}}^{1,\infty}(\partial\Omega)};
    \end{align*}
    \item $\mathrm{C}_{b,h}^{0}(\partial\Omega) := \{ u\in \mathrm{C}^0_{b}(\partial\Omega)\,|\, S_\phi u \in \eus{S}'_h(\mathbb{R}^{n-1})\,\};$
    \item $\mathrm{C}_{b,h}^{0,1}(\partial\Omega) := \{ u\in \mathrm{C}^{0,1}_{b}(\partial\Omega)\,|\, S_\phi u \in \eus{S}'_h(\mathbb{R}^{n-1})\,\}$, written $\dot{\mathrm{C}}_{b,h}^{0,1}(\partial\Omega)$ when endowed with the $\dot{\mathrm{W}}^{1,\infty}$-norm.
\end{itemize}
\end{definition}

The following lemmas demonstrate that these definitions are meaningful.

\begin{lemma}\label{lem:LpspacesPushForwardBoundary} Let $p\in[1,+\infty]$ and $\mathrm{X}\in\{\mathrm{L}^p,\mathrm{C}_b^0,\mathrm{C}_{b,h}^0,\mathrm{C}_0^0\}$. The map
\begin{align*}
    S_\phi\,:\, &\mathrm{X}(\partial\Omega)\longrightarrow \mathrm{X}(\mathbb{R}^{n-1}),
\end{align*}
is well-defined and is a continuous isomorphism of normed vector spaces. 
\end{lemma}

\begin{proof} By direct computations, we obtain for all $p\in[1,+\infty]$, $u\in\mathrm{L}^p(\partial\Omega)$,
\begin{align*}
    \lVert S_\phi u \rVert_{\mathrm{L}^p(\mathbb{R}^{n-1})}\leqslant\lVert u \rVert_{\mathrm{L}^p(\partial\Omega)}\leqslant (1+\lVert \nabla'\phi\rVert_{\mathrm{L}^{\infty}(\mathbb{R}^{n-1})}^2)^{\frac{1}{2p}}\lVert S_\phi u \rVert_{\mathrm{L}^p(\mathbb{R}^{n-1})}\text{.}
\end{align*}
A similar argument applies for $S_\phi^{-1}$.

It remains to see that $S_\phi(\mathrm{C}_b^0(\partial\Omega))=\mathrm{C}_b^0(\mathbb{R}^{n-1})$ and $S_\phi(\mathrm{C}_0^0(\partial\Omega))=\mathrm{C}_0^0(\mathbb{R}^{n-1})$.
\end{proof}

The following corollary is a direct consequence of Lemma \ref{lem:LpspacesPushForwardBoundary}.
\begin{corollary}\label{cor:H1pDOmegaH1pRn-1} Let $p\in[1,+\infty)$. For $u\in \dot{\mathrm{W}}^{1,p}(\partial\Omega)$, we have $S_\phi u \in \dot{\mathrm{W}}^{1,p}(\mathbb{R}^{n-1})$, with the estimate
\begin{align*}
    \lVert u\rVert_{\dot{\mathrm{W}}^{1,p}(\partial\Omega)}\sim_{s,p,n,\partial\Omega} \lVert S_\phi u\rVert_{\dot{\mathrm{W}}^{1,p}(\mathbb{R}^{n-1})}\text{.}
\end{align*}
Conversely, if $v\in\dot{\mathrm{W}}^{1,p}(\mathbb{R}^{n-1})$, then $S_{\phi}^{-1}v \in\dot{\mathrm{W}}^{1,p}(\partial\Omega)$ with the corresponding estimate.

The result still holds with $\mathrm{C}_b^{0,1}$, $\mathrm{C}_{b,h}^{0,1}$ and $\mathrm{C}_0^{0,1}$, all endowed with the $\dot{\mathrm{W}}^{1,\infty}$-(semi-)norm.
\end{corollary}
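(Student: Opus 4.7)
The plan is simply to unwind the definition of $\dot{\mathrm{H}}^{1,p}(\partial\Omega)$ and then invoke Lemma~\ref{lem:LpspacesPushForwardBoundary} twice (once for the local integrability condition, once for the $\mathrm{L}^p$ control of the gradient), combined with the well-known equivalence of the $\dot{\mathrm{H}}^{1,p}$-norm on the whole space with the $\mathrm{L}^p$-norm of the gradient (which is a particular case of Proposition~\ref{prop:EqNormNablakHsp} specialized to $\Omega=\mathbb{R}^{n-1}$, where the argument is standard via the boundedness of the Riesz transform and the observation that $\mathbb{R}^{n-1}\subset \mathbb{R}^{n-1}$ is trivial).

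First I would take $u\in \dot{\mathrm{H}}^{1,p}(\partial\Omega)$. By definition $S_\phi u\in \eus{S}'_h(\mathbb{R}^{n-1})$ and $S_\phi^{-1}[\nabla' S_\phi u]\in\mathrm{L}^p(\partial\Omega)$. The latter, together with Lemma~\ref{lem:LpspacesPushForwardBoundary} applied componentwise to $\nabla' S_\phi u$, yields $\nabla' S_\phi u\in \mathrm{L}^p(\mathbb{R}^{n-1})$ with
\begin{align*}
\lVert \nabla' S_\phi u\rVert_{\mathrm{L}^p(\mathbb{R}^{n-1})} \sim_{p,n,\partial\Omega} \lVert S_\phi^{-1}[\nabla' S_\phi u]\rVert_{\mathrm{L}^p(\partial\Omega)} = \lVert u\rVert_{\dot{\mathrm{H}}^{1,p}(\partial\Omega)}.
\end{align*}
Combined with $S_\phi u\in \eus{S}'_h(\mathbb{R}^{n-1})$, this gives $S_\phi u\in \dot{\mathrm{H}}^{1,p}(\mathbb{R}^{n-1})$. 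Next, since $u\in\mathrm{L}^1_{\mathrm{loc}}(\partial\Omega)$, the $\mathrm{L}^p_{\mathrm{loc}}$-part of Lemma~\ref{lem:LpspacesPushForwardBoundary} yields $S_\phi u\in\mathrm{L}^1_{\mathrm{loc}}(\mathbb{R}^{n-1})$. Finally, the whole-space gradient characterization
\begin{align*}
\lVert S_\phi u\rVert_{\dot{\mathrm{H}}^{1,p}(\mathbb{R}^{n-1})}\sim_{p,n}\lVert \nabla' S_\phi u\rVert_{\mathrm{L}^p(\mathbb{R}^{n-1})}
\end{align*}
closes the chain of equivalences and establishes the direct implication with the stated equivalence of norms.

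For the converse, I would start from $v\in \dot{\mathrm{H}}^{1,p}(\mathbb{R}^{n-1})\cap\mathrm{L}^1_{\mathrm{loc}}(\mathbb{R}^{n-1})$. Then $S_\phi^{-1}v \in\mathrm{L}^1_{\mathrm{loc}}(\partial\Omega)$ by Lemma~\ref{lem:LpspacesPushForwardBoundary}, and by construction $S_\phi(S_\phi^{-1}v)=v\in\eus{S}'_h(\mathbb{R}^{n-1})$. The gradient $\nabla' v = \nabla' S_\phi(S_\phi^{-1}v)$ lies in $\mathrm{L}^p(\mathbb{R}^{n-1})$ (again by the whole-space norm equivalence), so Lemma~\ref{lem:LpspacesPushForwardBoundary} applied in the reverse direction yields $S_\phi^{-1}[\nabla' S_\phi(S_\phi^{-1}v)]\in\mathrm{L}^p(\partial\Omega)$ with equivalent norms. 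Hence $S_\phi^{-1}v\in \dot{\mathrm{H}}^{1,p}(\partial\Omega)$ with the corresponding estimate.

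There is no real obstacle here: the statement is essentially a packaging of Lemma~\ref{lem:LpspacesPushForwardBoundary} tailored to the first-order tangential gradient, combined with the fact that the bi-Lipschitz parametrization $x'\mapsto (x',\phi(x'))$ of $\partial\Omega$ interacts trivially with $\nabla'$ through the definition adopted for $\dot{\mathrm{H}}^{1,p}(\partial\Omega)$. The only subtle point worth explicitly mentioning is the verification that $S_\phi u\in\eus{S}'_h(\mathbb{R}^{n-1})$ transfers correctly on both sides, which is built into the definitions and requires no additional work.
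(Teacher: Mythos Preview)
Your proof is correct and follows the same approach as the paper, which simply records the corollary as ``a direct consequence of Lemma~\ref{lem:LpspacesPushForwardBoundary}''. You have merely spelled out the details that the paper leaves implicit: unwinding the definition of $\dot{\mathrm{H}}^{1,p}(\partial\Omega)$ and applying Lemma~\ref{lem:LpspacesPushForwardBoundary} both to the $\mathrm{L}^1_{\mathrm{loc}}$ condition and (componentwise) to $\nabla' S_\phi u$.
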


The idea behind the definition of Besov spaces on the boundary lies in the fact that for all $u\in\mathrm{L}^1_{\text{loc}}(\mathbb{R}^{n-1})\cap\eus{S}'_h(\mathbb{R}^{n-1})$, when $s\in(0,1)$, $p\in[1,+\infty)$,
\begin{align}\label{eq:equivalentnorm}
    \lVert f \rVert_{\dot{\mathrm{B}}^{s}_{p,p}(\mathbb{R}^{n-1})}^p\sim_{p,s,n} \int_{\mathbb{R}^{n-1}}\int_{\mathbb{R}^{n-1}} \frac{\lvert f(x)-f(y) \rvert^p}{\lvert x-y\rvert^{ps+n-1}} ~\mathrm{d}x\mathrm{d}y \text{, }
\end{align}
see \cite[Theorem~2.36]{bookBahouriCheminDanchin} for a proof. The case $p=+\infty$ is treated via usual modification, giving the homogeneous H\"{o}lder (semi-)norms.

\begin{lemma}\label{lem:BsppDOmegaBsppRn-1}Let $p\in[1,+\infty]$, $s\in(0,1)$. For all $u\in \dot{\mathrm{B}}^{s}_{p,p}(\partial\Omega)$, $S_\phi u\in\dot{\mathrm{B}}^{s}_{p,p}(\mathbb{R}^{n-1})$ with the estimate
\begin{align*}
    \lVert u\rVert_{\dot{\mathrm{B}}^{s}_{p,p}(\partial\Omega)}\sim_{s,p,n,\partial\Omega} \lVert S_\phi u\rVert_{\dot{\mathrm{B}}^{s}_{p,p}(\mathbb{R}^{n-1})}
\end{align*}
Conversely, for $v\in \dot{\mathrm{B}}^{s}_{p,p}(\mathbb{R}^{n-1})$, one has $S_{\phi}^{-1}v \in\dot{\mathrm{B}}^{s}_{p,p}(\partial\Omega)$ with the corresponding estimate. 
\end{lemma}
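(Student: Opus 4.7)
The strategy is to reduce the identity to the analogous Gagliardo-type equivalence on $\mathbb{R}^{n-1}$ recalled in \eqref{eq:equivalentnorm}, and then exploit the bi-Lipschitz structure of the graph map $x' \mapsto (x',\phi(x'))$ to pass between the two seminorms.

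The plan is as follows. First, I would reduce the surface double integral to a double integral over $\mathbb{R}^{n-1}\times \mathbb{R}^{n-1}$ by parametrizing $\partial\Omega$ via the graph. Recall $\mathrm{d}\sigma_x = \sqrt{1+|\nabla'\phi(x')|^2}\,\mathrm{d}x'$, so the Jacobian factors are bounded above and below by constants depending only on $L:=\lVert\nabla'\phi\rVert_{\mathrm{L}^\infty}$. For the distance, the identity $|(x',\phi(x'))-(y',\phi(y'))|^2 = |x'-y'|^2 + |\phi(x')-\phi(y')|^2$ combined with the Lipschitz bound yields
\begin{align*}
    |x'-y'| \;\leqslant\; |(x',\phi(x'))-(y',\phi(y'))| \;\leqslant\; \sqrt{1+L^2}\,|x'-y'|\text{.}
\end{align*}
Setting $v := S_\phi u$ and combining these two estimates gives, for $p\in[1,+\infty)$,
\begin{align*}
    \lVert u \rVert_{\dot{\mathrm{B}}^{s}_{p,p}(\partial\Omega)}^p
    \;\sim_{s,p,n,\partial\Omega}\;
    \int_{\mathbb{R}^{n-1}}\int_{\mathbb{R}^{n-1}} \frac{|v(x')-v(y')|^p}{|x'-y'|^{ps+n-1}}\,\mathrm{d}x'\,\mathrm{d}y'\text{,}
\end{align*}
and the analogous pointwise statement for $p=+\infty$ (suprema of H\"older quotients).

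Next, to invoke \eqref{eq:equivalentnorm}, I must verify that $v = S_\phi u$ belongs to $\mathrm{L}^1_{\mathrm{loc}}(\mathbb{R}^{n-1})\cap\eus{S}'_h(\mathbb{R}^{n-1})$. The condition $v\in\eus{S}'_h(\mathbb{R}^{n-1})$ is built into the definition of $\dot{\mathrm{B}}^{s}_{p,p}(\partial\Omega)$, and local integrability of $v$ on $\mathbb{R}^{n-1}$ follows from local integrability of $u$ on $\partial\Omega$ via the $\mathrm{L}^1_{\mathrm{loc}}$ part of Lemma~\ref{lem:LpspacesPushForwardBoundary}. Once this is established, the equivalence \eqref{eq:equivalentnorm} yields $v\in\dot{\mathrm{B}}^{s}_{p,p}(\mathbb{R}^{n-1})$ together with
\begin{align*}
    \lVert u\rVert_{\dot{\mathrm{B}}^{s}_{p,p}(\partial\Omega)}\;\sim_{s,p,n,\partial\Omega}\; \lVert S_\phi u\rVert_{\dot{\mathrm{B}}^{s}_{p,p}(\mathbb{R}^{n-1})}\text{.}
\end{align*}

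The converse direction is entirely symmetric: given $v\in\dot{\mathrm{B}}^{s}_{p,p}(\mathbb{R}^{n-1})\cap\mathrm{L}^{1}_{\mathrm{loc}}(\mathbb{R}^{n-1})$, the pullback $S_\phi^{-1}v$ is locally integrable on $\partial\Omega$ by Lemma~\ref{lem:LpspacesPushForwardBoundary}, and $S_\phi(S_\phi^{-1} v) = v \in \eus{S}'_h(\mathbb{R}^{n-1})$ so the definitional side condition is met; the same change of variable and two-sided distance estimate produces the reverse comparison of seminorms.

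The only mildly subtle point is the definitional requirement $S_\phi u \in \eus{S}'_h(\mathbb{R}^{n-1})$, which must be propagated carefully but is immediate from the definitions. The remaining work is a routine change of variables, which I have no intention of unwinding beyond the sharp distance and Jacobian estimates above.
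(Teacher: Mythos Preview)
Your proposal is correct and follows essentially the same route as the paper: parametrize $\partial\Omega$ by the graph, control the Jacobian factors and the distance $|(x',\phi(x'))-(y',\phi(y'))|$ two-sidedly via the Lipschitz bound, and reduce to the Gagliardo characterization \eqref{eq:equivalentnorm} on $\mathbb{R}^{n-1}$. If anything, you are slightly more careful than the paper in spelling out that the side conditions $S_\phi u\in\eus{S}'_h(\mathbb{R}^{n-1})$ and $S_\phi u\in\mathrm{L}^1_{\mathrm{loc}}(\mathbb{R}^{n-1})$ are met before invoking \eqref{eq:equivalentnorm}.
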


\begin{proof} For $u\in\mathrm{L}^1_{\text{loc}}(\partial\Omega)$, if $p<+\infty$,
\begin{align*}
    \lVert  u \rVert_{\dot{\mathrm{B}}^{s}_{p,p}(\partial\Omega)}^p &= \int_{\partial\Omega}\int_{\partial\Omega} \frac{\lvert u(x)-u(y) \rvert^p}{\lvert x-y\rvert^{ps+n-1}} ~\mathrm{d}\sigma_x\mathrm{d}\sigma_y\\
    &= \int_{\mathbb{R}^{n-1}}\int_{\mathbb{R}^{n-1}} \frac{\lvert u(x',\phi(x'))-u(y',\phi(y')) \rvert^p}{\lvert (x'-y',\phi(x')-\phi(y'))\rvert^{ps+n-1}} \sqrt{\lvert \nabla' \phi(x')\rvert^2 +1}\sqrt{\lvert \nabla' \phi(y')\rvert^2 +1}~\mathrm{d}x'\mathrm{d}y'\\
    &\leqslant [1+\lVert \nabla'\phi\rVert_{\mathrm{L}^{\infty}(\mathbb{R}^{n-1})}^2] \int_{\mathbb{R}^{n-1}}\int_{\mathbb{R}^{n-1}} \frac{\lvert S_\phi u(x')- S_\phi u(y') \rvert^p}{\lvert x'-y'\rvert^{ps+n-1}} ~\mathrm{d}x'\mathrm{d}y'\\
    &\lesssim_{p,s,n,\partial\Omega}\,\lVert  S_\phi u \rVert_{\dot{\mathrm{B}}^{s}_{p,p}(\mathbb{R}^{n-1})}^p\text{.}
\end{align*}
The last estimate comes from \eqref{eq:equivalentnorm}.

For the reverse estimate, we start with \eqref{eq:equivalentnorm} then, we obtain
\begin{align*}
    \lVert  S_\phi u \rVert_{\dot{\mathrm{B}}^{s}_{p,p}(\mathbb{R}^{n-1})}^p &\lesssim_{p,s,n} \int_{\mathbb{R}^{n-1}}\int_{\mathbb{R}^{n-1}} \frac{\lvert S_\phi u(x')- S_\phi u(y') \rvert^p}{\lvert x'-y'\rvert^{ps+n-1}} ~\mathrm{d}x'\mathrm{d}y'\\
    &\lesssim_{p,s,n} [1+\lVert \nabla'\phi\rVert_{\mathrm{L}^{\infty}(\mathbb{R}^{n-1})}^2]^{\frac{(n-1)+ps}{2}}\int_{\mathbb{R}^{n-1}}\int_{\mathbb{R}^{n-1}} \frac{\lvert u(x',\phi(x'))- u(y',\phi(y')) \rvert^p}{\lvert (x'-y',\phi(x')-\phi(y'))\rvert^{ps+n-1}} ~\mathrm{d}x'\mathrm{d}y'\\
    &\lesssim_{p,s,n,\partial\Omega}\int_{\partial\Omega}\int_{\partial\Omega} \frac{\lvert u(x)- u(y) \rvert^p}{\lvert x-y\rvert^{ps+n-1}} ~\mathrm{d}\sigma_x\mathrm{d}\sigma_y =\lVert  u \rVert_{\dot{\mathrm{B}}^{s}_{p,p}(\partial\Omega)}^p \text{.}
\end{align*}
The case $p=+\infty$ is similar and left to the reader with
\begin{align*}
    \lVert  w \rVert_{\dot{\mathrm{B}}^{s}_{\infty,\infty}(\partial\Omega)} = \sup_{\substack{(x,y)\in \partial\Omega^2\text{,}\\x\neq y\text{.}}} \frac{\lvert w(x)- w(y) \rvert}{\lvert x-y\rvert^{s}}\text{.}
\end{align*}
\end{proof}

\begin{proposition}\label{prop:realInterpolationDiagonalBesovBoundary} Let $p\in[1,+\infty)$, $s\in(0,1)$. The following equalities hold with equivalence of norms
\begin{align*}
    (\mathrm{L}^p(\partial\Omega),\dot{\mathrm{W}}^{1,p}(\partial\Omega))_{s,p} &= \dot{\mathrm{B}}^{s}_{p,p}(\partial\Omega)\text{,}\\
    (\mathrm{C}^0_0(\partial\Omega),\dot{\mathrm{C}}^{0,1}_0(\partial\Omega))_{s,\infty} &= \dot{\mathrm{B}}^{s,0}_{\infty,\infty}(\partial\Omega)\text{,}\\
    (\mathrm{C}^0_{b,h}(\partial\Omega),\dot{\mathrm{C}}^{0,1}_{b,h}(\partial\Omega))_{s,\infty} &= \dot{\mathrm{B}}^{s}_{\infty,\infty}(\partial\Omega).
\end{align*}
\end{proposition}

\begin{proof}We focus on the first equality, the second and third ones admit a similar proof.

Let $u\in \mathrm{L}^{p}(\partial\Omega)+\dot{\mathrm{W}}^{1,p}(\partial\Omega)$, then for $(a,b)\in\mathrm{L}^{p}(\partial\Omega)\times\dot{\mathrm{W}}^{1,p}(\partial\Omega)$ such that $u=a+b$, by Corollary \ref{cor:H1pDOmegaH1pRn-1}, we have
\begin{align}\label{eq:SuinsumLpH1pRn-1}
    S_{\phi}u = S_{\phi}a+S_{\phi}b \in \mathrm{L}^{p}(\mathbb{R}^{n-1})+\dot{\mathrm{W}}^{1,p}(\mathbb{R}^{n-1})\text{.}
\end{align}
Therefore, by the definition of the $K$-functional and Corollary \ref{cor:H1pDOmegaH1pRn-1}, we obtain
\begin{align*}
    K(t,S_{\phi}u,\mathrm{L}^{p}(\mathbb{R}^{n-1}),\dot{\mathrm{W}}^{1,p}(\mathbb{R}^{n-1})) &\leqslant \lVert S_{\phi}a \rVert_{\mathrm{L}^{p}(\mathbb{R}^{n-1})} + t\lVert S_{\phi}b \rVert_{\dot{\mathrm{W}}^{1,p}(\mathbb{R}^{n-1})}\\
    &\lesssim_{p,n,\partial\Omega} \lVert a \rVert_{\mathrm{L}^{p}(\partial\Omega)} + t\lVert b \rVert_{\dot{\mathrm{W}}^{1,p}(\partial\Omega)}\text{.}
\end{align*}
Looking at the infimum on all such pair $(a,b)$ yields
\begin{align*}
    K(t,S_{\phi}u,\mathrm{L}^{p}(\mathbb{R}^{n-1}),\dot{\mathrm{W}}^{1,p}(\mathbb{R}^{n-1}))\lesssim_{p,n,\partial\Omega} K(t,u,\mathrm{L}^{p}(\partial\Omega),\dot{\mathrm{W}}^{1,p}(\partial\Omega))\text{.}
\end{align*}
Now, for the reverse estimate from \eqref{eq:SuinsumLpH1pRn-1}, let $(A,B)\in\mathrm{L}^{p}(\mathbb{R}^{n-1})\times\dot{\mathrm{W}}^{1,p}(\mathbb{R}^{n-1})$, such that one has $S_{\phi}u=A+B$, it follows that, by Corollary \ref{cor:H1pDOmegaH1pRn-1},
\begin{align*}
    u = S_{\phi}^{-1}A+S_{\phi}^{-1}B\in \mathrm{L}^{p}(\partial\Omega)+\dot{\mathrm{W}}^{1,p}(\partial\Omega)\text{.}
\end{align*}
So as before, we obtain,
\begin{align*}
    K(t,u,\mathrm{L}^{p}(\partial\Omega),\dot{\mathrm{W}}^{1,p}(\partial\Omega)) \lesssim_{p,n,\partial\Omega}K(t,S_{\phi}u,\mathrm{L}^{p}(\mathbb{R}^{n-1}),\dot{\mathrm{W}}^{1,p}(\mathbb{R}^{n-1})) \text{.}
\end{align*}
In the end we have obtained for all $u\in \mathrm{L}^{p}(\partial\Omega)+\dot{\mathrm{W}}^{1,p}(\partial\Omega)$ and all $t>0$:
\begin{align}\label{eq:EqKfuncRn-1DOmega}
    K(t,u,\mathrm{L}^{p}(\partial\Omega),\dot{\mathrm{W}}^{1,p}(\partial\Omega)) \sim_{p,n,\partial\Omega}K(t,S_{\phi}u,\mathrm{L}^{p}(\mathbb{R}^{n-1}),\dot{\mathrm{W}}^{1,p}(\mathbb{R}^{n-1})) \text{.}
\end{align}
Finally, if one multiplies \ref{eq:EqKfuncRn-1DOmega} by $t^{-s}$, then take its $\mathrm{L}^q_{\ast}$-norm, thanks to \ref{eq:realInterpHomBspqRn} and Lemma \ref{lem:BsppDOmegaBsppRn-1} we obtain
\begin{align*}
    \lVert u \rVert_{(\mathrm{L}^p(\partial\Omega),\dot{\mathrm{W}}^{1,p}(\partial\Omega))_{s,p}} \sim_{s,p,n,\partial\Omega} \lVert S_\phi u\rVert_{\dot{\mathrm{B}}^{s}_{p,p}(\mathbb{R}^{n-1})} \sim_{s,p,n,\partial\Omega} \lVert u\rVert_{\dot{\mathrm{B}}^{s}_{p,p}(\partial\Omega)}
\end{align*}
which ends the proof.
\end{proof}

Now, we introduce the following definition of homogeneous Besov space on the boundary with third index $q\neq p$, consistent with the case $q=p$.

\begin{definition}\label{def:HomBesovspaceBoundaryBspq} For $p,q\in[1,+\infty]$, $p<+\infty$, $s\in(0,1)$, we define
\begin{align*}
    \dot{\mathrm{B}}^{s}_{p,q}(\partial\Omega)&:= (\mathrm{L}^p(\partial\Omega),\dot{\mathrm{W}}^{1,p}(\partial\Omega))_{s,q};\\
    \dot{\mathrm{B}}^{s,0}_{\infty,q}(\partial\Omega)&:= (\mathrm{C}^0_0(\partial\Omega),\dot{\mathrm{C}}^{0,1}_0(\partial\Omega))_{s,q};\\
    \dot{\mathrm{B}}^{s}_{\infty,q}(\partial\Omega)&:= (\mathrm{C}^0_{b,h}(\partial\Omega),\dot{\mathrm{C}}^{0,1}_{b,h}(\partial\Omega))_{s,q}.
\end{align*}
When $q=+\infty$, we can replace $((\cdot,\cdot)_{s,\infty},\mathrm{B}^s)$ by $((\cdot,\cdot)_{s},\mathcal{B}^s)$.
\end{definition}

The following results are then a direct consequence of the estimate \eqref{eq:EqKfuncRn-1DOmega} and usual results for homogeneous Sobolev and Besov spaces on $\mathbb{R}^{n-1}$.

\begin{corollary}\label{lem:BspqDOmegaBspqRn-1}Let $p,q\in[1,+\infty]$, $s\in(0,1)$. For all $u\in \dot{\mathrm{B}}^{s}_{p,q}(\partial\Omega)$, one has $S_\phi u\in\dot{\mathrm{B}}^{s}_{p,q}(\mathbb{R}^{n-1})$ with the estimate
\begin{align*}
    \lVert u\rVert_{\dot{\mathrm{B}}^{s}_{p,q}(\partial\Omega)}\sim_{s,p,n,\partial\Omega} \lVert S_\phi u\rVert_{\dot{\mathrm{B}}^{s}_{p,q}(\mathbb{R}^{n-1})}.
\end{align*}
Conversely, for $v\in \dot{\mathrm{B}}^{s}_{p,q}(\mathbb{R}^{n-1})$, one has $S_{\phi}^{-1}v \in\dot{\mathrm{B}}^{s}_{p,q}(\partial\Omega)$ with corresponding estimate. When $p=+\infty$, the result still holds with  $\dot{\mathrm{B}}^{s,0}_{\infty,q}$ instead of $\dot{\mathrm{B}}^{s}_{\infty,q}$. When $q=+\infty$, one can replace $\dot{\mathrm{B}}^{s}_{p,\infty}$ by $\dot{\mathcal{B}}^{s}_{p,\infty}$.
\end{corollary}

\begin{proposition}Let $p,q\in[1,+\infty]$, $p<+\infty$, $s\in(0,1)$. The following assertions are true.
\begin{enumerate}
    \item $\dot{\mathrm{B}}^{s}_{p,q}(\partial\Omega)$ is a Banach space whenever $(\mathcal{C}_{s+\frac{1}{p},p,q})$ is satisfied.
    \item If $s\in(0,\frac{n-1}{p})$, for $\frac{1}{r}:= \frac{1}{p}-\frac{s}{n-1}$, if $q\in [1,r]$, we have the continuous embedding
    \begin{align*}
        \dot{\mathrm{B}}^{s}_{p,q}(\partial\Omega) \hookrightarrow \mathrm{L}^r(\partial\Omega)\text{.}
    \end{align*}
    \item When $p>n-1$, we have the continuous embedding
    \begin{align*}
        \dot{\mathrm{B}}^{\frac{n-1}{p}}_{p,1}(\partial\Omega) \hookrightarrow \mathrm{C}^{0}_{0}(\partial\Omega)\text{.}
    \end{align*}
\end{enumerate}
\end{proposition}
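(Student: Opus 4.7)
The plan is, for each of the three assertions, to transfer the question to $\mathbb{R}^{n-1}$ via the pushforward identifications of Lemma \ref{lem:LpspacesPushForwardBoundary} and Corollary \ref{lem:BspqDOmegaBspqRn-1}, and then invoke the corresponding classical result on $\mathbb{R}^{n-1}$. The completeness condition is the key bridge: read in ambient dimension $n$, $(\mathcal{C}_{s+\frac{1}{p},p,q})$ rewrites as $s<\frac{n-1}{p}$, or $q=1$ and $s\leqslant\frac{n-1}{p}$, which is exactly the completeness criterion \eqref{AssumptionCompletenessExponents} applied in dimension $n-1$.

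For \emph{assertion (1)}, Corollary \ref{lem:BspqDOmegaBspqRn-1} gives a bicontinuous identification of $\dot{\mathrm{B}}^{s}_{p,q}(\partial\Omega)$ with $\dot{\mathrm{B}}^{s}_{p,q}(\mathbb{R}^{n-1}) \cap \mathrm{L}^{1}_{\mathrm{loc}}(\mathbb{R}^{n-1})$ through $S_\phi$. Given a Cauchy sequence in $\dot{\mathrm{B}}^{s}_{p,q}(\partial\Omega)$, its image under $S_\phi$ is Cauchy in the now-complete ambient space $\dot{\mathrm{B}}^{s}_{p,q}(\mathbb{R}^{n-1})$ and thus admits a limit $v$. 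I would then argue that $v$ remains in $\mathrm{L}^{1}_{\mathrm{loc}}(\mathbb{R}^{n-1})$ by the Sobolev-type embedding provided by assertion (2) applied on $\mathbb{R}^{n-1}$, so that $u:=S_\phi^{-1}v$ lies in $\dot{\mathrm{B}}^{s}_{p,q}(\partial\Omega)$ and is the desired limit.

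For \emph{assertions (2)--(3)}, chaining the classical embeddings on $\mathbb{R}^{n-1}$ with the transfer isomorphisms is direct. For (2), the standard Besov embedding $\dot{\mathrm{B}}^{s}_{p,q}(\mathbb{R}^{n-1}) \hookrightarrow \mathrm{L}^{r}(\mathbb{R}^{n-1})$ for $s\in(0,\tfrac{n-1}{p})$ and $q\leqslant r$ (see e.g.\ \cite[Chapter~2]{bookBahouriCheminDanchin}) combined with Lemmas \ref{lem:LpspacesPushForwardBoundary} and \ref{lem:BspqDOmegaBspqRn-1} yields
\begin{align*}
\lVert u \rVert_{\mathrm{L}^{r}(\partial\Omega)} \sim_{p,r,\partial\Omega} \lVert S_\phi u \rVert_{\mathrm{L}^{r}(\mathbb{R}^{n-1})} \lesssim_{n,s,p,q} \lVert S_\phi u \rVert_{\dot{\mathrm{B}}^{s}_{p,q}(\mathbb{R}^{n-1})} \sim_{s,p,n,\partial\Omega} \lVert u \rVert_{\dot{\mathrm{B}}^{s}_{p,q}(\partial\Omega)}.
\end{align*}
For (3), I would start from the critical embedding $\dot{\mathrm{B}}^{(n-1)/p}_{p,1}(\mathbb{R}^{n-1})\hookrightarrow \mathrm{C}^{0}_{0}(\mathbb{R}^{n-1})$, which applies because $p>n-1$ places $\tfrac{n-1}{p}$ in $(0,1)$ so Definition \ref{def:HomBesovspaceBoundaryBspq} is in force. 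The parametrization $x'\mapsto(x',\phi(x'))$ being proper---$|x'|\to+\infty$ if and only if $|(x',\phi(x'))|\to+\infty$, by the Lipschitz bound $|\phi(x')|\leqslant|\phi(0)|+\mathrm{Lip}(\phi)|x'|$---ensures that $S_\phi^{-1}$ maps $\mathrm{C}^{0}_{0}(\mathbb{R}^{n-1})$ into $\mathrm{C}^{0}_{0}(\partial\Omega)$, which gives the result.

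The only genuinely delicate step is the closedness point in assertion (1): a sequence converging in $\dot{\mathrm{B}}^{s}_{p,q}(\mathbb{R}^{n-1})$ whose terms all lie in $\mathrm{L}^{1}_{\mathrm{loc}}(\mathbb{R}^{n-1})$ must have its limit in $\mathrm{L}^{1}_{\mathrm{loc}}(\mathbb{R}^{n-1})$ for $S_\phi^{-1}$ to be applicable at the limit. This is precisely where the assumption $(\mathcal{C}_{s+\frac{1}{p},p,q})$ enters critically, through the Bernstein-type inequalities underpinning assertion (2). Once this closedness is established, all three statements fall out of the $S_\phi$-transfer machinery.
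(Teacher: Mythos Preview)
Your proposal is correct and follows the same route as the paper, which states the result as a ``direct consequence of the estimate \eqref{eq:EqKfuncRn-1DOmega} and usual results for homogeneous Sobolev and Besov spaces on $\mathbb{R}^{n-1}$'' without further detail. Your fleshed-out version, in particular the explicit treatment of the $\mathrm{L}^1_{\mathrm{loc}}$ closedness in (i) and the properness argument for (iii), is exactly what that sentence is gesturing at.

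One small sharpening: in your argument for (i) you appeal to ``assertion (2) applied on $\mathbb{R}^{n-1}$'' to force the limit $v$ into $\mathrm{L}^1_{\mathrm{loc}}$, but assertion (2) as stated needs $q\leqslant r$. For general $q\in[1,+\infty]$ with $s<\tfrac{n-1}{p}$ you should argue directly via Bernstein (as you hint at the end): the high-frequency part $\sum_{j\geqslant 0}\dot{\Delta}_j v$ converges in $\mathrm{L}^p$ since $s>0$, while the low-frequency part $\sum_{j<0}\dot{\Delta}_j v$ converges in $\mathrm{L}^\infty$ since $s<\tfrac{n-1}{p}$, giving $v\in\mathrm{L}^p+\mathrm{L}^\infty\subset\mathrm{L}^1_{\mathrm{loc}}$. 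The borderline case $s=\tfrac{n-1}{p}$ forces $q=1$ and, since $s\in(0,1)$, also $p>n-1$, so the $\mathrm{C}^0_0$ embedding of (iii) covers it.
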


\begin{remark}From there, it is straightforward to check that one can recover usual and very well-known function spaces ${\mathrm{W}}^{1,p}(\partial\Omega) = \mathrm{L}^p(\partial\Omega)\cap\dot{\mathrm{W}}^{1,p}(\partial\Omega)$, ${\mathrm{B}}^{s}_{p,q}(\partial\Omega)=\mathrm{L}^p(\partial\Omega)\cap\dot{\mathrm{B}}^{s}_{p,q}(\partial\Omega)$, $s\in(0,1)$, $p,q\in[1,+\infty]$, $p<+\infty$.

One could also check that the intersection space $\dot{\mathrm{B}}^{s_0}_{p_0,q_0}(\partial\Omega)\cap \dot{\mathrm{B}}^{s_1}_{p_1,q_1}(\partial\Omega)$ is complete whenever $(\mathcal{C}_{s_0+\frac{1}{p_0},p_0,q_0})$ is satisfied.
\end{remark}

\subsection{Definitions and preliminary results for the trace theorem}\label{sec:AnisotropicTrcThm}

The strategy of the proof will mainly arise from a standard flattening procedure of the boundary, but then, to use anisotropic estimates, as done in \cite[Lemmas~1~\&~2]{Ding1996}. For the reader's convenience we recall, from \eqref{eq:globalChangecoordSpeLip}, \eqref{eq:globalChangecoordJac} and \eqref{eq:Tmapsglobalchangeofcoodinates}, that for any measurable function $u\,:\,\Omega \longrightarrow\mathbb{C}$,
\begin{align} \label{eq:globalChangecoordSpeLip2}
    T_\phi u(x',x_n) = u(x',x_n+\phi(x')),\quad (x',x_n)\in\mathbb{R}^{n-1}\times[0,+\infty)\text{.}
\end{align}

In this section, we will need to use Banach-valued (anisotropic) homogeneous Sobolev spaces with non-negative regularity index and with values in a (reflexive) Lebesgue space. See the previous work of the author \cite[Section~3.1]{Gaudin2023} for an elementary construction of homogeneous vector-valued Riesz potential spaces and references therein for a more general review of vector-valued Sobolev (Bessel potential) spaces and their properties.

\begin{definition}For $p\in(1,+\infty)$, provided $0\leqslant\alpha<1/p$, we define $\frac{1}{r}:=\frac{1}{p}-\alpha$,
\begin{align*}
    \dot{\mathrm{H}}^{\alpha,p}(\mathbb{R},\mathrm{L}^p(\mathbb{R}^{n-1})) := \left\{\,u\in\mathrm{L}^{r}(\mathbb{R},\mathrm{L}^p(\mathbb{R}^{n-1}))\,\big|\, (-\partial_{x_n}^2)^{\frac{\alpha}{2}}u\in\mathrm{L}^{p}(\mathbb{R},\mathrm{L}^p(\mathbb{R}^{n-1}))=\mathrm{L}^p(\mathbb{R}^{n})\,\right\}\text{.}
\end{align*}
We also define by restriction, in the sense of distributions, the corresponding space on the half line
\begin{align*}
    \dot{\mathrm{H}}^{\alpha,p}(\mathbb{R}_+,\mathrm{L}^p(\mathbb{R}^{n-1})) :=  \dot{\mathrm{H}}^{\alpha,p}(\mathbb{R},\mathrm{L}^p(\mathbb{R}^{n-1}))_{|_{\mathbb{R}_+}}\text{.}
\end{align*}
This is a Banach space with respect to the quotient norm
\begin{align*}
    \lVert u \rVert_{\dot{\mathrm{H}}^{\alpha,p}(\mathbb{R}_+,\mathrm{L}^p(\mathbb{R}^{n-1}))} := \inf_{\substack{U_{|_{\mathbb{R}_+}}=u\text{,}\\ U\in \dot{\mathrm{H}}^{\alpha,p}(\mathbb{R},\mathrm{L}^p(\mathbb{R}^{n-1}))\text{.}}} \lVert U\rVert_{\dot{\mathrm{H}}^{\alpha,p}(\mathbb{R},\mathrm{L}^p(\mathbb{R}^{n-1}))}\text{.}
\end{align*}
\end{definition}
For more details, see \cite[Subsection~3.1]{Gaudin2023}.

We recall that considering the unbounded operator $$(-\partial_{x_n},\mathrm{D}_{p}(-\partial_{x_n}))=(-\partial_{x_n},\mathrm{H}^{1,p}(\mathbb{R}_+,\mathrm{L}^p(\mathbb{R}^{n-1}))),$$ acting on ${\mathrm{L}}^{p}(\mathbb{R}_+,\mathrm{L}^p(\mathbb{R}^{n-1}))=\mathrm{L}^p(\mathbb{R}^{n}_+),$ then its fractional powers, one has the following equivalence of norms, for all $p\in(1,+\infty)$, all $\alpha\in[0,1/p)$ and all $u\in\dot{\mathrm{H}}^{\alpha,p}(\mathbb{R}_+,\mathrm{L}^p(\mathbb{R}^{n-1}))$,
\begin{align}\label{eq:EquivNormVectorValuedBesselNeumannDeriv}
    \lVert (-\partial_{x_n})^{\alpha} u \rVert_{\mathrm{L}^p(\mathbb{R}^n_+)} \sim_{p,\alpha} \lVert u \rVert_{\dot{\mathrm{H}}^{\alpha,p}(\mathbb{R}_+,\mathrm{L}^p(\mathbb{R}^{n-1}))}.
\end{align}
For more details, see \cite[Subsection~3.2]{Gaudin2023}.

\begin{lemma}\label{lem:EstimatefromHsptoAnisotropicHspLp}Let $p\in(1,+\infty)$ and $\alpha\in[0,1/p)$. For all $u\in\dot{\mathrm{H}}^{\alpha,p}(\mathbb{R}^n_+)$, we have the estimate
\begin{align*}
    \lVert u\rVert_{\dot{\mathrm{H}}^{\alpha,p}_{x_n}(\mathbb{R}_+,\mathrm{L}^p_{x'}(\mathbb{R}^{n-1}))} \lesssim_{p,\alpha,n} \lVert u\rVert_{\dot{\mathrm{H}}^{\alpha,p}(\mathbb{R}^n_+)}\text{.}
\end{align*}
\end{lemma}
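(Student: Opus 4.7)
The inequality is a pure statement about restriction spaces: both $\dot{\mathrm{H}}^{\alpha,p}(\mathbb{R}^n_+)$ and $\dot{\mathrm{H}}^{\alpha,p}(\mathbb{R}_+,\mathrm{L}^p(\mathbb{R}^{n-1}))$ are defined as quotients by restriction from the corresponding space on the full space (respectively, the full line with values in $\mathrm{L}^p(\mathbb{R}^{n-1})$). Consequently, once one has the global continuous embedding
\begin{align*}
    \dot{\mathrm{H}}^{\alpha,p}(\mathbb{R}^n) \hookrightarrow \dot{\mathrm{H}}^{\alpha,p}(\mathbb{R},\mathrm{L}^p(\mathbb{R}^{n-1}))\text{,}
\end{align*}
taking the infimum over all extensions $U\in\dot{\mathrm{H}}^{\alpha,p}(\mathbb{R}^n)$ with $U_{|\mathbb{R}^n_+}=u$ will yield the statement, so no explicit use of a boundary extension operator is needed at this stage.

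In terms of Fourier multipliers, the global embedding is exactly the $\mathrm{L}^p(\mathbb{R}^n)$-boundedness of the operator with symbol $m(\xi) = (|\xi_n|/|\xi|)^\alpha$, that is, of $(-\partial_{x_n}^2)^{\alpha/2}(-\Delta)^{-\alpha/2}$, which up to sign is the $\alpha/2$-th power of the operator $R_n^2$ for the $n$-th Riesz transform. I would obtain this by complex interpolation between the two natural endpoints: at $\alpha=0$, Fubini gives the trivial isometry $\mathrm{L}^p(\mathbb{R},\mathrm{L}^p(\mathbb{R}^{n-1}))=\mathrm{L}^p(\mathbb{R}^n)$; at $\alpha=1$, the target inequality reduces to $\lVert\partial_{x_n}U\rVert_{\mathrm{L}^p(\mathbb{R}^n)} \leqslant \lVert\nabla U\rVert_{\mathrm{L}^p(\mathbb{R}^n)}\lesssim_{p,n} \lVert U\rVert_{\dot{\mathrm{H}}^{1,p}(\mathbb{R}^n)}$. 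Running the complex method at $\theta=\alpha$ and identifying the resulting spaces via Proposition~\ref{prop:InterpHomSpacesRn} on the source and via its vector-valued counterpart on the target delivers the inequality.

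The main obstacle is that $\dot{\mathrm{H}}^{1,p}(\mathbb{R}^n)$ is not complete in general (namely, for $p\geqslant n$), so Proposition~\ref{prop:InterpHomSpacesRn} cannot be directly invoked to identify $[\mathrm{L}^p,\dot{\mathrm{H}}^{1,p}]_\alpha$. I would circumvent this exactly as in the proof of Proposition~\ref{prop:ExtOpHomSobSpaces}: first work on the dense subspace $\eus{S}_0(\mathbb{R}^n)$, lift the operator to the level of the Littlewood-Paley decomposition (i.e.\ act on sequences $(\dot{\Delta}_j U)_{j\in\mathbb{Z}}$), and perform the complex interpolation in the complete ambient mixed Lebesgue space $\mathrm{L}^p(\mathbb{R}^n,\ell^2_\alpha(\mathbb{Z}))$, where \cite[Theorems~5.1.2~\&~5.6.3]{BerghLofstrom1976} applies without issue; Lemma~\ref{lem:Triebelnormeq} then passes back to homogeneous Sobolev norms. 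The analogous manipulation on the $\mathrm{L}^p(\mathbb{R}^{n-1})$-valued target is handled by the author's construction in \cite[Section~3.1]{Gaudin2023}, the UMD property of $\mathrm{L}^p(\mathbb{R}^{n-1})$ ensuring the vector-valued Littlewood-Paley characterization. A final density argument then transfers the estimate from $\eus{S}_0(\mathbb{R}^n)$ back to all of $\dot{\mathrm{H}}^{\alpha,p}(\mathbb{R}^n)$, which is complete in the regime $\alpha<1/p$ under consideration.
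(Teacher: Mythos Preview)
Your overall structure is right and matches the paper: reduce to the full-space inequality, then pass to the half-space by taking the infimum over extensions. You also correctly identify the relevant operator as the Fourier multiplier with symbol $m(\xi)=(|\xi_n|/|\xi|)^\alpha$.

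Where you diverge is in how you establish the $\mathrm{L}^p$-boundedness of that multiplier. The paper does this in one line: $m$ is smooth on $\mathbb{R}^n\setminus\{0\}$ and homogeneous of degree~$0$, hence a Mikhlin--H\"ormander multiplier, so it is bounded on $\mathrm{L}^p(\mathbb{R}^n)$ for every $p\in(1,+\infty)$; this is what the paper means by ``boundedness of Riesz transforms''. Your route---interpolate the identity map between $\alpha=0$ and $\alpha=1$, then lift to $\mathrm{L}^p(\ell^2_s)$ to circumvent the non-completeness of $\dot{\mathrm{H}}^{1,p}(\mathbb{R}^n)$ for $p\geqslant n$, and invoke the vector-valued construction of \cite{Gaudin2023} on the target---is correct, but it is substantially more work for no gain here. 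The endpoint $\alpha=1$ also lies outside the range $[0,1/p)$ for which the paper has set up the anisotropic target space, so you would need to supply that definition separately before interpolating. In short: your argument is sound, but once you have written down $m(\xi)$ you are already done by Mikhlin, and the interpolation machinery is superfluous.
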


\begin{proof}On $\mathbb{R}^n=\mathbb{R}^{n-1}\times\mathbb{R}$, the result follows from the $\mathrm{L}^p$-boundedness of Riesz transforms. Indeed for $u\in\dot{\mathrm{H}}^{\alpha,p}(\mathbb{R}^n)$, one can write
\begin{align*}
(-\partial_{x_n}^2)^\frac{\alpha}{2} u = (-\partial_{x_n}^2)^\frac{\alpha}{2}(-\Delta)^{-\frac{\alpha}{2}} (-\Delta)^{\frac{\alpha}{2}} u = |R_n|^\alpha (-\Delta)^{\frac{\alpha}{2}} u,
\end{align*}
where $R_n$ is the Riesz transform with respect to variable $x_n$. So that
\begin{align*}
    \lVert u\rVert_{\dot{\mathrm{H}}^{\alpha,p}_{x_n}(\mathbb{R},\mathrm{L}^p_{x'}(\mathbb{R}^{n-1}))} = \lVert (-\partial_{x_n}^2)^\frac{\alpha}{2}u\rVert_{\mathrm{L}^p(\mathbb{R}^{n})}\lesssim_{p,\alpha,n} \lVert (-\Delta)^\frac{\alpha}{2}u\rVert_{\mathrm{L}^p(\mathbb{R}^{n})}=\lVert u\rVert_{\dot{\mathrm{H}}^{\alpha,p}(\mathbb{R}^n)}\text{.}
\end{align*}
The case of the half-space follows from the definition of function spaces by restriction.
\end{proof}

For $p\in(1,+\infty)$, $s\in[1,2]$, we introduce the function space
\begin{align*}
    \mathcal{K}^{s,p}(\mathbb{R}^{n}_+):=\mathrm{H}^{s,p}(\mathbb{R}_+,\mathrm{L}^p(\mathbb{R}^{n-1}))\cap \mathrm{H}^{s-1,p}(\mathbb{R}_+,\mathrm{H}^{1,p}(\mathbb{R}^{n-1}))
\end{align*}
with its natural norm. We also introduce the homogeneous (semi-)norm
\begin{align*}
    \lVert u\rVert_{\dot{\mathcal{K}}^{s,p}(\mathbb{R}^{n}_+)} := \lVert (\partial_{x_n}u,\nabla'u)\rVert_{\dot{\mathrm{H}}^{s-1,p}(\mathbb{R}_+,\mathrm{L}^p(\mathbb{R}^{n-1}))}\text{.}
\end{align*}
We mention that $\mathcal{K}^{1,p}(\mathbb{R}^{n}_+)=\mathrm{H}^{1,p}(\mathbb{R}^n_+)$, and $\lVert \cdot \rVert_{\dot{\mathcal{K}}^{1,p}(\mathbb{R}^{n}_+)}\sim_{p,n} \lVert \nabla \cdot \rVert_{\mathrm{L}^{p}(\mathbb{R}^{n}_+)}$.

\begin{lemma}\label{lem:TphiHspKsp} For $p\in(1,+\infty)$, $s\in[1,1+1/p)$. The linear operator
\begin{align*}
    T_\phi\,:\, \mathrm{H}^{s,p}(\Omega) \longrightarrow  \mathcal{K}^{s,p}(\mathbb{R}^{n}_+)
\end{align*}
is well-defined and bounded.

Moreover, for all $u\in\mathrm{H}^{s,p}(\Omega)$ we have the homogeneous estimate
\begin{align}\label{eq:homAnisotropicEstTphi}
    \lVert T_\phi u\rVert_{\dot{\mathcal{K}}^{s,p}(\mathbb{R}^{n}_+)}\lesssim_{p,s,n,\partial\Omega} \lVert u\rVert_{\dot{\mathrm{H}}^{s,p}(\Omega)}\text{.}
\end{align}
\end{lemma}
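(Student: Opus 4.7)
Write $s=1+\alpha$ with $\alpha\in[0,1/p)$ and use the a.e.\ chain rule for the bi-Lipschitz map $T_\phi$: since $\phi$ depends only on $x'$,
\[ \partial_{x_n}T_\phi u = T_\phi(\partial_{x_n}u),\qquad \partial_{x_k}T_\phi u = T_\phi(\partial_{x_k}u) + (\partial_{x_k}\phi)\, T_\phi(\partial_{x_n}u),\quad k\in\llb 1,n-1\rrb. \]
Plugging this expansion into $\lVert T_\phi u\rVert_{\dot{\mathcal{K}}^{s,p}(\mathbb{R}^n_+)}=\lVert(\partial_{x_n}T_\phi u,\nabla' T_\phi u)\rVert_{\dot{\mathrm{H}}^{\alpha,p}(\mathbb{R}_+,\mathrm{L}^p(\mathbb{R}^{n-1}))}$ reduces the estimate \eqref{eq:homAnisotropicEstTphi} to controlling two kinds of pieces in $\dot{\mathrm{H}}^{\alpha,p}(\mathbb{R}_+,\mathrm{L}^p(\mathbb{R}^{n-1}))$: the ``pure'' pieces $T_\phi(\partial_{x_j}u)$ for $j\in\llb 1,n\rrb$, and the cross-terms $(\partial_{x_k}\phi)\,T_\phi(\partial_{x_n}u)$ arising from the $\nabla'$ part.

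For a pure piece, three already-established tools concatenate: Lemma \ref{lem:EstimatefromHsptoAnisotropicHspLp} (applicable since $\alpha<1/p$) yields
\[\lVert T_\phi(\partial_{x_j}u)\rVert_{\dot{\mathrm{H}}^{\alpha,p}(\mathbb{R}_+,\mathrm{L}^p(\mathbb{R}^{n-1}))}\lesssim_{p,\alpha,n}\lVert T_\phi(\partial_{x_j}u)\rVert_{\dot{\mathrm{H}}^{\alpha,p}(\mathbb{R}^n_+)};\]
the global change of coordinates estimate of Corollary \ref{cor:IsomHomSobSpacesRn+SpeLip} (valid since $\alpha\in[0,1]$ and $(\mathcal{C}_{\alpha,p})$ holds because $\alpha<1/p\leqslant n/p$) gives $\lVert T_\phi(\partial_{x_j}u)\rVert_{\dot{\mathrm{H}}^{\alpha,p}(\mathbb{R}^n_+)}\lesssim_{p,\alpha,n,\partial\Omega}\lVert \partial_{x_j}u\rVert_{\dot{\mathrm{H}}^{\alpha,p}(\Omega)}$; and Proposition \ref{prop:EqNormNablakHsp} with $k=1$ (legitimate since $s=1+\alpha>1/p$) yields $\lVert\nabla u\rVert_{\dot{\mathrm{H}}^{\alpha,p}(\Omega)}\sim\lVert u\rVert_{\dot{\mathrm{H}}^{s,p}(\Omega)}$. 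For a cross-term, the crucial observation is that the fractional operator $(-\partial_{x_n}^2)^{\alpha/2}$ defining the anisotropic norm acts only in $x_n$, hence commutes with the multiplication by $\partial_{x_k}\phi\in \mathrm{L}^\infty(\mathbb{R}^{n-1})$, so that
\[\lVert(\partial_{x_k}\phi)\,T_\phi(\partial_{x_n}u)\rVert_{\dot{\mathrm{H}}^{\alpha,p}(\mathbb{R}_+,\mathrm{L}^p(\mathbb{R}^{n-1}))}\leqslant\lVert\nabla'\phi\rVert_{\mathrm{L}^\infty}\lVert T_\phi(\partial_{x_n}u)\rVert_{\dot{\mathrm{H}}^{\alpha,p}(\mathbb{R}_+,\mathrm{L}^p(\mathbb{R}^{n-1}))}, \]
which is again a pure piece. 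Combining all bounds yields \eqref{eq:homAnisotropicEstTphi}.

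For the full inhomogeneous bound $T_\phi:\mathrm{H}^{s,p}(\Omega)\to\mathcal{K}^{s,p}(\mathbb{R}^n_+)$, it remains to control the base-line $\mathrm{L}^p(\mathbb{R}^n_+)$-norms of $T_\phi u$ and $\nabla T_\phi u$; the former is immediate from the measure-preserving property of $T_\phi$ (see \eqref{eq:globalChangecoordJac}) and the latter from the chain rule together with $\lVert\nabla'\phi\rVert_{\mathrm{L}^\infty}<+\infty$. The main obstacle---and the reason for the restriction $s<1+1/p$ and the appearance of the anisotropic target space $\mathcal{K}^{s,p}$ rather than $\mathrm{H}^{s,p}$---is precisely the handling of the cross-term $(\partial_{x_k}\phi)T_\phi(\partial_{x_n}u)$: since $\partial_{x_k}\phi$ has only $\mathrm{L}^\infty$ regularity, it would fail to be a pointwise multiplier on the isotropic space $\dot{\mathrm{H}}^{\alpha,p}(\mathbb{R}^n_+)$ for $\alpha>0$, but is trivially a multiplier on the anisotropic scale where the $x'$-variable only appears through its $\mathrm{L}^p$-norm.
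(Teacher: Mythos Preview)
Your proof is correct and follows essentially the same route as the paper for the homogeneous estimate \eqref{eq:homAnisotropicEstTphi}: the paper uses the same chain rule identities, the same bound on the cross-term $(\partial_{x_k}\phi)\,T_\phi(\partial_{x_n}u)$ in $\dot{\mathrm{H}}^{s-1,p}(\mathbb{R}_+,\mathrm{L}^p(\mathbb{R}^{n-1}))$ (though without explicitly stating, as you helpfully do, that this works because the fractional $x_n$-operator commutes with multiplication by functions of $x'$), and then concludes via Lemma~\ref{lem:EstimatefromHsptoAnisotropicHspLp} and the change-of-coordinates estimate. The only cosmetic difference is that the paper cites Proposition~\ref{prop:globalMapChangeHsp} and the trivial inequality $\lVert\nabla u\rVert_{\dot{\mathrm{H}}^{s-1,p}(\Omega)}\lesssim\lVert u\rVert_{\dot{\mathrm{H}}^{s,p}(\Omega)}$ where you cite Corollary~\ref{cor:IsomHomSobSpacesRn+SpeLip} and Proposition~\ref{prop:EqNormNablakHsp}; these are equivalent here. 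For the inhomogeneous boundedness $T_\phi:\mathrm{H}^{s,p}(\Omega)\to\mathcal{K}^{s,p}(\mathbb{R}^n_+)$, the paper instead proceeds by complex interpolation between the elementary endpoint bounds at $s=1$ and $s=2$ (following \cite{Ding1996}), which yields the result on the whole strip $s\in[1,2]$; your direct argument is fine in the stated range, though note that the $\mathcal{K}^{s,p}$-norm also contains a lower-order piece $\lVert T_\phi u\rVert_{\dot{\mathrm{H}}^{s-1,p}(\mathbb{R}_+,\mathrm{L}^p)}$ coming from the $\mathrm{H}^{s-1,p}(\mathbb{R}_+,\mathrm{H}^{1,p})$-factor, which is easily handled by the same chain of inequalities applied to $T_\phi u$ itself rather than its gradient.
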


\begin{proof} For the boundedness of $T_\phi$ from $\mathrm{H}^{s,p}(\Omega)$ to  $\mathcal{K}^{s,p}(\mathbb{R}^{n}_+)$, it suffices to follow the proof of \cite[Lemma~2]{Ding1996}.
One may check the boundedness properties
\begin{align*}
    T_\phi \,:\,    &\mathrm{H}^{1,p}(\Omega)\longrightarrow {\mathrm{H}}^{1,p}(\mathbb{R}_+,\mathrm{L}^p(\mathbb{R}^{n-1})),\\
                    &\mathrm{H}^{2,p}(\Omega)\longrightarrow {\mathrm{H}}^{2,p}(\mathbb{R}_+,\mathrm{L}^p(\mathbb{R}^{n-1})).
\end{align*}
By complex interpolation, it is implied that
\begin{align}\label{eq:BoundednessTphiKsp1}
    T_\phi \,:\,    &\mathrm{H}^{s,p}(\Omega)\longrightarrow {\mathrm{H}}^{s,p}(\mathbb{R}_+,\mathrm{L}^p(\mathbb{R}^{n-1}))
\end{align}
is well-defined and bounded for all $s\in[1,2]$. Similarly, the boundedness of
\begin{align*}
    T_\phi \,:\,    &\mathrm{H}^{1,p}(\Omega)\longrightarrow {\mathrm{L}}^{p}(\mathbb{R}_+,{\mathrm{H}}^{1,p}(\mathbb{R}^{n-1})),\\
                    &\mathrm{H}^{2,p}(\Omega)\longrightarrow {\mathrm{H}}^{1,p}(\mathbb{R}_+,{\mathrm{H}}^{1,p}(\mathbb{R}^{n-1}))\text{,}
\end{align*}
for $s\in[1,2]$, implies, by complex interpolation, that the following operator is well-defined and bounded 
\begin{align}\label{eq:BoundednessTphiKsp2}
    T_\phi \,:\,    &\mathrm{H}^{s,p}(\Omega)\longrightarrow {\mathrm{H}}^{s-1,p}(\mathbb{R}_+,{\mathrm{H}}^{1,p}(\mathbb{R}^{n-1})).
\end{align}
Thus, \eqref{eq:BoundednessTphiKsp1} and \eqref{eq:BoundednessTphiKsp2} yield the boundedness of $T_\phi$. Now, we prove the estimate~\eqref{eq:homAnisotropicEstTphi}. For $u\in\mathrm{H}^{s,p}(\Omega)\subset \dot{\mathrm{H}}^{s,p}(\Omega)$, we have $T_\phi u \in \mathcal{K}^{s,p}(\mathbb{R}^n_+)$, and since
\begin{align*}
    \partial_{x_k}(T_\phi u) = T_\phi(\partial_{x_k}u) + \partial_{x_k}\phi T_\phi(\partial_{x_n}u)\text{, }\qquad \partial_{x_n}(T_\phi u)=T_\phi(\partial_{x_n}u)\text{, } k\in\llb 1,n-1\rrb\text{,}
\end{align*}
we obtain,
\begin{align*}
    \lVert T_\phi u\rVert_{\dot{\mathcal{K}}^{s,p}(\mathbb{R}^{n}_+)} &= \lVert (\partial_{x_n} T_\phi u,\nabla' T_\phi u)\rVert_{\dot{\mathrm{H}}^{s-1,p}(\mathbb{R}_+,\mathrm{L}^p(\mathbb{R}^{n-1}))}\\
    &\leqslant \lVert T_\phi \partial_{x_n}u \rVert_{\dot{\mathrm{H}}^{s-1,p}(\mathbb{R}_+,\mathrm{L}^p(\mathbb{R}^{n-1}))} + \lVert \nabla' T_\phi u\rVert_{\dot{\mathrm{H}}^{s-1,p}(\mathbb{R}_+,\mathrm{L}^p(\mathbb{R}^{n-1}))}\\
    &\leqslant (1+(n-1)\lVert \nabla'\phi \lVert_{\mathrm{L}^{\infty}(\mathbb{R}^{n-1})})\lVert T_\phi \nabla' u \lVert_{\dot{\mathrm{H}}^{s-1,p}(\mathbb{R}_+,\mathrm{L}^p(\mathbb{R}^{n-1}))} +\lVert T_\phi \partial_{x_n} u \lVert_{\dot{\mathrm{H}}^{s-1,p}(\mathbb{R}_+,\mathrm{L}^p(\mathbb{R}^{n-1}))}\\
    &\lesssim_{p,s,n,\partial\Omega} \lVert T_\phi \nabla u \lVert_{\dot{\mathrm{H}}^{s-1,p}(\mathbb{R}_+,\mathrm{L}^p(\mathbb{R}^{n-1}))}\text{.}
\end{align*}
The estimate \eqref{eq:homAnisotropicEstTphi} is then a consequence of Lemma \ref{lem:EstimatefromHsptoAnisotropicHspLp} and Proposition \ref{prop:globalMapChangeHsp}
\begin{align*}
    \lVert T_\phi u\rVert_{\dot{\mathcal{K}}^{s,p}(\mathbb{R}^{n}_+)} &\lesssim_{p,s,n,\partial\Omega} \lVert T_\phi \nabla u \lVert_{\dot{\mathrm{H}}^{s-1,p}(\mathbb{R}_+,\mathrm{L}^p(\mathbb{R}^{n-1}))}\\ &\lesssim_{p,s,n,\partial\Omega} \lVert T_\phi \nabla u \lVert_{\dot{\mathrm{H}}^{s-1,p}(\mathbb{R}^n_+)}\\ & \lesssim_{p,s,n,\partial\Omega} \lVert \nabla u \lVert_{\dot{\mathrm{H}}^{s-1,p}(\Omega)}\\ & \lesssim_{p,s,n,\partial\Omega} \lVert u \lVert_{\dot{\mathrm{H}}^{s,p}(\Omega)}\text{.}
\end{align*}
This is the desired statement.
\end{proof}

We recall the following result on the flat half-space.
\begin{proposition}\label{prop:TraceEmbeddingInhomHspRn+} Let $p\in(1,+\infty)$, $s\in(1/p,1]$, one has the embedding
\begin{align}
    \mathrm{H}^{s,p}(\mathbb{R}^n_+) &\hookrightarrow \mathrm{C}^0_{0,x_n}(\overline{\mathbb{R}_+},\mathrm{B}^{s-1/p}_{p,p}(\mathbb{R}^{n-1})).
\end{align}
In particular, the map
\begin{align}\label{eq:traceOp0Rn+}
    \gamma_0\,:\,u\longmapsto u(\cdot,0)
\end{align}
is well-defined and bounded from $\mathrm{H}^{s,p}(\mathbb{R}^n_+)$ to $\mathrm{B}^{s-1/p}_{p,p}(\mathbb{R}^{n-1})$.
\end{proposition}

Note that, for, say, all $u\in\mathrm{C}^{0}(\overline{\Omega})$, the following equality holds pointwise:
\begin{align*}
    S_\phi^{-1}[T_\phi u(\cdot,0)] =   u_{|_{\partial\Omega}}.
\end{align*}
Here, $u_{|_{\partial\Omega}}$ stands for the restriction of $u$ to $\partial\Omega$.

\begin{definition}\label{def:TraceOperator} We define the \textbf{trace operator}, on the boundary $\partial\Omega$, still denoted by $[\cdot]_{|_{\partial\Omega}}$, as
\begin{align*}
    [\cdot]_{|_{\partial\Omega}}:= S_\phi^{-1} \gamma_0 T_\phi .
\end{align*}
The operators $ S_\phi^{-1}$, $\gamma_0$ and $T_\phi$ are defined respectively through \eqref{eq:pullbackBoundaryRn+}, \eqref{eq:traceOp0Rn+} and \eqref{eq:globalChangecoordSpeLip2}.
\end{definition}

\begin{remark}\label{rmk:traceWellDefSobSpaces} For $p\in(1,+\infty)$, $s\in(1/p,1+1/p)$, writing $\sigma=\min(1,s)$, the trace operator is well-defined on $\dot{\mathrm{H}}^{s,p}(\Omega)$. Indeed, since $\dot{\mathrm{H}}^{s,p}(\Omega)\subset {\mathrm{H}}^{s,p}(\Omega)+\mathrm{C}^0_0(\overline{\Omega})$, by Corollary~\ref{cor:IsomHomSobSpacesRn+SpeLip}, Proposition~\ref{prop:TraceEmbeddingInhomHspRn+} and Lemmas \ref{lem:LpspacesPushForwardBoundary} and \ref{lem:BsppDOmegaBsppRn-1}
\begin{align*}
    [\dot{\mathrm{H}}^{s,p}(\Omega)]_{|_{\partial\Omega}} \subset [{\mathrm{H}}^{\sigma,p}(\Omega)]_{|_{\partial\Omega}} + [\mathrm{C}^0_0(\overline{\Omega})]_{|_{\partial\Omega}}\subset {\mathrm{B}}^{\sigma-1/p}_{p,p}(\partial\Omega) + \mathrm{C}^0_0(\partial\Omega).
\end{align*}
Therefore, again, it suffices to show continuity with respect to the appropriate homogeneous Sobolev and Besov norms.
\end{remark}

\begin{lemma}\label{lem:TangentialContinuityTraceHspOmegaBsppRn-1} Let $p\in(1,+\infty)$, $s\in[1,1+1/p)$. For all $u\in{\mathrm{H}}^{s,p}(\Omega)$, we have
\begin{align*}
    T_\phi u \in \mathrm{C}_0^0(\overline{\mathbb{R}_+},\mathrm{B}^{s-1/p}_{p,p}(\mathbb{R}^{n-1}))\text{.}
\end{align*}
\end{lemma}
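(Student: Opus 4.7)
The strategy is to realize the map $t\mapsto T_\phi u(\cdot,t)$ for $t\geqslant0$ as the composition of the (strongly continuous) translation group on $\mathrm{H}^{s,p}(\mathbb{R}^n)$ with the classical inhomogeneous trace operator, so that continuity, boundedness and vanishing at infinity all reduce to known facts for inhomogeneous spaces.

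Concretely, I would first extend $u$ via Stein's universal extension operator (cf.\ Proposition~\ref{prop:ExtOpHomSobSpaces}) to $\mathcal{E}u\in \mathrm{H}^{s,p}(\mathbb{R}^n)$ with $\lVert\mathcal{E}u\rVert_{\mathrm{H}^{s,p}(\mathbb{R}^n)}\lesssim_{s,p,n,\partial\Omega}\lVert u\rVert_{\mathrm{H}^{s,p}(\Omega)}$. Writing $\tau_s$ for the translation $(\tau_s v)(x',x_n):=v(x',x_n-s)$ in the $e_n$-direction, one has the a.e.\ identity
\begin{align*}
T_\phi u(x',t)=\mathcal{E}u(x',\phi(x')+t)=(\tau_{-t}\mathcal{E}u)(x',\phi(x'))=S_\phi\bigl[(\tau_{-t}\mathcal{E}u)_{|_{\partial\Omega}}\bigr](x'),\qquad t\geqslant0,
\end{align*}
since $(x',\phi(x')+t)\in\overline{\Omega}$, where $\mathcal{E}u$ coincides with $u$. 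The map $t\mapsto \tau_{-t}\mathcal{E}u$ is continuous from $\mathbb{R}$ into $\mathrm{H}^{s,p}(\mathbb{R}^n)$ by the standard strong continuity of translations on Bessel potential spaces (reducing by density and translation invariance of the norm to $\mathrm{C}_c^\infty(\mathbb{R}^n)$). Composing with the bounded linear operators given by restriction $\mathrm{H}^{s,p}(\mathbb{R}^n)\to \mathrm{H}^{s,p}(\Omega)$, the classical trace map $\mathrm{H}^{s,p}(\Omega)\to \mathrm{B}^{s-1/p}_{p,p}(\partial\Omega)$ (Theorem~\ref{thm:genericinhomTrace}), and $S_\phi\,:\,\mathrm{B}^{s-1/p}_{p,p}(\partial\Omega)\to \mathrm{B}^{s-1/p}_{p,p}(\mathbb{R}^{n-1})$ (Corollary~\ref{lem:BspqDOmegaBspqRn-1}), this yields that $t\mapsto T_\phi u(\cdot,t)$ is continuous from $\overline{\mathbb{R}_+}$ into $\mathrm{B}^{s-1/p}_{p,p}(\mathbb{R}^{n-1})$, with the uniform bound $\sup_{t\geqslant0}\lVert T_\phi u(\cdot,t)\rVert_{\mathrm{B}^{s-1/p}_{p,p}(\mathbb{R}^{n-1})}\lesssim_{s,p,n,\partial\Omega}\lVert u\rVert_{\mathrm{H}^{s,p}(\Omega)}$.

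To upgrade to $\mathrm{C}_0^0$, I would use the density of $\mathrm{C}_c^\infty(\overline{\Omega})$ in $\mathrm{H}^{s,p}(\Omega)$. For $u\in\mathrm{C}_c^\infty(\overline{\Omega})$, the support of $u$ is contained in some bounded set $K\subset\mathbb{R}^n$; since $\phi$ is bounded on any bounded subset of $\mathbb{R}^{n-1}$, the function $T_\phi u$ is compactly supported in $\overline{\mathbb{R}^n_+}$, so $T_\phi u(\cdot,t)=0$ for all $t$ large enough. In particular $T_\phi u\in \mathrm{C}_0^0(\overline{\mathbb{R}_+},\mathrm{B}^{s-1/p}_{p,p}(\mathbb{R}^{n-1}))$. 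Since $\mathrm{C}_0^0$ is closed in $\mathrm{C}^0_b$ for the sup-norm, the uniform continuity of $T_\phi\,:\,\mathrm{H}^{s,p}(\Omega)\to\mathrm{C}^0_b(\overline{\mathbb{R}_+},\mathrm{B}^{s-1/p}_{p,p}(\mathbb{R}^{n-1}))$ proved above, combined with the density, yields the result for every $u\in\mathrm{H}^{s,p}(\Omega)$. The only delicate point of the argument is the strong continuity of the translation group on $\mathrm{H}^{s,p}(\mathbb{R}^n)$ for non-integer $s$, but this is classical and standard; everything else reduces to the already established inhomogeneous trace theorem and the behaviour of $S_\phi$ on boundary Besov spaces.
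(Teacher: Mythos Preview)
Your proof is correct and takes a genuinely different route from the paper's. You exploit translation invariance of $\mathrm{H}^{s,p}(\mathbb{R}^n)$ together with the already-known inhomogeneous trace theorem on $\Omega$ (Theorem~\ref{thm:genericinhomTrace}) and the boundary identification $S_\phi$, reducing everything to elementary facts: strong continuity of translations on Bessel potential spaces, and a density argument for the vanishing at infinity. The paper instead treats $v(t,\cdot)=T_\phi u(\cdot,t)$ as the mild solution of the abstract Cauchy problem $\partial_t v+(\mathrm{I}-\Delta')^{1/2}v=F$ with $F\in\mathrm{H}^{s-1,p}(\mathbb{R}_+,\mathrm{L}^p(\mathbb{R}^{n-1}))$ and $v(0)\in\mathrm{B}^{s-1/p}_{p,p}(\mathbb{R}^{n-1})$, and then invokes $\dot{\mathrm{H}}^{s-1,p}(\mathrm{L}^p)$-maximal regularity for the Poisson semigroup (from \cite{Gaudin2023}) to obtain both the $\mathrm{C}_0^0$ membership and a quantitative estimate. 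Your argument is more elementary and self-contained; the paper's approach is heavier but is chosen because the same maximal regularity machinery with $(-\Delta')^{1/2}$ in place of $(\mathrm{I}-\Delta')^{1/2}$ is reused immediately afterwards in Corollary~\ref{cor:homAnisotropicTraceEst} to produce the \emph{homogeneous} estimate $\lVert T_\phi u\rVert_{\mathrm{L}^\infty(\mathbb{R}_+,\dot{\mathrm{B}}^{s-1/p}_{p,p})}\lesssim\lVert T_\phi u\rVert_{\dot{\mathcal{K}}^{s,p}}$, which is the real target. For the lemma as stated, your approach is a clean shortcut; one minor remark is that the reference for the boundedness of $S_\phi$ on inhomogeneous boundary Besov spaces should combine Lemma~\ref{lem:LpspacesPushForwardBoundary} and Lemma~\ref{lem:BsppDOmegaBsppRn-1} (the paper only states the homogeneous version explicitly).
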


\begin{proof} For $u\in {\mathrm{H}}^{s,p}(\Omega)\subset {\mathrm{H}}^{1,p}(\Omega) $, then $T_\phi u \in {\mathrm{H}}^{1,p}(\mathbb{R}^n_+)\cap \mathcal{K}^{s,p}(\mathbb{R}^n_+) $, and by \cite[Theorem~4.3]{Gaudin2022},
\begin{align*}
    T_\phi u \in \mathrm{C}_0^0(\overline{\mathbb{R}_+},\mathrm{B}^{1-1/p}_{p,p}(\mathbb{R}^{n-1}))\text{,}
\end{align*}
and $T_\phi u(\cdot,0) = S_\phi [ u_{|_{\partial\Omega}}]$ in $\mathrm{B}^{1-1/p}_{p,p}(\mathbb{R}^{n-1})$.

Therefore, if we set $v(t,x'):= T_\phi u (x',t)$, $t\geqslant 0$, $x'\in\mathbb{R}^{n-1}$, we have
\begin{align*}
    &F:=\partial_t v + (\mathrm{I}-\Delta')^\frac{1}{2}v \in {\mathrm{H}}^{s-1,p}(\mathbb{R}_+,\mathrm{L}^p(\mathbb{R}^{n-1})) \subset {\mathrm{L}}^{p}(\mathbb{R}_+,\mathrm{L}^p(\mathbb{R}^{n-1}))\text{, }\\
     \text{ and } & v(0,\cdot) = \left[T_\phi u\right]_{|_{\partial\mathbb{R}^n_+}} = S_\phi [ u_{|_{\partial\Omega}}] \in \mathrm{B}^{1-1/p}_{p,p}(\mathbb{R}^{n-1})\subset \mathrm{L}^p(\mathbb{R}^{n-1}) \text{.}
\end{align*}
By uniqueness of the mild solution, for all $t\geqslant 0$
\begin{align*}
    v(t) = e^{-t(\mathrm{I}-\Delta')^\frac{1}{2}}v(0) + \int_{0}^{t} e^{-(t-s)(\mathrm{I}-\Delta')^\frac{1}{2}}F(s) \,\mathrm{d}s\text{.}
\end{align*}
However, one has $v\in \mathcal{K}^{s,p}(\mathbb{R}^n_+)$ as well as 
\begin{align*}
    t\mapsto \int_{0}^{t} e^{-(t-s)(\mathrm{I}-\Delta')^\frac{1}{2}}F(s) \,\mathrm{d}s \in \mathcal{K}^{s,p}(\mathbb{R}^n_+),
\end{align*}
by construction and maximal regularity, see \cite[Theorem~4.7]{Gaudin2023}, noting that $F\in \mathrm{H}^{s-1,p}(\mathbb{R}_+,\mathrm{L}^p(\mathbb{R}^{n-1}))$.

This implies
\begin{align*}
    t\mapsto e^{-t(\mathrm{I}-\Delta')^\frac{1}{2}}v(0)= v(t) - \int_{0}^{t} e^{-(t-s)(\mathrm{I}-\Delta')^\frac{1}{2}}F(s) \,\mathrm{d}s \in \mathcal{K}^{s,p}(\mathbb{R}^n_+).
\end{align*}

Since $(\mathrm{I}-\Delta')^\frac{1}{2}$ is invertible on $\mathrm{L}^p(\mathbb{R}^{n-1})$ with domain $\mathrm{D}_p((\mathrm{I}-\Delta')^\frac{1}{2})=\mathrm{H}^{1,p}(\mathbb{R}^{n-1})$, we claim that this implies
\begin{align*}
    v(0)\in \mathrm{B}^{s-1/p}_{p,p}(\mathbb{R}^{n-1}).
\end{align*}

Indeed, by the identity $(-\partial_{t})^{s-1} e^{-t(\mathrm{I}-\Delta')^\frac{1}{2}} = (\mathrm{I}-\Delta')^\frac{s-1}{2}e^{-t(\mathrm{I}-\Delta')^\frac{1}{2}}$, the equivalence of norms~\eqref{eq:EquivNormVectorValuedBesselNeumannDeriv}, and from the fact $t\mapsto e^{-t(\mathrm{I}-\Delta')^\frac{1}{2}}v(0)\in \mathcal{K}^{s,p}(\mathbb{R}^n_+)$, we deduce
\begin{align*}
    \lVert \partial_t e^{-(\cdot)(\mathrm{I}-\Delta')^\frac{1}{2}}v(0) \rVert_{\mathrm{H}^{s-1,p}(\mathbb{R}_+,\mathrm{L}^p(\mathbb{R}^{n-1}))} &\sim \big\lVert t\mapsto (\mathrm{I}-\Delta')^\frac{s}{2} e^{-t(\mathrm{I}-\Delta')^\frac{1}{2}}v(0) \big\rVert_{\mathrm{L}^p(\mathbb{R}_+,\mathrm{L}^p(\mathbb{R}^{n-1}))}\\
    &\sim\left(\int_{0}^{+\infty} \lVert t^{1-(1-1/p)}(\mathrm{I}-\Delta')^\frac{s}{2} e^{-t(\mathrm{I}-\Delta')^\frac{1}{2}}v(0)\big\rVert_{\mathrm{L}^p(\mathbb{R}^{n-1})}^p \frac{\mathrm{d}t}{t}\right)^\frac{1}{p}\\
    &\sim_{p,n} \big\lVert (\mathrm{I}-\Delta')^\frac{s-1}{2} v(0) \big\rVert_{(\mathrm{L}^p,\mathrm{D}_p((\mathrm{I}-\Delta')^\frac{1}{2}))_{1-\frac{1}{p},p}}\\
    &\sim_{p,n} \big\lVert (\mathrm{I}-\Delta')^\frac{s-1}{2} v(0) \big\rVert_{\mathrm{B}^{1-1/p}_{p,p}(\mathbb{R}^{n-1})}\\
    &\sim_{p,s,n} \big\lVert v(0) \big\rVert_{\mathrm{B}^{s-1/p}_{p,p}(\mathbb{R}^{n-1})}
.\end{align*}
The last estimates are consequences of \cite[Theorem~6.2.9~\&~Corollary~6.5.5]{bookHaase2006}, and the identity $(\mathrm{I}-\Delta')^\frac{s-1}{2}\mathrm{B}^{s-1/p}_{p,p}(\mathbb{R}^{n-1}) = \mathrm{B}^{1-1/p}_{p,p}(\mathbb{R}^{n-1})$.

Applying \cite[Theorem~4.7]{Gaudin2023} again, we obtain the following maximal regularity estimate
\begin{align*}
    \lVert v \rVert_{\mathrm{L}^{\infty}(\mathbb{R}_+,\mathrm{B}^{s-1/p}_{p,p}(\mathbb{R}^{n-1}))} \lesssim_{p,s,n} \lVert (\partial_t v,(\mathrm{I}-\Delta')^\frac{1}{2}v)\rVert_{ \dot{\mathrm{H}}^{s-1,p}(\mathbb{R}_+,\mathrm{L}^p(\mathbb{R}^{n-1}))} \lesssim_{p,s,n}& \lVert F\rVert_{ \dot{\mathrm{H}}^{s-1,p}(\mathbb{R}_+,\mathrm{L}^p(\mathbb{R}^{n-1}))}\\ &+ \lVert v(0) \rVert_{\mathrm{B}^{s-1/p}_{p,p}(\mathbb{R}^{n-1})}\text{,}
\end{align*}
and $v\in \mathrm{C}_0^0(\overline{\mathbb{R}_+},\mathrm{B}^{s-1/p}_{p,p}(\mathbb{R}^{n-1}))$.
\end{proof}

\begin{corollary}\label{cor:homAnisotropicTraceEst} Let $p\in(1,+\infty)$, $s\in[1,1+1/p)$. For all $u\in{\mathrm{H}}^{s,p}(\Omega)$,
\begin{align*}
    \lVert T_\phi u \rVert_{\mathrm{L}^{\infty}(\mathbb{R}_+,\dot{\mathrm{B}}^{s-1/p}_{p,p}(\mathbb{R}^{n-1}))} \lesssim_{p,s,n,\partial\Omega} \lVert T_\phi u\rVert_{\dot{\mathcal{K}}^{s,p}(\mathbb{R}^n_+)}\text{.}
\end{align*}
\end{corollary}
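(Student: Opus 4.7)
The plan is to run the argument of Lemma \ref{lem:TangentialContinuityTraceHspOmegaBsppRn-1} but with the homogeneous operator $(-\Delta')^{1/2}$ in place of the inhomogeneous one $(\mathrm{I}-\Delta')^{1/2}$, so that the resulting trace estimate is purely homogeneous. Set $w := T_\phi u$; by Lemma \ref{lem:TphiHspKsp} this element lies in $\mathcal{K}^{s,p}(\mathbb{R}^n_+)$ and satisfies the homogeneous bound $\lVert w\rVert_{\dot{\mathcal{K}}^{s,p}(\mathbb{R}^n_+)}\lesssim \lVert u\rVert_{\dot{\mathrm{H}}^{s,p}(\Omega)}$, and by Lemma \ref{lem:TangentialContinuityTraceHspOmegaBsppRn-1} we already have $w\in \mathrm{C}^0_0(\overline{\mathbb{R}_+},\mathrm{B}^{s-1/p}_{p,p}(\mathbb{R}^{n-1}))$ with boundary value $w(0)=S_\phi[u_{|_{\partial\Omega}}]$, so every object below is well defined.

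The first step is to show that
$$\tilde F := \partial_{x_n} w + (-\Delta')^{1/2} w \in \dot{\mathrm{H}}^{s-1,p}(\mathbb{R}_+,\mathrm{L}^p(\mathbb{R}^{n-1}))\text{, with }\lVert \tilde F\rVert_{\dot{\mathrm{H}}^{s-1,p}(\mathbb{R}_+,\mathrm{L}^p(\mathbb{R}^{n-1}))}\lesssim \lVert w\rVert_{\dot{\mathcal{K}}^{s,p}(\mathbb{R}^n_+)}.$$
For the normal derivative term this is just the very definition of the seminorm $\lVert \cdot\rVert_{\dot{\mathcal{K}}^{s,p}}$. For the tangential term one uses the decomposition $(-\Delta')^{1/2} w = -\sum_{j=1}^{n-1} R'_j \partial_{x_j} w$ where $R'_j$ denotes the $j$-th Riesz transform on $\mathbb{R}^{n-1}$. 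Each $R'_j$ is bounded on $\mathrm{L}^p(\mathbb{R}^{n-1})$ and acts solely in the tangential variables, so it extends by tensorisation to a bounded operator on the vector-valued space $\dot{\mathrm{H}}^{s-1,p}(\mathbb{R}_+,\mathrm{L}^p(\mathbb{R}^{n-1}))$, which finishes this reduction.

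The second step is to identify $w$ on $\mathbb{R}_+$ with the mild solution of
$$\partial_t w + (-\Delta')^{1/2} w = \tilde F, \qquad w(0) = S_\phi[u_{|_{\partial\Omega}}],$$
and then to apply the homogeneous $\dot{\mathrm{H}}^{s-1,p}(\mathrm{L}^p)$-maximal regularity for the Poisson semigroup $(e^{-t(-\Delta')^{1/2}})_{t\geq 0}$ on $\mathrm{L}^p(\mathbb{R}^{n-1})$ supplied by \cite[Theorem~4.7]{Gaudin2023}. The trace embedding attached to this maximal-regularity space should give
$$\lVert w\rVert_{\mathrm{L}^{\infty}(\mathbb{R}_+,\dot{\mathrm{B}}^{s-1/p}_{p,p}(\mathbb{R}^{n-1}))} \lesssim_{p,s,n} \lVert \tilde F\rVert_{\dot{\mathrm{H}}^{s-1,p}(\mathbb{R}_+,\mathrm{L}^p(\mathbb{R}^{n-1}))},$$
with no residual low-order term. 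Combining with the first step yields the announced estimate.

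The main obstacle lies in this second step: verifying that \cite[Theorem~4.7]{Gaudin2023} indeed outputs a trace embedding landing in the \emph{homogeneous} Besov space $\dot{\mathrm{B}}^{s-1/p}_{p,p}(\mathbb{R}^{n-1})$, free of any residual Lebesgue-type contribution, even though the generator $(-\Delta')^{1/2}$ fails to be invertible on $\mathrm{L}^p(\mathbb{R}^{n-1})$. This is exactly the improvement over the inhomogeneous argument of Lemma \ref{lem:TangentialContinuityTraceHspOmegaBsppRn-1}, and it is the whole reason the homogeneous maximal regularity of \cite{Gaudin2023} was built in the first place; once it is invoked, the remaining bookkeeping is immediate.
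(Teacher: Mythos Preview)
Your approach mirrors the paper's: invoke Lemma \ref{lem:TangentialContinuityTraceHspOmegaBsppRn-1} for the qualitative continuity statement, set up the abstract Cauchy problem with generator $(-\Delta')^{1/2}$ in place of $(\mathrm{I}-\Delta')^{1/2}$, and apply \cite[Theorem~4.7]{Gaudin2023}. You correctly identify the key technical point --- that the homogeneous domain of $(-\Delta')^{1/2}$ is $\dot{\mathrm{H}}^{1,p}(\mathbb{R}^{n-1})$, so the trace space produced is genuinely $\dot{\mathrm{B}}^{s-1/p}_{p,p}$ --- and your Riesz-transform argument in the first step is exactly what underlies the paper's final equivalence $\lVert(\partial_t v,(-\Delta')^{1/2}v)\rVert_{\dot{\mathrm{H}}^{s-1,p}(\mathbb{R}_+,\mathrm{L}^p)}\sim\lVert v\rVert_{\dot{\mathcal{K}}^{s,p}}$.

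There is one slip in the formulation of your second step: the displayed estimate $\lVert w\rVert_{\mathrm{L}^\infty(\mathbb{R}_+,\dot{\mathrm{B}}^{s-1/p}_{p,p})}\lesssim\lVert\tilde F\rVert_{\dot{\mathrm{H}}^{s-1,p}}$ by itself is false --- take $\tilde F=0$ and $w(t)=e^{-t(-\Delta')^{1/2}}g$ for any nonzero $g\in\dot{\mathrm{B}}^{s-1/p}_{p,p}$. What \cite[Theorem~4.7]{Gaudin2023} actually delivers, and what the paper uses, is
\[
\lVert w\rVert_{\mathrm{L}^\infty(\mathbb{R}_+,\dot{\mathrm{B}}^{s-1/p}_{p,p})}\lesssim\lVert(\partial_t w,(-\Delta')^{1/2}w)\rVert_{\dot{\mathrm{H}}^{s-1,p}(\mathbb{R}_+,\mathrm{L}^p)}\text{.}
\]
Your first step already bounds $\partial_{x_n} w$ and $(-\Delta')^{1/2}w$ \emph{separately} in $\dot{\mathrm{H}}^{s-1,p}(\mathbb{R}_+,\mathrm{L}^p)$ by $\lVert w\rVert_{\dot{\mathcal{K}}^{s,p}}$, so this corrected version is immediately controlled; the detour through $\tilde F$ is unnecessary, and the worry about a residual initial-data term simply does not arise.
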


\begin{proof}By Lemma \ref{lem:TangentialContinuityTraceHspOmegaBsppRn-1},
\begin{align*}
    T_\phi u \in \mathrm{C}_0^0(\overline{\mathbb{R}_+},\mathrm{B}^{s-1/p}_{p,p}(\mathbb{R}^{n-1}))\subset \mathrm{C}_0^0(\overline{\mathbb{R}_+},\dot{\mathrm{B}}^{s-1/p}_{p,p}(\mathbb{R}^{n-1}))\text{.}
\end{align*}
As in the proof of Lemma \ref{lem:TangentialContinuityTraceHspOmegaBsppRn-1}, for $v(t,x'):= T_\phi u (x',t)$, $x'\in\mathbb{R}^{n-1}$ and $t\geqslant 0$, we have
\begin{align*}
    &f:=\partial_t v + (-\Delta')^\frac{1}{2}v \in {\mathrm{H}}^{s-1,p}(\mathbb{R}_+,\mathrm{L}^p(\mathbb{R}^{n-1}))\subset\dot{\mathrm{H}}^{s-1,p}(\mathbb{R}_+,\mathrm{L}^p(\mathbb{R}^{n-1}))\text{, }\\
     \text{ and } & v(0,\cdot) = \left[T_\phi u\right]_{|_{\partial\mathbb{R}^n_+}} = S_\phi [ u_{|_{\partial\Omega}}] \in \mathrm{B}^{s-1/p}_{p,p}(\mathbb{R}^{n-1})\subset \dot{\mathrm{B}}^{s-1/p}_{p,p}(\mathbb{R}^{n-1}) \text{.}
\end{align*}
Therefore, by \cite[Theorem~4.7]{Gaudin2023}, since the operator $(-\Delta')^\frac{1}{2}$ on $\mathrm{L}^p(\mathbb{R}^{n-1})$ has a homogeneous domain $\mathrm{D}_p(\mathring{(-\Delta')}^\frac{1}{2})=\dot{\mathrm{H}}^{1,p}(\mathbb{R}^{n-1})$, we have the following maximal regularity estimate
\begin{align*}
    \lVert v \rVert_{\mathrm{L}^{\infty}(\mathbb{R}_+,\dot{\mathrm{B}}^{s-1/p}_{p,p}(\mathbb{R}^{n-1}))} \lesssim_{p,s,n} \lVert (\partial_t v,(-\Delta')^\frac{1}{2}v)\rVert_{ \dot{\mathrm{H}}^{s-1,p}(\mathbb{R}_+,\mathrm{L}^p(\mathbb{R}^{n-1}))}\sim_{p,s,n} \lVert v\rVert_{\dot{\mathcal{K}}^{s,p}(\mathbb{R}^n_+)}\text{,}
\end{align*}
as desired.
\end{proof}

\subsection{The trace theorem and related results}

We state and prove the trace theorem. We mention for the reader that the definition of the trace operator was given above, see Definition \ref{def:TraceOperator}.

\begin{theorem}\label{thm:TraceSpeLipopti}Let $p,q\in[1,+\infty]$, $p<+\infty$, $s\in(1/p,1+1/p)$. The following statements are true:
\begin{enumerate}[label=($\roman*$)]
    \item provided $1<p<+\infty$, for all $u\in \dot{\mathrm{H}}^{s,p}(\Omega)$, we have $u_{|_{\partial\Omega}}\in \dot{\mathrm{B}}^{s-\frac{1}{p}}_{p,p}(\partial\Omega)$ with the estimate
    \begin{align*}
        \lVert  u_{|_{\partial\Omega}} \rVert_{\dot{\mathrm{B}}^{s-\frac{1}{p}}_{p,p}(\partial\Omega)}  \lesssim_{p,s,n,\partial\Omega} \lVert u \rVert_{\dot{\mathrm{H}}^{s,p}(\Omega)} ;
    \end{align*}
    \item for all $u\in \dot{\mathrm{B}}^{s}_{p,q}(\Omega)$, we have $u_{|_{\partial\Omega}}\in \dot{\mathrm{B}}^{s-\frac{1}{p}}_{p,q}(\partial\Omega)$ with the estimate
    \begin{align*}
        \lVert u_{|_{\partial\Omega}} \rVert_{\dot{\mathrm{B}}^{s-\frac{1}{p}}_{p,q}(\partial\Omega)} \lesssim_{p,s,n,\partial\Omega} \lVert u \rVert_{\dot{\mathrm{B}}^{s}_{p,q}(\Omega)}  ;
    \end{align*}
    \item for all $u\in \dot{\mathrm{B}}^{s,0}_{\infty,q}(\Omega)$, we have $u_{|_{\partial\Omega}}\in \dot{\mathrm{B}}^{s,0}_{\infty,q}(\partial\Omega)$ with the estimate
    \begin{align*}
        \lVert u_{|_{\partial\Omega}} \rVert_{\dot{\mathrm{B}}^{s}_{\infty,q}(\partial\Omega)} \lesssim_{p,s,n,\partial\Omega} \lVert u \rVert_{\dot{\mathrm{B}}^{s}_{\infty,q}(\Omega)}  ;
    \end{align*}
    \item provided $k\in\llb0,1\rrb$, for all $u\in \dot{\mathrm{B}}^{k+\frac{1}{p}}_{p,1}(\Omega)$, we have $u_{|_{\partial\Omega}}\in\dot{\mathrm{W}}^{k,p}(\partial\Omega)$ with the estimate
    \begin{align*}
        \lVert u_{|_{\partial\Omega}} \rVert_{\dot{\mathrm{W}}^{k,p}(\partial\Omega)} \lesssim_{p,n,\partial\Omega} \lVert u \rVert_{\dot{\mathrm{B}}^{k+\frac{1}{p}}_{p,1}(\Omega)}.
    \end{align*}
    \item provided $k\in\llb0,1\rrb$, for all $u\in \dot{\mathrm{W}}^{k+1,1}(\Omega)$, we have $u_{|_{\partial\Omega}}\in\dot{\mathrm{W}}^{k,1}(\partial\Omega)$ with the estimate
    \begin{align*}
        \lVert u_{|_{\partial\Omega}} \rVert_{\dot{\mathrm{W}}^{k,1}(\partial\Omega)} \lesssim_{n,\partial\Omega} \lVert u \rVert_{\dot{\mathrm{W}}^{k+1,1}(\Omega)}.
    \end{align*}
\end{enumerate}
Moreover, when $p=+\infty$, the estimates in \textit{(ii)} and \textit{(iv)} remain true while it is not clear that $S_{\phi}[u_{|_{\partial\Omega}}]$ belongs to $\mathcal{S}'_h(\mathbb{R}^{n-1})$.
\end{theorem}

\begin{proof}
\textbf{Step 1:} homogeneous Sobolev spaces with $1<p<+\infty$.
    
    Let $u\in \mathrm{H}^{s,p}(\Omega)$. We assume first that $s\in(\frac{1}{p},1]$. By Proposition \ref{prop:globalMapChangeHsp}, we have $T_\phi u \in \mathrm{H}^{s,p}(\mathbb{R}^n_+)$. The standard trace theorem with homogeneous estimates \cite[Theorem~4.3]{Gaudin2022}\footnote{which is just Proposition \ref{prop:TraceEmbeddingInhomHspRn+} to which was applied a dilation argument.} yields that, $$x_n\mapsto T_\phi u(\cdot,x_n)\in \mathrm{C}^0_b(\overline{\mathbb{R}_+},\dot{\mathrm{B}}^{s-1/p}_{p,p}(\mathbb{R}^{n-1})),$$ with the estimates
    \begin{align*}
        \lVert   T_\phi u (\cdot,0) \rVert_{\dot{\mathrm{B}}^{s-\frac{1}{p}}_{p,p}(\mathbb{R}^{n-1})}  \lesssim_{s,p,n} \lVert T_\phi u \rVert_{\dot{\mathrm{H}}^{s,p}(\mathbb{R}^n_+)}\lesssim_{s,p,n} \lVert u \rVert_{\dot{\mathrm{H}}^{s,p}(\Omega)}\text{.}
    \end{align*}

    Now, we consider $u\in \mathrm{H}^{s,p}(\Omega)$, with $s\in[1,1+1/p)$, it follows from the successive use of Lemma~\ref{lem:EstimatefromHsptoAnisotropicHspLp} and Corollary~\ref{cor:homAnisotropicTraceEst}, that,
    \begin{align*}
        \lVert  T_\phi u(\cdot,0) \rVert_{\dot{\mathrm{B}}^{s-\frac{1}{p}}_{p,p}(\mathbb{R}^{n-1})} \lesssim_{s,p,n} \lVert T_\phi u \rVert_{\dot{\mathcal{K}}^{s,p}(\mathbb{R}^n_+)}\lesssim_{s,p,n,\partial\Omega} \lVert u \rVert_{\dot{\mathrm{H}}^{s,p}(\Omega)}\text{.}
    \end{align*}
    We want to relax the estimate for all $u\in\dot{\mathrm{H}}^{s,p}(\Omega)$. We cannot conclude directly with a density argument, since the involved spaces are not necessarily complete.

    Let $1/p<s<1+1/p$, $u\in\dot{\mathrm{H}}^{s,p}(\Omega)$. We consider $U\in \dot{\mathrm{H}}^{s,p}(\mathbb{R}^n)$ such that $U_{|_\Omega}=u$, we also do have $U_{|_{\partial\Omega}}=u_{|_{\partial\Omega}}$. We set
    $U_+:=[\mathrm{I}-\dot{S}_0]U$, and
    $U_-:=\dot{S}_0U$, $U_{N,-}:=[\dot{S}_0-\dot{S}_{-N}]U$. we have for all $N\in\mathbb{N}$,
\begin{align*}
    U_{-,N},\,U_{-}\in\mathrm{C}^1_0(\mathbb{R}^n)\text{ and }U_{-,N}, U_+\in\mathrm{H}^{s,p}(\mathbb{R}^n),
\end{align*}
with the property
\begin{align*}
    \lVert U_{-}-U_{-,N}\rVert_{\mathrm{L}^{\infty}(\mathbb{R}^n)}\xrightarrow[N\rightarrow+\infty]{}0.
\end{align*}
In particular, for all $N\in\mathbb{N}$,
\begin{align*}
    &T_{\phi}U_{-,N},T_{\phi}U_{-}\in\mathrm{C}^{0,1}_0(\mathbb{R}^n)\text{, and }T_{\phi}U_{-,N}, T_{\phi}U_+\in\mathrm{H}^{s,p}(\mathbb{R}^n),\\
    &T_{\phi}U_{-,N}(\cdot,0),T_{\phi}U_{-}(\cdot,0)\in\mathrm{C}^{0,1}_0(\mathbb{R}^{n-1})\text{, and }T_{\phi}U_{-,N}(\cdot,0), T_{\phi}U_+(\cdot,0)\in\mathrm{B}^{s-1/p}_{p,p}(\mathbb{R}^{n-1}),
\end{align*}
with the property
\begin{align*}
    \lVert T_{\phi}U_{-}(\cdot,0)-T_{\phi}U_{-,N}(\cdot,0)\rVert_{\mathrm{L}^{\infty}(\mathbb{R}^{n-1})}\xrightarrow[N\rightarrow+\infty]{}0.
\end{align*}
It has to be mentioned that $T_{\phi}u(\cdot,0)\in\mathcal{S}'_h(\mathbb{R}^{n-1})$.

Since $(T_{\phi}U_{-,N}(\cdot,0))_{N\in\mathbb{N}}$ converges everywhere to $T_{\phi}U_{-}(\cdot,0)$, by the Fatou Lemma, and the previous estimates
    \begin{align*}
        \lVert  T_\phi u(\cdot,0) \rVert_{\dot{\mathrm{B}}^{s-\frac{1}{p}}_{p,p}(\mathbb{R}^{n-1})} = \lVert  T_\phi U(\cdot,0) \rVert_{\dot{\mathrm{B}}^{s-\frac{1}{p}}_{p,p}(\mathbb{R}^{n-1})} &\leqslant \liminf_{N\rightarrow+\infty}\, \lVert  T_\phi U_{-,N}(\cdot,0) \rVert_{\dot{\mathrm{B}}^{s-\frac{1}{p}}_{p,p}(\mathbb{R}^{n-1})} \\ & \qquad \qquad \qquad + \lVert  T_\phi U_+(\cdot,0) \rVert_{\dot{\mathrm{B}}^{s-\frac{1}{p}}_{p,p}(\mathbb{R}^{n-1})}\\
        &\lesssim_{s,p,n,\partial\Omega} \lVert U \rVert_{\dot{\mathrm{H}}^{s,p}(\mathbb{R}^n)}\text{.}
    \end{align*}
    Taking the infimum on all such $U$ yields
    \begin{align*}
        \lVert  T_\phi u(\cdot,0) \rVert_{\dot{\mathrm{B}}^{s-\frac{1}{p}}_{p,p}(\mathbb{R}^{n-1})} \lesssim_{s,p,n,\partial\Omega} \lVert u \rVert_{\dot{\mathrm{H}}^{s,p}(\Omega)}\text{.}
    \end{align*}
    But for almost every $x'\in\mathbb{R}^{n-1}$, we recall that
    \begin{align*}
        T_\phi u (x',0) = u(x',0+\phi(x'))= S_\phi[u_{|_{\partial\Omega}}](x')\text{.}
    \end{align*}
    Thus, one applies Lemma \ref{lem:BsppDOmegaBsppRn-1}:
    \begin{align*}
        \lVert  u_{|_{\partial\Omega}} \rVert_{\dot{\mathrm{B}}^{s-\frac{1}{p}}_{p,p}(\partial\Omega)} \lesssim_{p,s,n,\partial\Omega}\lVert  S_\phi[u_{|_{\partial\Omega}}] \rVert_{\dot{\mathrm{B}}^{s-\frac{1}{p}}_{p,p}(\mathbb{R}^{n-1})} = \lVert T_{\phi}u(\cdot,0) \rVert_{\dot{\mathrm{B}}^{s-\frac{1}{p}}_{p,p}(\mathbb{R}^{n-1})}  \lesssim_{s,p,n,\partial\Omega} \lVert u \rVert_{\dot{\mathrm{H}}^{s,p}(\Omega)}\text{.}
    \end{align*}

    Hence, we have obtained \textit{(i)}.
    
    \textbf{Step 2:} homogeneous Besov spaces with $1<p<+\infty$.
    
    For $u\in \dot{\mathrm{B}}^{\frac{1}{p}}_{p,1}(\Omega)$, by Lemma \ref{lem:IsomHomBesovSpacesRn+SpeLip} and \cite[Theorem~4.3]{Gaudin2022}, it holds
    \begin{align*}
        T_\phi u \in \dot{\mathrm{B}}^{\frac{1}{p}}_{p,1}(\mathbb{R}^n_+)\subset \mathrm{C}^0_0(\overline{\mathbb{R}_+},\mathrm{L}^p(\mathbb{R}^{n-1}))
    \end{align*}
    with the estimates
    \begin{align}\label{eq:estimateTraceBesovinLebesgue}
        \lVert u_{|_{\partial\Omega}}\rVert_{\mathrm{L}^p(\partial\Omega)}\sim_{p,n,\partial\Omega}\lVert T_\phi u(\cdot,0)\rVert_{\mathrm{L}^p(\mathbb{R}^{n-1})} \lesssim_{p,n,\partial\Omega} \lVert T_\phi u\rVert_{\dot{\mathrm{B}}^{\frac{1}{p}}_{p,1}(\mathbb{R}^n_+)} \lesssim_{p,n,\partial\Omega} \lVert u\rVert_{\dot{\mathrm{B}}^{\frac{1}{p}}_{p,1}(\Omega)}\text{.}
    \end{align}
    
    Now, for $u\in \dot{\mathrm{B}}^{1+\frac{1}{p}}_{p,1}(\Omega)$, since $\nabla u \in \dot{\mathrm{B}}^{\frac{1}{p}}_{p,1}(\Omega)$, we may use the estimate \eqref{eq:estimateTraceBesovinLebesgue},
    \begin{align*}
        \lVert [\nabla u]_{|_{\partial\Omega}}\rVert_{\mathrm{L}^p(\partial\Omega)} \lesssim_{p,n,\partial\Omega} \lVert \nabla u\rVert_{\dot{\mathrm{B}}^{\frac{1}{p}}_{p,1}(\Omega)} \lesssim_{p,n,\partial\Omega} \lVert u\rVert_{\dot{\mathrm{B}}^{1+\frac{1}{p}}_{p,1}(\Omega)} \text{.}
    \end{align*}
    But one may check that
    \begin{align*}
        \lVert u_{|_{\partial\Omega}}\rVert_{\dot{\mathrm{W}}^{1,p}(\partial\Omega)}=\lVert S_\phi^{-1}\nabla'[S_\phi u_{|_{\partial\Omega}}]\rVert_{\mathrm{L}^p(\partial\Omega)}\lesssim_{p,n,\partial\Omega}\lVert [\nabla u]_{|_{\partial\Omega}}\rVert_{\mathrm{L}^p(\partial\Omega)} \lesssim_{p,n,\partial\Omega}\lVert u\rVert_{\dot{\mathrm{B}}^{1+\frac{1}{p}}_{p,1}(\Omega)} \text{.}
    \end{align*}
    Real interpolation, Theorem \ref{thm:RealInterpHomSpacesSpeLip}, yields \textit{(ii)} and \textit{(iv)} whenever $1<p<+\infty$.
    
    \textbf{Step 3:} homogeneous Sobolev and Besov spaces with $p=1$.

    Let $u\in\mathcal{S}(\mathbb{R}^n)$, one has
    \begin{align*}
        \lVert  u_{|_{\partial\Omega}}\rVert_{\mathrm{L}^1(\partial\Omega)} \lesssim_{n,\partial\Omega} \lVert T_\phi u(\cdot,0)\rVert_{\mathrm{L}^1(\mathbb{R}^{n-1})} \lesssim_{n,\partial\Omega} \lVert \partial_{x_n} T_\phi u\rVert_{\mathrm{L}^{1}(\mathbb{R}^n_+)} \lesssim_{n,\partial\Omega} \lVert u\rVert_{\dot{\mathrm{W}}^{1,1}(\Omega)},
    \end{align*}
    where we recall that $\partial_{x_n}$ and $ T_\phi$ commute. Similarly,
    \begin{align*}
        \lVert  u_{|_{\partial\Omega}}\rVert_{\dot{\mathrm{W}}^{1,1}(\partial\Omega)} \lesssim_{n,\partial\Omega} \lVert \nabla' T_\phi u(\cdot,0)\rVert_{\mathrm{L}^1(\mathbb{R}^{n-1})} \lesssim_{n,\partial\Omega} \lVert \partial_{x_n} \nabla' T_\phi u\rVert_{\mathrm{L}^{1}(\mathbb{R}^n_+)} \lesssim_{n,\partial\Omega} \lVert u\rVert_{\dot{\mathrm{W}}^{2,1}(\Omega)}.
    \end{align*}
    Again, this follows from the fact that $\partial_{x_n}$ commutes with $T_\phi$ and the multiplication by $\phi$ (that does not depend on $x_n$). Each space involved is complete, therefore \textit{(v)} holds by density.

    For $k\in\llb 0,1\rrb$, one has $\dot{\mathrm{B}}^{k+1}_{1,1}(\Omega)\hookrightarrow \dot{\mathrm{W}}^{k+1,1}(\Omega)$ which yields \textit{(iv)} when $p=1$. Consequently, by real interpolation, Theorem \ref{thm:RealInterpHomSpacesSpeLip}, \textit{(ii)} holds for $p=1$.

    \textbf{Step 4:} homogeneous Besov spaces with $p=+\infty$.

    Let $k=0,1$, and $u\in\dot{\mathrm{B}}^{k}_{\infty,1}(\Omega)$, by the definition of function spaces by restriction, one has $u\in\mathrm{C}^{k}_{b,h}(\overline{\Omega})$ and then $T_\phi u \in\mathrm{C}^{0,k}_{b}(\overline{\mathbb{R}^{n}_+})$. It follows that
    \begin{align*}
        \lVert (\Delta_jT_\phi u(\cdot,0))_{j\in\mathbb{Z}}\rVert_{\ell^\infty_k(\mathbb{Z},\mathrm{L}^\infty(\mathbb{R}^{n-1}))} \leqslant \lVert T_\phi u(\cdot,0) \rVert_{\dot{\mathrm{W}}^{k,\infty}(\mathbb{R}^{n-1})} &\leqslant \lVert T_\phi u \rVert_{\dot{\mathrm{W}}^{k,\infty}(\mathbb{R}^{n}_+)}\\ &\lesssim_{n,\partial\Omega}\lVert  u \rVert_{\dot{\mathrm{W}}^{k,\infty}(\Omega)} \lesssim_{n,\partial\Omega}  \lVert  u \rVert_{\dot{\mathrm{B}}^{k}_{\infty,1}(\Omega)}.
    \end{align*}
    This yields \textit{(iv)} when $p=+\infty$.

    The map $u\mapsto (\Delta_jT_\phi u(\cdot,0))_{j\in\mathbb{Z}}$ is then well-defined and bounded as a linear operator from $\dot{\mathrm{B}}^{k}_{\infty,1}(\Omega)$ to $\ell^\infty_k(\mathbb{Z},\mathrm{L}^\infty(\mathbb{R}^{n-1}))$.
    
    By real interpolation and Lemma \ref{lem:EmbeddingInterpHomSobspacesSpeLip}, one obtains for all $u\in\dot{\mathrm{B}}^{s}_{\infty,q}(\Omega)\hookrightarrow \big(\dot{\mathrm{B}}^{0}_{\infty,1}(\Omega),\dot{\mathrm{B}}^{1}_{\infty,1}(\Omega)\big)_{s,q}$, $s\in(0,1)$, $q\in[1,+\infty]$,
    \begin{align*}
        \lVert (\Delta_jT_\phi u(\cdot,0))_{j\in\mathbb{Z}}\rVert_{\ell^q_s(\mathbb{Z},\mathrm{L}^\infty(\mathbb{R}^{n-1}))} \lesssim_{p,s,n}^{\partial\Omega}  \lVert  u \rVert_{\big(\dot{\mathrm{B}}^{0}_{\infty,1}(\Omega),\dot{\mathrm{B}}^{1}_{\infty,1}(\Omega)\big)_{s,q}}\lesssim_{p,s,n}^{\partial\Omega}  \lVert  u \rVert_{\dot{\mathrm{B}}^{s}_{\infty,q}(\Omega)}
    \end{align*}
    We recall that, since $s>0$, by Lemma \ref{lem:structLemBesovSpeLip}, $\dot{\mathrm{B}}^{s}_{\infty,q}(\Omega) = {\mathrm{B}}^{s}_{\infty,q}(\Omega)\cap \mathcal{S}'_h(\mathbb{R}^n)_{|_{\Omega}}$. Therefore, $u_{|_{\partial\Omega}}\in{\mathrm{B}}^{s}_{\infty,q}(\partial\Omega)$ is already well-defined, so the estimate
    \begin{align*}
        \lVert  u_{|_{\partial\Omega}} \rVert_{\dot{\mathrm{B}}^{s}_{\infty,q}(\partial\Omega)} \lesssim_{p,s,n}^{\partial\Omega}  \lVert  u \rVert_{\dot{\mathrm{B}}^{s}_{\infty,q}(\Omega)}
    \end{align*}
    holds, even if it is not clear that $S_\phi[u_{|_{\partial\Omega}}]$ belongs to $\mathcal{S}'_h(\mathbb{R}^{n-1})$. This yields the case $p=+\infty$. In the case of elements $u \in \dot{\mathrm{B}}^{s,0}_{\infty,q}(\Omega) \subset {\mathrm{B}}^{s,0}_{\infty,q}(\Omega) \subset \mathrm{C}^{0}_0(\overline{\Omega})$, one has $S_\phi[u_{|_{\partial\Omega}}]\in\mathrm{C}^{0}_0(\mathbb{R}^{n-1})\subset\mathcal{S}'_h(\mathbb{R}^{n-1})$. Therefore, \textit{(iii)} holds.
\end{proof}

We state interesting consequences on regularity and integrability properties of traces in the case of intersection spaces, and we provide the classical identification of function spaces for functions that vanish on the boundary. The proofs are very similar to the corresponding ones in \cite[Section~4B]{Gaudin2022} for $\Omega$ to be the flat half-space.

\begin{proposition}\label{prop:TraceSpeLipIntersec1}Let $p,q\in[1,+\infty]$, and $\theta\in(0,1)$, $-1+\frac{1}{p} < s_0 < \frac{1}{p} < s_1<1+\frac{1}{p}$ such that
\begin{align*}
    \frac{1}{p}=(1-\theta)s_0+\theta s_1\text{.}
\end{align*}
\begin{enumerate}[label=($\roman*$)]
    \item Provided $1<p<+\infty$, for all $u\in \dot{\mathrm{H}}^{s_0,p}(\Omega)\cap \dot{\mathrm{H}}^{s_1,p}(\Omega)$, we have $u_{|_{\partial\Omega}} \in \mathrm{B}^{s_1-\frac{1}{p}}_{p,p}(\partial\Omega)$, with the estimate
    \begin{align*}
        \lVert  u_{|_{\partial\Omega}} \rVert_{\mathrm{B}^{s_1-\frac{1}{p}}_{p,p}(\partial\Omega)}  \lesssim_{s_0,s_1,p,n,\partial\Omega} \lVert u \rVert_{\dot{\mathrm{H}}^{s_0,p}(\Omega)}^{1-\theta}\lVert u \rVert_{\dot{\mathrm{H}}^{s_1,p}(\Omega)}^\theta +  \lVert u \rVert_{\dot{\mathrm{H}}^{s_1,p}(\Omega)} \text{. }
    \end{align*}
    \item For all $u\in \dot{\mathrm{B}}^{s_0}_{p,q}(\Omega)\cap \dot{\mathrm{B}}^{s_1}_{p,q}(\Omega)$, we have $u_{|_{\partial\Omega}} \in \mathrm{B}^{s_1-\frac{1}{p}}_{p,q}(\partial\Omega)$, with the estimate
    \begin{align*}
        \lVert  u_{|_{\partial\Omega}} \rVert_{\mathrm{B}^{s_1-\frac{1}{p}}_{p,q}(\partial\Omega)}  \lesssim_{s_0,s_1,p,n,\partial\Omega} \lVert u \rVert_{\dot{\mathrm{B}}^{s_0}_{p,q}(\Omega)}^{1-\theta}\lVert u \rVert_{\dot{\mathrm{B}}^{s_1}_{p,q}(\Omega)}^\theta +  \lVert u \rVert_{\dot{\mathrm{B}}^{s_1}_{p,q}(\Omega)} \text{. }
    \end{align*}
\end{enumerate}
\end{proposition}

\begin{proof} We mention the result \cite[Proposition~4.4]{Gaudin2022} for the case $\Omega=\mathbb{R}^n_+$, where the proof only relies on interpolation inequalities and the appropriate trace estimates. Everything has been made in order to recover the same interpolation inequalities, the result then follows from Theorem~\ref{thm:TraceSpeLipopti}.
\end{proof}

\begin{lemma}\label{lem:IdentHsp0and0trace} Let $p_j\in(1,+\infty)$, $s_j\in(1/p_j,1+1/p_j)$, $j\in\{0,1\}$. We have the following canonical isomorphism of normed spaces
\begin{align*}
    \{\, u\in[\dot{\mathrm{H}}^{s_0,p_0}\cap\dot{\mathrm{H}}^{s_1,p_1}](\Omega) \,|\, u_{|_{\partial\Omega}} =0 \,\} \simeq [\dot{\mathrm{H}}^{s_0,p_0}_0\cap\dot{\mathrm{H}}^{s_1,p_1}_0](\Omega) \text{. }
\end{align*}
The result still holds replacing $\dot{\mathrm{H}}^{s_j,p_j}$ by $\dot{\mathrm{B}}^{s_j}_{p_j,q_j}$, $\dot{\mathcal{B}}^{s_j}_{p_j,\infty}$, $\dot{\mathrm{B}}^{s_j,0}_{\infty,q_j}$, $\dot{\mathrm{W}}^{1,1}$, ${\mathrm{C}}^{0}_0$, $p_j,q_j\in[1,+\infty]$, $j\in\{0,1\}$.
\end{lemma}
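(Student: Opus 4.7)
The lemma asserts a canonical isomorphism realised by restriction in one direction and extension by zero in the other, so I plan to verify that each map lands in the claimed space, with the intersection version reducing to handling each index $s_j$ separately.

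\textbf{Easy inclusion $(\supseteq)$.} Given $u \in [\dot{\mathrm{H}}^{s_0,p_0}_0 \cap \dot{\mathrm{H}}^{s_1,p_1}_0](\Omega)$, since $s_j > 1/p_j > 0$ and $(\mathcal{C}_{s_0,p_0})$ holds, Proposition~\ref{prop:densityCcinftyinhomHspspeLip1} yields a sequence in $\mathrm{C}_c^\infty(\Omega)$ converging to $u$ in both norms. Each approximant has vanishing trace trivially, and continuity of the trace operator (Theorem~\ref{thm:TraceSpeLipopti}) transfers this to $u_{|_{\partial\Omega}}$.

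\textbf{Hard inclusion $(\subseteq)$.} Let $u \in [\dot{\mathrm{H}}^{s_0,p_0}\cap\dot{\mathrm{H}}^{s_1,p_1}](\Omega)$ with $u_{|_{\partial\Omega}} = 0$. By Sobolev embedding (Proposition~\ref{prop:SobolevEmbeddingsDomain}) $u$ is locally integrable, so the zero extension $\tilde u := \mathbbm{1}_\Omega u$ is a well-defined distribution on $\mathbb{R}^n$ supported in $\overline\Omega$. I show $\tilde u \in \dot{\mathrm{H}}^{s_j,p_j}(\mathbb{R}^n)$ by splitting according to the size of $s_j$. If $s_j \in (1/p_j, 1]$, I transfer to the half-space via Corollary~\ref{cor:IsomHomSobSpacesRn+SpeLip}: one has $T_\phi \tilde u = \mathbbm{1}_{\mathbb{R}^n_+} T_\phi u$ and $T_\phi u$ has vanishing trace on $\partial \mathbb{R}^n_+$; the analogous half-space statement (end of \cite[Section~3]{Gaudin2022}) then gives $\mathbbm{1}_{\mathbb{R}^n_+} T_\phi u \in \dot{\mathrm{H}}^{s_j,p_j}_0(\mathbb{R}^n_+)$, and applying $T_\phi^{-1}$ (Proposition~\ref{prop:globalMapChangeHsp}) returns $\tilde u \in \dot{\mathrm{H}}^{s_j,p_j}_0(\Omega)$.

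If $s_j \in (1, 1+1/p_j)$, I argue directly. By density the integration-by-parts formula \eqref{eq:intbyPartsFormulae} extends from $\mathrm{C}^{0,1}_c$ to $u \in \mathrm{H}^{1,p_j}_{\mathrm{loc}}(\Omega)$, so for any $\varphi \in \mathrm{C}_c^\infty(\mathbb{R}^n)$, using the vanishing of the trace,
\[
\langle \partial_k \tilde u, \varphi\rangle_{\mathbb{R}^n} = -\int_\Omega u\, \partial_k \varphi \,\mathrm{d}x = \int_\Omega \partial_k u \cdot \varphi\,\mathrm{d}x = \langle \widetilde{\partial_k u}, \varphi\rangle_{\mathbb{R}^n},
\]
so $\nabla \tilde u = \widetilde{\nabla u}$ as distributions on $\mathbb{R}^n$. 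Since $\partial_k u \in \dot{\mathrm{H}}^{s_j-1,p_j}(\Omega)$ with $s_j - 1 \in (0, 1/p_j) \subset (-1+1/p_j, 1/p_j)$, Corollary~\ref{cor:HomSobolevMultiplierSpeLip} ensures $\widetilde{\partial_k u} \in \dot{\mathrm{H}}^{s_j-1,p_j}(\mathbb{R}^n)$; the norm equivalence on $\mathbb{R}^n$ obtained by inverting $(-\Delta)^{1/2}$ on $\eus{S}'_h(\mathbb{R}^n)$ then gives $\tilde u \in \dot{\mathrm{H}}^{s_j,p_j}(\mathbb{R}^n)$, which is supported in $\overline\Omega$ by construction.

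\textbf{Besov case and main obstacle.} The Besov scale version follows the same scheme upon substituting Lemma~\ref{lem:IsomHomBesovSpacesRn+SpeLip}, Proposition~\ref{prop:HomBesovMultiplierSpeLip}, Proposition~\ref{prop:EqNormNablakmBspBaq} and Proposition~\ref{prop:densityCcinftyBspq0SpeLip} for their Sobolev counterparts; density being only available for $q_j < +\infty$, the $q_j = +\infty$ case will be deduced by weak-$\ast$ arguments using Proposition~\ref{prop:weakstardensity}. The main obstacle is the rigorous integration-by-parts step when $s_j>1$: one has to approximate $u$ in $\mathrm{H}^{1,p_j}_{\mathrm{loc}}(\Omega)$ while keeping the boundary term controlled, which I expect to handle by truncation and mollification after pulling back to $\mathbb{R}^n_+$ via $T_\phi$.
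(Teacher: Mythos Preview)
Your proposal is correct and follows the same plan the paper sketches: the paper simply points to \cite[Lemma~3.10~\&~Corollary~3.11]{Gaudin2022} and observes that the three ingredients used there---integration by parts, the trace theorem, and bounded extension by zero in the range $(-1+1/p,1/p)$---are all now available on $\Omega$ via Corollary~\ref{cor:HomSobolevMultiplierSpeLip} (resp.\ Proposition~\ref{prop:HomBesovMultiplierSpeLip}).

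The one noteworthy difference is your case split at $s_j=1$. The paper's intended argument runs the integration-by-parts reduction $\partial_k\tilde u=\widetilde{\partial_k u}$ \emph{uniformly} over $s_j\in(1/p_j,1+1/p_j)$, since $s_j-1$ always lands in $(-1+1/p_j,1/p_j)$ where the multiplier/extension-by-zero result applies; when $s_j<1$ the gradient is not locally integrable, so the identity has to be read through duality or density, but that is exactly the content of Corollary~\ref{cor:Hdp=Hsp0SpeLip}. Your change-of-coordinates shortcut for $s_j\le 1$ sidesteps that distributional interpretation by invoking the half-space result directly via $T_\phi$, which is perfectly valid and arguably cleaner on that sub-range, at the cost of handling two cases. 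One small omission in that branch: when $(\mathcal{C}_{s_1,p_1})$ fails you need to justify applying $T_\phi^{-1}$ on the intersection $\dot{\mathrm{H}}^{s_0,p_0}\cap\dot{\mathrm{H}}^{s_1,p_1}(\mathbb{R}^n)$ rather than on $\dot{\mathrm{H}}^{s_1,p_1}(\mathbb{R}^n)$ alone; this is immediate from density of $\eus{S}_0(\mathbb{R}^n)$ in the intersection and Proposition~\ref{prop:globalMapChangeHsp}~\textit{(ii)}.
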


\begin{proof} We follow \cite[Lemma~4.8]{Gaudin2022}. Let $u\in [\dot{\mathrm{H}}^{s_0,p_0}\cap\dot{\mathrm{H}}^{s_1,p_1}](\Omega,\mathbb{C})$ such that $u_{|_{\partial\Omega}} =0$, then for all $\phi\in [\dot{\mathrm{H}}^{1-s_j,p_j'}\cap\mathcal{S}](\mathbb{R}^n,\mathbb{C}^n)$, we have
\begin{align*}
    \int_{\Omega} \nabla u \cdot \phi = - \int_{\Omega} u\,\div (\phi) \text{. }
\end{align*}
So that introducing the extensions by $0$ to $\mathbb{R}^n$,  $\tilde{u}$ and $\widetilde{\nabla u}$,
\begin{align*}
    \int_{\mathbb{R}^n} \widetilde{\nabla u} \cdot \phi = \int_{\Omega} \nabla u \cdot \phi = - \int_{\Omega} u\,\div (\phi)  = - \int_{\mathbb{R}^n} \tilde{u}\,\div (\phi)  = \big\langle \nabla\tilde{ u},\phi \big\rangle_{\mathbb{R}^n}\text{. }
\end{align*}
Therefore, for all $\phi\in [\dot{\mathrm{H}}^{1-s_j,p_j'}\cap\eus{S}](\mathbb{R}^n,\mathbb{C}^n)$,
\begin{align*}
    \int_{\mathbb{R}^n} \widetilde{\nabla u} \cdot \phi= \big\langle \nabla\tilde{ u},\phi \big\rangle_{\mathbb{R}^n}\text{. }
\end{align*}
Hence $\widetilde{\nabla u} = \nabla\tilde{ u}$ in $\eus{S}'(\mathbb{R}^n,\mathbb{C}^n)$. Thus, by Proposition \ref{prop:dualityRieszpotential} and Corollary \ref{cor:HomSobolevMultiplierSpeLip}, we deduce that
\begin{align*}
    \big\lvert\big\langle\nabla\tilde{ u},\phi \big\rangle _{\mathbb{R}^n}\big\rvert &\leqslant \lVert \phi \rVert_{\dot{\mathrm{H}}^{1-s_j,p_j'}(\mathbb{R}^n)}\lVert \widetilde{\nabla u} \rVert_{\dot{\mathrm{H}}^{s_j-1,p_j}(\mathbb{R}^n)}\\
    &\lesssim_{p_j,n,s_j}^{\partial\Omega} \lVert {\phi} \rVert_{\dot{\mathrm{H}}^{1-s_j,p_j'}(\mathbb{R}^n)}\lVert {\nabla u} \rVert_{\dot{\mathrm{H}}^{s_j-1,p_j}(\Omega)}\\
    &\lesssim_{p_j,n,s_j}^{\partial\Omega} \lVert {\phi} \rVert_{\dot{\mathrm{H}}^{1-s_j,p_j'}(\mathbb{R}^n)}\lVert {u} \rVert_{\dot{\mathrm{H}}^{s_j,p_j}(\Omega)}\text{. }
\end{align*}
In the case of inhomogeneous function spaces, the proof should conclude here.
In order to conclude the case of homogeneous function spaces, it remains to show, thanks to Proposition \ref{prop:dualityRieszpotential} and Proposition \ref{prop:EqNormNablakHsp}, that $\Tilde{u}$ belongs to $\mathcal{S}'_h(\mathbb{R}^n)$. To do so, we present three different strategies here, so that the proof for the remaining function spaces can benefit from at least one of them.

If $s_j<n/p_j$, for $j=0$ or $j=1$, then at least one space is complete and there is nothing to achieve. If $s_j>n/p_j$, for $j=0$ or $j=1$, Sobolev embeddings yield 
\begin{align*}
    u \in \dot{\mathrm{H}}^{s_j,p_j}(\Omega) \subset {\mathrm{H}}^{s_j,p_j}(\Omega) + \mathrm{C}^{0}_0(\overline{\Omega}) \subset \mathrm{C}_0^{0}(\overline{\Omega}).
\end{align*}
Since $u$ is continuous up to the boundary with trace $0$, it holds that
\begin{align*}
    \tilde{u}\in\mathrm{C}_0^0(\mathbb{R}^n)\subset \mathcal{S}'_h(\mathbb{R}^n).
\end{align*}
Now, it remains to deal with the case where one has $s_j=n/p_j$ for $j=0$ and $j=1$. We set $\sigma_j:=\min(1,n/p_j)>1/p_j$. We do have
\begin{align*}
    u \in \dot{\mathrm{H}}^{n/p_j,p_j}(\Omega) \subset {\mathrm{H}}^{\sigma_j,p_j}(\Omega) + \mathrm{C}^{0}_0(\overline{\Omega}).
\end{align*}
We consider $(a,b)\in {\mathrm{H}}^{\sigma_j,p_j}(\Omega) + \mathrm{C}^{0}_0(\overline{\Omega})$ such that $u=a+b$, since $u_{|_{\partial\Omega}}=0$, by Lemma \ref{lem:ProjDirBinHomSpaces}, we have the equality
\begin{align*}
    u = \mathcal{P}_\mathcal{D} u = \mathcal{P}_\mathcal{D} a + \mathcal{P}_\mathcal{D} b.
\end{align*}
We do have $\mathcal{P}_\mathcal{D} a\in\mathrm{H}^{\sigma_j,p_j}(\Omega)$, $\mathcal{P}_\mathcal{D} b\in \mathrm{C}^0_0(\overline{\Omega})$ with
\begin{align*}
    \mathcal{P}_\mathcal{D} a_{|_{\partial\Omega}}=\mathcal{P}_\mathcal{D} b_{|_{\partial\Omega}}=0.
\end{align*}
Consequently, since $\sigma_j>1/p_j$,
\begin{align*}
    \tilde{u} = \widetilde{\mathcal{P}_\mathcal{D} a}+\widetilde{\mathcal{P}_\mathcal{D} b} \in \mathrm{H}^{\sigma_j,p_j}_0(\Omega)+ \mathrm{C}_{0,0}^0(\Omega) \subset \mathcal{S}'_h(\mathbb{R}^{n-1}).
\end{align*}
The case of Besov spaces and other function spaces follows the same lines. The isomorphism is then a direct consequence.
\end{proof}

\begin{remark}We conclude this paper with few remarks.
    \begin{enumerate}
    \item It has been brought to the attention of the author Jones' extensions operators \cite{Jones1981}. This extension operator admits global homogeneous estimates for $(\varepsilon,\infty)$-domains $\Omega$. Moreover, those also have the good property of preserving the local character of extended functions, see \cite[Theorem~10.2]{BechtelBrownHallerDintelTolksdorf2021}. Then, in this case, one can reproduce the proof of Proposition~\ref{prop:ExtOpHomSobSpaces}. 
    
    \item In Theorem \ref{thm:TraceSpeLipopti}, we don't give any claim about a right bounded inverse. The naive composition with Poisson's extension
    \begin{align*}
        f\longmapsto T_{\phi}^{-1}[(y',y_n)\mapsto e^{-y_n(-\Delta')^{1/2}}S_{\phi} f(y')]
    \end{align*}
    only yields right bounded inverse for regularity indices  $1/p < s \leqslant 1$ ($1/p<s<1$ in the case of Besov spaces). 
    \item For a right bounded inverse for regularity indices between $1/p$ and $1+1/p$ in Theorem \ref{thm:TraceSpeLipopti}, personal discussions with Patrick Tolksdorf and Moritz Egert persuaded the author that it should be possible to adapt Jonsson and Wallin's extension operator, \cite[Chapter~VII, Theorem~3]{JonssonWallin1984} from the boundary from the whole domain, in a way so that it preserves homogeneous norms. 
    \item One may also use the same Jonsson and Wallin's (usual) extension operator to reprove exactly the same way \cite[Theorem~A.2]{Gaudin2023Hodge}, making sense of weak partial traces of differentials forms, and in this case $\mathbb{R}^n_+$ by $\Omega$ a special Lipschitz domain. The result is still far from being optimal, by the way. For the case of inhomogeneous function spaces on bounded Lipschitz domains, one may find an optimal statement in \cite{MitreaMitreaShaw2008}.
\end{enumerate}
\end{remark}

\section*{Acknowledgments} \addcontentsline{toc}{section}{Acknowledgments}

This work was partially supported by the ANR project RAGE ANR-18-CE40-0012.

\section*{Data Availability} \addcontentsline{toc}{section}{Data Availability}

The manuscript has no associated data. No new data were created
during the study.

\typeout{}                                
\bibliographystyle{alpha}
{\footnotesize
\bibliography{BiblioAnatole}}


\appendix
\section*{Appendices}
\addcontentsline{toc}{section}{Appendices}

\renewcommand{\thesection}{\Alph{section}}
\section{Dual spaces of some vector-valued sequence spaces}\label{Appendix:VectorValHomSpaces}

We present basic results on the dual of Lebesgue vector-valued sequence spaces, for which no concise and adapted statement appears to exist, to the best of the author's knowledge. The closest reference in spirit is \cite[Chapter~2, Section~11, Proposition, p.177]{bookTriebel1983}. Although this is basic and very well-known among specialists, we provide the explicit result and a proof for a particular case here, arguing that the remaining ones can be proven as in the last reference.

We recall that $\mathcal{M}(\mathbb{R}^n)=(\mathrm{C}^{0}_{0}(\mathbb{R}^n))'$ is the space of finite Radon measures over $\mathbb{R}^n$, for which we have $\mathcal{M}(\mathbb{R}^n)\ast\mathrm{L}^1(\mathbb{R}^n)\hookrightarrow\mathrm{L}^1(\mathbb{R}^n)$. We recall that the $\mathcal{M}(\mathbb{R}^n)$-norm of $\mu\in\mathcal{M}(\mathbb{R}^n)$ is
\begin{align*}
    \lVert \mu \rVert_{\mathcal{M}(\mathbb{R}^n)} := \sup_{\substack{g\in\mathrm{C}^0_0(\mathbb{R}^n),\\ \lVert g\rVert_{\mathrm{L}^\infty(\mathbb{R}^n)}\leqslant 1}} \,\left\lvert \int_{\mathbb{R}^n} g(x)\mathrm{~d}\mu(x) \right\rvert.
\end{align*}
We have the additional property $\lVert \mu \rVert_{\mathcal{M}(\mathbb{R}^n)}=|\mu|(\mathbb{R}^n)$.

\begin{lemma}\label{lem:DualitySequencespaces} Let $p,q\in[1,+\infty]$, $s\in\mathbb{R}$, the following duality identities hold:
\begin{enumerate}
    \item $\ell^{q}_{s}(\mathbb{Z},\mathrm{L}^p(\mathbb{R}^n)) = (\ell^{q'}_{-s}(\mathbb{Z},\mathrm{L}^{p'}(\mathbb{R}^n)) )'$, $p,q\in(1,+\infty]$;
    \item $\ell^{1}_{s}(\mathbb{Z},\mathrm{L}^p(\mathbb{R}^n)) = (c^{0}_{-s}(\mathbb{Z},\mathrm{L}^{p'}(\mathbb{R}^n)))'$, $p\in(1,+\infty]$;
    \item $\ell^{q}_{s}(\mathbb{Z},\mathcal{M}(\mathbb{R}^n)) = (\ell^{q'}_{-s}(\mathbb{Z},\mathrm{C}^{0}_{0}(\mathbb{R}^n)))'$, $q\in(1,+\infty]$;
    \item $\ell^{1}_{s}(\mathbb{Z},\mathcal{M}(\mathbb{R}^n)) = (c^{0}_{-s}(\mathbb{Z},\mathrm{C}^{0}_{0}(\mathbb{R}^n)))'$.
\end{enumerate}
\end{lemma}

\begin{proof} The identity \textit{(i)} for $s=0$ has been proved in \cite[Chapter~2,~Section~11,~Proposition,~p.177]{bookTriebel1983}. For each identity, the embedding from the left-hand side to the right-hand side always holds. This is a direct consequence of Hölder's inequality. The action is given by the map
\begin{align*}
\left\{\begin{array}{cl}
\ell^{q}_{s}(\mathbb{Z},\mathrm{L}^p(\mathbb{R}^n)) &\longrightarrow (\ell^{q'}_{-s}(\mathbb{Z},\mathrm{L}^{p'}(\mathbb{R}^n)) )'\\
(g_j)_{j\in\mathbb{Z}} &\longmapsto \left[ (f_j)_{j\in\mathbb{Z}}\longmapsto \sum_{j\in\mathbb{Z}}\int_{\mathbb{R}^n} f_j(x)\,g_j(x)\mathrm{~d}x\right]
\end{array}\right.
\end{align*}
and similarly for $\ell^{1}_{s}(\mathbb{Z},\mathrm{L}^p(\mathbb{R}^n)) \hookrightarrow (c^{0}_{-s}(\mathbb{Z},\mathrm{L}^{p'}(\mathbb{R}^n)))$.

In the case of $\mathrm{C}^{0}_{0}(\mathbb{R}^n)$ instead of $\mathrm{L}^p(\mathbb{R}^n)$, Hölder's inequality is replaced by the duality relation $\mathcal{M}(\mathbb{R}^n)=(\mathrm{C}^{0}_{0}(\mathbb{R}^n))'$: the action is given by the map
\begin{align*}
\left\{\begin{array}{cl}
\ell^{1}_{s}(\mathbb{Z},\mathcal{M}(\mathbb{R}^n)) &\longrightarrow (c^{0}_{-s}(\mathbb{Z},\mathrm{C}^{0}_{0}(\mathbb{R}^n) )'\\
(\mu_j)_{j\in\mathbb{Z}} &\longmapsto \left[ (f_j)_{j\in\mathbb{Z}}\longmapsto \sum_{j\in\mathbb{Z}}\int_{\mathbb{R}^n} f_j(x)\mathrm{~d}\mu_{j}(x)\right]
\end{array}\right.
\end{align*}
and similarly for $\ell^{q}_{s}(\mathbb{Z},\mathcal{M}(\mathbb{R}^n)) \hookrightarrow (\ell^{q'}_{-s}(\mathbb{Z},\mathrm{C}^{0}_{0}(\mathbb{R}^n)))'$.

Now, we focus on the ontoness of the canonical map $\ell^{1}_{s}(\mathbb{Z},\mathcal{M}(\mathbb{R}^n)) \longrightarrow (c^{0}_{-s}(\mathbb{Z},\mathrm{C}^{0}_{0}(\mathbb{R}^n))'$. The remaining cases admit a similar proof, thanks to the arguments one can find in the proof of \cite[Chapter~2,~Section~11,~Proposition,~p.177]{bookTriebel1983}.

We set $(\mathrm{e}_j)_{j\in\mathbb{Z}}$ to be the canonical Schauder basis of sequence spaces. For all $k\in\mathbb{Z}$, all $f\in\mathrm{C}_0^0(\mathbb{R}^n)$,  we set $\iota_k(f):= f\otimes\mathrm{e}_k \in c^{0}_{-s}(\mathbb{Z},\mathrm{C}^{0}_{0}(\mathbb{R}^n))$. For all $N\in\mathbb{N}$, all $\mathfrak{f}\in c^{0}_{-s}(\mathbb{Z},\mathrm{C}^{0}_{0}(\mathbb{R}^n))$, we define the projection $\pi_{N}\mathfrak{f} := \sum_{-N\leqslant j\leqslant N} \mathfrak{f}_j\otimes\mathrm{e}_j$.

Let $\Gamma \in (c^{0}_{-s}(\mathbb{Z},\mathrm{C}^{0}_{0}(\mathbb{R}^n))'$. For all $k\in\mathbb{Z}$, we deduce by composition that $\Gamma\circ\iota_k \in (\mathrm{C}^{0}_{0}(\mathbb{R}^n))'=\mathcal{M}(\mathbb{R}^n)$, we define $\mu_k\in\mathcal{M}(\mathbb{R}^n)$, through the identity
\begin{align*}
    \Gamma\circ\iota_k(\varphi) =: \int_{\mathbb{R}^n} \varphi(x)\mathrm{~d}\mu_k(x),\, \forall \varphi\in\mathrm{C}^0_0(\mathbb{R}^n).
\end{align*}
We want to show that $\mathfrak{\mu} = (\mu_k)_{k\in\mathbb{Z}}\in\ell^{1}_{s}(\mathbb{Z},\mathcal{M}(\mathbb{R}^n))$. Let $\varepsilon>0$, by definition of the $\mathcal{M}(\mathbb{R}^n)$-norm, for all $k\in\mathbb{Z}$, there exists $\psi_k\in\mathrm{C}^{0}_{0}(\mathbb{R}^n)$, $|\psi_k|\leqslant 1$, such that\footnote{Up to multiply $\psi_k$ by $\arg(\int \psi_k \mathrm{d}\mu_k)$}
\begin{align*}
    \lVert \mu_k\rVert_{\mathcal{M}(\mathbb{R}^n)} \leqslant \varepsilon + \int_{\mathbb{R}^n} \psi_k(x)\mathrm{~d}\mu_k(x).
\end{align*}

For all $N\in\mathbb{N}$, one has the equality
\begin{align*}
    \Gamma\circ\pi_{N} = \sum_{-N\leqslant k\leqslant N} \Gamma\circ\iota_k.
\end{align*}
Therefore,
\begin{align*}
    \sum_{j\in\llb-N,N\rrb} 2^{js} \lVert \mu_j\rVert_{\mathcal{M}(\mathbb{R}^n)} &\leqslant \varepsilon\left(\sum_{j\in\llb-N,N\rrb} 2^{js}\right) + \sum_{j\in\llb-N,N\rrb} \int_{\mathbb{R}^n} (2^{js}\psi_j(x))\mathrm{~d}\mu_j(x)\\
    &\leqslant C_{N,s}\cdot\varepsilon + \Gamma\left[\pi_{\llb-N,N\rrb}(2^{js}\psi_j)_{j\in\mathbb{Z}}\right]\\
    &\leqslant C_{N,s}\cdot\varepsilon + \lVert\Gamma\rVert_{(c^{0}_{-s}(\mathbb{Z},\mathrm{C}^{0}_{0}(\mathbb{R}^n)))'}\lVert(\psi_j)_{j\in\mathbb{Z}}\rVert_{\ell^{\infty}(\mathbb{Z},\mathrm{L}^{\infty}(\mathbb{R}^n))}\\
    &\leqslant C_{N,s}\cdot\varepsilon + \lVert\Gamma\rVert_{(c^{0}_{-s}(\mathbb{Z},\mathrm{C}^{0}_{0}(\mathbb{R}^n)))'}.
\end{align*}
The last inequality comes from $|\psi_k|\leqslant 1$, $k\in\mathbb{Z}$. Therefore, as $\varepsilon$ tends to 0, we obtain, for all $N\in\mathbb{N}$, the inequality
\begin{align*}
     \sum_{j\in\llb-N,N\rrb} 2^{js} \lVert \mu_j\rVert_{\mathcal{M}(\mathbb{R}^n)} \leqslant \lVert\Gamma\rVert_{(c^{0}_{-s}(\mathbb{Z},\mathrm{C}^{0}_{0}(\mathbb{R}^n)))'}.
\end{align*}
Thus, as $N$ tends to infinity, it yields $(\mu_j)_{j\in\mathbb{Z}}\in{\ell^{1}_{s}(\mathbb{Z},\mathcal{M}(\mathbb{R}^n))}$ with the estimate
\begin{align*}
    \lVert (\mu_j)_{j\in\mathbb{Z}}\rVert_{\ell^{1}_{s}(\mathbb{Z},\mathcal{M}(\mathbb{R}^n))} \leqslant \lVert\Gamma\rVert_{(c^{0}_{-s}(\mathbb{Z},\mathrm{C}^{0}_{0}(\mathbb{R}^n)))'}.
\end{align*}
\end{proof}

\renewcommand{\thesection}{\Alph{section}}
\section{Some additional interpolation results}\label{Appendix:Interp}

Here we present a number of results concerning the reiteration property for real interpolation. These well-known results are stated in a way so that it does not (systematically) require the completeness of the normed vector spaces involved.

\begin{lemma}[Extremal reiteration property]\label{lem:ExtrmReitPropnonComp} Let $\mathrm{X}_0$ and $\mathrm{X}_1$ be two compatible normed vector spaces. For $\theta,\theta_0,\theta_1\in(0,1)$, $r,q\in[1,+\infty]$, we have with equivalence of norms
\begin{align*}
    \left( (\mathrm{X}_0,\mathrm{X}_1)_{\theta_0,r},\mathrm{X}_1\right)_{\theta,q} = (\mathrm{X}_0,\mathrm{X}_1)_{(1-\theta)\theta_0+\theta,q} \text{ and } \left(\mathrm{X}_0,(\mathrm{X}_0,\mathrm{X}_1)_{\theta_1,r}\right)_{\theta,q} = (\mathrm{X}_0,\mathrm{X}_1)_{\theta_1\theta,q}.
\end{align*}
\end{lemma}

\begin{proof} We prove the identity
\begin{align*}
    \left(\mathrm{X}_0,(\mathrm{X}_0,\mathrm{X}_1)_{\theta_1,r}\right)_{\theta,q} = (\mathrm{X}_0,\mathrm{X}_1)_{\theta_1\theta,q},
\end{align*}
the other one can be treated with similar arguments.

$\bullet$ The embedding $\left(\mathrm{X}_0,(\mathrm{X}_0,\mathrm{X}_1)_{\theta_1,r}\right)_{\theta,q} \hookrightarrow (\mathrm{X}_0,\mathrm{X}_1)_{\theta_1\theta,q}$ follows from the first arguments in the proof of \cite[Theorem~3.5.3]{BerghLofstrom1976}.

$\bullet$ For the reverse embedding $(\mathrm{X}_0,\mathrm{X}_1)_{\theta_1\theta,q}\hookrightarrow \left(\mathrm{X}_0,(\mathrm{X}_0,\mathrm{X}_1)_{\theta_1,r}\right)_{\theta,q} $, it suffices to focus on the case $r=1$.

For $u\in\mathrm{X}_0+\mathrm{X}_1$, the Holmstedt estimate for the $K$-functional is given in \cite[Corollary~3.6.2]{BerghLofstrom1976}, and is stated as
\begin{align*}
    K(t,u,\mathrm{X}_0,\mathrm{X}_{\theta_1,1}) \sim_{\theta_1} t\int_{t^{1/\theta_1}}^{+\infty} \tau^{-\theta_1}K(\tau,u,\mathrm{X}_0,\mathrm{X}_{1}) \frac{\mathrm{d}\tau}{\tau}\text{, } t>0\text{,}
\end{align*}
where $\mathrm{X}_{\theta_1,1}:=(\mathrm{X}_0,\mathrm{X}_1)_{\theta_1,1}$. We can multiply by $t^{-\theta}$ and take the $\mathrm{L}^q_\ast$-norm to obtain,
\begin{align*}
    \lVert u\rVert_{(\mathrm{X}_0,\mathrm{X}_{\theta_1,1})_{\theta,q}} &\sim_{\theta_1} \left(\int_{0}^{+\infty}\left(t^{1-\theta}\int_{t^{1/\theta_1}}^{+\infty} \tau^{-\theta_1}K(\tau,u,\mathrm{X}_0,\mathrm{X}_{1}) \frac{\mathrm{d}\tau}{\tau}\right)^q \frac{\mathrm{d}t}{t}\right)^\frac{1}{q}\\
    &\sim_{\theta_1} \left(\int_{0}^{+\infty}\left(t^{(1-\theta)\theta_1}\int_{t}^{+\infty} \tau^{-\theta_1}K(\tau,u,\mathrm{X}_0,\mathrm{X}_{1}) \frac{\mathrm{d}\tau}{\tau}\right)^q \frac{\mathrm{d}t}{t}\right)^\frac{1}{q}.
\end{align*}
The last line is obtained by means of the change of variables $t\mapsto t^{\theta_1}$. Hardy's inequality \cite[Lemma~6.2.6]{bookHaase2006} implies
\begin{align*}
    \lVert u\rVert_{(\mathrm{X}_0,\mathrm{X}_{\theta_1,1})_{\theta,q}} &\lesssim_{\theta_1} \lVert u\rVert_{(\mathrm{X}_0,\mathrm{X}_{1})_{\theta\theta_1,q}}.
\end{align*}
We have a similar proof if $q=+\infty$.
\end{proof}

Thanks to the previous Lemma, one can relax the completeness assumptions in the first Wolff's reiteration Theorem \cite[Theorem~1]{Wolff1982}, by a close inspection of the proof.

\begin{lemma}[Wolff's reiteration theorem for real interpolation]\label{lem:WOlffReitThmnonComp} Let $\mathrm{X}_1$, $\mathrm{X}_2$, $\mathrm{X}_3$ and $\mathrm{X}_4$ be four compatible normed vector spaces. Let $\lambda,\mu\in(0,1)$, and $q,r\in[1,+\infty]$, we assume
\begin{align*}
    (\mathrm{X}_1,\mathrm{X}_3)_{\lambda,r}=\mathrm{X}_2 \text{ and }(\mathrm{X}_2,\mathrm{X}_4)_{\mu,q}=\mathrm{X}_3.
\end{align*}
Then, for $\theta_2=\frac{\lambda \mu}{1-\mu+\mu\lambda}$ and $\theta_3=\frac{\lambda}{1-\mu+\mu\lambda}$,
\begin{enumerate}
    \item if $\mathrm{X}_2$ is a complete space, and $\mathrm{X}_1\cap\mathrm{X}_4\subset\mathrm{X}_2\cap \mathrm{X}_3$, then
    \begin{align*}
        (\mathrm{X}_1,\mathrm{X}_4)_{\theta_2,1}\hookrightarrow\mathrm{X}_2;
    \end{align*}
    \item if $\mathrm{X}_3$ is a complete space, and $\mathrm{X}_1\cap\mathrm{X}_4\subset\mathrm{X}_2\cap \mathrm{X}_3$, then
    \begin{align*}
        (\mathrm{X}_1,\mathrm{X}_4)_{\theta_3,1}\hookrightarrow\mathrm{X}_3;
    \end{align*}
    \item if $\mathrm{X}_1$ and $(\mathrm{X}_1+\mathrm{X}_2)\cap\mathrm{X}_4$ equipped with their natural norms are complete spaces, then
    \begin{align*}
        \mathrm{X}_2\hookrightarrow (\mathrm{X}_1,\mathrm{X}_4)_{\theta_2,\infty};
    \end{align*}
    \item if $\mathrm{X}_1$ and $(\mathrm{X}_1+\mathrm{X}_3)\cap\mathrm{X}_4$ equipped with their natural norms are complete spaces, then
    \begin{align*}
        \mathrm{X}_3\hookrightarrow (\mathrm{X}_1,\mathrm{X}_4)_{\theta_3,\infty}.
    \end{align*}
\end{enumerate}
In particular,
\begin{align*}
     (\mathrm{X}_1,\mathrm{X}_4)_{\theta_3,q}&=\mathrm{X}_3\text{, under the assumptions in \textit{(i)} and \textit{(iii)};}\\
     (\mathrm{X}_1,\mathrm{X}_4)_{\theta_2,r}&=\mathrm{X}_2\text{, under the assumptions in \textit{(ii)} and \textit{(iv)}.}
\end{align*}
\end{lemma}

In particular, we can also derive the following lemmas.
\begin{lemma}\label{lem:WOlffReitThmnonComp2} Let $\mathrm{X}_1$, $\mathrm{X}_2$, $\mathrm{X}_3$ and $\mathrm{X}_4$ be four compatible normed vector spaces. Let $\lambda,\mu\in(0,1)$, and $q,r\in[1,+\infty]$, we assume
\begin{align*}
    (\mathrm{X}_1,\mathrm{X}_3)_{\lambda,r}\hookrightarrow\mathrm{X}_2 \text{ and }(\mathrm{X}_2,\mathrm{X}_4)_{\mu,q}\hookrightarrow\mathrm{X}_3.
\end{align*}
For $\theta_2=\frac{\lambda \mu}{1-\mu+\mu\lambda}$, if $\mathrm{X}_2$ is a complete space, and $\mathrm{X}_1\cap\mathrm{X}_4\subset\mathrm{X}_2\cap \mathrm{X}_3$, then
\begin{align*}
        (\mathrm{X}_1,\mathrm{X}_4)_{\theta_2,1}\hookrightarrow\mathrm{X}_2.
\end{align*}
One has a similar result for $X_3$.
\end{lemma}

\begin{lemma}\label{lem:WOlffReitThmnonComp3} Let $\mathrm{X}_1$, $\mathrm{X}_2$, $\mathrm{X}_3$ and $\mathrm{X}_4$ be four compatible normed vector spaces. Let $\lambda,\mu\in(0,1)$, and $q,r\in[1,+\infty]$, we assume
\begin{align*}
    \mathrm{X}_2\hookrightarrow(\mathrm{X}_1,\mathrm{X}_3)_{\lambda,r} \text{ and }\mathrm{X}_3\hookrightarrow(\mathrm{X}_2,\mathrm{X}_4)_{\mu,q}.
\end{align*}
For $\theta_2=\frac{\lambda \mu}{1-\mu+\mu\lambda}$, if $\mathrm{X}_1$ and $(\mathrm{X}_1+\mathrm{X}_2)\cap\mathrm{X}_4$ endowed with their natural norms are complete spaces, then
\begin{align*}
        \mathrm{X}_2\hookrightarrow(\mathrm{X}_1,\mathrm{X}_4)_{\theta_2,\infty}.
\end{align*}
One has a similar result for $X_3$.
\end{lemma}

\renewcommand{\thesection}{\Alph{section}}
\section{A projection operator on the kernel of the trace operator in inhomogeneous function spaces}\label{Appendix:projNulTrace}

\begin{lemma}\label{lem:ProjDirBinHomSpaces} Let $\chi\in\mathrm{C}_c^\infty(\mathbb{R})$, satisfying $0\leqslant \chi\leqslant 1$, such that $\supp \chi \subset [-1,1]$ and $\chi(0)=1$. We introduce
\begin{align*}
    P_{\chi}\,:\, f   &   \longmapsto    \left[    (x',x_n)\mapsto \chi(x_n)e^{-x_n(-\Delta')^\frac{1}{2}}f(x') \right] .
\end{align*}

The linear operator, given formally by
\begin{align*}
    \mathcal{P}_\mathcal{D} = \mathrm{I} - T^{-1}_{\phi}P_\chi S_{\phi}([\cdot]_{|_{\partial\Omega}}),
\end{align*}
is a linear projection such that it maps boundedly
\begin{enumerate}
    \item $\mathrm{H}^{s,p}(\Omega)$ into itself,  $1/p<s\leqslant1$, $1<p<+\infty$;
    \item $\mathrm{B}^{s}_{p,q}(\Omega)$  into itself,  $1/p<s<1$, $1\leqslant p,q\leqslant +\infty$;
    \item $\mathrm{C}^{0}_{0}(\overline{\Omega})$ into itself.
\end{enumerate}
For all measurable function $u$ in any of the spaces above, it satisfies
\begin{align*}
    [\mathcal{P}_\mathcal{D}u]_{|_{\partial\Omega}} =0.
\end{align*}
Moreover, for $\mathfrak{B}\in\{\mathcal{B}^{s}_{p,\infty},\mathrm{B}^{s,0}_{\infty,q}\}$, we have $\mathcal{P}_\mathcal{D} \mathfrak{B}(\Omega) \subset \mathfrak{B}(\Omega)$.
\end{lemma}

\begin{proof} Up to conjugate $\mathcal{P}_\mathcal{D}$ by $T_\phi$, one can assume, without loss of generality, that $\Omega=\mathbb{R}^n_+$.

By the trace theorem, \cite[Theorem~6.6.1]{BerghLofstrom1976}, it suffices to show that $P_\chi$ maps boundedly $\mathrm{B}^{s-1/p}_{p,p}(\mathbb{R}^{n-1})$ to $\mathrm{H}^{s,p}(\mathbb{R}^n_{+})$, $\mathrm{B}^{s-1/p}_{p,q}(\mathbb{R}^{n-1})$ to $\mathrm{B}^{s}_{p,q}(\mathbb{R}^n_{+})$ and $\mathrm{C}^{0}_{0}(\mathbb{R}^{n-1})$ to $\mathrm{C}^{0}_0(\overline{\mathbb{R}^{n}_+})$, provided $s\in(0,1]$. 

\textbf{Step 1:} Let $1\leqslant p<+\infty$. Let $f\in\mathrm{B}^{-\frac{1}{p}}_{p,p}(\mathbb{R}^{n-1}) = \mathrm{L}^p(\mathbb{R}^{n-1}) + \dot{\mathrm{B}}^{-\frac{1}{p}}_{p,p}(\mathbb{R}^{n-1})$. We write $f=a+b$, with $(a,b)\in\mathrm{L}^p(\mathbb{R}^{n-1})\times \dot{\mathrm{B}}^{-\frac{1}{p}}_{p,p}(\mathbb{R}^{n-1})$. It holds that
\begin{align*}
    \left\lVert P_\chi f\right\rVert_{\mathrm{L}^p(\mathbb{R}^n_+)} &= \left(\int_{0}^{+\infty}\lVert \chi(x_n)e^{-x_n(-\Delta')^\frac{1}{2}}f\rVert_{\mathrm{L}^p(\mathbb{R}^{n-1})}^p\mathrm{~d}x_n\right)^\frac{1}{p} \\
    &= \left(\int_{0}^{+\infty}\left(t^{\frac{1}{p}}\lVert \chi(t)e^{-t(-\Delta')^\frac{1}{2}}f\rVert_{\mathrm{L}^p(\mathbb{R}^{n-1})}\right)^p\frac{\mathrm{d}t}{t}\right)^\frac{1}{p}\\
    &\leqslant \left(\int_{0}^{1}\lVert e^{-t(-\Delta')^\frac{1}{2}}a\rVert_{\mathrm{L}^p(\mathbb{R}^{n-1})}^p{\mathrm{d}t}\right)^\frac{1}{p} + \left(\int_{0}^{+\infty}\left(t^{\frac{1}{p}}\lVert e^{-t(-\Delta')^\frac{1}{2}}b\rVert_{\mathrm{L}^p(\mathbb{R}^{n-1})}\right)^p\frac{\mathrm{d}t}{t}\right)^\frac{1}{p}\\
    &\lesssim_{p,n} \left\lVert a \right\rVert_{{\mathrm{L}}^{p}(\mathbb{R}^{n-1})}+ \left\lVert b \right\rVert_{\dot{\mathrm{B}}^{-\frac{1}{p}}_{p,p}(\mathbb{R}^{n-1})}\text{.}
\end{align*}
The last inequality is a consequence of \cite[Lemma~B.1]{Gaudin2022}. Taking the infimum on all such pairs $(a,b)$ yields
\begin{align}\label{eq:estPchi1Sob}
    \left\lVert P_\chi f\right\rVert_{\mathrm{L}^p(\mathbb{R}^n_+)} \lesssim_{p,n} \left\lVert f \right\rVert_{{\mathrm{B}}^{-\frac{1}{p}}_{p,p}(\mathbb{R}^{n-1})}.
\end{align}

Now, for $f\in\mathrm{W}^{1,p}(\mathbb{R}^n_+)$, by the Leibniz rule, one can proceed as before to obtain
\begin{align*}
    \lVert \nabla P_\chi f\rVert_{\mathrm{L}^p(\mathbb{R}^n_+)} &= \left(\int_{0}^{+\infty}\lVert \nabla [\chi(x_n)e^{-x_n(-\Delta')^\frac{1}{2}}f]\rVert_{\mathrm{L}^p(\mathbb{R}^{n-1})}^p\mathrm{~d}x_n\right)^\frac{1}{p}\\
    &\leqslant \left(\int_{0}^{+\infty}\lVert e^{-t(-\Delta')^\frac{1}{2}}\nabla'f\rVert_{\mathrm{L}^p(\mathbb{R}^{n-1})}^p\mathrm{~d}t\right)^\frac{1}{p} +\left(\int_{0}^{1}\lVert  \chi'(t)e^{-t(-\Delta')^\frac{1}{2}}f\rVert_{\mathrm{L}^p(\mathbb{R}^{n-1})}^p\mathrm{~d}t\right)^\frac{1}{p}\\ \quad\quad&+ \left(\int_{0}^{+\infty}\lVert (-\Delta')^\frac{1}{2}e^{-t(-\Delta')^\frac{1}{2}}f\rVert_{\mathrm{L}^p(\mathbb{R}^{n-1})}^p\mathrm{~d}t\right)^\frac{1}{p}\\
    &\lesssim_{p,n,\chi}  \left\lVert f \right\rVert_{{\mathrm{L}}^{p}(\mathbb{R}^{n-1})} + \left\lVert \nabla' f\right\rVert_{\dot{\mathrm{B}}^{-\frac{1}{p}}_{p,p}(\mathbb{R}^{n-1})}+\lVert (-\Delta')^\frac{1}{2}f \rVert_{\dot{\mathrm{B}}^{-\frac{1}{p}}_{p,p}(\mathbb{R}^{n-1})}\\
    &\lesssim_{p,n,\chi} \left\lVert f \right\rVert_{{\mathrm{B}}^{1-\frac{1}{p}}_{p,p}(\mathbb{R}^{n-1})}.
\end{align*}
Since ${\mathrm{B}}^{1-\frac{1}{p}}_{p,p}(\mathbb{R}^{n-1})\hookrightarrow {\mathrm{B}}^{-\frac{1}{p}}_{p,p}(\mathbb{R}^{n-1})$, one can take the sum with \eqref{eq:estPchi1Sob} giving
\begin{align}\label{eq:estPchi2Sob}
    \left\lVert P_\chi f\right\rVert_{\mathrm{W}^{1,p}(\mathbb{R}^n_+)} \lesssim_{p,n} \left\lVert f \right\rVert_{{\mathrm{B}}^{-\frac{1}{p}}_{p,p}(\mathbb{R}^{n-1})}.
\end{align}

When $1<p<+\infty$, complex interpolation between \eqref{eq:estPchi1Sob} and \eqref{eq:estPchi2Sob}, yields the boundedness of
\begin{align*}
    P_\chi\,:\,\mathrm{B}^{s-1/p}_{p,p}(\mathbb{R}^{n-1})\longrightarrow \mathrm{H}^{s,p}(\mathbb{R}^n_{+}),\, 0\leqslant s \leqslant 1,\, 1< p<+\infty.
\end{align*}
In particular, $\mathcal{P}_\mathcal{D}$ is well-defined and bounded on $\mathrm{H}^{s,p}(\Omega)$, for $1/p<s\leqslant 1$, $1<p<+\infty$.

One proceeds the same way when $1\leqslant p<+\infty$: real interpolation between \eqref{eq:estPchi1Sob} and \eqref{eq:estPchi2Sob} yields the boundedness of
\begin{align*}
    P_\chi\,:\,\mathrm{B}^{s-1/p}_{p,q}(\mathbb{R}^{n-1})\longrightarrow \mathrm{B}^{s}_{p,q}(\mathbb{R}^{n}_+),\, 0< s < 1,\, 1\leqslant p<+\infty, 1\leqslant q\leqslant +\infty.
\end{align*}
In particular, $\mathcal{P}_\mathcal{D}$ is well-defined and bounded on $\mathrm{B}^{s}_{p,q}(\mathbb{R}^n_{+})$, for $1/p<s< 1$, $1<p<+\infty$, $q\in[1,+\infty]$.

\textbf{Step 2:} The case $p=+\infty$. Let $f\in\mathrm{C}_{b}^0(\mathbb{R}^{n-1})$, then, by the mapping properties of the convolution $\mathrm{L}^1(\mathbb{R}^{n-1})\ast \mathrm{C}_{b}^0(\mathbb{R}^{n-1})\hookrightarrow \mathrm{C}_{b}^0(\mathbb{R}^{n-1})$ and by the Dominated Convergence Theorem, one has $P_\chi f\in\mathrm{C}^0_b(\overline{\mathbb{R}^n_+})$ with the estimate
\begin{align}\label{eq:LinftyPchi1}
    \lVert P_\chi f \rVert_{\mathrm{L}^\infty(\mathbb{R}^n_+)} \leqslant \lVert  f \rVert_{\mathrm{L}^\infty(\mathbb{R}^n_+)}.
\end{align}
Additionally, if $f\in\mathrm{C}_0^0(\mathbb{R}^{n-1})$, one also has $P_\chi f\in\mathrm{C}^0_0(\overline{\mathbb{R}^n_+})$ which gives \textit{(iii)}.

Now, if $f\in\mathrm{B}^1_{\infty,1}(\mathbb{R}^{n-1})\subset \mathrm{W}_{1,\infty}(\mathbb{R}^{n-1})\subset\mathrm{C}_{b}(\mathbb{R}^{n-1})$,
\begin{align}
    \lVert \nabla P_\chi f \rVert_{\mathrm{L}^\infty(\mathbb{R}^n_+)} &\leqslant \lVert \chi' \rVert_{\mathrm{L}^\infty([0,1])} \lVert  f \rVert_{\mathrm{L}^\infty(\mathbb{R}^{n-1})} + \lVert  \nabla' f \rVert_{\mathrm{L}^\infty(\mathbb{R}^{n-1})} + \lVert  (-\Delta')^\frac{1}{2} f \rVert_{\mathrm{L}^\infty(\mathbb{R}^{n-1})}\nonumber\\
    &\lesssim_{n,\chi} \lVert f \rVert_{\mathrm{B}^1_{\infty,1}(\mathbb{R}^{n-1})}.\label{eq:LinftyPchi2}
\end{align}
One can sum up \eqref{eq:LinftyPchi1} and \eqref{eq:LinftyPchi2} to deduce,
\begin{align}\label{eq:LinftyPchi3}
    \lVert P_\chi f \rVert_{\mathrm{W}^{1,\infty}(\mathbb{R}^n_+)} \lesssim_{n,\chi} \lVert f \rVert_{\mathrm{B}^1_{\infty,1}(\mathbb{R}^{n-1})}.
\end{align}
Now, we perform real interpolation between \eqref{eq:LinftyPchi1} and \eqref{eq:LinftyPchi3}, which yields, for all $0 < s < 1$, all $q \in [1, +\infty]$, and all $f \in \mathrm{B}^{s}_{\infty,q}(\mathbb{R}^{n}_+)$,
\begin{align*}
    \lVert P_\chi f \rVert_{\mathrm{B}^{s}_{\infty,q}(\mathbb{R}^n_+)} \lesssim_{n,\chi} \lVert f \rVert_{\mathrm{B}^s_{\infty,q}(\mathbb{R}^{n-1})}.
\end{align*}
Furthermore, if $f \in \mathrm{B}^{s,0}_{\infty,q}(\mathbb{R}^{n-1}) \subset \mathrm{C}^0_0(\mathbb{R}^{n-1})$, then one can check $P_\chi f \in \mathrm{B}^{s,0}_{\infty,q}(\mathbb{R}^{n}_+)$, completing the proof.
\end{proof}

\end{document}